\definecolor{note_fontcolor}{rgb}{0.800781, 0.800781, 0.800781}
\newenvironment{lyxgreyedout}
  {\textcolor{note_fontcolor}\bgroup\ignorespaces}
  {\ignorespacesafterend\egroup}
\numberwithin{equation}{section}
\numberwithin{figure}{section}
\theoremstyle{plain}
\newtheorem{thm}{\protect\theoremname}[section]
\theoremstyle{definition}
\newtheorem{defn}[thm]{\protect\definitionname}
\theoremstyle{plain}
\newtheorem{cor}[thm]{\protect\corollaryname}
\theoremstyle{plain}
\newtheorem{prop}[thm]{\protect\propositionname}
\theoremstyle{remark}
\newtheorem{rem}[thm]{\protect\remarkname}
\theoremstyle{plain}
\newtheorem{lem}[thm]{\protect\lemmaname}
\theoremstyle{remark}
\newtheorem*{claim*}{\protect\claimname}
\theoremstyle{plain}
\newtheorem*{lem*}{\protect\lemmaname}
\theoremstyle{definition}
\newtheorem{example}[thm]{\protect\examplename}
\providecommand{\claimname}{Claim}
\providecommand{\corollaryname}{Corollary}
\providecommand{\definitionname}{Definition}
\providecommand{\examplename}{Example}
\providecommand{\lemmaname}{Lemma}
\providecommand{\propositionname}{Proposition}
\providecommand{\remarkname}{Remark}
\providecommand{\theoremname}{Theorem}
\begin{document}
\title{Diophantine approximations on random fractals}
\author{yiftach dayan}
\date{Dec 19, 2019}
\begin{abstract}
We show that fractal percolation sets in $\mathbb{R}^{d}$ almost
surely intersect every hyperplane absolutely winning (HAW) set with
full Hausdorff dimension. In particular, if $E\subset\mathbb{R}^{d}$
is a realization of a fractal percolation process, then almost surely
(conditioned on $E\neq\emptyset$), for every countable collection
$\left(f_{i}\right)_{i\in\mathbb{N}}$ of $C^{1}$ diffeomorphisms
of $\mathbb{R}^{d}$, $\dim_{H}\left(E\cap\left(\bigcap_{i\in\mathbb{N}}f_{i}\left(\text{BA}_{d}\right)\right)\right)=\dim_{H}\left(E\right)$,
where $\text{BA}_{d}$ is the set of badly approximable vectors in
$\mathbb{R}^{d}$. We show this by proving that $E$ almost surely
contains hyperplane diffuse subsets which are Ahlfors-regular with
dimensions arbitrarily close to $\dim_{H}\left(E\right)$. 

We achieve this by analyzing Galton-Watson trees and showing that
they almost surely contain appropriate subtrees whose projections
to $\mathbb{R}^{d}$ yield the aforementioned subsets of $E$. This
method allows us to obtain a more general result by projecting the
Galton-Watson trees against any similarity IFS whose attractor is
not contained in a single affine hyperplane. Thus our general result
relates to a broader class of random fractals than fractal percolation.
\end{abstract}

\maketitle

\section{Introduction\label{sec:Introduction}}

\subsection{The set $\text{BA}_{d}$. }

The field of Diophantine approximations deals with approximations
of real numbers and vectors by rationals, where the idea is to keep
the denominators as small as possible. A theorem by Dirichlet implies
that for every $v\in\mathbb{R}^{d}$, there exist infinitely many
$\left(P,q\right)\in\mathbb{Z}^{d}\times\mathbb{N}$, such that

\[
\left\Vert v-\frac{P}{q}\right\Vert _{\infty}<\frac{1}{q^{1+\frac{1}{d}}}.
\]
This result leads to one of the key definitions in the field - the
badly approximable vectors.
\begin{defn}
\label{def:BA}A vector $v\in\mathbb{R}^{d}$ is called \textit{badly
approximable} if there exists some $c>0$, s.t. for every $\left(P,q\right)\in\mathbb{Z}^{d}\times\mathbb{N}$,
\[
\left\Vert v-\frac{P}{q}\right\Vert \geq\frac{c}{q^{1+\frac{1}{d}}}.
\]
The set of all badly approximable vectors in $\mathbb{R}^{d}$ is
denoted by $\text{BA}_{d}$.
\end{defn}

Throughout this paper, $\left\Vert \cdot\right\Vert $ is the Euclidean
norm, which is the only norm on $\mathbb{R}^{d}$ to be considered
from this point forward. Given $x\in\mathbb{R}^{d}$ and $r>0$, $B_{r}\left(x\right)=\left\{ y\in\mathbb{R}^{d}:\,\left\Vert x-y\right\Vert <r\right\} $,
and finally, given a set $S\subseteq\mathbb{R}^{d}$, and $\varepsilon>0$,
$S^{\left(\varepsilon\right)}$ is the \emph{$\varepsilon$-neighborhood}
of $S$ defined by $S^{\left(\varepsilon\right)}=\bigcup\limits _{x\in S}B_{\varepsilon}\left(x\right)$.
Note that using any other norm in Definition \ref{def:BA} would result
in an equivalent definition.

The set $\text{BA}_{d}$ is one of the most intensively investigated
sets in the field of Diophantine approximations. It is well known
that $\text{BA}_{d}$ has Lebesgue measure 0. On the other hand, it
has Hausdorff dimension $d$, which makes it reasonable to surmise
that it intersects various kinds of fractal sets. Indeed, in recent
years there has been a lot of interest, and many results, about the
intersection of $\text{BA}_{d}$ with fractal sets. A key result in
this line of research is due to Broderick, Fishman, Kleinbock, Reich
and Weiss \cite{Broderick2012319}, which deals with the intersection
of $\text{BA}_{d}$ with a certain kind of fractals called \emph{hyperplane
diffuse}. 
\begin{defn}
\label{def:diffuse}Given $\beta>0$, a closed set $K\subseteq\mathbb{R}^{d}$
is called \emph{hyperplane $\beta$ - diffuse} if the following holds: 

$\exists\xi_{0}>0,\,\forall\xi\in\left(0,\,\xi_{0}\right),\,\forall x\in K,\,\forall\mathcal{L}\subset\mathbb{R}^{d}$
affine hyperplane, 
\[
K\cap B_{\xi}\left(x\right)\setminus\mathcal{L}^{\left(\beta\xi\right)}\neq\emptyset.
\]
A set is called \emph{hyperplane diffuse} if it is hyperplane $\beta$
- diffuse for some $\beta$.
\end{defn}

This turns out to be a quite natural property for fractals and many
interesting fractals are known to be hyperplane diffuse, especially
when they have some self similarity. see Theorems 1.3 - 1.5 in \cite{Das2016b}
for some examples. 

In \cite{Broderick2012319} it was shown that if $K\subset\mathbb{R}^{d}$
is hyperplane diffuse, then $\dim_{H}\left(K\cap\text{BA}_{d}\right)>0$.
Moreover, if $K$ is also Ahlfors-regular (defined below) then $\dim_{H}\left(K\cap\text{BA}_{d}\right)=\dim_{H}\left(K\right)$. 
\begin{defn}
For any $\delta>0$, a measure $\mu$ on $\mathbb{R}^{d}$ is called
$\delta$-Ahlfors-regular , if $\exists c_{1},c_{2}>0$ s.t. $\forall\rho\in\left(0,1\right)$,
$\forall x\in\text{supp}\left(\mu\right)$, 
\[
c_{1}\rho^{\delta}\leq\mu\left(B_{\rho}\left(x\right)\right)\leq c_{2}\rho^{\delta}
\]
and Ahlfors-regular if it is $\delta$-Ahlfors-regular for some $\delta>0$.
A set $K\subset\mathbb{R}^{d}$ is called Ahlfors-regular (resp. $\delta$-Ahlfors-regular)
if there exists an Ahlfors-regular (resp. $\delta$-Ahlfors-regular)
measure $\mu$ on $\mathbb{R}^{d}$ s.t. $\text{supp}\left(\mu\right)=K$. 
\end{defn}

This result became a main tool for studying intersections of $\text{BA}_{d}$
with fractals. The above statement is in fact a corollary of a more
general theorem, where the set $\text{BA}_{d}$ is replaced by an
arbitrary hyperplane absolute winning (HAW) set. These are sets which
are winning in a certain game called the hyperplane absolute game,
which we shall describe in section \ref{sec:Hyperplane-absolute-game}.
Note that the set $\text{BA}_{d}$ is HAW \cite{Broderick2012319}.
Thus, the more general theorem is the following.
\begin{thm}[\cite{Broderick2012319}]
\label{thm:HAW intersects diffuse sets}Let $K\subset\mathbb{R}^{d}$
be hyperplane diffuse. Then there exists a constant $C>0$, s.t. $\forall S\subseteq\mathbb{R}^{d}$
HAW, $\dim_{H}\left(K\cap S\right)>C$. Moreover, if $K$ is Ahlfors-regular
then $\dim_{H}\left(K\cap S\right)=\dim_{H}\left(K\right)$.
\end{thm}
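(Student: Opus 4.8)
The plan is to prove the sharper, Ahlfors-regular statement; the uniform bound $\dim_{H}(K\cap S)>C$ then falls out of the same construction. Note first that $K\cap S\subseteq K$, so $\dim_{H}(K\cap S)\le\dim_{H}(K)$ automatically and only a lower bound needs an argument. The strategy is to produce, inside $K\cap S$, a closed subset that is itself Ahlfors-regular of dimension $\ge\dim_{H}(K)-\varepsilon$ for every prescribed $\varepsilon>0$ (in the general case, of some dimension $\ge C_{0}=C_{0}(d,\beta)>0$), and then let $\varepsilon\to0$. Fix $\beta$ witnessing the hyperplane $\beta$-diffuseness of $K$ (Definition~\ref{def:diffuse}), pick a game parameter $\beta'\in(0,\tfrac{1}{3})$ small relative to $\beta$, and --- using that $S$ is HAW --- fix a winning strategy for Bob in the $\beta'$-hyperplane absolute game for $S$. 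The whole proof is an elaboration of the slogan ``run Bob's winning strategy while keeping all of his balls centred in $K$''.

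First I would set up the Cantor scheme: a non-autonomous construction indexed by a rooted tree $T$, attaching to each node $\omega\in T$ a ball $B_{\omega}=B_{r_{\omega}}(x_{\omega})$ with centre $x_{\omega}\in K$, so that $\overline{B_{\omega j}}\subset B_{\omega}$ for every child, $r_{\omega j}\asymp\eta\, r_{\omega}$ for a fixed small $\eta=\eta(\beta,d)$, and distinct children of a node are $\gtrsim\eta\, r_{\omega}$-separated. The limit set $C=\bigcap_{n}\bigcup_{|\omega|=n}\overline{B_{\omega}}$ is then closed and, since $K$ is closed and contains every $x_{\omega}$, contained in $K$; putting equal mass on the children of each node gives a measure $\mu$ on $C$ for which the mass distribution principle yields $\dim_{H}C\ge\log N/\log(1/\eta)$, where $N$ is the uniform number of children per node. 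The role of the hypotheses on $K$ is to supply the children: hyperplane $\beta$-diffuseness, applied a bounded number of times inside $B_{r_{\omega}}(x_{\omega})$, already produces several disjoint well-separated sub-balls centred in $K$ and avoiding a prescribed hyperplane $\beta' r_{\omega}$-neighbourhood; when $K$ is also $\delta$-Ahlfors-regular one instead produces $N\gtrsim\eta^{-(\delta-\varepsilon)}$ such sub-balls by counting an $\eta r_{\omega}$-separated net of $K\cap B_{r_{\omega}}(x_{\omega})$ against the Ahlfors-regular measure. Choosing the parameters accordingly turns the dimension bound into $\dim_{H}C\ge\dim_{H}(K)-\varepsilon$ (resp. $\ge C_{0}(d,\beta)$).

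It remains to force $C\subseteq S$, and this is the hard part. We build $T$ top-down while maintaining, along the path from the root to each node $\omega$, a \emph{legal partial play} of the $\beta'$-hyperplane absolute game in which Bob follows his fixed winning strategy and every ball Bob produces is one of the $B_{\omega'}$ with $\omega'$ a prefix of $\omega$ --- with finitely many auxiliary rounds inserted, purely to descend from one branching scale to the next and to keep Bob's ball centred in $K$, which is exactly where the coupling $\beta'\ll\beta$ enters (diffuseness guarantees Bob a legal reply lying in $K$ no matter which hyperplane Alice has just removed). Along each infinite branch this produces a complete play consistent with Bob's winning strategy, whose unique limit point therefore lies in $S$; hence $C\subseteq S$. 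The genuine obstacle is that Bob's strategy is deterministic and outputs a single nested ball per round, whereas a Cantor scheme needs branching: the way around it is to transfer the HAW property to a combinatorial game played on the tree of sub-balls of $K$, in which removing one hyperplane $\beta' r$-neighbourhood from $\mathbb{R}^{d}$ deletes only a controlled portion of the scale-$\eta r$ sub-balls of $K\cap B_{r}(x)$ --- precisely because $K$ is hyperplane $\beta$-diffuse and $\beta'$ is small, so $K$ is not concentrated near any affine hyperplane at any scale --- and then to check that for this tree game a win for $S$ forces a subtree of the required branching. Getting the interlocking choices of $\beta'$, $\eta$ and $N$ right, and verifying that the branch limits genuinely coincide with the game limits, is the technical heart of the argument; the complete details are in \cite{Broderick2012319}.
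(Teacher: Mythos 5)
Note first that the paper does not prove this theorem; it is quoted from \cite{Broderick2012319}, so your proposal must be measured against the argument there rather than any proof in this text. Your overall architecture, building a Cantor scheme of balls centred in $K$, with diffuseness (and Ahlfors-regularity for the sharper bound) supplying enough well-separated children at each scale and the hyperplane-absolute-game strategy forcing the branch limits into $S$, is the right idea and does match the structure of the cited proof, which rests on the ``winning on a fractal implies full dimension'' machinery of Fishman and Kleinbock--Weiss.

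However, you have the players swapped throughout. In the hyperplane absolute game as defined in this paper, Bob picks the balls, Alice removes hyperplane neighbourhoods, and $S$ is HAW precisely when \emph{Alice} has a strategy guaranteeing that $\bigcap_n B_n$ meets $S$. You write ``fix a winning strategy for Bob,'' ``run Bob's winning strategy,'' and ``consistent with Bob's winning strategy,'' none of which is meaningful here: Bob is not the one playing to hit $S$, and he holds no distinguished strategy. This is more than a naming slip, because it leads you to mislocate the central obstacle. The fixed, deterministic object is \emph{Alice's} strategy, which returns one hyperplane neighbourhood per round given the history of play; the branching of the Cantor tree is generated by the freedom you retain on Bob's side in choosing many disjoint legal sub-balls after each of Alice's removals. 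So the obstacle you name, that ``Bob's strategy is deterministic and outputs a single nested ball per round,'' is not the issue. The real point, which your resolution in fact identifies correctly once the roles are corrected, is to verify that the single $\beta' r$-neighbourhood Alice removes at each node cannot delete too many of the scale-$\eta r$ candidate sub-balls of $K$, and that is exactly what hyperplane $\beta$-diffuseness with $\beta'\ll\beta$ (plus the Ahlfors-regular counting in the sharper case) provides. One further small correction: in the absolute game $\bigcap_n B_n$ may a priori be a ball of positive radius, so your scheme must drive $r_{\omega}\to 0$ along every branch before you may speak of a unique limit point lying in $S$.
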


Two important properties of HAW sets are the following:
\begin{thm}
\emph{\label{thm:HAW is robust}\cite[Proposition 2.3]{Broderick2012319}}
\end{thm}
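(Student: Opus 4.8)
Since the statement is quoted from \cite{Broderick2012319}, I only sketch how one argues. Recall the \emph{hyperplane absolute game} with parameter $\beta\in(0,1/3)$: Bob chooses a closed ball $B_{0}$; then, given Bob's ball $B_{i}=\overline{B}_{\rho_{i}}(x_{i})$, Alice chooses an affine hyperplane $\mathcal{L}_{i}$ and a number $\varepsilon_{i}\le\beta$, and Bob must choose $B_{i+1}\subseteq B_{i}\setminus\mathcal{L}_{i}^{(\varepsilon_{i}\rho_{i})}$ with radius $\rho_{i+1}\ge\beta\rho_{i}$ (such a ball exists because $\varepsilon_{i}\le\beta<1/3$). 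A set $S$ is $\beta$-HAW if Alice has a strategy forcing the nested balls $B_{i}$ to shrink to a single point of $S$, and HAW if this holds for every $\beta\in(0,1/3)$. The two properties to be proved are: (a) a countable intersection of HAW sets is HAW; and (b) $f(S)$ is HAW whenever $S$ is HAW and $f$ is a $C^{1}$ diffeomorphism of $\mathbb{R}^{d}$. As a further consequence one gets (c): every HAW set has full Hausdorff dimension $d$.

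For (a) the plan is the standard diagonalization. Fix $\beta\in(0,1/3)$; given HAW sets $S_{1},S_{2},\dots$ with winning strategies $\sigma_{1},\sigma_{2},\dots$ for the $\beta$-game, partition the rounds into infinitely many infinite blocks $\mathbb{N}_{0}=\bigsqcup_{n}A_{n}$, and let Alice respond with $\sigma_{n}$ on the rounds of $A_{n}$ and with a trivial move (a hyperplane disjoint from Bob's current ball) on all the rest. With the appropriate bookkeeping --- using, where necessary, that the HAW class is independent of $\beta$ --- the restriction of any resulting play to the rounds of $A_{n}$ is an admissible run of the $\beta$-game for $S_{n}$ in which Alice follows $\sigma_{n}$, so the outcome lies in $S_{n}$. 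This holds for every $n$, hence the outcome lies in $\bigcap_{n}S_{n}$, and since $\beta$ was arbitrary, $\bigcap_{n}S_{n}$ is HAW.

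For (b) the plan is to conjugate Alice's strategy by $f$ and control the distortion. After finitely many rounds Bob's balls lie inside a fixed compact set $K_{0}$ on which $f$ and $f^{-1}$ are bi-Lipschitz and $Df$ is uniformly continuous, so on any ball of radius $\rho$ in $K_{0}$ the map $f$ agrees with an affine map up to an error $o(\rho)$ uniform in the centre. When Bob plays $B_{i}$ in the game against $f(S)$, Alice feeds to her winning strategy for $S$ a ball $B_{i}'\subseteq f^{-1}(B_{i})$ of comparable radius, obtaining an affine hyperplane $\mathcal{L}_{i}'$ and a width $\varepsilon_{i}'\rho_{i}'$ to delete; the $f$-image of that hyperplane neighbourhood is contained in a hyperplane neighbourhood (around the tangent plane of $f(\mathcal{L}_{i}')$ at a central point) of width at most $C\varepsilon_{i}'\rho_{i}'$, where $C$ depends only on the bi-Lipschitz constants and the modulus of continuity of $Df$ on $K_{0}$. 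Alice deletes that fattened neighbourhood; choosing the parameter $\beta'$ of the simulated $S$-game small enough --- which is allowed, as $S$ is $\beta'$-HAW for every small $\beta'$ --- keeps $C\varepsilon_{i}'\rho_{i}'$ inside the width Alice is permitted against $f(S)$ and keeps all effective parameters in $(0,1/3)$. Bob's reply, pulled back under $f^{-1}$, then contains an admissible Bob move in the simulated $S$-game that avoids $(\mathcal{L}_{i}')^{(\varepsilon_{i}'\rho_{i}')}$, so the simulated game produces a point $p\in S$ with $f(p)\in\bigcap_{i}B_{i}$; as also $f(p)\in f(S)$, Alice has won. I expect (b) to be the main obstacle: one must verify carefully that the pulled-back Bob moves really are admissible and that the accumulated distortion never forces a parameter out of $(0,1/3)$, and the clean way to do this, as in \cite{Broderick2012319}, is to first replace the game by an equivalent one in which Bob may play sets comparable to balls rather than exact balls, so the distortion is absorbed once and for all.

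Finally (c) is immediate from Theorem \ref{thm:HAW intersects diffuse sets}: a closed cube $Q\subset\mathbb{R}^{d}$ is hyperplane diffuse ($Q\cap B_{\xi}(x)$ always contains a ball of radius proportional to $\xi$, including when $x$ is a corner) and $d$-Ahlfors-regular, so for any HAW set $S$ one gets $\dim_{H}(S)\ge\dim_{H}(Q\cap S)=\dim_{H}(Q)=d$. Alternatively one argues directly in the game: have Bob branch at each round into a maximal packing of $\asymp\beta^{-(d-1)}$ disjoint subballs of radius $\asymp\beta\rho_{i}$ avoiding Alice's deleted slab, apply Alice's strategy along every branch, and note that the resulting Cantor subset of $S$ has Hausdorff dimension tending to $d$ as $\beta\to0$.
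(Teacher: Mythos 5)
The paper states Theorem \ref{thm:HAW is robust} as a citation to \cite[Proposition 2.3]{Broderick2012319} and gives no proof of its own, so there is no in-paper argument to compare against; I can only assess whether your sketch is a reasonable summary of how the cited result is proved. It is: dovetailing across a partition of the rounds is the right mechanism for the countable intersection property, and local linearization of the diffeomorphism --- most cleanly executed after passing to an equivalent game in which Bob may play sets comparable to balls, as you note --- is the right mechanism for $C^{1}$-invariance.

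Two points in (a) are glossed over and should be made explicit. First, because the gaps between consecutive rounds of $A_{n}$ are unbounded, the sub-game that $\sigma_{n}$ sees has radius ratios that can be as small as $\beta^{\text{gap}}$; the ``$\beta$-independence'' you invoke is exactly what rescues this, but the clean statement is that HAW is equivalent to winning a variant of the game with \emph{no} lower bound on Bob's radii, which is a lemma in \cite{Broderick2012319}, not an immediate consequence of the definition. Second, your conclusion ``the outcome lies in $\bigcap_{n}S_{n}$'' tacitly assumes $\bigcap_{i}B_{i}$ is a single point; since Bob controls the radii (the constraint $\rho_{i+1}\ge\beta\rho_{i}$ only bounds shrinkage from below), one must argue either that Alice can always force $\rho_{i}\to0$, or work with the game variant above. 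Finally, your item (c) is not part of the statement being cited; in this paper the full-dimension property appears as Theorem \ref{thm:HAW intersects diffuse sets}, and your derivation of it from that theorem is correct but orthogonal to what Theorem \ref{thm:HAW is robust} asserts.
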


\begin{enumerate}
\item \emph{Any countable intersection of HAW sets is HAW.}
\item \emph{Any image of a HAW set under a $C^{1}$ diffeomorphism of $\mathbb{R}^{d}$
is HAW.}
\end{enumerate}
Theorem \ref{thm:HAW is robust} implies for example that if $K\subseteq\mathbb{R}^{d}$
is hyperplane diffuse, then for every sequence $\left(f_{n}\right)_{n\in\mathbb{N}}$
of \emph{$C^{1}$ }diffeomorphisms of \emph{$\mathbb{R}^{d}$, }the
intersection\emph{ $K\cap\left(\bigcap_{i\in\mathbb{N}}f_{i}\left(\text{BA}_{d}\right)\right)$
}has positive Hausdorff dimension, and if $K$ is also Ahlfors-regular
then the Hausdorff dimension of the intersection is maximal, i.e.,
equal to $\dim_{H}\left(K\right)$.

\subsection{Random fractals}

In this paper we deal with a natural model of random fractals which
we will refer to as Galton-Watson fractals. This model may be described
as follows. Suppose we are given a finite IFS $\Phi=\left\{ \varphi_{i}:\mathbb{R}^{d}\to\mathbb{R}^{d}\right\} _{i\in\Lambda}$
of contracting similarity maps  with attractor $K$ (these notions
will be explained in more detail in subsection \ref{subsec:IFSs-and-projections}.
See also \cite{falconer2013fractal} for a good exposition of this
topic). $\Phi$ defines a coding map $\gamma_{\Phi}:\Lambda^{\mathbb{N}}\to\mathbb{R}^{d}$
given by $\gamma_{\Phi}\left(i\right)=\bigcap\limits _{n=1}^{\infty}\varphi_{i_{1}}\circ...\circ\varphi_{i_{n}}\left(K\right)$%
\begin{lyxgreyedout}
$K$ is defined as the attractor which does not involve the coding
map, so why is the definition circular?%
\end{lyxgreyedout}
. Note that $\gamma_{\Phi}\left(\Lambda^{\mathbb{N}}\right)=K$. Let
$W$ be a random variable taking values in the finite set $2^{\Lambda}$.
We construct a Galton-Watson tree by iteratively choosing at random
the children of each element of the tree as realizations of independent
copies of $W$, starting from the root, namely $\emptyset$. By concatenating
each child to its parent, this defines a random subset of the symbolic
space $\Lambda^{\mathbb{N}}$ which we then project using $\gamma_{\Phi}$
to yield a random fractal $E\subset\mathbb{R}^{d}$ (which is contained
in \emph{K}). See Figure \ref{fig:A-Galton-Watson-fractal} for an
illustrative example.

\begin{figure}
\subfloat[The Sierpinski triangle]{\includegraphics[scale=0.5]{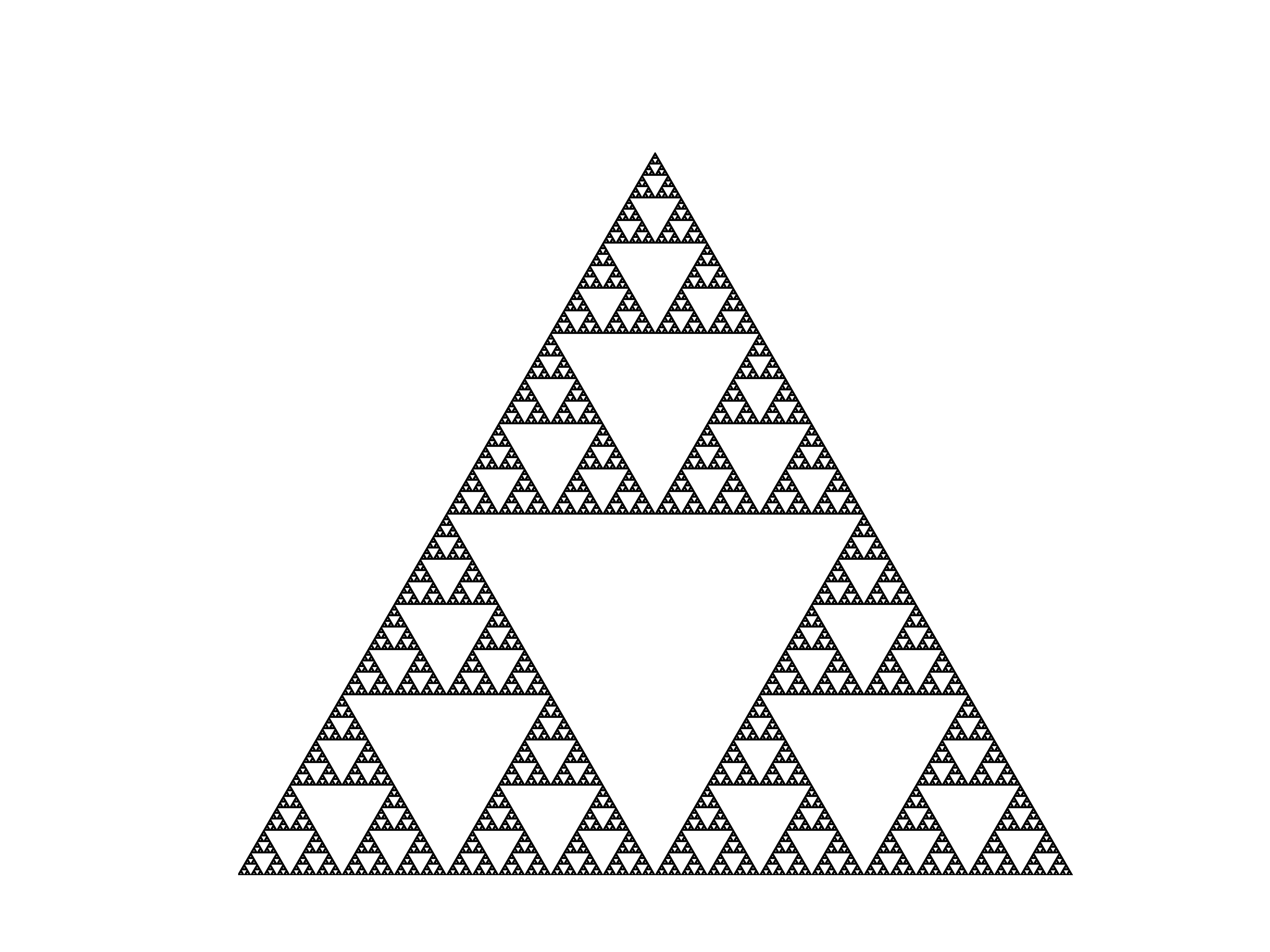}}\subfloat[The Galton-Watson fractal]{\includegraphics[scale=0.5]{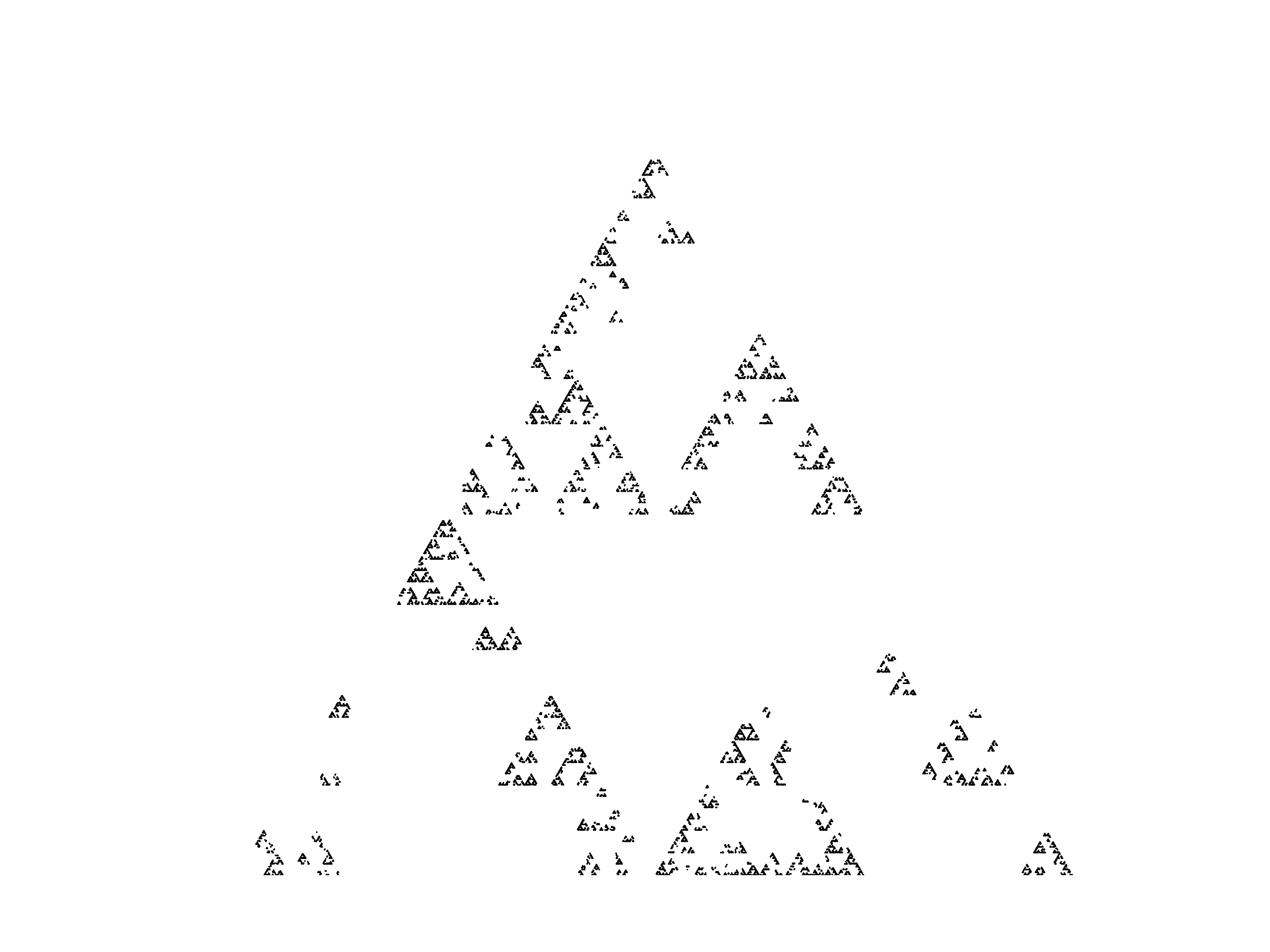}}\caption{\label{fig:A-Galton-Watson-fractal}An approximate realization of
a Galton-Watson fractal which is constructed using an IFS whose attractor
is the Sierpinski triangle, where $W$ has the distribution of a Bernoulli
process on $\left\{ 0,1\right\} ^{\Lambda}$ with parameter $p=0.8$.}
\end{figure}
 Throughout the paper, we shall always assume that $\forall i\in\Lambda,\,\mathbb{P}\left(i\in W\right)>0$.
Note that it is possible that at some level of the tree, no element
survives and the process dies out. If this occurs we say that the
process is extinct, and the resulting limit set is $E=\emptyset$.
It is a well known fact that unless $\left|W\right|=1$ almost surely,
$\mathbb{E}\left(\left|W\right|\right)\leq1\iff E=\emptyset\text{ a.s.}$
(see e.g. \cite[Proposition 5.1 ]{Lyons2016}). The case $\mathbb{E}\left(\left|W\right|\right)>1$
is called \emph{supercritical} and we shall assume this property throughout
this paper. Another well known fact is that if $\Phi$ satisfies the
open set condition (abbreviated to OSC and will be defined in subsection
\ref{subsec:IFSs-and-projections}), then a.s. conditioned on nonextinction
$\dim_{H}\left(E\right)=\delta$ where $\delta$ is the unique number
satisfying $\mathbb{E}\left(\sum\limits _{i\in W}r_{i}^{\delta}\right)=1$,
and $r_{i}$ is the contraction ratio of $\varphi_{i}$ for each $i\in\Lambda$.

A specific example of Galton-Watson fractals that the reader should
keep in mind is that of fractal percolation (AKA Mandelbrot percolation)
which we now describe. Fix some $p\in[0,1]$ and some integer $b\geq2$.
Let $E_{0}\subseteq\mathbb{R}^{d}$ be the unit cube. Divide $E_{0}$
to $b^{d}$ closed subcubes of equal volume. Now, independently, retain
each subcube with probability $p$ or discard it with probability
$1-p$. Let $E_{1}$ be the union of all surviving subcubes. Next,
for each surviving subcube in $E_{1}$ we follow the same procedure.
The union of all surviving subcubes in this step will be denoted by
$E_{2}$. We continue in the same fashion to produce a nested sequence
$E_{0}\supseteq E_{1}\supseteq E_{2}\supseteq...$ where each set
$E_{i}$ is the union of the surviving subcubes of level $i$ of the
process. Eventually we take $E=\underset{i\in\mathbb{N}}{\bigcap}E_{i}$.
See Figure \ref{fig:fractal percolation} for an example.
\begin{figure}
\subfloat{\includegraphics[scale=0.11]{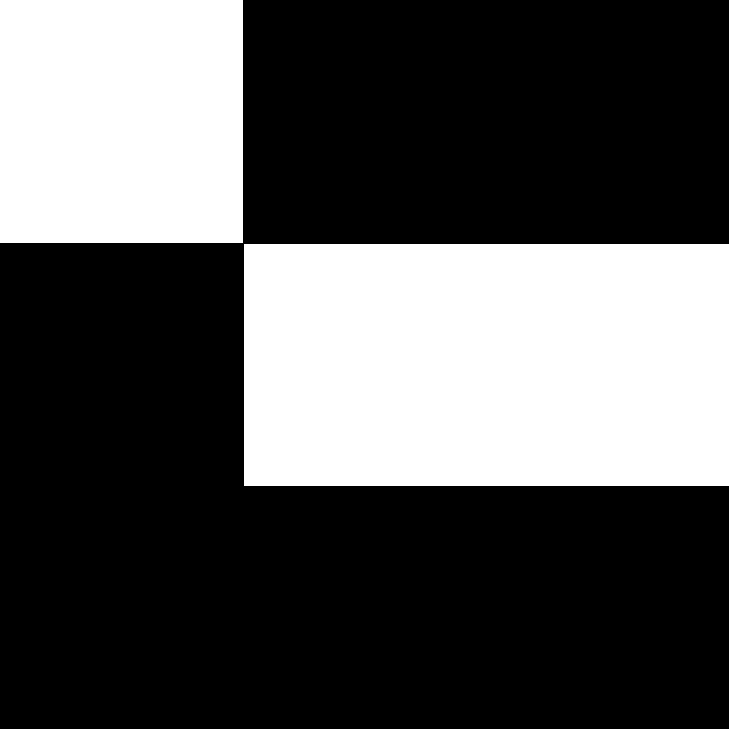}}\hspace{1cm}
\subfloat{\includegraphics[scale=0.11]{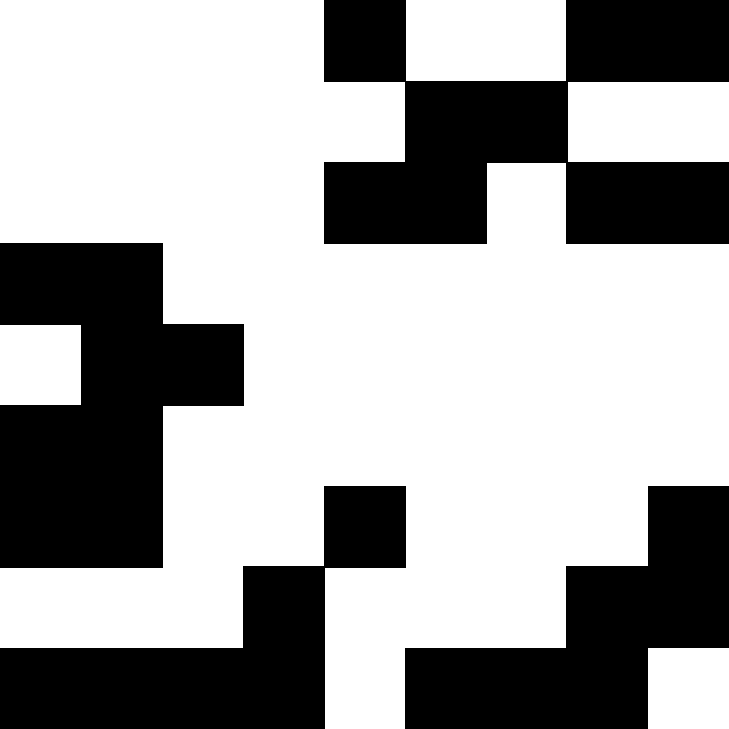}}\hspace{1cm}
\subfloat{\includegraphics[scale=0.11]{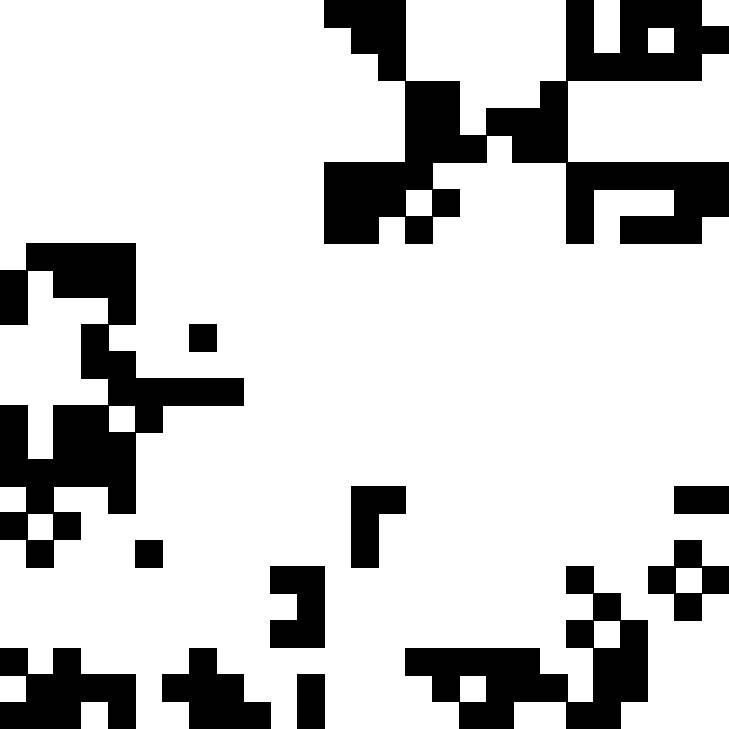}}\hspace{1cm}
\subfloat{\includegraphics[scale=0.11]{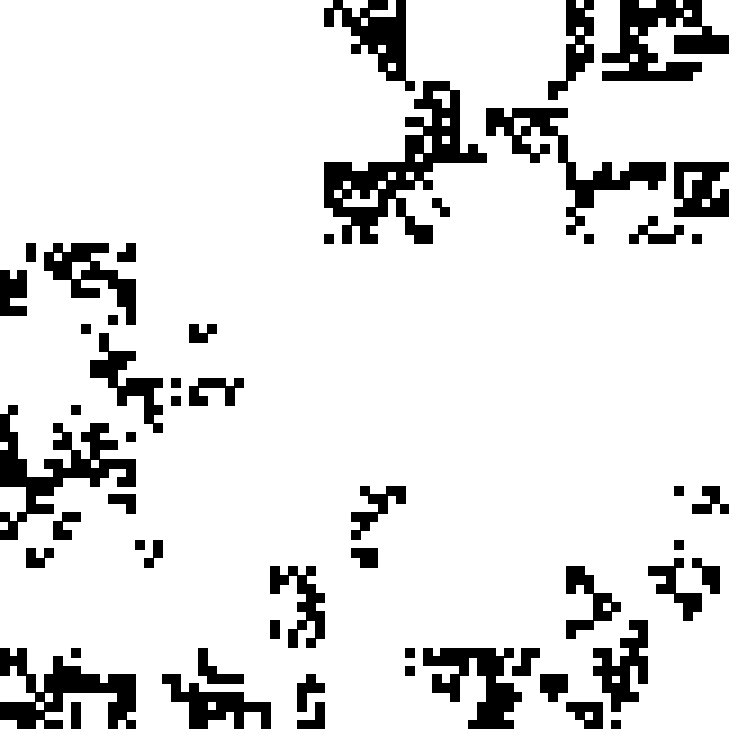}}\caption{\label{fig:fractal percolation}A realization of the first 4 steps
of a fractal percolation process in $\mathbb{R}^{2}$ with $b=3$
and $p=0.6$.}
\end{figure}

Being a relatively simple and natural model, fractal percolation has
been extensively studied over the years%
\begin{lyxgreyedout}
Should I add some references here?%
\end{lyxgreyedout}
.

All the notions raised in this subsection will be defined in a more
formal and detailed manner in section \ref{sec:Labeled-GW Trees}. 

\subsection{Main result and applications}

\subsubsection{Main theorem}

A condition which will recur in this paper is that a Galton-Watson
fractal is not a.s. contained in an affine hyperplane. Such a Galton-Watson
fractal will be referred to as \emph{non-planar}. Since we make the
assumption that $\forall i\in\Lambda,\,\mathbb{P}\left(i\in W\right)>0$,
non-planarity is essentially a property of the underlying IFS. More
precisely, if $E$ is a supercritical Galton-Watson fractal w.r.t.
a similarity IFS $\Phi=\left\{ \varphi_{i}\right\} _{i\in\Lambda}$
and offspring distribution $W$, then $E$ is non-planar iff the attractor
of $\Phi$ is not contained in an affine hyperplane. This fact as
well as some other equivalent conditions to non-planarity are proved
in Proposition \ref{prop: equivalent conditions for not in a hyperplane}.
Note that by definition non-planar Galton-Watson fractals are supercritical.

The main theorem of this paper is the following:
\begin{thm}
\label{thm: main generalized}Let E be a non-planar Galton-Watson
fractal w.r.t. a similarity IFS $\Phi$. Then a.s. conditioned on
nonextinction, 
\[
\exists C>0,\,\forall S\subset\mathbb{R}^{d}\text{ HAW, }\dim_{H}\left(E\cap S\right)>C.
\]
Moreover, if in addition $\Phi$ satisfies the OSC, then a.s. conditioned
on nonextinction, 
\[
\forall S\subset\mathbb{R}^{d}\text{ HAW, }\dim_{H}\left(E\cap S\right)=\dim_{H}\left(E\right).
\]
\end{thm}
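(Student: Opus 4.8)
The plan is to derive both assertions from Theorem~\ref{thm:HAW intersects diffuse sets}; write $m:=\mathbb{E}(|W|)>1$. I would first reduce the theorem to the geometric statement announced in the abstract: almost surely conditioned on nonextinction, $E$ contains a nonempty hyperplane diffuse subset and, if $\Phi$ satisfies the OSC, for every $\varepsilon>0$ it contains a hyperplane diffuse, Ahlfors-regular subset $F_\varepsilon$ with $\dim_H F_\varepsilon>\dim_H E-\varepsilon$. Granting this, the first assertion is immediate: the first part of Theorem~\ref{thm:HAW intersects diffuse sets} applied to a fixed hyperplane diffuse set $F\subseteq E$ produces a constant $C$ depending only on $F$ --- hence uniform in $S$ --- with $\dim_H(E\cap S)\geq\dim_H(F\cap S)>C$ for every HAW set $S$. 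For the second assertion, the Ahlfors-regular case of Theorem~\ref{thm:HAW intersects diffuse sets} applied to $F_{\varepsilon_n}$ along $\varepsilon_n\downarrow0$ --- a countable family of almost sure events --- gives $\dim_H(E\cap S)\geq\dim_H F_{\varepsilon_n}>\dim_H E-\varepsilon_n$ for every HAW $S$ and every $n$, hence $\dim_H(E\cap S)\geq\dim_H E$; the reverse inequality is trivial since $E\cap S\subseteq E$ and $\dim_H E$ is a.s.\ constant on nonextinction under the OSC.

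I would realise the required subsets as $\gamma_\Phi(\partial T')$ for suitable random subtrees $T'$ of the Galton--Watson tree $T$ generating $E$, and isolate the two tree-level conditions that make such a projection work. For hyperplane diffuseness it suffices that $T'$ be \emph{uniformly spanning}: there are $N\in\mathbb{N}$ and $\beta^{*}>0$ such that below every vertex $v$ of $T'$ one finds, within at most $N$ further levels, vertices of $T'$ whose cylinders witness non-planarity in every direction at scale $r_v$, i.e.\ for each affine hyperplane $\mathcal{L}$ some such cylinder is not contained in $\mathcal{L}^{(\beta^{*}r_v)}$. Using that the attractor $K$ lies in no affine hyperplane (Proposition~\ref{prop: equivalent conditions for not in a hyperplane}) --- which by scaling gives $\sup_{y\in\varphi_w(K)}d(y,\mathcal{L})\geq\beta_0 r_w$ for a fixed $\beta_0>0$, every word $w$ and every hyperplane $\mathcal{L}$ --- one checks by a routine passage-between-consecutive-scales argument that uniform spanning of $T'$ makes $\gamma_\Phi(\partial T')$ hyperplane diffuse, with no separation hypothesis needed; this already yields the first assertion without assuming the OSC. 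For Ahlfors regularity I would additionally require $T'$ to be \emph{$M$-ary along $N$-blocks}: every vertex at a level divisible by $N$ has exactly $M$ descendants of $T'$ at level $N$ below, with comparable contraction ratios and cylinders pairwise separated by a fixed fraction of their diameter --- this is the only place the OSC enters. Then $\gamma_\Phi(\partial T')$ is Ahlfors-regular of a dimension governed by $M$ and $N$ (equal to $\tfrac{\log M}{N\log(1/r)}$ when all $r_i$ equal a common $r$, with an analogous formula in general) that approaches $\dim_H E$ as $\tfrac{\log M}{N}\uparrow\log m$.

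The core of the argument is to produce such subtrees inside $T$ almost surely on nonextinction. The geometric input is that for $N$ large there exist ``realisable'' level-$N$ configurations --- subsets of $\Lambda^N$ occurring with positive probability as the level-$N$ descendants of a vertex of $T$ --- that are as abundant as desired and contain a spanning sub-configuration; this uses that $\mathbb{P}(i\in W)>0$ for all $i$, that $|W|\geq2$ with positive probability (non-planarity forces supercriticality), and that $K$ affinely spans $\mathbb{R}^d$, while finiteness of the subsets of $\Lambda^N$ yields a uniform lower bound $\beta^{*}>0$ on the spanning constants. The probabilistic input is the martingale convergence $Z_n/m^n\to\mathcal{W}>0$ on nonextinction, where $Z_n$ is the size of the $n$-th generation of $T$ (valid because $|W|$ is bounded), together with a second-moment estimate; these let one greedily retain, along $N$-blocks and at essentially every vertex, a well-separated family of $M\approx m^N$ good continuations whose own level-$N$ descendants again admit such a selection, so that the resulting thinned tree is supercritical and hence infinite a.s.\ on nonextinction.

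The main obstacle is exactly this last construction, where three demands compete: a \emph{uniform} spanning constant for the selected block configurations; enough of them, $M=m^{N(1+o(1))}$, that $\dim_H\gamma_\Phi(\partial T')\to\dim_H E$; and enough admissible vertices to keep the thinned process supercritical. I would reconcile these by passing to a high super-level $N$, at which both the counting (via $Z_n\approx\mathcal{W}m^n$) and the fact that deep survivor sets affinely span $\mathbb{R}^d$ (forced by non-planarity of $E$ itself) have stabilised, and only then running the block-by-block selection; quantifying that the overwhelming majority of the $\approx m^N$ level-$N$ survivors of a typical vertex are usable is where the real work lies.
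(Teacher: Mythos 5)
Your reduction of the theorem to the statement that $E$ a.s.\ contains hyperplane diffuse, Ahlfors-regular subsets of dimension approaching $\dim_{H}E$ (the paper's Theorem~\ref{thm:Main theorem diffuse subses}), together with the two applications of Theorem~\ref{thm:HAW intersects diffuse sets} (a single diffuse $F\subseteq E$ gives the uniform constant $C$; the Ahlfors-regular $F_{\varepsilon_{n}}$'s along $\varepsilon_{n}\downarrow0$ give the full-dimension statement), is exactly the paper's route. Your block-compression and supercritical-thinning strategy for producing those subsets also matches the architecture of the paper's Section~5, where the machinery is a $*$-tree generalization of Pakes--Dekking (Theorem~\ref{thm:fixed point for GRLT}).

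However, two steps in the sketch would not go through as stated. First, your ``uniformly spanning'' condition --- that below $v$ some cylinder of $T'$ is \emph{not contained in} $\mathcal{L}^{(\beta^{*}r_{v})}$ --- only yields a point of the ambient attractor $K$ outside that neighborhood, not a point of $\gamma_{\Phi}(\partial T')$; the piece of $\gamma_{\Phi}(\partial T')$ inside that cylinder could still lie entirely within $\mathcal{L}^{(\beta^{*}r_{v})}$, so the ``routine passage between consecutive scales'' does not in fact produce hyperplane diffuseness. What is needed is that some child cylinder be \emph{disjoint} from $\mathcal{L}^{(c)}$ (the $(K,c)$-diffuseness of $W_{T'}(v)$ in Definition~\ref{def:diffuseness for IFS}); then infiniteness of $T'$ below that child forces a limit-set point outside the neighborhood, which is what Proposition~\ref{prop:diffuse tree implies diffuse set} uses. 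Second, ``the resulting thinned tree is supercritical and hence infinite a.s.\ on nonextinction'' conflates two different events: supercriticality only gives positive survival probability for the thinned tree, and the passage to ``a.s.\ conditioned on nonextinction of $T$, some $T^{v}$ carries the good subtree'' requires the 0--1 law of Proposition~\ref{prop:0-1 law} (as packaged in Corollary~\ref{cor:Pakes-Dekking result} and Lemma~\ref{lem:Graph fixed point}). Finally, you acknowledge but do not carry out the main technical burden, namely verifying that the diffuseness and cardinality requirements can be imposed \emph{simultaneously} at every vertex of the compressed tree while still keeping the relevant generating function's fixed point tending to $\mathbb{P}(\text{extinction})$ as $\rho\to0$ --- precisely the content of Lemmas~\ref{lem: rho^alpha subtrees exist} and~\ref{lem: diffuse subtrees exist} and the final step of the proof of Theorem~\ref{thm:Main theorem diffuse subses}.
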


The reader should pay special attention to the order of the quantifiers
in Theorem \ref{thm: main generalized} (a.s. $\forall S\subset\mathbb{R}^{d}$...)
which is the stronger form as the collection of HAW sets is uncountable. 

The proof of Theorem \ref{thm: main generalized} is interesting mainly
because in many cases (fractal percolation for example) the Galton-Watson
fractal is a.s. not hyperplane diffuse (see Corollaries \ref{cor:GW fractals are not hyperplane diffuse},
\ref{cor:Fractal percolation is not hyperplane diffuse} ). Therefore
in order to prove Theorem \ref{thm: main generalized} we prove the
following:
\begin{thm}
\label{thm:Main theorem diffuse subses}Let E be a non-planar Galton-Watson
fractal w.r.t. a similarity IFS $\Phi$. Then a.s. conditioned on
nonextinction, $E$ contains a hyperplane diffuse subset. Moreover,
if $\Phi$ satisfies the OSC, then a.s. conditioned on nonextinction,
E contains a sequence of subsets $\left(D_{n}\right)_{n\in\mathbb{N}}$,
s.t. for each $n\in\mathbb{N}$, $D_{n}\subseteq E$ is hyperplane
diffuse and Ahlfors-regular, and $\dim_{H}\left(D_{n}\right)\nearrow\dim_{H}\left(E\right).$
\begin{lyxgreyedout}
Should I mention that this approach was taken before by Simmons et
al?%
\end{lyxgreyedout}
\end{thm}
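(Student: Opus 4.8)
The plan is to work on the symbolic side and exhibit, inside the random Galton--Watson tree $T\subseteq\Lambda^{*}$, subtrees with a \emph{deterministic} branching pattern whose $\gamma_{\Phi}$-projections are the desired subsets of $E$. Two geometric reductions guide the construction. First, a Moran-type limit set built by selecting, inside each chosen cylinder, a finite family of sub-cylinders is hyperplane $\beta$-diffuse (for a $\beta$ depending on the data) provided the selected family is \emph{uniformly spanning}: there is a fixed $\beta_{0}>0$ such that for every affine hyperplane $\mathcal{L}$ one of the chosen sub-cylinders escapes the $\beta_{0}$-neighbourhood of $\mathcal{L}$ at the scale of the ambient cylinder; the point here, which I would isolate as a lemma, is that by compactness of the space of affine hyperplanes meeting a bounded region it suffices to spoil a \emph{finite} $\tfrac{\beta_{0}}{2}$-net of hyperplanes, so a finite uniformly spanning family is enough, and the non-planarity hypothesis on $\Phi$ (equivalently, Proposition~\ref{prop: equivalent conditions for not in a hyperplane}) produces such a family among the cylinders of a suitable iterate $\Phi^{(m)}$ of $\Phi$. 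Second, Ahlfors-regularity additionally requires the construction to be \emph{ratio-homogeneous}: if at every node the chosen sub-cylinders always carry the same multiset of contraction ratios, then weighting each one proportionally to its ratio raised to the similarity exponent $s$ of the family yields an exactly $s$-regular measure on the limit set, and the open set condition for $\Phi$ — inherited by any family of words of a common length — supplies the separation. So I would first fix, for each target $s<\dim_{H}E$, a finite family $\mathcal{A}$ of words of a common length and common contraction ratio which is uniformly spanning, satisfies the OSC, and has similarity dimension in $(s,\dim_{H}E)$; arranging all these properties simultaneously (enough spread to span $K$ while keeping all ratios equal) is where non-planarity is really used, and may require passing to an iterate or to a coarser coding.

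The probabilistic heart is then a \emph{block percolation} argument on $T$. Fix a working combinatorial depth $\ell$ (large) and call a node $v\in T$ \emph{good} if within $T^{(v)}\cap\Lambda^{\ell}$ one can select $N:=|\mathcal{A}|$ descendants whose contraction ratios relative to $v$ match those of $\mathcal{A}$ and which form a $\tfrac{\beta_{0}}{2}$-uniformly spanning, separated configuration. Crucially, goodness of $v$ depends only on the depth-$\ell$ truncation of $T^{(v)}$, so along the block-levels $\ell,2\ell,3\ell,\dots$ the events ``$v$ is good'' are independent, and the good nodes together with a fixed measurable choice of good offspring form a Galton--Watson tree whose offspring law is ``$N$ children with probability $q_{0}$, and none otherwise.'' One checks that $q_{0}$ is bounded below — indeed $q_{0}$ can be pushed close to $1-\mathbb{P}(\mathrm{ext})$ by refining the scale — because $E^{(v)}$ has the same law as $E$, hence is a.s.\ not contained in any affine hyperplane, hence with probability close to $1-\mathbb{P}(\mathrm{ext})$ has ``fatness'' at least $\beta_{0}$, and a compact set of fatness $\ge\beta_{0}$ covered by small cylinders forces those cylinders to be uniformly spanning (the spanning property is monotone under enlarging the configuration). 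Choosing $\mathcal{A}$ with $N$ large relative to $1/q_{0}$ makes the block process supercritical; since on non-extinction $T$ has $Z_{n}\to\infty$ nodes at block-level $n$, each launching an independent copy of the block process, a.s.\ one of them survives. Its boundary, pushed through $\gamma_{\Phi}$, is a subset of $E$ which is a ratio-homogeneous, separated, uniformly spanning Moran set, hence Ahlfors-regular and hyperplane diffuse, of dimension equal to the similarity dimension of $\mathcal{A}$, which was chosen in $(s,\dim_{H}E)$; letting $s\nearrow\dim_{H}E$ produces the sequence $(D_{n})$. For the first assertion, without OSC, one runs the same argument but drops the ratio-homogeneity and separation bookkeeping entirely: a single surviving block subtree already projects to a hyperplane diffuse subset.

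One subtlety must be addressed before projecting: the block process is itself a Galton--Watson tree, so its surviving boundary is again random rather than of deterministic branching type, which would spoil exact ratio-homogeneity. This is harmless. A supercritical Galton--Watson tree with offspring equal to $N$ with probability $q_{0}$ and $0$ otherwise, with $Nq_{0}$ large, a.s.\ on survival contains an $N'$-ary subtree for any fixed $N'\ge 2$ (the probability $\alpha$ that a vertex roots an $N'$-ary subtree is the largest fixed point of $\alpha\mapsto q_{0}\,\mathbb{P}(\mathrm{Bin}(N,\alpha)\ge N')$, which is positive once $Nq_{0}\gg N'$, and on survival one finds such a subtree somewhere by the usual independent-attempts argument). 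Passing to such a deterministic $N'$-ary subtree of the block tree before applying $\gamma_{\Phi}$ restores the exact ratio-homogeneity needed for the Ahlfors-regular measure, at the cost of replacing $N$ by $N'$ in the dimension count, which only forces us to choose $\mathcal{A}$ slightly larger.

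The step I expect to be the real obstacle is the interaction between the rigidity demanded by Ahlfors-regularity and the spread demanded by hyperplane diffuseness, in the presence of randomness. One cannot simply place a fixed self-similar set inside $E$, since the full attractor of the sub-IFS generated by $\mathcal{A}$ is a.s.\ not contained in $E$; one must work with the random Moran set produced by block percolation, and for that set to be genuinely Ahlfors-regular the selected configurations must be ratio-homogeneous — yet uniform spanning is naturally tied to covering most of $E^{(v)}$, which in turn is tied to the \emph{typical} letter-frequencies of the tree, and these need not be compatible with a prescribed common ratio. Reconciling these constraints — choosing $\mathcal{A}$ (and the depth $\ell$) so that equal-ratio, spanning, separated configurations of the required size occur in $T^{(v)}\cap\Lambda^{\ell}$ with probability bounded away from $0$, while the similarity dimension of $\mathcal{A}$ stays close to $\dim_{H}E$ — is the crux, and is presumably handled by an appropriate choice of $\mathcal{A}$ inside an iterate of $\Phi$ together with the supercriticality estimate above.
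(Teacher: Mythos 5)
Your block-percolation scaffold is structurally parallel to the paper's own approach, which compresses $T$ along sections and invokes a generalized Pakes--Dekking argument (Theorems~\ref{thm:fixed point} and~\ref{thm:fixed point for GRLT}); in the fractal-percolation case, where all maps of $\Phi$ share one ratio, exact ratio-homogeneity comes for free at every depth and your construction reduces to the one in Section~\ref{sec:the-fractal-percolation}. The genuine gap is precisely the step you flag at the end and do not resolve. By insisting on a \emph{single} contraction ratio for all selected sub-cylinders --- working at a fixed combinatorial depth $\ell$ and restricting to one ratio class --- you must show that a spanning, equal-ratio configuration occurs in $T^{\left(v\right)}\cap\Lambda^{\ell}$ with probability bounded away from $0$ and, for the ``moreover'' part, that such a class has cardinality close to $r_{\mathcal{A}}^{-\delta}$ with $\delta=\dim_{H}E$. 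Your ``fatness'' argument only establishes that the full family of surviving depth-$\ell$ cylinders is spanning; it does not control any single ratio class, and when the ratios of $\Phi$ are incommensurable the words of a fixed ratio at depth $\ell$ have a prescribed letter-frequency profile, so their cylinders can concentrate near a proper sub-attractor (possibly contained in a hyperplane). Likewise, the cardinality of a fixed ratio class at depth $\ell$ is a multinomial coefficient times a survival factor and need not approach $r_{\mathcal{A}}^{-\delta}$, so the $N^{\prime}$-ary subtree you extract may have dimension bounded strictly below $\delta$.

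The paper sidesteps both difficulties by never demanding exact ratio-homogeneity. It compresses $T$ along the sections $\Pi_{\rho^{n}}=\left\{ i:\,r_{i}\leq\rho^{n}<r_{i_{1}}\cdots r_{i_{\left|i\right|-1}}\right\} $, on which ratios agree only up to the fixed factor $r_{\min}$, and Lemma~\ref{lem:rho^alpha-ary implies alpha Ahlfors regular} shows that this bounded discrepancy still makes a $\rho^{-\alpha}$-ary $*$-subtree project to an $\alpha$-Ahlfors-regular set. The needed cardinality estimate then comes from Falconer's Theorem~\ref{thm:Falconer's Theorem} via Corollary~\ref{cor:size of cutsets} and Lemma~\ref{lem: rho^alpha subtrees exist}, and the diffuseness estimate from Lemma~\ref{lem: diffuse subtrees exist}, both phrased with respect to these sections rather than depth-$\ell$ slices; the $*$-tree version of Pakes--Dekking (Theorem~\ref{thm:fixed point for GRLT}) is built precisely to handle the resulting non-stationary offspring distributions. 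If you replace ``fixed depth $\ell$, fixed ratio'' by ``section $\Pi_{\rho}$, ratios within a factor $r_{\min}$'', your block-percolation argument becomes essentially the paper's, and the obstacle you identified disappears.
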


\subsubsection{Application to $\text{BA}_{d}$}

Applying Theorem \ref{thm: main generalized} to $\text{BA}_{d}$,
together with Theorem \ref{thm:HAW is robust}, yields the following
immediate corollary.
\begin{cor}
Let E be a non-planar Galton-Watson fractal w.r.t. a similarity IFS
$\Phi$. Then a.s. conditioned on nonextinction, there exists a constant
$C>0$ s.t. for every $\left(f_{i}\right)_{i\in\mathbb{N}}$ a sequence
of $C^{1}$ diffeomorphisms of $\mathbb{R}^{d}$,
\[
\dim_{H}\left(E\cap\left(\bigcap_{i\in\mathbb{N}}f_{i}\left(\text{BA}_{d}\right)\right)\right)>C.
\]
Moreover, if in addition $\Phi$ satisfies the OSC, then a.s. conditioned
on nonextinction, for every $\left(f_{i}\right)_{i\in\mathbb{N}}$
a sequence of $C^{1}$ diffeomorphisms of $\mathbb{R}^{d}$,
\[
\dim_{H}\left(E\cap\left(\bigcap_{i\in\mathbb{N}}f_{i}\left(\text{BA}_{d}\right)\right)\right)=\dim_{H}\left(E\right).
\]
\end{cor}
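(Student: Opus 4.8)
The plan is to derive this corollary directly from Theorem \ref{thm: main generalized} and Theorem \ref{thm:HAW is robust}; the only point requiring care is the order of the quantifiers. First I would recall that by \cite{Broderick2012319} the set $\text{BA}_d$ itself is HAW. Now fix an arbitrary sequence $\left(f_i\right)_{i\in\mathbb{N}}$ of $C^1$ diffeomorphisms of $\mathbb{R}^d$. By part (2) of Theorem \ref{thm:HAW is robust}, each set $f_i\left(\text{BA}_d\right)$ is HAW, and then by part (1) the countable intersection $S:=\bigcap_{i\in\mathbb{N}}f_i\left(\text{BA}_d\right)$ is again HAW.

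Next I would invoke Theorem \ref{thm: main generalized}. It furnishes a \emph{single} almost sure event (conditioned on nonextinction) on which there exists a constant $C>0$ such that $\dim_H\left(E\cap S\right)>C$ simultaneously for \emph{every} HAW set $S$. The crucial observation — and the reason the argument survives the uncountability of the family of possible diffeomorphism sequences — is that in Theorem \ref{thm: main generalized} the quantifier ``$\forall S$ HAW'' sits \emph{inside} the almost sure event, not outside it. Hence, on that event, for any choice of $\left(f_i\right)_{i\in\mathbb{N}}$ whatsoever the associated HAW set $S=\bigcap_i f_i\left(\text{BA}_d\right)$ satisfies $\dim_H\left(E\cap\bigcap_{i\in\mathbb{N}}f_i\left(\text{BA}_d\right)\right)>C$, which is the first assertion. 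For the second assertion, assume in addition that $\Phi$ satisfies the OSC; then Theorem \ref{thm: main generalized} gives an almost sure event (conditioned on nonextinction) on which $\dim_H\left(E\cap S\right)=\dim_H\left(E\right)$ for every HAW set $S$, and applying this with $S=\bigcap_i f_i\left(\text{BA}_d\right)$ exactly as above yields $\dim_H\left(E\cap\bigcap_{i\in\mathbb{N}}f_i\left(\text{BA}_d\right)\right)=\dim_H\left(E\right)$ for every sequence of $C^1$ diffeomorphisms.

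There is essentially no obstacle here beyond bookkeeping, since all of the substantive content has already been absorbed into Theorems \ref{thm: main generalized} and \ref{thm:HAW is robust}. The one thing I would be careful to avoid is inadvertently proving only the weaker statement ``for each fixed sequence $\left(f_i\right)$, almost surely $\dim_H\left(E\cap\bigcap_i f_i\left(\text{BA}_d\right)\right)\geq C$''; the stronger ``almost surely, for every sequence'' form is obtained precisely by keeping the HAW quantifier inside the probability-one event supplied by Theorem \ref{thm: main generalized}.
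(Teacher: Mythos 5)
Your proof is correct and is precisely the argument the paper has in mind: the paper presents this corollary without a written proof, stating only that it is immediate from Theorem \ref{thm: main generalized} together with Theorem \ref{thm:HAW is robust} applied to $\text{BA}_d$. Your careful remark about keeping the ``$\forall S$ HAW'' quantifier inside the almost-sure event supplied by Theorem \ref{thm: main generalized} is exactly the point the paper itself emphasizes when it draws the reader's attention to the order of the quantifiers.
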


\subsubsection{Absolutely non-normal numbers and a generalization}

Since Theorem \ref{thm: main generalized} deals with any HAW set,
one may consider other interesting sets which are known to be HAW.
One such set is the set of \textit{absolutely non-normal numbers. }
\begin{defn}
Let $2\leq a\in\mathbb{N}$. For $x\in\mathbb{R}$, let $\left(x_{1},x_{2},...\right)$
be the digital expansion of the fractional part of $x$ in base $a$.
Then $x$ is \textit{normal} to base $a$ if $\forall n\in\mathbb{N},$
for every word $\omega\in\left\{ 0,1,...,a-1\right\} ^{n}$,
\[
\lim_{N\rightarrow\infty}\dfrac{1}{N}\left(\#\mbox{ occurrences of }\omega\mbox{ in }x_{1},x_{2},...,x_{N}\right)=a^{-n}
\]
Following \cite{bugeaud2012distribution}, $x$ will be called an
\textit{absolutely non-normal number} if it is normal to no base $2\leq a\in\mathbb{N}$.
By ergodicity of Bernoulli shifts, the set of numbers in the unit
interval which are normal to every integer base has Lebesgue measure
1. However, in \cite[Theorem 2.6]{Broderick2012319} following the
ideas of Schmidt \cite{Schmidt1966}, it was shown that the set of
absolutely non-normal numbers is HAW. In fact, a stronger result was
proved - the set of points whose orbit under multiplication by any
positive integer (mod 1) is not dense is HAW.

A generalization of this for higher dimensions is given by the following.
Let $\mathbb{T}^{d}=\mathbb{R}^{d}/\mathbb{Z}^{d}$ be the $d$ -
dimensional torus, and let $\pi:\mathbb{R}^{d}\rightarrow\mathbb{T}^{d}$
be the projection map. For every matrix $R\in GL_{d}\left(\mathbb{Q}\right)$
with integer entries, and every point $y\in\mathbb{T}^{d}$, we shall
denote 
\[
\mathcal{E}\left(R,y\right)=\left\{ x\in\mathbb{R}^{d}:\,y\notin\overline{\left\{ \pi\left(R^{k}x\right):\,k\in\mathbb{N}\right\} }\right\} 
\]
\end{defn}

\begin{prop}
\cite[Theorem 2.6]{Broderick2012319} For every nonsingular semisimple
matrix with integer entries $R\in GL_{d}\left(\mathbb{Q}\right)$,
and every point $y\in\mathbb{T}^{d}$, $\mathcal{E}\left(R,y\right)$
is HAW.
\end{prop}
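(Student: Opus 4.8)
The plan is to prove directly that $\mathcal{E}(R,y)$ is hyperplane absolute winning, by producing a winning strategy for the player — call her Alice — who deletes hyperplane neighborhoods in the $\beta$-hyperplane absolute game, for every sufficiently small $\beta>0$. Recall the format: Bob first picks a ball $B_{0}$; given the current ball $B_{k}$ of radius $r_{k}$, Alice deletes $\mathcal{L}_{k}^{(\varepsilon_{k}r_{k})}$ for some affine hyperplane $\mathcal{L}_{k}$ and some $\varepsilon_{k}\in(0,\beta]$, and Bob must choose $B_{k+1}\subseteq B_{k}\setminus\mathcal{L}_{k}^{(\varepsilon_{k}r_{k})}$ of radius $r_{k+1}\geq\beta r_{k}$; Alice wins if $\bigcap_{k}B_{k}\subseteq\mathcal{E}(R,y)$. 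Two elementary observations will be used: placing $\mathcal{L}_{k}$ through the center of $B_{k}$ forces $r_{k+1}<r_{k}/2$; and, writing $\pi:\mathbb{R}^{d}\to\mathbb{T}^{d}$ for the covering map and fixing a lift $\tilde{y}\in\mathbb{R}^{d}$ of $y$, for every $\eta>0$
\[
\mathcal{A}_{\eta}:=\{x\in\mathbb{R}^{d}:\ \|R^{n}x-\tilde{y}-m\|\geq\eta\ \text{for all }n\in\mathbb{N},\ m\in\mathbb{Z}^{d}\}\subseteq\mathcal{E}(R,y).
\]
So it is enough that, for every small $\beta$ and every $B_{0}$, Alice may choose some $\eta=\eta(\beta,B_{0})>0$ and force the outcome into $\mathcal{A}_{\eta}$.

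I would first dispose of a degenerate case. As $R$ has integer entries, $\det R\in\mathbb{Z}\setminus\{0\}$, so $\prod_{i}|\lambda_{i}|=|\det R|\geq1$; hence either some eigenvalue of $R$ has modulus $>1$, or all have modulus $1$. In the latter case every eigenvalue is an algebraic integer all of whose Galois conjugates — themselves eigenvalues of $R$ — lie on the unit circle, so by Kronecker's theorem each is a root of unity, and semisimplicity then forces $R^{N}=I$ for some $N$. In that case $\mathcal{E}(R,y)=\bigcap_{j=0}^{N-1}\bigl(\mathbb{R}^{d}\setminus R^{-j}(\tilde{y}+\mathbb{Z}^{d})\bigr)$, a countable intersection of complements of single points, each of which is plainly HAW, so $\mathcal{E}(R,y)$ is HAW by Theorem~\ref{thm:HAW is robust}. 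Assume from now on that $\rho:=\rho(R)>1$.

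The core of the argument is to replace the awkward bad sets $\mathcal{B}_{n}:=\{x:\ \operatorname{dist}_{\mathbb{T}^{d}}(\pi(R^{n}x),y)<\eta\}$ — each a union of long thin ellipsoids $R^{-n}B_{\eta}(0)$ translated along the (generally irrational) lattice $R^{-n}\mathbb{Z}^{d}$, and hard to dodge in that form — by a single tidy family of equally spaced parallel hyperplane slabs. Let $V_{1}$ be the sum of the eigenspaces of $R^{\top}$ whose eigenvalue has modulus $\rho$; since $\mathbb{Z}^{d}$ is not contained in the proper complementary $R^{\top}$-invariant subspace, fix $w_{0}\in\mathbb{Z}^{d}$ with nonzero $V_{1}$-component, so that $\|(R^{\top})^{n}w_{0}\|\asymp\rho^{n}$ — this comparability is precisely where semisimplicity is used, as Jordan blocks would otherwise insert polynomial factors. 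Now if $\|R^{n}x-\tilde{y}-m\|<\eta$ for some $m\in\mathbb{Z}^{d}$, then pairing with $w_{0}$ and using $\langle m,w_{0}\rangle\in\mathbb{Z}$ gives $\langle x,(R^{\top})^{n}w_{0}\rangle\in\langle\tilde{y},w_{0}\rangle+\mathbb{Z}+(-\eta\|w_{0}\|,\eta\|w_{0}\|)$, so $x$ lies in the union $\mathcal{S}_{n}$ of hyperplane slabs orthogonal to $u_{n}:=(R^{\top})^{n}w_{0}/\|(R^{\top})^{n}w_{0}\|$, of thickness $\asymp\eta\rho^{-n}$ and consecutive spacing $\asymp\rho^{-n}$. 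Hence $\mathbb{R}^{d}\setminus\bigcup_{n}\mathcal{S}_{n}\subseteq\mathcal{A}_{\eta}$, and it suffices for Alice to force the outcome to avoid every $\mathcal{S}_{n}$. The payoff is that $\mathcal{S}_{n}$ is a single family of equally spaced parallel slabs: for $\eta$ small and a suitable constant $c$, at any scale $r<c\rho^{-n}$ a ball of radius $r$ meets at most one slab of $\mathcal{S}_{n}$, and that slab — of thickness $\asymp\eta\rho^{-n}$ — is contained in a $(\beta r)$-neighborhood of a hyperplane provided $r\gtrsim\eta\rho^{-n}/\beta$.

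Finally I would run the strategy. Fix $\beta$ small, and then $\eta=\eta(\beta,B_{0})>0$ small enough that for every $n$ the two constraints above leave a nonempty admissible interval of scales $r\in[\,c_{1}\eta\rho^{-n}/\beta,\ c_{2}\rho^{-n}\,]$ whose ratio $c_{2}\beta/(c_{1}\eta)$ far exceeds $\beta^{-1}$, and small enough that $B_{0}$ lies in no slab of $\mathcal{S}_{1}$. Alice handles $n=1,2,3,\dots$ in order: on a turn where the current radius still exceeds $c_{2}\rho^{-n}$ for the next unhandled index $n$ she makes a filler move (delete a hyperplane through the center of $B_{k}$); when $r_{k}$ first drops below $c_{2}\rho^{-n}$ it automatically lands in the admissible interval for $n$ (by the choice of $\eta$), whereupon she makes the substantive move — $B_{k}$ then meets at most one slab of $\mathcal{S}_{n}$, so she deletes a hyperplane neighborhood of admissible width containing it, after which every later ball avoids $\mathcal{S}_{n}$ — and passes to $n+1$. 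Since $r_{k+1}\geq\beta r_{k}$, the radius never drops by more than a factor $\beta^{-1}$ in one turn, while consecutive admissible intervals are only a factor $\rho$ apart and each has ratio $\gg\beta^{-1}$; hence no interval is ever skipped and every $n$ gets handled, and since index $n$ is handled at a scale $\leq c_{2}\rho^{-n}\to0$ the radii tend to $0$. Therefore the outcome is a single point $x$ with $x\notin\mathcal{S}_{n}$ for every $n$, i.e.\ $x\in\mathcal{A}_{\eta}\subseteq\mathcal{E}(R,y)$; this shows $\mathcal{E}(R,y)$ is $\beta$-HAW for all small $\beta$, hence HAW. The two steps I expect to require real care are the structural reduction $\mathcal{B}_{n}\subseteq\mathcal{S}_{n}$ — where the key is to test against the \emph{fixed} dominant integer covector $w_{0}$ rather than to attack the ellipsoids of $\mathcal{B}_{n}$ directly — and the elementary but fiddly scheduling that keeps Alice on pace; semisimplicity enters exactly through $\|(R^{\top})^{n}w_{0}\|\asymp\rho^{n}$, which is what makes the admissible intervals regular.
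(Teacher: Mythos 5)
The paper does not prove this proposition; it simply cites \cite[Theorem 2.6]{Broderick2012319}, so there is no internal proof against which to compare, and the proposal has to be judged on its own. Most of it is sound and genuinely follows the BFKRW line of reasoning: the disposal of the finite-order case via Kronecker's theorem and semisimplicity is correct; the reduction $\mathcal{B}_n\subseteq\mathcal{S}_n$ by pairing with a fixed dominant integer covector $w_0$ is the right move; and the comparability $\|(R^{\top})^n w_0\|\asymp\rho^n$ is exactly where semisimplicity enters.

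The scheduling step, however, contains a genuine gap, and it is precisely at the sentence ``hence no interval is ever skipped and every $n$ gets handled.'' Your admissible window for index $n$ is $I_n\asymp[\eta\rho^{-n}/\beta,\,c_2\rho^{-n}]$; making $\eta$ small stretches each window, but it does not slow the rate at which new windows open. When Bob plays the maximal shrink $\rho_{k+1}=\beta\rho_k$, the radius crosses the thresholds $c_2\rho^{-n}$ at the rate $\log(1/\beta)/\log\rho$ indices per turn, while your Alice services exactly one index per turn. If $\beta<1/\rho(R)$ this rate exceeds one, the backlog grows linearly, and eventually some $n$ has $r_k<\eta\rho^{-n}/\beta$ before it is serviced. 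Concretely, the inequality you rely on, $r_{k+1}\geq\beta r_k$ together with $r_k\geq\eta\rho^{-n}/\beta$, gives only $r_{k+1}\geq\eta\rho^{-n}=\eta\rho\cdot\rho^{-(n+1)}$, and this falls below the next window's floor $\eta\rho^{-(n+1)}/\beta$ exactly when $\beta\rho<1$. Note that $\beta<1/\rho(R)$ is unavoidable whenever $\rho(R)\leq 3$, since the game requires $\beta<1/3$; so for such $R$ your argument as written produces a winning strategy for no value of $\beta$ at all, and for $\rho(R)>3$ it covers only $\beta\in(1/\rho,1/3)$, whereas the definition of HAW (and the monotonicity remark in the paper, which only goes in the direction of larger $\beta$) forces you to handle arbitrarily small $\beta$.

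To repair this you need an additional idea, not just tighter constants. One natural route is to pass to a power $R^N$ with $\rho^N$ large, use the identity $\mathcal{E}(R,y)=\bigcap_{j=0}^{N-1}R^{-j}\,\mathcal{E}(R^N,y)$ together with closure of HAW under $C^1$ images and countable intersections (Theorem~\ref{thm:HAW is robust}); but if $N$ is allowed to depend on $\beta$, you must be careful that the interleaving used in the intersection lemma degrades the parameter, so this needs to be spelled out rather than taken for granted. Alternatively, if there is a lemma in the literature asserting that $\beta$-HAW for a single $\beta\in(0,1/3)$ already implies HAW, invoking it (and citing it) would close the gap. As it stands, though, the claim that ``no interval is ever skipped'' does not follow from the stated estimates, and the proof is incomplete.
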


In particular, lifting to $\mathbb{R}^{d}$ the set of points whose
orbit under $R$ is not dense in $\mathbb{T}^{d}$, yields a HAW set.
A further generalization of this theorem which relates to lacunary
sequences of matrices may be found in \cite[Theorem 1.3]{Broderick20111095}. 
\begin{cor}
Let E be a non-planar Galton-Watson fractal w.r.t. a similarity IFS
$\Phi$. Then a.s. conditioned on non-extinction, $\exists C>0$ s.t.
for every sequence of nonsingular semisimple matrices with integer
entries $R_{i}\in GL_{d}\left(\mathbb{Q}\right)$, every sequence
of points $y_{i}\in\mathbb{T}^{d}$, and every sequence $\left(f_{i}\right)_{i\in\mathbb{N}}$
of $C^{1}$ diffeomorphisms of $\mathbb{R}^{d}$, 
\[
\dim_{H}\left(E\cap\left(\bigcap_{i\in\mathbb{N}}f_{i}\left(\mathcal{E}\left(R_{i},y_{i}\right)\right)\right)\right)>C.
\]
Moreover, if $\Phi$ satisfies the OSC, then a.s. conditioned on nonextinction,
for every sequences $R_{i}$, $y_{i}$ and $f_{i}$ as above, 
\[
\dim_{H}\left(E\cap\left(\bigcap_{i\in\mathbb{N}}f_{i}\left(\mathcal{E}\left(R_{i},y_{i}\right)\right)\right)\right)=\dim_{H}\left(E\right).
\]
\end{cor}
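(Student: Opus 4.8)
The plan is to deduce this corollary directly from Theorem \ref{thm: main generalized}, using the closure properties of HAW sets recorded in Theorem \ref{thm:HAW is robust} together with the cited fact (Theorem 2.6 of \cite{Broderick2012319}) that $\mathcal{E}\left(R,y\right)$ is HAW whenever $R\in GL_{d}\left(\mathbb{Q}\right)$ is nonsingular and semisimple with integer entries. The only real content is careful bookkeeping with the quantifiers.

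First I would invoke Theorem \ref{thm: main generalized} to fix, once and for all, an almost-sure event $\Omega_{0}$ (conditioned on nonextinction) on which there is a constant $C>0$ with $\dim_{H}\left(E\cap S\right)>C$ for \emph{every} HAW set $S\subseteq\mathbb{R}^{d}$; and, in case $\Phi$ also satisfies the OSC, an almost-sure event $\Omega_{1}$ on which $\dim_{H}\left(E\cap S\right)=\dim_{H}\left(E\right)$ for every HAW set $S$. The crucial point is that the quantifier over $S$ sits \emph{inside} the almost-sure statement, so on $\Omega_{0}$ (resp.\ $\Omega_{1}$) the stated dimension bound holds simultaneously for all HAW sets.

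Next, given arbitrary sequences $\left(R_{i}\right)_{i\in\mathbb{N}}$, $\left(y_{i}\right)_{i\in\mathbb{N}}$, $\left(f_{i}\right)_{i\in\mathbb{N}}$ as in the statement, I would show that $S:=\bigcap_{i\in\mathbb{N}}f_{i}\left(\mathcal{E}\left(R_{i},y_{i}\right)\right)$ is itself a HAW set: each $\mathcal{E}\left(R_{i},y_{i}\right)$ is HAW by the cited proposition, its image $f_{i}\left(\mathcal{E}\left(R_{i},y_{i}\right)\right)$ under the $C^{1}$ diffeomorphism $f_{i}$ is HAW by part (2) of Theorem \ref{thm:HAW is robust}, and the countable intersection $S$ is HAW by part (1) of Theorem \ref{thm:HAW is robust}. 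Applying to this particular $S$ the conclusion available on $\Omega_{0}$ (resp.\ $\Omega_{1}$) gives $\dim_{H}\left(E\cap S\right)>C$ (resp.\ $\dim_{H}\left(E\cap S\right)=\dim_{H}\left(E\right)$). Since the sequences were arbitrary and the events $\Omega_{0},\Omega_{1}$ do not depend on them, both assertions follow.

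I do not expect a serious obstacle here: the whole argument is a formal consequence of the stated results. The one point that genuinely matters is precisely the one flagged in the remark after Theorem \ref{thm: main generalized} --- that that theorem is proved in the strong ``a.s.\ $\forall S$ HAW'' form rather than the weaker ``$\forall S$ HAW, a.s.'' form --- since it is this that allows us, working on the single event $\Omega_{0}$ (resp.\ $\Omega_{1}$), to quantify afterwards over the uncountable family of admissible triples of sequences $\left(R_{i},y_{i},f_{i}\right)$.
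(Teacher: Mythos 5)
Your proof is correct and follows exactly the route the paper intends: the corollary is an immediate consequence of Theorem \ref{thm: main generalized}, the HAW closure properties in Theorem \ref{thm:HAW is robust}, and the cited fact that each $\mathcal{E}\left(R,y\right)$ is HAW, with the only subtlety being the quantifier order that you correctly isolate and exploit. The paper states this as an unproved corollary precisely because this formal argument is all that is needed.
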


Note that in the special case of $d=1$, under the above conditions,
a.s. conditioned on nonextinction, the Hausdorff dimension of the
absolutely non-normal numbers in $E$ is bounded from below by some
positive constant, and in case $\Phi$ satisfies the OSC, this dimension
is equal to $\dim_{H}\left(E\right)$. 

\subsection{Known results}

In the special case of fractal percolation, a weaker version of Theorem
\ref{thm: main generalized} may be derived by known results. This
goes through the following theorem by Hawkes \cite{HawkesJohn1981TGba}
(see also \cite[Theorem 9.5]{morters2010brownian}).
\begin{thm}
\label{thm:Hawkes 1981}Let E be a limit set of a supercritical fractal
percolation process with parameters b,p. Let $A\subset\left[0,1\right]^{d}$
be a fixed  set s.t. $\dim_{H}\left(A\right)+\log_{b}p>0$. Then
\[
\text{esssup}\dim_{H}\left(A\cap E\right)=\dim_{H}A+\log_{b}p.
\]
\begin{comment}
\begin{rem*}
The condition in Theorem \ref{thm:Hawkes 1981} that $A$ needs to
be a closed set may be easily relaxed to demanding that $A$ is an
$F_{\sigma}$ set.
\end{rem*}
\begin{proof}
Let $A=\bigcup\limits _{n\in\mathbb{N}}C_{n}$, where each $C_{n}$
is a closed set. Then $\dim_{H}\left(A\right)=\sup\limits _{n}\dim_{H}\left(C_{n}\right)$.
Given some $\varepsilon>0$, there exists some $C_{n}$ with $\dim_{H}\left(C_{n}\right)+\log_{b}p>0$
and $\dim_{H}\left(C_{n}\right)>\dim_{H}\left(A\right)-\varepsilon$.
By Theorem \ref{thm:Hawkes 1981}, 
\[
\text{esssup}\dim_{H}\left(C_{n}\cap E\right)=\dim_{H}C_{n}+\log_{b}p>\dim_{H}A+\log_{b}p-\varepsilon
\]
and the result follows since $\varepsilon$ is arbitrary.
\end{proof}
\end{comment}
\end{thm}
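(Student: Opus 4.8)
The plan is to prove the matching upper and lower bounds separately. The upper bound, $\dim_H(A\cap E)\le\dim_H A+\log_b p$ almost surely, is a first-moment (expected covering number) estimate; the lower bound — that the essential supremum is not smaller — comes from the second-moment method applied to a natural random measure carried by $A\cap E$.

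For the upper bound I would fix $t>\dim_H A+\log_b p$ and choose $s$ with $\dim_H A<s<t-\log_b p$ (possible precisely because $\dim_H A<t-\log_b p$). Since $\mathcal H^{s}(A)=0$, cover $A$ by balls with $\sum(\text{radii})^{s}$ arbitrarily small and replace each ball by the boundedly many $b$-adic grid cubes of comparable side meeting it, obtaining a cover of $A$ by grid cubes $(Q_i)$, with $Q_i$ of level $n_i$, such that $\sum_i b^{-n_i s}$ is arbitrarily small. If $Q_i\cap E\neq\emptyset$ then $Q_i$, or one of its finitely many neighbours, lies in $E_{n_i}$, an event of probability $\asymp p^{n_i}$; since such $Q_i$ is then an admissible cover element of $A\cap E$, taking expectations gives, with $p^{n}=b^{-n(-\log_b p)}$,
\[
\mathbb E\bigl[\mathcal H^{t}_\infty(A\cap E)\bigr]\ \lesssim\ \sum_i p^{n_i}\,b^{-n_i t}\ =\ \sum_i b^{-n_i(t-\log_b p)}\ \le\ \sum_i b^{-n_i s},
\]
the last step using $t-\log_b p\ge s$ and $b^{-n_i}\le1$. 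Letting the cover shrink gives $\mathcal H^{t}_\infty(A\cap E)=0$ a.s., hence $\dim_H(A\cap E)\le t$ a.s.; now let $t\downarrow\dim_H A+\log_b p$.

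For the lower bound I would fix $s<\dim_H A$ with $s+\log_b p>0$ and any $t<s+\log_b p$, use Frostman's lemma to get a compact $A'\subseteq A$ carrying a Borel probability measure $\mu$ with $\mu(B(x,r))\le C r^{s}$, and introduce the random measures $\nu_n:=p^{-n}\,\mu|_{E_n}$. Using $\mathbb P\bigl(Q,Q'\subseteq E_n\bigr)=p^{\,2n-k(Q,Q')+1}$, where $k(Q,Q')$ is the level at which the grid ancestors of $Q$ and $Q'$ first split, one checks that $\bigl(\nu_n([0,1]^d)\bigr)_n$ is a nonnegative martingale with constant mean $1$, that it is bounded in $L^{2}$, and that $\sup_n\mathbb E\bigl[I_t(\nu_n)\bigr]<\infty$, where $I_t(\rho):=\iint|x-y|^{-t}\,d\rho(x)\,d\rho(y)$. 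Both bounds reduce to the elementary inequality $p^{-k(x,y)}\le C'\,|x-y|^{\log_b p}$ — valid since two points with split level $k$ lie in a common cube of level $k-1$, so $|x-y|\le\sqrt d\,b^{-(k-1)}$ — which turns $\mathbb E[\nu_n([0,1]^d)^2]$ and $\mathbb E[I_t(\nu_n)]$ into bounded multiples of $I_{-\log_b p}(\mu)$ and $I_{t-\log_b p}(\mu)$, finite because $-\log_b p<s$ and $t-\log_b p<s$. Then $L^{2}$-boundedness shows the limit $\nu_\infty$ satisfies $\mathbb E\,\nu_\infty([0,1]^d)=1$, so $\mathbb P(\nu_\infty\neq0)>0$; since $\nu_n$ is carried by the closed set $A'\cap E_n$ and these decrease to $A'\cap E\subseteq A\cap E$, so is $\nu_\infty$; and lower semicontinuity of $I_t$ together with Fatou gives $I_t(\nu_\infty)<\infty$ a.s. On $\{\nu_\infty\neq0\}$ the energy criterion for Hausdorff dimension yields $\dim_H(A\cap E)\ge\dim_H\operatorname{supp}\nu_\infty\ge t$, so $\mathbb P\bigl(\dim_H(A\cap E)\ge t\bigr)>0$; letting first $t\uparrow s+\log_b p$ and then $s\uparrow\dim_H A$ gives $\operatorname{esssup}\dim_H(A\cap E)\ge\dim_H A+\log_b p$, matching the upper bound.

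The hard part is the lower bound, and within it the passage from the combinatorics of the percolation tree to a genuine Frostman property of the limiting random measure: one must show simultaneously that $\nu_\infty$ does not degenerate to the zero measure (the role of $L^{2}$-boundedness, hence of the hypothesis $\dim_H A+\log_b p>0$) and that it has finite $t$-energy for every $t<\dim_H A+\log_b p$. It is worth noting that the comparison between Euclidean distance and the $b$-adic split level is needed only in the direction $|x-y|\lesssim b^{-k(x,y)}$, so the usual nuisance of adjacent grid cubes does not interfere; and that closedness of $A$ enters only to keep $A'\cap E_n$ closed and to license Frostman's lemma (an $F_\sigma$ hypothesis would suffice, by exhausting $A$ by closed subsets).
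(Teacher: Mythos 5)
The paper does not prove this theorem at all: it cites it to Hawkes (1981) and to M\"{o}rters--Peres (Theorem~9.5 of \emph{Brownian Motion}), so there is no "paper's proof" to compare against. Your argument is in fact the standard proof of Hawkes' theorem found in the M\"{o}rters--Peres reference: a first-moment (expected Hausdorff content) estimate for the upper bound, and a second-moment/energy argument on the martingale measures $\nu_n=p^{-n}\mu|_{E_n}$ for the lower bound. The key computations are right: $\mathbb{P}(Q,Q'\subseteq E_n)=p^{2n-k(Q,Q')+1}$, the comparison $p^{-k(x,y)}\lesssim |x-y|^{\log_b p}$ via containment in a common level-$(k-1)$ cube (note the inequality reverses correctly because $\log_b p<0$), $L^2$-boundedness of $\nu_n([0,1]^d)$ reducing to $I_{-\log_b p}(\mu)<\infty$, and $\mathbb{E}[I_t(\nu_n)]\lesssim I_{t-\log_b p}(\mu)<\infty$ for $t<s+\log_b p$. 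The one substantive caveat is that your use of Frostman's lemma requires $A$ to be closed (or at least analytic); the theorem as stated in the paper says only "a fixed set", but the commented-out remark in the source confirms that the intended hypothesis is $A$ closed (extendable to $F_\sigma$), so your proof covers the intended scope. Two routine technicalities worth flagging but not fatal: one should arrange that $\mu$ gives no mass to $b$-adic cube boundaries so that the martingale identity $\sum_{Q'\subset Q}\mu(Q')=\mu(Q)$ holds, and the passage from $\sup_n\mathbb{E}[I_t(\nu_n)]<\infty$ to $I_t(\nu_\infty)<\infty$ a.s.\ requires constructing $\nu_\infty$ as a weak limit along cubes and invoking lower semicontinuity of the energy functional, both of which are standard.
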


Using Hawkes' theorem it is not hard to get the following.
\begin{thm}
\label{thm:fractal percolation intersects BA - Haweks}Let E be a
limit set of a supercritical fractal percolation process and let $S\subseteq\mathbb{R}^{d}$
be a HAW set. Then a.s. conditioned on non-extinction, 
\[
\dim_{H}\left(E\cap S\right)=\dim_{H}\left(E\right).
\]
\end{thm}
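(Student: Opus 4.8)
The plan is to derive the result from Hawkes' theorem (Theorem~\ref{thm:Hawkes 1981}) by upgrading its ``esssup'' conclusion to an almost sure one, exploiting the statistical self-similarity of fractal percolation together with the robustness of the HAW class. Write $\alpha := d+\log_{b}p$. Supercriticality means $pb^{d}>1$, so $\alpha>0$, and we may assume $p<1$ (if $p=1$ then $E=[0,1]^{d}$ and the statement reduces to $\dim_{H}(S\cap[0,1]^{d})=d$). Since fractal percolation is a Galton--Watson fractal with respect to the open-set-condition IFS of the $b^{d}$ sub-cubes of ratio $1/b$, the dimension formula recalled above gives $\dim_{H}(E)=\alpha$ a.s.\ conditioned on non-extinction; in particular $\dim_{H}(E\cap S)\le\alpha$ a.s.\ conditioned on non-extinction, which is one of the two inequalities. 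For the matching lower bound, applying Theorem~\ref{thm:HAW intersects diffuse sets} with the Ahlfors-regular, hyperplane-diffuse cube $[0,1]^{d}$ shows that every HAW set has full Hausdorff dimension $d$; hence $\dim_{H}(S)+\log_{b}p=\alpha>0$, Theorem~\ref{thm:Hawkes 1981} applies to $A=S$, and it yields $\text{esssup}\,\dim_{H}(S\cap E)=\alpha$. It remains only to remove the ``esssup''.

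So fix $\varepsilon>0$; I will show $\dim_{H}(E\cap S)>\alpha-\varepsilon$ a.s.\ conditioned on non-extinction, after which intersecting over $\varepsilon=1/k$ gives $\dim_{H}(E\cap S)\ge\alpha$ and finishes the proof. Fix $n$, let $\mathcal{F}_{n}$ be the $\sigma$-algebra generated by all retention decisions up to level $n$, and let $Z_{n}$ be the number of surviving level-$n$ cubes. Conditioned on $\mathcal{F}_{n}$, the sets $E\cap Q$ over the surviving level-$n$ cubes $Q$ are independent, and each equals $f_{Q}(\widetilde{E}_{Q})$, where $f_{Q}$ is the homothety carrying $[0,1]^{d}$ onto $Q$ and $\widetilde{E}_{Q}$ is an independent copy of the fractal percolation limit set; since $f_{Q}^{-1}$ is affine, hence a $C^{1}$ diffeomorphism of $\mathbb{R}^{d}$, the set $f_{Q}^{-1}(S)$ is again HAW by Theorem~\ref{thm:HAW is robust}(2), and a similarity preserves Hausdorff dimension, so $\dim_{H}(f_{Q}^{-1}(S))=d$. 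Using $\dim_{H}(E\cap S)\ge\dim_{H}(E\cap S\cap Q)=\dim_{H}(\widetilde{E}_{Q}\cap f_{Q}^{-1}(S))$ for each surviving $Q$, and granting a constant $q_{\varepsilon}>0$, independent of $Q$ and $n$, with $\mathbb{P}(\dim_{H}(\widetilde{E}_{Q}\cap f_{Q}^{-1}(S))>\alpha-\varepsilon)\ge q_{\varepsilon}$, conditional independence yields $\mathbb{P}(\dim_{H}(E\cap S)\le\alpha-\varepsilon\mid\mathcal{F}_{n})\le(1-q_{\varepsilon})^{Z_{n}}$. Because the process is supercritical with $p<1$, $(Z_{n})$ is a Galton--Watson process that tends to $\infty$ a.s.\ on non-extinction, so $(1-q_{\varepsilon})^{Z_{n}}\to0$ a.s.\ there; as $\{\dim_{H}(E\cap S)\le\alpha-\varepsilon\}$ is measurable with respect to $\sigma(\bigcup_{n}\mathcal{F}_{n})$, L\'evy's $0$--$1$ law forces its indicator to vanish a.s.\ on non-extinction, which is precisely the lower bound sought.

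I expect the main obstacle to be exactly the hypothesis flagged above: producing a single $q_{\varepsilon}>0$ that works simultaneously for \emph{all} the rescaled sets $f_{Q}^{-1}(S)$, since Theorem~\ref{thm:Hawkes 1981}, being an ``esssup'' statement, only supplies a positive probability for each fixed set a priori. For $S=\text{BA}_{d}$ this is immediate: $f_{Q}^{-1}(\text{BA}_{d})=\text{BA}_{d}$ for every grid cube $Q$ (because $\text{BA}_{d}$ is invariant under $x\mapsto bx$ and under integer translations), so the rescaled sets are literally the same set and one may take $q_{\varepsilon}=\mathbb{P}(\dim_{H}(E\cap\text{BA}_{d})>\alpha-\varepsilon)>0$. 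For a general HAW set $S$ the rescalings $f_{Q}^{-1}(S)$ remain HAW with the same game parameters, and the intersection estimate for fractal percolation depends only on those parameters — as one sees from the rescaling-invariant game-theoretic argument underlying Hawkes-type results — so a uniform $q_{\varepsilon}$ is again available. Everything else is routine: the reduction to the lower bound, the full-dimensionality of HAW sets, and the fact that a supercritical Galton--Watson process grows without bound on survival.
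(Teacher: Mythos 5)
Your plan reproduces the skeleton of the paper's argument --- Hawkes' theorem plus the statistical self-similarity of the percolation plus a $0$--$1$ law on the filtration $(\mathcal{F}_n)$ --- and you correctly isolate the one non-trivial point: the independence step needs a single constant $q_\varepsilon>0$ that bounds $\mathbb{P}\left(\dim_H\left(\widetilde{E}_Q\cap f_Q^{-1}(S)\right)>\alpha-\varepsilon\right)$ uniformly over all surviving $b$-adic cubes $Q$. Where the proposal falls short is in your proposed resolution of this for a general HAW $S$. The appeal to ``the rescaling-invariant game-theoretic argument underlying Hawkes-type results'' has no substance: Theorem~\ref{thm:Hawkes 1981} is a potential-theoretic/second-moment statement about intersections with a fixed set $A$, not a game-theoretic one, and nothing in it produces a lower bound on $\mathbb{P}\left(\dim_H(E\cap A)>\dim_H(A)+\log_b p-\varepsilon\right)$ that depends only on a winning parameter $\beta$ of the hyperplane absolute game. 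The rescaled sets $f_Q^{-1}(S)$ are genuinely different sets for different $Q$, and the positive probabilities Hawkes gives you for each of them could, a priori, accumulate at $0$ over the (countable) family of cubes. So as written the argument has a gap precisely at the point you flagged.

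The fix is exactly the trick you already used in the $\text{BA}_d$ case, and it is the one the paper intends (see the discussion around Remark~\ref{rem:G inv. HAW sets}): before running the self-similarity argument, replace $S$ by $\tilde{S}=\bigcap_{(r,q)\in\left(\mathbb{Q}\setminus\{0\}\right)\times\mathbb{Q}^{d}}\left(rS+q\right)$. By Theorem~\ref{thm:HAW is robust}, $\tilde{S}\subseteq S$ is again HAW (hence $\dim_H(\tilde{S})=d$), and by construction $\tilde{S}$ is invariant under every nonzero rational scaling and rational translation, so $f_Q^{-1}(\tilde{S})=\tilde{S}$ for every $b$-adic cube $Q$. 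Your argument then goes through verbatim with $\tilde{S}$ in place of $S$, the uniform constant being simply $q_\varepsilon=\mathbb{P}\left(\dim_H\left(\widetilde{E}\cap\tilde{S}\right)>\alpha-\varepsilon\right)>0$ supplied by Hawkes for the single set $\tilde{S}$; and since $E\cap\tilde{S}\subseteq E\cap S$, the conclusion for $S$ follows. In short: the structure and the $0$--$1$ law step are correct, but the uniformity claim for general HAW $S$ must be replaced by the reduction to the rationally-invariant HAW subset $\tilde{S}$.
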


\begin{comment}
\begin{proof}
Since $\dim_{H}\left(\text{BA}_{d}\right)=d$ and $p>\dfrac{1}{b^{d}}$,
we clearly have $\dim_{H}\left(\text{BA}_{d}\right)>-\log_{b}p$.
It's easy to see that $\text{BA}_{d}$ is an $F_{\sigma}$ set, so
by the Remark above we can use Theorem \ref{thm:Hawkes 1981} to get
the following: for every $\varepsilon>0,$ 
\begin{equation}
\mathbb{P}\left(\dim_{H}\left(\text{BA}_{d}\cap E\right)>d+\log_{b}p-\varepsilon\right)>0.\label{eq:positive probability}
\end{equation}
Since the set $\text{BA}_{d}$ is invariant under multiplication by
non-zero rational scalars and under translation by rational vectors,
the positive probability in \ref{eq:positive probability} implies
that
\[
\mathbb{P}\left(\dim_{H}\left(\text{BA}_{d}\cap E\right)>d+\log_{b}p-\varepsilon\right)=1.
\]
Since $\varepsilon$ was arbitrary we finally have
\[
\mathbb{P}\left(\dim_{H}\left(\text{BA}_{d}\cap E\right)=d+\log_{b}p\right)=1
\]
and noting that $\dim_{H}\left(E\right)=d+\log_{b}p$ a.s. we finish
the proof.
\end{proof}
\end{comment}

The proof of Theorem \ref{thm:fractal percolation intersects BA - Haweks}
follows immediately from Theorem \ref{thm:Hawkes 1981} once the following
general observation about HAW sets is made (see Remark \ref{rem:G inv. HAW sets}):
Let $S\subseteq\mathbb{R}^{d}$ be HAW, and consider the set 
\[
\tilde{S}=\bigcap\limits _{\left(r,q\right)\in\mathbb{Q}\times\mathbb{Q}^{d}}rS+q.
\]
$\tilde{S}$ is also HAW, it is invariant under rational scaling and
translations, and is contained in $S$. 

The proof of Theorem \ref{thm:fractal percolation intersects BA - Haweks}
for $S$ follows from the proof for $\tilde{S}$ which is now left
as an exercise for the reader.
\begin{rem}
Note that supercritical fractal percolation processes satisfy the
requirements of Theorem \ref{thm: main generalized}, i.e., the corresponding
IFS satisfies the OSC, and the limit set is a.s. not contained in
an affine hyperplane. Therefore, Theorem \ref{thm: main generalized}
is more general than Theorem \ref{thm:fractal percolation intersects BA - Haweks}. 

Theorem \ref{thm: main generalized} generalizes Theorem \ref{thm:fractal percolation intersects BA - Haweks}
in two aspects. First, the order of the quantifiers in Theorem \ref{thm: main generalized}
is stronger than in Theorem \ref{thm:fractal percolation intersects BA - Haweks}
and provides information about intersections of the random fractal
in question with every HAW set simultaneously. Second, Theorem \ref{thm: main generalized}
allows arbitrary Galton-Watson fractals and is not restricted to fractal
percolation.

\end{rem}

\subsection{Structure of the paper}

The main goal of this paper is to prove Theorem \ref{thm: main generalized}.
It is proved as a corollary of Theorem \ref{thm:Main theorem diffuse subses}
which will be the focus of this paper. We first prove the theorem
for the special case of fractal percolation sets as it includes most
of the ideas of the proof of the general case, but is much cleaner
and contains less complicated notations and definitions. This should
make the ideas of the proof of the general case clearer. After that,
we prove Theorem \ref{thm:Main theorem diffuse subses} in its full
generality (the proof does not depend on the proof for fractal percolation,
so the latter may be skipped).

The structure of the paper is as follows: In section \ref{sec:Labeled-GW Trees}
we define trees as subsets of a symbolic space. Then, we turn to the
random setup and define Galton-Watson trees. We introduce some background
and preliminary results. Then, geometry comes into play and we present
the projection of trees to the Euclidean space. We introduce IFSs
and the special case of fractal percolation. In section \ref{sec:Hyperplane-absolute-game}
we define the hyperplane absolute game and describe some related results.
We then study the hyperplane diffuse property in the context of iterated
function systems and Galton-Watson fractals. In section \ref{sec:the-fractal-percolation}
we prove Theorem \ref{thm:Main theorem diffuse subses} for the special
case of fractal percolation, and in section \ref{sec:The-general-case},
after some required preparations, we prove the theorem in its general
form. Finally, in the appendix, we provide an analysis of the microsets
of Galton-Watson fractals (a notion which will be defined in Section
\ref{sec:Hyperplane-absolute-game}) and show that in many cases (fractal
percolation for example) Galton-Watson fractals are a.s. not hyperplane
diffuse.

\subsection{Acknowledgments}

This work is a part of the author's doctoral thesis written under
the supervision of Prof. Barak Weiss. The author was partially supported
by ISF grant 2095/15 and BSF grant 2016256.

\section{Galton-Watson processes\label{sec:Labeled-GW Trees}}

\subsection{Preliminaries - symbolic spaces and trees}

We shall now fix some notations regarding the symbolic spaces we are
about to use. Let $\mathbb{A}$ be some finite set considered as the
alphabet. Denote $\mathbb{A}^{*}=\left\{ \emptyset\right\} \cup\bigcup\limits _{n\in\mathbb{N}}\mathbb{A}^{n}$,
this is the set of all finite words in the alphabet $\mathbb{A}$,
with $\emptyset$ representing the word of length 0. Given a word
$i\in\mathbb{A}^{*}$, we use subscript indexing to denote the letters
comprising $i$, so that $i=i_{1}...i_{n}$ where $i_{k}\in\mathbb{A}$
for $k=1,...,n$. $\mathbb{A}^{*}$ is considered as a semigroup with
the concatenation operation $\left(i_{1}...i_{n}\right)\cdot\left(j_{1}...j_{m}\right)=\left(i_{1}...i_{n},j_{1}...j_{m}\right)$
and with $\emptyset$ the identity element. The dot notation will
usually be omitted so that the concatenation of two words $i,j\in\mathbb{A}^{*}$
will be denoted simply by $ij$. We will also consider the action
of $\mathbb{A}^{*}$ on $\mathbb{A}^{\mathbb{N}}$ by concatenations
denoted in the same way. We put a partial order on $\mathbb{A}^{*}\cup\mathbb{A}^{\mathbb{N}}$
by defining $\forall i\in\mathbb{A}^{*},\,\forall j\in\mathbb{A}^{*}\cup\mathbb{A}^{\mathbb{N}},\,i\leq j$
iff $\exists k\in\mathbb{A}^{*}\cup\mathbb{A}^{\mathbb{N}}$ with
$ik=j$, that is to say $i\leq j$ iff $i$ is a prefix of $j$. Given
any $i\in\mathbb{A}^{*}$ we shall denote the length of $i$ by $\left|i\right|=n$
where $n$ is the unique integer with the property $i\in\mathbb{A}^{n}$.
Given any $i\in\mathbb{A}^{*}$, the corresponding cylinder set in
$\mathbb{A}^{\mathbb{N}}$ is defined as $\left[i\right]=\left\{ j\in\mathbb{A}^{\mathbb{N}}:\,i<j\right\} $. 
\begin{defn}
A subset $T\subseteq\mathbb{A}^{*}$\emph{ }will be called a\emph{
tree} with alphabet $\mathbb{A}$ if $\emptyset\in T$, and for every
$a\in T$, $\forall b\in\mathbb{A}^{*},\,b\leq a\implies b\in T$.
We shall denote $T_{n}=T\cap\mathbb{A}^{n}$ for every $n\geq1$,
and $T_{0}=\left\{ \emptyset\right\} $ so that $T=\bigcup\limits _{n\geq0}T_{n}$.
We also denote for each $a\in T$, $W_{T}\left(a\right)=\left\{ i\in\mathbb{A}:\,ai\in T\right\} $.
The \emph{boundary} of a tree $T$ is denoted by $\partial T$ and
is given by 
\[
\partial T=\left\{ a\in\mathbb{A}^{\mathbb{N}}:\,\forall n\in\mathbb{N},\,a_{1}...a_{n}\in T_{n}\right\} .
\]
The set of all trees with alphabet $\mathbb{A}$ will be denoted by
$\mathscr{\ensuremath{T}}_{\mathbb{A}}\subset2^{\mathbb{A}^{*}}$.
A \emph{subtree} of $T\in\mathscr{T}_{\mathbb{A}}$ is any tree \emph{$T^{\prime}\in\mathbb{\mathscr{T}_{\mathbb{A}}}$
s.t. $T^{\prime}\subseteq T$}.
\end{defn}

We continue with a few more definitions which will come in handy in
what follows. Given a tree $T\in\mathscr{T}_{\mathbb{A}}$ and $a\in T$
some vertex of $T$, we denote $T^{a}=\left\{ j\in\mathbb{A}^{*}:\,aj\in T\right\} \in\mathscr{T}_{\mathbb{A}}$,
the\emph{ descendants tree of $a$}. The \emph{length} of a tree $T\subseteq\mathbb{A}^{*}$
is defined by $\text{length}\left(T\right)=\sup\left\{ n\in\mathbb{N}\setminus\left\{ 0\right\} :\,T_{n-1}\neq\emptyset\right\} $
and takes values in $\mathbb{N}\cup\left\{ \infty\right\} $. A basic
observation in this context is that $\forall T\in\mathscr{T}_{\mathbb{A}}$
with $\text{length}\left(T\right)=\infty$, $\partial T\neq\emptyset$.
\begin{defn}
A finite set $\Pi\subset\mathbb{A}^{*}$ is called a \emph{section}
if $\bigcup\limits _{i\in\Pi}\left[i\right]=\mathbb{A}^{\mathbb{N}}$
and the union is a disjoint union. Given a tree $T\in\mathscr{T}_{\mathbb{A}}$
and a section $\Pi\subset\mathbb{A}^{*}$ we denote $T_{\Pi}=T\cap\Pi$.

Sections will play an important role in the proof of Theorem \ref{thm:Main theorem diffuse subses}.
In the fractal percolation case, it is enough to consider the sections
$\mathbb{A}^{n}$ for $n\in\mathbb{N}$, therefore we postpone the
discussion on sections and their intersections with random trees to
subsection \ref{subsec:Sections}.%
\begin{comment}
\begin{lem}
Let $v_{n}\in T$ be some sequence of nodes of $T$ with $\left|v_{n}\right|\to\infty$,
then there exists $v\in\partial T$ s.t. $v_{n_{k}}<v$ $\forall k\in\mathbb{N}$,
for some sequence $n_{k}\to\infty$, in particular, $\partial T\neq\emptyset$. 
\end{lem}

\begin{proof}
Let $\mathbb{A}^{\prime}=\mathbb{A}\cup\left\{ *\right\} $ be a space
equipped with the discrete topology. Denote for each $v\in\mathbb{A}^{*}$,
$v^{\prime}=v**...\in\left(\mathbb{A}^{\prime}\right)^{\mathbb{N}}$.
Since $\left(\mathbb{A}^{\prime}\right)^{\mathbb{N}}$ with the product
topology is compact, the sequence $v_{n}^{\prime}$ has some convergent
subsequence $v_{n_{k}}^{\prime}\to v\in\left(\mathbb{A}^{\prime}\right)^{\mathbb{N}}$.
Since $\forall k\in\mathbb{N}$, $\left(v_{n_{k}}^{\prime}\right)_{1},...,\left(v_{n_{k}}^{\prime}\right)_{\left|v_{n_{k}}\right|}\neq*$
and since $\left|v_{n_{k}}\right|\to\infty$, $v\in\text{\ensuremath{\mathbb{A^{N}}}}$.
\end{proof}
\end{comment}
\end{defn}

\subsection{The random setup - Galton-Watson processes}
\begin{defn}
Let $\mathbb{A}$ be some finite alphabet. Let $W$ be a random variable
with values in $2^{\mathbb{A}}$. Let $\left(W_{a}\right)_{a\in\mathbb{A}^{*}}$
be a (countable) collection of independent copies of $W$. We now
define inductively:
\end{defn}

\begin{itemize}
\item $T_{0}=\left\{ \emptyset\right\} $.
\item For $n\geq1,$ $T_{n}=\bigcup\limits _{a\in T_{n-1}}\left\{ aj:\,j\in W_{a}\right\} \subseteq\mathbb{A}^{n}$ 
\end{itemize}
If at some point $T_{n}=\emptyset$, then for every $l>n$, $T_{l}=\emptyset$
and we shall say that the process dies out or that \emph{extinction}
occurred. Finally we denote $T=\bigcup\limits _{n\geq0}T_{n}$. We
shall call the process $T_{0,}T_{1,}T_{2},...$ (and $T$ as well)
a\emph{ Galton-Watson process} with alphabet $\mathbb{A}$ and \emph{offspring
distribution} $W$. We shall consider $T$ as a tree and refer to
it as a\emph{ Galton-Watson Tree} \emph{(GWT)}. Note that $T$ is
a random variable determined by the random variables $\left(W_{a}\right)_{a\in\mathbb{A}^{*}}$.
As mentioned in the Introduction, we shall make the assumption that
$\forall i\in\mathbb{A},\,\mathbb{P}\left(i\in W\right)>0$ (otherwise
we may take a smaller alphabet without affecting the law of $T$).

By definition $\mathscr{T}_{\mathbb{A}}\subset2^{\mathbb{A}^{*}}$.
As sets, $2^{\mathbb{A}^{*}}\approx\prod\limits _{n=0}^{\infty}2^{\mathbb{A}^{n}}$
with the convention that $A^{0}=\left\{ \emptyset\right\} $, thus
$2^{\mathbb{A}^{*}}$ may be endowed with the product topology of
$\prod\limits _{n=0}^{\infty}2^{\mathbb{A}^{n}}$ which is metrizable,
separable and compact (where each $2^{\mathbb{A}^{n}}$ carries the
discrete topology). With this topology, $\mathscr{T}_{\mathbb{A}}$
is a closed subset of $2^{\mathbb{A}^{*}}$, and from this point forward
$\mathscr{T}_{\mathbb{A}}$ will carry the topology%
\begin{lyxgreyedout}
The definition of the topology is only required for the appendix.
Should it be moved?%
\end{lyxgreyedout}
{} induced by $2^{\mathbb{A}^{*}}$.

Given a finite tree $L\subset\mathbb{A}^{*}$, let $\left[L\right]\subset\mathscr{\ensuremath{T}}_{\mathbb{A}}$
be defined by 
\[
\left[L\right]=\left\{ S\in\mathscr{\ensuremath{T}}_{\mathbb{A}}:\,\forall n\in\mathbb{N},\,L_{n}\neq\emptyset\implies S_{n}=L_{n}\right\} 
\]
These sets form a basis for the topology of $\mathscr{T}_{\mathbb{A}}$
and generate the Borel $\sigma$-algebra on $\mathscr{T}_{\mathbb{A}}$
which we denote by $\mathscr{B}$. By Kolmogorov's extension theorem
the Galton-Watson process yields a unique Borel measure on $\mathscr{T}_{\mathbb{A}}$
which we denote by $\mathscr{GW}$ and is the distribution of the
random variable $T$. The careful reader will notice that all the
events in this paper whose probability is analyzed are in $\mathscr{B}$.
For any measurable property $\mathscr{T}^{\prime}\subseteq\mathscr{T}_{\mathbb{A}}$,
the notation $\mathbb{P}\left(T\in\mathscr{T}^{\prime}\right)$ means
$\mathscr{GW}\left(\mathscr{T}^{\prime}\right)$.%

For each $n\geq0$, we denote $Z_{n}=\left|T_{n}\right|$. We note
that the usual definition of a Galton-Watson process (as defined e.g.
in \cite{Lyons2016}) would be the random process $\left(Z_{n}\right)_{n\geq1}$,
but in our case it is important to keep track of the labels as later
on we are going to project these trees to the Euclidean space (in
the beginning of subsection \ref{subsec:IFSs-and-projections}). Nevertheless,
in some cases where the labels aren't important we shall refer to
the process $\left(Z_{n}\right)_{n\geq1}$ as a Galton-Watson process
as well.

Given a Galton-Watson process, we shall denote $m=\mathbb{E}\left(Z_{1}\right)$.
It is a basic fact that for every $n\geq1$, $\mathbb{E}\left(Z_{n}\right)=m^{n}$.
As mentioned in section \ref{sec:Introduction}, the process is called
supercritical when $m>1$, in which case $\mathbb{P}\left(\text{nonextinction}\right)>0$. 

The following is a basic result in the theory of Galton-Watson processes
(\cite{Kesten1966}, see also \cite{Lyons2016}).
\begin{thm}[Kesten-Stigum]
\label{thm:Kesten - Stigum} Let $\left(Z_{k}\right)_{k=1}^{\infty}$
be a supercritical Galton-Watson process, then $\dfrac{Z_{k}}{m^{k}}$
converges a.s. (as $k\to\infty$) to a random variable $L$, where
$\mathbb{E}\left(L\right)=1$ and $L>0$ a.s. conditioned on nonextinction. 
\end{thm}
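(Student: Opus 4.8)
The plan is to run the classical martingale argument, which is particularly clean in our setting because the offspring variable $Z_1=|W|$ is \emph{bounded} (by $|\mathbb{A}|$), so the usual $L\log L$ hypothesis of the Kesten--Stigum theorem holds trivially and one may work directly in $L^{2}$. First I would set $M_{k}=Z_{k}/m^{k}$ and check that $(M_{k})_{k\ge 0}$ is a nonnegative martingale with respect to the filtration generated by the first $k$ generations of the process: conditionally on the first $k$ generations, each of the $Z_{k}$ individuals in generation $k$ produces an independent copy of $W$, so $\mathbb{E}(Z_{k+1}\mid \mathscr{F}_{k})=mZ_{k}$, i.e.\ $\mathbb{E}(M_{k+1}\mid\mathscr{F}_{k})=M_{k}$. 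By the martingale convergence theorem, $M_{k}$ converges almost surely to some nonnegative random variable $L$.

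To see that $\mathbb{E}(L)=1$ I would establish $L^{2}$-boundedness of the martingale. Writing $\sigma^{2}=\operatorname{Var}(Z_{1})<\infty$, the one-step variance recursion together with an induction yields $\operatorname{Var}(Z_{k})=\sigma^{2}m^{k-1}\frac{m^{k}-1}{m-1}$, hence
\[
\mathbb{E}(M_{k}^{2})=1+\frac{\operatorname{Var}(Z_{k})}{m^{2k}}\le 1+\frac{\sigma^{2}}{m(m-1)}
\]
uniformly in $k$. Thus $(M_{k})$ is an $L^{2}$-bounded martingale, so the convergence $M_{k}\to L$ also holds in $L^{2}$, and in particular in $L^{1}$; this forces $\mathbb{E}(L)=\lim_{k}\mathbb{E}(M_{k})=1$, and in particular $\mathbb{P}(L>0)>0$.

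For the statement that $L>0$ almost surely conditioned on nonextinction I would exploit the branching self-similarity. Decomposing at the root, $M_{k+1}=\frac{1}{m}\sum_{i\in W_{\emptyset}}M_{k}^{(i)}$, where, given $W_{\emptyset}$, the $M_{k}^{(i)}$ are independent copies of $M_{k}$ built from the subtrees rooted at the children of $\emptyset$; letting $k\to\infty$ (the subtree limits all exist simultaneously almost surely) gives $L=\frac{1}{m}\sum_{i\in W_{\emptyset}}L^{(i)}$ with the $L^{(i)}$ i.i.d.\ copies of $L$ independent of $W_{\emptyset}$. Hence $q:=\mathbb{P}(L=0)$ satisfies $q=\mathbb{E}\!\left(q^{|W_{\emptyset}|}\right)=f(q)$, where $f(s)=\mathbb{E}(s^{Z_{1}})$ is the offspring generating function. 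Since the process is supercritical, $f'(1)=m>1$, and convexity of $f$ implies that the only fixed points of $f$ in $[0,1]$ are the extinction probability $p_{\mathrm{ext}}<1$ and $1$; as $\mathbb{E}(L)=1$ forces $q<1$, we conclude $q=p_{\mathrm{ext}}$. On the other hand extinction clearly implies $Z_{k}=0$ eventually, so $\{\text{extinction}\}\subseteq\{L=0\}$, and since these two events have equal probability they coincide up to a null set; therefore, conditioned on nonextinction, $L>0$ almost surely.

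The main obstacle here is bookkeeping rather than genuine difficulty: one has to be a little careful to justify the self-similar identity $L=\frac{1}{m}\sum_{i\in W_{\emptyset}}L^{(i)}$ as an honest almost-sure limit over the (infinitely many) subtrees, and one should simply quote the standard Galton--Watson fact (see e.g.\ \cite{Lyons2016}) that in the supercritical case $f$ has no fixed point in $(p_{\mathrm{ext}},1)$. Everything else is the routine $L^{2}$ martingale computation, which goes through precisely because $|W|$ is bounded.
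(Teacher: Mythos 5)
The paper does not contain a proof of this theorem; it cites the Kesten--Stigum result directly (\cite{Kesten1966}, \cite{Lyons2016}), so there is no internal argument to compare against. Your proof is a correct, self-contained derivation of the version the paper actually needs. You rightly observe that in this setting $Z_1 = |W| \leq |\mathbb{A}|$ is bounded, hence has finite variance, so the martingale $M_k = Z_k / m^k$ is $L^2$-bounded and the standard variance recursion together with $L^2$ martingale convergence yield $\mathbb{E}(L) = 1$ directly. Your treatment of ``$L > 0$ a.s.\ on nonextinction'' is also correct: the branching decomposition $L = m^{-1}\sum_{i \in W_\emptyset} L^{(i)}$ shows that $q = \mathbb{P}(L = 0)$ is a fixed point of the offspring generating function $f$, and since $q < 1$ (forced by $\mathbb{E}(L) = 1$) it must equal the extinction probability, so $\{L = 0\}$ and $\{\text{extinction}\}$ coincide up to a null set. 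Worth noting for context: the general Kesten--Stigum theorem holds under the weaker hypothesis $\mathbb{E}(Z_1 \log Z_1) < \infty$, and its sharp proof (e.g.\ the size-biased spine argument in \cite{Lyons2016}) is considerably subtler than the $L^2$ computation; the finite-alphabet setting of this paper makes that machinery unnecessary, which is precisely the simplification you exploit, so nothing is lost by taking the easier route here.
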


An immediate corollary of Theorem \ref{thm:Kesten - Stigum} is the
following.
\begin{cor}
\label{cor:kesten - stigum}Let $\left(Z_{k}\right)_{k=1}^{\infty}$
be a supercritical Galton-Watson process, then the following holds.
\[
\forall\varepsilon>0,\,\exists c>0,\,\exists K_{0}\in\mathbb{N},\,\forall k>K_{0},\,\mathbb{P}\left(\dfrac{Z_{k}}{m^{k}}>c\mid\,\text{nonextinction}\right)>1-\varepsilon.
\]
\end{cor}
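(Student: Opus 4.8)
The plan is to obtain this as a routine consequence of the Kesten--Stigum theorem (Theorem \ref{thm:Kesten - Stigum}) together with continuity of measure. Since the process is supercritical we have $\mathbb{P}\left(\mathrm{nonextinction}\right)>0$, so conditioning on nonextinction is legitimate; for brevity I will write $\mathbb{P}_{\mathrm{ne}}\left(\cdot\right)=\mathbb{P}\left(\cdot\mid\mathrm{nonextinction}\right)$.

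By Theorem \ref{thm:Kesten - Stigum}, conditioned on nonextinction we have $Z_{k}/m^{k}\to L$ almost surely with $L>0$ almost surely; in particular $\liminf_{k}Z_{k}/m^{k}=L>0$ holds $\mathbb{P}_{\mathrm{ne}}$-almost surely. For $c>0$ and $N\in\mathbb{N}$ I put
\[
A_{c,N}=\left\{ Z_{k}/m^{k}>c\ \text{for all}\ k\geq N\right\} .
\]
A short check shows $\left\{ \liminf_{k}Z_{k}/m^{k}>0\right\} =\bigcup_{c\in\mathbb{Q}_{>0}}\bigcup_{N\in\mathbb{N}}A_{c,N}$: if $\liminf_{k}Z_{k}/m^{k}=\ell>0$, pick a rational $c$ with $0<c<\ell$; then $\sup_{N}\inf_{k\geq N}Z_{k}/m^{k}=\ell>c$, so $\inf_{k\geq N}Z_{k}/m^{k}>c$ for some $N$, i.e.\ $A_{c,N}$ occurs; the reverse inclusion is immediate. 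Hence $\mathbb{P}_{\mathrm{ne}}\left(\bigcup_{c\in\mathbb{Q}_{>0}}\bigcup_{N}A_{c,N}\right)=1$.

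Now fix $\varepsilon>0$. The events $\bigcup_{N}A_{c,N}$ increase as $c\downarrow0$, and their union over $c\in\mathbb{Q}_{>0}$ has $\mathbb{P}_{\mathrm{ne}}$-measure $1$, so by continuity of measure there is $c>0$ with $\mathbb{P}_{\mathrm{ne}}\left(\bigcup_{N}A_{c,N}\right)>1-\varepsilon$. Keeping this $c$ fixed, the events $A_{c,N}$ increase in $N$ to $\bigcup_{N}A_{c,N}$, so continuity of measure again yields $K_{0}\in\mathbb{N}$ with $\mathbb{P}_{\mathrm{ne}}\left(A_{c,K_{0}}\right)>1-\varepsilon$. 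Finally, for every $k>K_{0}$ we have $A_{c,K_{0}}\subseteq\left\{ Z_{k}/m^{k}>c\right\} $, so $\mathbb{P}_{\mathrm{ne}}\left(Z_{k}/m^{k}>c\right)\geq\mathbb{P}_{\mathrm{ne}}\left(A_{c,K_{0}}\right)>1-\varepsilon$, which is the assertion.

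I do not expect any genuine obstacle here: the statement is essentially just the quantitative reformulation of the fact that $Z_{k}/m^{k}$ converges almost surely to a \emph{positive} limit. The only points worth a word of care are that supercriticality is precisely what makes the conditioning meaningful, and that one must pass to the limit twice in turn --- first selecting $c$ small, then $K_{0}$ large --- rather than trying to choose $c$ and $K_{0}$ simultaneously.
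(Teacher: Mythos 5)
Your proof is correct and follows exactly the standard argument the paper has in mind (the corollary is left as an exercise there): use Kesten--Stigum to get $Z_k/m^k\to L>0$ a.s.\ on nonextinction, then apply continuity of measure twice, first to pick $c$ and then to pick $K_0$. Your bookkeeping via the events $A_{c,N}$ is clean and even avoids the cosmetic $1-2\varepsilon$ slack that a more naive two-step choice would produce.
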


The proof of Corollary \ref{cor:kesten - stigum} is standard and
is left as an exercise to reader.

\begin{comment}
\begin{proof}
Let $L$ be the limit random variable guaranteed by Kesten - Stigum
theorem. Then $L>0$ almost surely. Therefore we have 
\[
\mathbb{P}\left(\bigcup_{n>0}\left[L>\frac{1}{n}\right]\right)=1.
\]
So given some $\varepsilon>0$, there exists some $n$ s.t. $\mathbb{P}\left[L>\dfrac{1}{n}\right]>1-\varepsilon$.
Denote $c:=\dfrac{1}{n}.$ 

Now we have 
\[
\mathbb{P}\left(\lim_{k\rightarrow\infty}\dfrac{Z_{k}}{m^{k}}>c\mid\,\text{nonextinction}\right)>1-\varepsilon.
\]
As a result, the following holds: 
\[
\mathbb{P}\left(\bigcup_{K\in\mathbb{N}}\left[\forall k>K,\,\dfrac{Z_{k}}{m^{k}}>c\right]\mid\,\text{nonextinction}\right)>1-\varepsilon
\]
and since this is an increasing sequence of events, there exists some
$K$ s.t.
\[
\mathbb{P}\left(\left[\forall k>K,\,\dfrac{Z_{k}}{m^{k}}>c\right]\mid\,\text{nonextinction}\right)>1-2\varepsilon
\]
or equivalently, for every $k>K$, we have:
\[
\mathbb{P}\left(\dfrac{Z_{k}}{m^{k}}>c\mid\,\text{nonextinction}\right)>1-2\varepsilon.
\]
\end{proof}
\end{comment}

The following proposition is a result of the statistical self similarity
of the Galton-Watson processes, that is, the fact that for every $v\in T$,
the tree $T^{v}$ is itself a GWT with the same offspring distribution
as $T$, and that for $v,w\in T$ which are not descendants of each
other, $T^{v}$ and $T^{w}$ are independent.
\begin{prop}
\label{prop:0-1 law}Let $T$ be a supercritical Galton-Watson tree
with alphabet $\mathbb{A}$ and let $\mathscr{T}^{\prime}\subseteq\mathscr{T}_{\mathbb{A}}$
be a measurable subset. Suppose that $\mathbb{P}\left(T\in\mathscr{T}^{\prime}\right)>0$,
then a.s. conditioned on nonextinction, there exist infinitely many
$v\in T$ s.t. $T^{v}\in\mathscr{T}^{\prime}$.
\end{prop}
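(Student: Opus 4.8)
The plan is to use only the two structural features of $T$ recalled just above the statement: for every $v\in T$ the descendants tree $T^{v}$ is itself a GWT with offspring distribution $W$, and for incomparable $v,w\in T$ the trees $T^{v}$ and $T^{w}$ are independent; equivalently, conditionally on $\{v\in T\}$ the tree $T^{v}$ has law $\mathscr{GW}$ and is independent of the event $\{v\in T\}$. Write $U(T)=\{v\in T:\,T^{v}\in\mathscr{T}^{\prime}\}$, let $E_{\infty}=\{\mathrm{length}(T)=\infty\}\subseteq\mathscr{T}_{\mathbb{A}}$ be the event of nonextinction, let $q\in[0,1)$ be the probability of extinction, and let $g(s)=\mathbb{E}\big(s^{|W|}\big)=\sum_{k\ge0}\mathbb{P}(|W|=k)\,s^{k}$ be the generating function of the offspring size. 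We freely use the standard facts (see \cite{Lyons2016}) that, since the process is supercritical, $q<1$, $g(s)<s$ for every $s\in(q,1)$, and $g^{\prime}(q)<1$. Since $\{|U(T)|=\infty\}\subseteq E_{\infty}$ and $\mathbb{P}(E_{\infty})>0$, the assertion to be proved is equivalent to $\mathbb{P}\big(|U(T)|<\infty,\ E_{\infty}\big)=0$.

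\emph{Step 1 (an unconditional bound) and Step 2 (almost sure survival on nonextinction).} Put $x:=\mathbb{P}\big(U(T)=\emptyset\big)$. Decomposing at the root, $\{U(T)=\emptyset\}\subseteq\bigcap_{i\in W}\{U(T^{i})=\emptyset\}$, and conditioning on $W$ the subtrees $(T^{i})_{i\in W}$ are i.i.d.\ copies of $T$; hence $x\le\mathbb{E}\big(x^{|W|}\big)=g(x)$. As also $x\le\mathbb{P}(T\notin\mathscr{T}^{\prime})<1$ and $g(s)<s$ on $(q,1)$, this forces $x\le q$. Now set $y:=\mathbb{P}\big(U(T)=\emptyset,\ E_{\infty}\big)$. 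On $\{U(T)=\emptyset\}\cap E_{\infty}$ the root has at least one child, every subtree $T^{i}$ ($i\in W$) satisfies $U(T^{i})=\emptyset$, and at least one of these subtrees is itself nonextinct; conditioning on $W$, bounding by a union over which child's subtree is the nonextinct one, and using the conditional independence of the subtrees gives
\[
y\ \le\ \mathbb{E}\big(\,|W|\,y\,x^{|W|-1}\,\big)\ =\ y\,g^{\prime}(x)\ \le\ y\,g^{\prime}(q).
\]
Since $g^{\prime}(q)<1$, this forces $y=0$: almost surely on $E_{\infty}$ there is at least one $v\in T$ with $T^{v}\in\mathscr{T}^{\prime}$.

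\emph{Step 3 (from one to infinitely many).} It suffices to show $\mathbb{P}\big(1\le|U(T)|<\infty,\ E_{\infty}\big)=0$. On that event let $N$ be the maximal length of a vertex of $U(T)$. For every $v\in T_{N+1}$ we must have $U(T^{v})=\emptyset$, for a vertex $w\in U(T^{v})$ would give $vw\in U(T)$ with $|vw|>N$; moreover, $T$ nonextinct means $\partial T\neq\emptyset$, so some $v\in T_{N+1}$ has $T^{v}\in E_{\infty}$. Hence
\[
\{1\le|U(T)|<\infty\}\cap E_{\infty}\ \subseteq\ \bigcup_{N\ge0}\ \bigcup_{v\in\mathbb{A}^{N+1}}\big\{\,v\in T,\ U(T^{v})=\emptyset,\ T^{v}\in E_{\infty}\,\big\}.
\]
By self-similarity each event on the right has probability $\mathbb{P}(v\in T)\cdot\mathbb{P}\big(U(T)=\emptyset,\ E_{\infty}\big)=0$ by Step 2, and a countable union of null sets is null; together with Step 2 this yields $\mathbb{P}\big(|U(T)|<\infty,\ E_{\infty}\big)=0$. (The measurability of $\{U(T)=\emptyset\}$, of $E_{\infty}$, and of the events above follows routinely from the description of the topology and Borel $\sigma$-algebra on $\mathscr{T}_{\mathbb{A}}$.)

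I expect Step 2 to be the crux. The bound of Step 1 alone only gives $\mathbb{P}(U(T)=\emptyset)\le 1-\mathbb{P}(T\in\mathscr{T}^{\prime})$, which need not vanish, so one genuinely needs the strict inequality $g^{\prime}(q)<1$ — a manifestation of supercriticality — to annihilate the bad event on the nonextinction event; Step 3 is then a routine bootstrapping argument.
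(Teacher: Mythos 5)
Your proof is correct, and it takes a genuinely different route from the paper's. The paper invokes the Kesten--Stigum theorem, via Corollary \ref{cor:kesten - stigum}, to ensure that conditionally on nonextinction the generation sizes $\left|T_{k}\right|$ eventually exceed $cm^{k}$ with probability close to $1$; since, conditionally on $T_{k}$, the descendant trees $\left(T^{v}\right)_{v\in T_{k}}$ are i.i.d.\ copies of $T$, the count of $v\in T_{k}$ with $T^{v}\in\mathscr{T}^{\prime}$ stochastically dominates a $\text{Bin}\left(cm^{k},\rho\right)$ variable, and a Chebyshev estimate shows this exceeds any fixed $M$ with conditional probability tending to $1$ as $k\to\infty$. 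Your argument instead stays entirely within the classical generating-function calculus for branching processes: Step 1 uses the recursion $x\le g(x)$ together with $x<1$ to place $\mathbb{P}\left(U(T)=\emptyset\right)$ at or below the extinction probability $q$; Step 2 exploits the strict inequality $g^{\prime}(q)<1$ --- which is exactly where supercriticality enters, by convexity of $g$ --- to force $\mathbb{P}\left(U(T)=\emptyset,\,E_{\infty}\right)=0$; and Step 3 is a clean self-similarity bootstrap from ``at least one'' to ``infinitely many''. The union bound in Step 2 is set up correctly (the $k=0$ term vanishes, and if $x=0$ one is done anyway), and the measurability remarks are routine as you say. What each route buys: the paper's argument is quantitatively explicit about how many good descendants appear at level $k$, but it leans on the nontrivial Kesten--Stigum limit theorem; yours is more elementary and self-contained, needing nothing beyond the fixed-point and convexity facts about $g$ and the conditional independence of sibling subtrees. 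Both are valid proofs of the proposition.
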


\begin{proof}
By Corollary \ref{cor:kesten - stigum}, given some $\varepsilon>0$,
there exists a constant $c>0$ s.t. 
\[
\mathbb{P}\left(\left|T_{k}\right|>cm^{k}\mid\,\text{nonextinction}\right)>1-\varepsilon
\]
whenever $k$ is large enough. Denote $\rho=\mathbb{P}\left(T\in\mathscr{T}^{\prime}\right)$.
Given any $M>0$, 

\[
\begin{array}{l}
\mathbb{P}\left(\left|\left\{ v\in T_{k}:\,T^{v}\in\mathscr{T}^{\prime}\right\} \right|<M\mid\,\text{nonextinction},\,\left|T_{k}\right|>cm^{k}\right)\leq\\
{\displaystyle \mathbb{P}\left(\text{Bin}\left(cm^{k},\,\rho\right)<M\right)\leq}\\
\dfrac{\rho\left(1-\rho\right)cm^{k}}{\left[\rho cm^{k}-M\right]^{2}}\underset{k\rightarrow\infty}{\longrightarrow}0
\end{array}
\]
In the last inequality we used Chebyshev's inequality%
\begin{comment}
\uline{Application of Chebychev's inequality:}

\[
\mathbb{P}\left(\left|X-\mathbb{E}\left(X\right)\right|\geq a\right)\leq\frac{\text{Var}\left(X\right)}{a^{2}}
\]

For $M\in\mathbb{R}$ s.t. $\mathbb{E}\left(X\right)\geq M$, 
\[
\mathbb{P}\left(X<M\right)\leq\mathbb{P}\left(\left|X-\mathbb{E}\left(X\right)\right|\geq\mathbb{E}\left(X\right)-M\right)\leq\frac{\text{Var}\left(X\right)}{\left(\mathbb{E}\left(X\right)-M\right)^{2}}
\]

Overall we have 
\[
\mathbb{P}\left(X<M\right)\leq\frac{\text{Var}\left(X\right)}{\left(\mathbb{E}\left(X\right)-M\right)^{2}}
\]

In case $X\sim\text{Bin}\left(n,p\right)$, we have 
\[
\mathbb{E}\left(X\right)=np,\,\text{Var}\left(X\right)=np\left(1-p\right)
\]

so we have $\forall M<np$
\[
\mathbb{P}\left(\text{Bin}\left(n,p\right)<M\right)\leq\frac{np\left(1-p\right)}{\left(np-M\right)^{2}}
\]
\end{comment}
{} assuming that $k$ is large enough so that $\rho cm^{k}>M$.%
\begin{comment}
The last computation implies that 
\[
{\displaystyle \mathbb{P}\left(\left|\left\{ a\in V_{k}:\,T^{\prime}\text{is a subtree of }T^{a}\right\} \right|<\alpha\mid\text{nonextinction}\right)<2\varepsilon}
\]
 whenever $k$ is large enough
\end{comment}
\end{proof}

\subsection{A generalization of Pakes-Dekking theorem}

In this subsection we show how to prove the existence of certain subtrees
in GWTs.
\begin{defn}
\label{def:A-tree}Given a nonempty collection of nonempty subsets
of the alphabet $\mathscr{A}\subseteq2^{\mathbb{A}}$, a tree $T\in\mathscr{T}_{\mathbb{A}}$
is called an \emph{$\mathscr{A}$-tree }if for every $a\in T$, $W_{T}\left(a\right)\in\mathscr{A}$.
Note that by definition $\mathscr{A}$-trees are infinite. A finite
tree $T$ will be called an \emph{$\mathscr{A}$-tree of length $n$},
if $\text{length}\left(T\right)=n$ and $\forall a\in T\setminus T_{\text{length}\left(T\right)-1}$,
$W_{T}\left(a\right)\in\mathscr{A}$. Given a tree $S\in\mathscr{T}_{\mathbb{A}}$,
an \emph{$\mathscr{A}$-subtree }(respectively, \emph{$\mathscr{A}$-subtree
of length $n$})\emph{ }of $S$ is a subtree of $S$ which is an \emph{$\mathscr{A}$}-tree\emph{
}(respectively, $\mathscr{A}$-tree of length $n$).
\end{defn}

Given a collection $\mathscr{A}\subseteq2^{\mathbb{A}}$ as above,
we denote $\overline{\mathscr{A}}=\left\{ S\in2^{\mathbb{A}}:\,\exists X\in\mathscr{A},\,X\subseteq S\right\} $.
It is obvious that $T$ has an $\overline{\mathscr{A}}$-subtree iff
$T$ has an $\mathscr{A}$-subtree. $\mathscr{A}$ will be called
\emph{monotonic }if $\overline{\mathscr{A}}=\mathscr{A}$.
\begin{lem}
\label{lem:A-subtree of every length implies infinite A-subtree}Let
$T\in\mathscr{T}_{\mathbb{A}}$ be a tree with alphabet $\mathbb{A}$,
and let $\mathscr{A}\subseteq2^{\mathbb{A}}$ be some collection as
above. Then $T$ has an infinite $\mathscr{A}$-subtree $\iff$ $\forall n\in\mathbb{N}$,
$T$ has an $\mathscr{A}$-subtree of length $n$.
\end{lem}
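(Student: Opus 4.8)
I would prove the two implications separately; the forward one is a routine truncation, the reverse one a compactness (König-type) argument.

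\textbf{The direction $\Rightarrow$.} Given an infinite $\mathscr{A}$-subtree $S\subseteq T$, for each $n\geq 1$ I would set $S^{(n)}=\bigcup_{k<n}\bigl(S\cap\mathbb{A}^{k}\bigr)$, the truncation of $S$ to its first $n$ levels. Since $\mathscr{A}$ consists of nonempty sets, an $\mathscr{A}$-tree has every level nonempty, so $S^{(n)}_{n-1}=S\cap\mathbb{A}^{n-1}\neq\emptyset$ and $\text{length}\bigl(S^{(n)}\bigr)=n$. Moreover every $a\in S^{(n)}\setminus S^{(n)}_{n-1}$ has $|a|\leq n-2$, hence all of its $S$-children lie at level $\leq n-1$ and are retained in $S^{(n)}$, so $W_{S^{(n)}}(a)=W_{S}(a)\in\mathscr{A}$. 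Thus $S^{(n)}$ is an $\mathscr{A}$-subtree of $T$ of length $n$.

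\textbf{The direction $\Leftarrow$.} Assuming for every $n\geq 1$ an $\mathscr{A}$-subtree $S^{(n)}\subseteq T$ of length $n$, I would use the fact, noted before this lemma, that $\mathscr{T}_{\mathbb{A}}$ is compact in the product topology of $\prod_{k\geq 0}2^{\mathbb{A}^{k}}$, and pass to a subsequence $S^{(n_{j})}\to S\in\mathscr{T}_{\mathbb{A}}$. Convergence in this topology says exactly that for each level $k$ there is an index beyond which $S^{(n_{j})}\cap\mathbb{A}^{k}=S\cap\mathbb{A}^{k}$. Since $S^{(n_{j})}\cap\mathbb{A}^{k}\subseteq T\cap\mathbb{A}^{k}$ for all $j,k$, the same containment passes to the limit, giving $S\subseteq T$ (and $\emptyset\in S$).

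\textbf{Checking the $\mathscr{A}$-condition for $S$.} It remains to verify that $W_{S}(a)\in\mathscr{A}$ for every $a\in S$, which then forces $S$ to be infinite (again because members of $\mathscr{A}$ are nonempty, so every vertex has a child). I would fix $a\in S$ with $|a|=k$ and choose $j$ large enough that $n_{j}>k+1$ and that $S^{(n_{j})}$ agrees with $S$ on levels $k$ and $k+1$. Then $|a|=k<n_{j}-1$, so $a\in S^{(n_{j})}\setminus S^{(n_{j})}_{n_{j}-1}$, whence $W_{S^{(n_{j})}}(a)\in\mathscr{A}$; and since $S^{(n_{j})}$ and $S$ have the same level-$(k+1)$ vertices lying below $a$, we get $W_{S}(a)=W_{S^{(n_{j})}}(a)\in\mathscr{A}$. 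Hence $S$ is an infinite $\mathscr{A}$-subtree of $T$.

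\textbf{Expected main obstacle.} There is no deep difficulty; the only point needing care is that the definition of an ``$\mathscr{A}$-subtree of length $n$'' imposes no condition on its deepest level $n-1$, so one must take $n_{j}$ \emph{strictly} larger than $|a|+1$ before reading off $W(a)\in\mathscr{A}$ — precisely the choice made above. If one prefers to avoid invoking the topology on $\mathscr{T}_{\mathbb{A}}$, the same conclusion follows from König's lemma applied to the finitely branching tree whose nodes are the finite $\mathscr{A}$-subtrees of $T$ that arise as the level-$(<m)$ truncations of $\mathscr{A}$-subtrees of $T$ of arbitrarily large length, with the pigeonhole principle supplying the compatible chain whose union is the desired infinite $\mathscr{A}$-subtree; the compactness phrasing merely packages this bookkeeping.
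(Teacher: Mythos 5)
Your proof is correct, but it takes a genuinely different route from the paper's. The paper does not extract a subsequential limit: it sets $S=\bigcup_{n}S^{(n)}$, observes that $S$ is a subtree of $T$ of infinite length (hence $\partial S\neq\emptyset$), and then prunes $S$ down to $S'=\{v<w:\,w\in\partial S\}$, the set of vertices of $S$ lying on infinite branches. It then argues that $S'$ is an $\overline{\mathscr{A}}$-subtree of $T$: for $v\in S'$, since $v$ lies in $S^{(n)}$ for arbitrarily large $n$, the descendant tree $S^{v}$ contains $\mathscr{A}$-subtrees of every length, and a sufficiently long one must avoid the finitely many children $w\in W_{S}(v)$ for which $S^{vw}$ is finite, whence $W_{S'}(v)\in\overline{\mathscr{A}}$; one then passes from the $\overline{\mathscr{A}}$-subtree $S'$ to an $\mathscr{A}$-subtree. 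Your compactness argument is arguably the cleaner of the two: it produces an $\mathscr{A}$-subtree directly, with no detour through $\overline{\mathscr{A}}$ and no pruning step, at the cost of invoking the product topology on $\mathscr{T}_{\mathbb{A}}$ (which, as you note, can be replaced by a bare-hands K\"onig/pigeonhole argument). The paper's union-and-prune argument avoids the topology but needs the finiteness observation about $W_{S}(v)\setminus W_{S'}(v)$ and the justification that $S^{v}$ contains arbitrarily long $\mathscr{A}$-subtrees, both of which your approach sidesteps.
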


\begin{proof}
For every $n\in\mathbb{N}$, let $S^{\left(n\right)}$ be an $\mathscr{A}$-subtree
of $T$ of length $n$. Denote $S=\bigcup\limits _{n=1}^{\infty}S^{\left(n\right)}$.
$S$ is a subtree of $T$, and since $\text{length}\left(S\right)=\infty$,
$\partial S\neq\emptyset$%
\begin{comment}
\begin{lem}
$\forall T\in\mathscr{T}_{\mathbb{A}}$ with $\text{length}\left(T\right)=\infty$,
$\partial T\neq\emptyset$.
\end{lem}

\begin{proof}
We shall construct $i\in\Lambda^{\mathbb{N}}$ s.t. $\forall n\in\mathbb{N},\,i_{1}...i_{n}\in T_{n}$,
which means that $i\in\partial T$. Let $i_{1}\in T_{1}$ be an element
s.t. $T^{i_{1}}$ is infinite (there is such an element since $T_{1}$
is finite). Assume we have defined $i_{1},...,i_{n}$, we take $i_{n+1}\in T^{i_{1}...i_{n}}$
s.t. $T^{i_{1}...i_{n+1}}$ is infinite (there is such an element
since by construction $T^{i_{1}...i_{n}}$ is infinite, and $\left(T^{i_{1}...i_{n}}\right)_{1}$
is finite).
\end{proof}
\end{comment}
. Define $S^{\prime}=\left\{ v<w:\,w\in\partial S\right\} $. Then
$S^{\prime}$ is an $\overline{\mathscr{A}}$-subtree of $T$%
\begin{comment}
Let $v\in S^{\prime}$ be any vertex. First note that $S^{v}$ contains
an $\mathscr{A}$-subtree of every length. Let $A=W_{S}\left(v\right)\setminus W_{S^{\prime}}\left(v\right)$,
then $k=\max\left\{ \text{length}\left(S^{vw}\right):\,w\in A\right\} <\infty$.
But since $v\in S^{\prime}$, $S^{v}$ has an $\mathscr{A}$-subtree
of length > $k+1$, so this particular subtree does not contain any
elements from $A$, hence $W_{S^{\prime}}\left(v\right)\in\overline{\mathscr{A}}$.
\end{comment}
, and therefore contains an $\mathscr{A}$-subtree. The other direction
is trivial. 
\end{proof}
The following theorem is the main tool we use to show that certain
$\mathscr{A}$-subtrees exist in GWTs. This theorem is a generalization
of a theorem by Pakes and Dekking (\cite{Pakes1991353}, see also
\cite{Lyons2016}) which deals with the existence of \emph{a}-ary
subtrees (where each element has exactly \emph{a} children) in Galton-Watson
trees. First, we need the following definition.
\begin{defn}
Let $A$ be some finite set, and $p\in\left[0,1\right]$. A random
subset $Y\subseteq A$ is said to have a binomial distribution with
parameter $p$ if 
\[
\forall B\subseteq A,\,\mathbb{P}\left(Y=B\right)=p^{\left|B\right|}\left(1-p\right)^{\left|A\setminus B\right|}.
\]
In this case we denote $Y\sim\text{Bin}\left(A,\,p\right)$. Note
that the notation $\text{Bin}\left(\cdot,\cdot\right)$ will also
be used for the usual binomial distribution as well, where the first
argument will be an integer and not a set. 
\end{defn}

The following notation will recur throughout the paper. Let $A$ be
some fixed finite set, and let $X$ be a random subset of $A$. Given
$s\in\left[0,1\right]$, we denote $X^{\left(s\right)}=X\cap Y$ where
$Y\sim\text{Bin}\left(A,\,1-s\right)$. 

Let $T$ be a GWT with alphabet $\mathbb{A}$ and any offspring distribution
$W$. Let $\mathscr{A}\subseteq2^{\mathbb{A}}$ be some nonempty collection
of nonempty subsets of $\mathbb{A}$. Define the function $g_{\mathscr{A}}:\left[0,1\right]\rightarrow\left[0,1\right]$
by $g_{\mathscr{A}}\left(s\right)=\mathbb{P}\left(W^{\left(s\right)}\notin\overline{\mathscr{A}}\right)$.
Finally, denote $\tau\left(\mathscr{A}\right)=\mathbb{P}\left(\text{\ensuremath{T} has an \ensuremath{\mathscr{A}}-subtree}\right)$.

\begin{comment}
\begin{lem}
Let $f:\left[0,1\right]\to\left[0,1\right]$ be continuous and monotonically
increasing (not necessarily strictly increasing), then $\lim\limits _{n\to\infty}f^{n}\left(0\right)$
is the smallest fixed point of $f$.
\end{lem}

\begin{proof}
We show that for every $n$, s.t. $f^{n}\left(0\right)\neq f^{n+1}\left(0\right)$,
$\forall x\in\left(f^{n}\left(0\right),\,f^{n+1}\left(0\right)\right)$,
$f\left(x\right)>x$. Indeed, $f\left(x\right)\geq f\left(f^{n}\left(0\right)\right)>x$
\end{proof}
\end{comment}

\begin{thm}
\label{thm:fixed point}With notations as above, $1-\tau\left(\mathscr{A}\right)$
is the smallest fixed point of $g_{\mathscr{A}}$ in $\left[0,1\right]$. 
\end{thm}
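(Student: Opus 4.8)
The plan is to mimic the classical Pakes–Dekking argument for $a$-ary subtrees, identifying the extinction probability of the ``$\mathscr{A}$-subtree process'' as the least fixed point of an associated generating-function-like map. First I would reduce to the monotonic case: since $T$ has an $\mathscr{A}$-subtree iff it has an $\overline{\mathscr{A}}$-subtree, and $g_{\mathscr{A}}$ depends on $\mathscr{A}$ only through $\overline{\mathscr{A}}$, we may assume $\mathscr{A}=\overline{\mathscr{A}}$ is monotonic. Next, using Lemma \ref{lem:A-subtree of every length implies infinite A-subtree}, having an $\mathscr{A}$-subtree is equivalent to having an $\mathscr{A}$-subtree of length $n$ for every $n$; so if $q_n$ denotes the probability that $T$ does \emph{not} have an $\mathscr{A}$-subtree of length $n$, then $q_n\nearrow 1-\tau(\mathscr{A})$ by continuity of measure along the decreasing events. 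The core of the argument is a recursion: conditioning on $W=W_T(\emptyset)$, the tree $T$ has an $\mathscr{A}$-subtree of length $n+1$ iff the set of children $a\in W$ whose descendant trees $T^a$ \emph{do} contain an $\mathscr{A}$-subtree of length $n$ lies in $\mathscr{A}$. Since the $T^a$ are i.i.d.\ copies of $T$ and independent of $W$, each child ``succeeds'' independently with probability $1-q_n$, so the successful set is distributed as $W^{(q_n)}$ in the paper's notation (thin $W$ by independently deleting each element with probability $q_n$). Hence $q_{n+1}=\mathbb{P}\bigl(W^{(q_n)}\notin\mathscr{A}\bigr)=g_{\mathscr{A}}(q_n)$, with $q_1=\mathbb{P}(W\notin\mathscr{A})=g_{\mathscr{A}}(0)=g_{\mathscr{A}}(1\cdot 0)$ — wait, more carefully $q_0=1$ is not right since length-$0$ trees; take $q_1$ as the base case with $q_1=g_{\mathscr{A}}(0)$ after checking the $n=1$ instance of the recursion directly (a length-$1$ $\mathscr{A}$-subtree exists iff $W\in\mathscr{A}$, i.e.\ $W^{(0)}=W\in\mathscr{A}$).

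To extract the fixed point I would argue as follows. The function $g_{\mathscr{A}}$ is continuous on $[0,1]$: $W^{(s)}=W\cap Y$ with $Y\sim\mathrm{Bin}(\mathbb{A},1-s)$, so $\mathbb{P}(W^{(s)}=B\mid W=C)$ is a polynomial in $s$ for each pair $B\subseteq C$, and summing the finitely many terms over which configurations land outside $\mathscr{A}$ gives a polynomial, in particular continuous; moreover it is monotone nondecreasing in $s$ because enlarging $s$ stochastically shrinks $W^{(s)}$ (coupling: $W^{(s)}\subseteq W^{(s')}$ can be arranged for $s\ge s'$) and $\mathscr{A}=\overline{\mathscr{A}}$ is upward closed, so the event $W^{(s)}\notin\mathscr{A}$ is nondecreasing in $s$. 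Then $q_1=g_{\mathscr{A}}(0)\ge 0$ and by induction $q_{n+1}=g_{\mathscr{A}}(q_n)\ge g_{\mathscr{A}}(q_{n-1})=q_n$, so $(q_n)$ is nondecreasing and bounded, hence converges to some $q_\infty$; by continuity $q_\infty=g_{\mathscr{A}}(q_\infty)$ is a fixed point, and $q_\infty=1-\tau(\mathscr{A})$ by the previous paragraph. Finally, for any fixed point $s^\ast$ of $g_{\mathscr{A}}$, monotonicity of $g_{\mathscr{A}}$ and $q_1=g_{\mathscr{A}}(0)\le g_{\mathscr{A}}(s^\ast)=s^\ast$ give inductively $q_n\le s^\ast$ for all $n$, hence $q_\infty\le s^\ast$; so $1-\tau(\mathscr{A})$ is the \emph{smallest} fixed point, as claimed.

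One technical point to handle carefully is the definition of $W^{(s)}$ relative to the filtration: when I condition on $W=W_T(\emptyset)=C$, I need the thinning variable $Y$ (encoding which children have a length-$n$ subtree below them) to be independent of $C$, which holds because the descendant trees $T^a$ for $a\in\mathbb{A}$ are independent of $W_\emptyset$ and i.i.d. This is exactly the ``statistical self-similarity'' recorded before Proposition \ref{prop:0-1 law}, so I would cite that. A second point: one must verify the $n=1$ base case of the recursion is consistent with the convention that an $\mathscr{A}$-subtree of length $1$ is a tree $\{\emptyset\}\cup\{a:a\in X\}$ with $X=W_T(\emptyset)\in\mathscr{A}$ — no constraint is placed on the leaves — matching $q_1=\mathbb{P}(W\notin\mathscr{A})$.

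The main obstacle I anticipate is not any single deep step but getting the recursion's bookkeeping exactly right: correctly matching the combinatorial event ``$T$ has an $\mathscr{A}$-subtree of length $n+1$'' with the distributional statement ``$W^{(q_n)}\notin\mathscr{A}$'', including the off-by-one in the length index and the passage from finite-length subtrees to infinite ones via Lemma \ref{lem:A-subtree of every length implies infinite A-subtree} and a monotone-limit argument. Once that correspondence is nailed down, the fixed-point extraction is the standard branching-process argument and is routine.
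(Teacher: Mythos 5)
Your proposal is correct and follows essentially the same approach as the paper: define $q_n$ as the probability of having no $\mathscr{A}$-subtree of length $n$, derive the recursion $q_{n+1}=g_{\mathscr{A}}(q_n)$ from statistical self-similarity and thinning, use Lemma \ref{lem:A-subtree of every length implies infinite A-subtree} to identify $\lim q_n$ with $1-\tau(\mathscr{A})$, and extract the smallest fixed point from monotonicity and continuity of $g_{\mathscr{A}}$. The only cosmetic difference is that you reduce to $\mathscr{A}=\overline{\mathscr{A}}$ up front and spell out the continuity/monotonicity and the least-fixed-point extraction, which the paper states more tersely or leaves to the reader.
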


\begin{proof}
We follow the Proof given in \cite[chapter 5]{Lyons2016} almost verbatim.
Note that the following properties hold:
\begin{enumerate}
\item $g_{\mathscr{A}}$ is continuous and monotonically increasing. %
\begin{comment}
strictly increasing (unless $W=\emptyset$ a.s., in which case $g_{\mathscr{A}}\equiv1$
and the claim follows)
\end{comment}
{} %
\begin{comment}
$g_{\mathscr{A}}\left(s\right)=\mathbb{P}\left(X^{\left(s\right)}\notin\overline{\mathscr{A}}\right)=\sum\limits _{A\subseteq\mathbb{A}}\mathbb{P}\left(X^{\left(s\right)}\notin\overline{\mathscr{A}}|\,T_{1}=A\right)\cdot\mathbb{P}\left(T_{1}=A\right)$.
By definition of $\overline{\mathscr{A}}$, $\forall A\subseteq\mathbb{A}$
which is nonempty, $\mathbb{P}\left(X^{\left(s\right)}\notin\overline{\mathscr{A}}|\,T_{1}=A\right)$
is a monotonic function of $s$. So unless $T_{1}=\emptyset$ a.s.,
$g_{\mathscr{A}}$ is strictly increasing.
\end{comment}
\item $g_{\mathscr{A}}\left(1\right)=1$
\item $g_{\mathscr{A}}\left(0\right)=\mathbb{P}\left(W\notin\overline{\mathscr{A}}\right)$
\end{enumerate}
If $g_{\mathscr{A}}\left(0\right)=0$, then $\mathbb{P}\left(W\in\overline{\mathscr{A}}\right)=1$
which implies the existence of an $\mathscr{A}$ - subtree a.s., i.e.,
$1-\tau\left(\mathscr{A}\right)=0$ and the claim follows. Otherwise,
we assume that $g_{\mathscr{A}}\left(0\right)>0$. Let $q_{n}$ be
the probability that $T$ does not contain an $\mathscr{A}-\text{subtree}$
of length $n$, where $q_{0}=0$. Then $1-q_{n}\searrow\mathbb{P}\left(T\text{ has an \ensuremath{\mathscr{A}}-subtree of every length}\right)$
and by Lemma \ref{lem:A-subtree of every length implies infinite A-subtree}
this is equivalent to $q_{n}\nearrow1-\tau\left(\mathscr{A}\right)$. 
\begin{claim*}
For every $n\geq1$, $q_{n}=g_{\mathscr{A}}\left(q_{n-1}\right)$.
\end{claim*}
\begin{proof}[proof of claim]
 Denote for every $n\geq1$, the following random set:
\[
V_{n}=\left\{ v\in T_{1}:\,T^{v}\,\text{has an \ensuremath{\mathscr{A}}-subtree of length \ensuremath{n}}\right\} .
\]
For each element $v\in\mathbb{A}$, $\mathbb{P}\left(v\in V_{n}|\,v\in T_{1}\right)=1-q_{n}$,
and for every two distinct elements in $\mathbb{A}$ these events
are independent, so $V_{n}\sim W^{\left(q_{n}\right)}$. Now, since
$T$ has an $\mathscr{A}$-subtree of length $n$ iff $V_{n-1}\in\overline{\mathscr{A}}$,
\[
q_{n}=\mathbb{P}\left(V_{n-1}\notin\overline{\mathscr{A}}\right)=\mathbb{P}\left(W^{\left(q_{n-1}\right)}\notin\overline{\mathscr{A}}\right)=g_{\mathscr{A}}\left(q_{n-1}\right).
\]
\end{proof}
Since $g_{\mathscr{A}}$ is increasing and continuous, its smallest
fixed point is $\lim\limits _{n\to\infty}g_{\mathscr{A}}^{n}\left(0\right)$,
where $g_{\mathscr{A}}^{n}$ denotes the composition of $g_{\mathscr{A}}$
with itself $n$ times (this is a general property of increasing and
continuous functions on $\left[0,1\right]$ whose proof is easy and
left to the reader). By the claim above, $\lim\limits _{n\to\infty}g_{\mathscr{A}}^{n}\left(0\right)=\lim\limits _{n\to\infty}q_{n}$
which concludes the proof. %
\end{proof}
\begin{rem}
Given some integer $1\leq a\leq\left|\mathbb{A}\right|$, we may set
$\mathscr{A}=\left\{ S\subseteq\mathbb{A}:\,\left|S\right|=a\right\} $.
In this case $\mathscr{A}$-subtrees are actually \emph{a}-ary subtrees
and Theorem \ref{thm:fixed point} becomes exactly Pakes - Dekking
theorem. 
\end{rem}

Combining Theorem \ref{thm:fixed point} and Proposition \ref{prop:0-1 law}
we obtain the following.
\begin{cor}
\label{cor:Pakes-Dekking result} With notations as above, if $g_{\mathscr{A}}$
has some fixed point $<1$, then almost surely conditioned on nonextinction,
there exist infinitely many $v\in T$ s.t. $T^{v}$ contains an $\mathscr{A}$-subtree.
\end{cor}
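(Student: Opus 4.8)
The plan is to combine the two results that immediately precede the corollary. By Theorem~\ref{thm:fixed point}, the quantity $1-\tau\left(\mathscr{A}\right)$ equals the smallest fixed point of $g_{\mathscr{A}}$ in $\left[0,1\right]$. The hypothesis of the corollary is that $g_{\mathscr{A}}$ has \emph{some} fixed point strictly less than $1$; since the smallest fixed point is no larger than any particular fixed point, this forces $1-\tau\left(\mathscr{A}\right)<1$, i.e. $\tau\left(\mathscr{A}\right)>0$. Unwinding the definition of $\tau$, this says $\mathbb{P}\left(T\text{ has an }\mathscr{A}\text{-subtree}\right)>0$.

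Next I would recast ``$T$ has an $\mathscr{A}$-subtree'' as membership in a measurable subset $\mathscr{T}'\subseteq\mathscr{T}_{\mathbb{A}}$, so that Proposition~\ref{prop:0-1 law} applies. Here one should note $\mathscr{T}'=\left\{S\in\mathscr{T}_{\mathbb{A}}:\,S\text{ has an }\mathscr{A}\text{-subtree}\right\}$ is indeed Borel: by Lemma~\ref{lem:A-subtree of every length implies infinite A-subtree}, having an $\mathscr{A}$-subtree is equivalent to having an $\mathscr{A}$-subtree of length $n$ for every $n\in\mathbb{N}$, and for each fixed $n$ the property of possessing an $\mathscr{A}$-subtree of length $n$ depends only on the first $n$ levels $S_{1},\dots,S_{n}$, hence is clopen in the product topology; intersecting over $n$ gives a $G_{\delta}$, in particular a Borel, set. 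With $\mathbb{P}\left(T\in\mathscr{T}'\right)=\tau\left(\mathscr{A}\right)>0$ established, Proposition~\ref{prop:0-1 law} yields directly that a.s. conditioned on nonextinction there exist infinitely many $v\in T$ with $T^{v}\in\mathscr{T}'$, which is exactly the assertion that infinitely many $T^{v}$ contain an $\mathscr{A}$-subtree.

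There is essentially no obstacle here; the corollary is a formal consequence of stitching Theorem~\ref{thm:fixed point} to Proposition~\ref{prop:0-1 law}. The only point requiring a word of care is the measurability of $\mathscr{T}'$, which the paper's own remark (``all the events in this paper whose probability is analyzed are in $\mathscr{B}$'') signals is expected to be routine, and which the clopen-per-level argument above makes precise. So I would write the proof in two or three lines: (i) the fixed-point hypothesis plus Theorem~\ref{thm:fixed point} give $\tau\left(\mathscr{A}\right)>0$; (ii) apply Proposition~\ref{prop:0-1 law} with $\mathscr{T}'$ the set of trees admitting an $\mathscr{A}$-subtree.
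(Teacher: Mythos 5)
Your proof is correct and follows exactly the route the paper intends (the paper itself introduces this corollary with the phrase ``Combining Theorem \ref{thm:fixed point} and Proposition \ref{prop:0-1 law} we obtain the following''): Theorem \ref{thm:fixed point} converts the fixed-point hypothesis into $\tau(\mathscr{A})>0$, and Proposition \ref{prop:0-1 law} then gives infinitely many $v\in T$ with $T^{v}$ containing an $\mathscr{A}$-subtree. Your extra remark on measurability of $\mathscr{T}'$ via Lemma \ref{lem:A-subtree of every length implies infinite A-subtree} is a correct and welcome bit of care the paper leaves implicit.
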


\begin{comment}
A different argument is the following:

Define a new GW tree $T_{\mathscr{A}}$ by setting the offspring distribution
$W_{\mathscr{A}}=\left\{ \begin{array}{ll}
W & \text{ if }W\in\overline{\mathscr{A}}\\
\emptyset & \text{ else}
\end{array}\right.$. Now $\mathbb{P}\left(\left|T_{\mathscr{A}}\right|=\infty\right)>0$
iff $\mathbb{E}\left(\left|W_{\mathscr{A}}\right|\right)>1$. This
is enough in the fractal percolation case to show that for $k$ large
enough the k - compressed tree containes a.s. a good subtree since
we are in the situation where $\mathbb{P}\left(W_{\mathscr{A}_{k}}\in\mathscr{A}_{k}\right)\underset{k\to\infty}{\longrightarrow}1$
combined with the fact that $\mathscr{A}_{k}$ contains only larger
and larger sets, so that $\mathbb{E}\left(\left|W_{\mathscr{A}_{k}}\right|\right)\underset{k\to\infty}{\longrightarrow}\infty$.
\end{comment}

\subsection{IFSs and projections to $\mathbb{R}^{d}$\label{subsec:IFSs-and-projections}}

An \emph{iterated function system (IFS)} is a finite collection $\left\{ \varphi_{i}\right\} _{i\in\Lambda}$
of self maps of $\mathbb{R}^{d}$ which are Lipschitz continuous with
Lipschitz constants smaller than 1.\emph{ }It is one of the most basic
results in fractal theory (due to Hutchinson \cite{Hutchinson1981})
that every IFS $\left\{ \varphi_{i}\right\} _{i\in\Lambda}$ gives
rise to a unique nonempty compact set $K\subset\mathbb{R}^{d}$ which
satisfies the equation $K=\bigcup\limits _{i\in\Lambda}\varphi_{i}K$.
The set $K$ is called the \emph{attractor} of the IFS. 

A map $f:\mathbb{R}^{d}\rightarrow\mathbb{R}^{d}$ is called a \emph{contracting
similarity} if there exists a constant $r\in\left(0,1\right)$, referred
to as the \emph{contraction ratio} of $f$, s.t. $\forall x,y\in\mathbb{R}^{d},\,\left\Vert f\left(x\right)-f\left(y\right)\right\Vert =r\left\Vert x-y\right\Vert $,
so that $f$ is a composition of a scaling by factor $r$, an orthogonal
transformation and a translation. In this paper we shall only discuss
IFSs which are formed by contracting similarity maps. Such IFSs shall
be referred to as \emph{similarity IFSs.} 

When analyzing a similarity IFS $\Phi=\left\{ \varphi_{i}\right\} _{i\in\Lambda}$
it is natural to work in the symbolic spaces $\Lambda^{\mathbb{N}}$
and $\Lambda^{*}$. In view of the setup above, in the abstract setting
of trees with alphabet $\mathbb{A}$, we shall often assign weights
to the alphabet $\left\{ r_{i}\right\} _{i\in\mathbb{A}}$. These
weights will correspond to the contraction ratios of similarity maps
and therefore we shall always assume that $r_{i}\in\left(0,1\right)$
for every $i\in\mathbb{A}$. 

Given an IFS $\left\{ \varphi_{i}\right\} _{i\in\Lambda}$, the identification
between the symbolic spaces $\Lambda^{\mathbb{N}},\,\Lambda^{*}$
and the Euclidean space is made via the coding map $\gamma_{\Phi}:\Lambda^{\mathbb{N}}\rightarrow\mathbb{R}^{d}$
which is given by 
\begin{equation}
\gamma_{\Phi}\left(j\right)=\bigcap\limits _{n=1}^{\infty}\varphi_{j_{1}...j_{n}}\left(K\right)\label{eq:coding map}
\end{equation}
where $\varphi_{j_{1}...j_{n}}=\varphi_{j_{1}}\circ...\circ\varphi_{j_{n}}$.
It may be easily seen that $K=\gamma_{\Phi}\left(\Lambda^{\mathbb{N}}\right)$.
Moreover, given a tree $T\in\Lambda^{*}$, we may project the boundary
of $T$ to the Euclidean space using $\gamma_{\Phi}$, where 
\begin{equation}
\gamma_{\Phi}\left(\partial T\right)=\bigcup_{j\in\partial T}\bigcap\limits _{n=1}^{\infty}\varphi_{j_{1}...j_{n}}\left(K\right)=\bigcap\limits _{n=1}^{\infty}\bigcup\limits _{i\in T_{n}}\varphi_{i}K\label{eq:projection of tree}
\end{equation}
\begin{comment}
\begin{proof}
The inclusion $\subseteq$ is immediate, as for every $j\in\partial T$
and $n\in\mathbb{N},$ $\varphi_{j_{1}...j_{n}}\left(K\right)\subseteq\bigcup\limits _{i\in T_{n}}\varphi_{i}K$.

For the other inclusion, we need the following lemma:
\begin{lem*}
Let $T\subseteq\Lambda^{*}$ be a tree with infinite length, i.e.,
$\forall n\in\mathbb{N},\,T_{n}\neq\emptyset$, then $\partial T\neq\emptyset$.
\end{lem*}
\begin{proof}[Proof of Lemma]
 We shall construct $i\in\Lambda^{\mathbb{N}}$ s.t. $\forall n\in\mathbb{N},\,i_{1}...i_{n}\in T_{n}$,
which means that $i\in\partial T$. Let $i_{1}\in T_{1}$ be an element
s.t. $T^{i_{1}}$ has infinite length (there is such an element since
$T_{1}$ is finite). Assume we have defined $i_{1},...,i_{n}$, we
take $i_{n+1}\in T^{i_{1}...i_{n}}$ s.t. $T^{i_{1}...i_{n+1}}$ has
infinite length (there is such an element since by construction $T^{i_{1}...i_{n}}$
has infinite length, and $\left(T^{i_{1}...i_{n}}\right)_{1}$ is
finite)
\end{proof}
For $x\in\bigcap\limits _{n=1}^{\infty}\bigcup\limits _{i\in T_{n}}\varphi_{i}K$,
define $T_{x}=\left\{ i\in T:\,x\in\varphi_{i}K\right\} $. Then $T_{x}$
is a tree of infinite length and by the lemma it has a nonempty boundary.
Let $i\in\partial T_{x}$, so $x\in\bigcap\limits _{n=1}^{\infty}\varphi_{i_{1}...i_{n}}\left(K\right)$. 
\end{proof}
\end{comment}

Note that for every compact set $F\subset\mathbb{R}^{d}$ s.t. $\forall i\in\Lambda,\,\varphi_{i}F\subseteq F$,
we have $K=\bigcap\limits _{n=1}^{\infty}\bigcup\limits _{i\in\Lambda^{n}}\varphi_{i}F$,
hence $K\subseteq F$ and the decreasing sequence of sets $\left(\bigcup\limits _{i\in\Lambda^{n}}\varphi_{i}F\right)_{n=1}^{\infty}$
may be thought of as approximating $K$. Since $K\subseteq F$, we
may replace $K$ with $F$ in equations (\ref{eq:coding map}), (\ref{eq:projection of tree})
and the equations will remain true.

An IFS $\left\{ \varphi_{i}\right\} _{i\in\Lambda}$ satisfies the
\emph{open set condition (OSC) }if there exists some nonempty open
set $U\subset\mathbb{R}^{d}$ s.t. $\varphi_{i}U\subseteq U$ for
every $i\in\Lambda$, and $\varphi_{i}U\cap\varphi_{j}U=\emptyset$
for distinct $i,\,j\in\Lambda$. A set \emph{U} satisfying these conditions
will be called an \emph{OSC set for $\Phi$}. In case an IFS $\Phi=\left\{ \varphi_{i}\right\} _{i\in\Lambda}$
with contraction ratios $\left\{ r_{i}\right\} _{i\in\Lambda}$ satisfies
the open set condition, it is well known\footnote{This was first proved by Moran for self-similar sets without overlaps
in 1946 (see \cite{moran_1946}). The form stated here assuming the
OSC was first proved by Hutchinson in 1981 (see \cite{Hutchinson1981}).} that the Hausdorff dimension of the attractor of $\Phi$ is the unique
number $\delta$ which satisfies the equation $\sum\limits _{i\in\Lambda}r_{i}^{\delta}=1$
. For convenience, $\forall i=i_{1}...i_{n}\in\Lambda^{*}$ we denote
$r_{i}=r_{i_{1}}\cdots r_{i_{n}}$ which is the contraction ratio
of the map $\varphi_{i}$. We also denote $r_{min}=\min\left\{ r_{i}:\,i\in\Lambda\right\} $
and $r_{max}=\max\left\{ r_{i}:\,i\in\Lambda\right\} $.

We shall now turn to the probabilistic setup. 
\begin{defn}
Let $\Phi=\left\{ \varphi_{i}\right\} _{i\in\Lambda}$ be a similarity
IFS, and let $W$ be some random variable with values in $2^{\Lambda}$.
Let $T$ be a GWT with alphabet $\Lambda$ and offspring distribution
$W$, and finally let $E$ be the random set $E=\gamma_{\Phi}\left(\partial T\right)$.
The random set $E$ will be called a \emph{Galton-Watson fractal (GWF)}
w.r.t. the IFS $\Phi$ and offspring distribution $W$. We shall always
assume that $\mathbb{E}\left(\left|W\right|\right)>1$ so that the
Galton-Watson process is supercritical.
\end{defn}

The following theorem is due to Falconer \cite{FalconerK.J1986Rf}
and Mauldin and Williams \cite{MauldinR.Daniel1986RRCA}. See also
\cite[Theorem 15.10]{Lyons2016} for another elegant proof.
\begin{thm}
\label{thm:dimension of GW fractals}Let E be a Galton-Watson fractal
w.r.t. a similarity IFS $\Phi=\left\{ \varphi_{i}\right\} _{i\in\Lambda}$
satisfying the OSC, with contraction ratios $\left\{ r_{i}\right\} _{i\in\Lambda}$
and offspring distribution W. Then a.s. conditioned on nonextinction,
$\dim_{H}E=\delta$ where $\delta$ is the unique number satisfying
\[
\mathbb{E}\left(\sum_{i\in W}r_{i}^{\delta}\right)=1.
\]
\end{thm}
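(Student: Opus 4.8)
The plan is to establish the upper bound $\dim_H E \le \delta$ by a first-moment (expected covering) argument, and the lower bound $\dim_H E \ge \delta$ by constructing a suitable random measure on $E$ and applying the mass distribution principle, with the OSC guaranteeing the needed separation. First I would set up the natural martingale: for a node $i \in \Lambda^*$ that survives in the tree (i.e.\ $i \in T$), the cylinder $\varphi_i(K)$ has diameter comparable to $r_i \cdot \mathrm{diam}(K)$, and these cylinders of level $n$ cover $E$. So for any $s$, the expected $s$-dimensional pre-measure of the level-$n$ cover is $\mathbb{E}\big(\sum_{i \in T_n} r_i^s\big)$. By the branching structure and independence of the offspring variables $(W_a)$, a direct induction on $n$ gives
\[
\mathbb{E}\Big(\sum_{i \in T_n} r_i^s\Big) \;=\; \Big(\mathbb{E}\Big(\sum_{i \in W} r_i^s\Big)\Big)^n =: \phi(s)^n,
\]
so that taking $s = \delta$ (where $\phi(\delta)=1$) keeps the expectation bounded, and taking $s > \delta$ (where $\phi(s)<1$ by strict monotonicity of $\phi$) makes it tend to $0$; a Borel--Cantelli / Fatou argument then yields $\dim_H E \le \delta$ almost surely, with no need for the OSC here.

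For the lower bound I would build a random measure $\nu$ on $\partial T$ and push it forward by $\gamma_\Phi$ to a measure $\mu = (\gamma_\Phi)_* \nu$ on $E$. The key input is the Kesten--Stigum-type behaviour of the weighted branching process: for $s = \delta$, the process $M_n := \sum_{i \in T_n} r_i^\delta$ is a nonnegative martingale (this is where $\phi(\delta)=1$ is used), which therefore converges a.s.\ to a limit $M_\infty$; one shows (under a mild moment assumption, or by a truncation argument, as in the Falconer / Mauldin--Williams / Lyons--Peres treatments) that $M_\infty > 0$ a.s.\ on nonextinction. Running the same martingale in each subtree $T^v$ lets one define consistent masses on cylinders $[i]$ by the martingale-limit values, producing $\nu$ on $\partial T$ with $\nu([i]) \asymp r_i^\delta$ in an averaged sense. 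Then $\mu(\varphi_i(K)) \approx r_i^\delta$, and since $\varphi_i(K)$ has diameter $\asymp r_i$, for a ball $B_\rho(x)$ with $x \in E$ I would cover $B_\rho(x) \cap E$ by boundedly many level-$n$ cylinders with $r_i \asymp \rho$ — here the OSC enters, ensuring that only a bounded number (depending on $\Phi$ only) of such cylinders can meet a ball of radius $\rho$ — to conclude $\mu(B_\rho(x)) \le C \rho^\delta$ (up to logarithmic corrections handled by a standard stopping-line/section argument). The mass distribution principle then gives $\dim_H E \ge \delta$ a.s.\ on nonextinction.

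The main obstacle I anticipate is the lower bound, and specifically two linked points: (i) showing the martingale limit $M_\infty$ is strictly positive on nonextinction without extra hypotheses — this is the analogue of the Kesten--Stigum theorem for the weighted process and typically requires either an $L\log L$-type condition or a careful truncation/second-moment argument restricted to the event that the tree is ``not too sparse'' (which one can arrange using Corollary \ref{cor:kesten - stigum} and Proposition \ref{prop:0-1 law}); and (ii) converting the averaged estimate $\nu([i]) \asymp r_i^\delta$ into a genuine pointwise upper bound $\mu(B_\rho(x)) \lesssim \rho^\delta$ uniformly, which is where the OSC is essential and where one must control the (random, but a.s.\ finite) constants via a section/stopping-line argument so that cylinders of comparable diameter are used. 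Since the theorem is quoted from Falconer and from Mauldin--Williams, the cleanest route is to follow the self-contained martingale proof in \cite[Theorem 15.10]{Lyons2016}, which packages both obstacles efficiently; I would cite that and sketch the adaptation of the separation step to the OSC setting.
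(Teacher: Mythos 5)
The paper does not prove Theorem \ref{thm:dimension of GW fractals}: it is quoted as a known result of Falconer \cite{FalconerK.J1986Rf} and Mauldin--Williams \cite{MauldinR.Daniel1986RRCA}, with the author pointing to \cite[Theorem~15.10]{Lyons2016} for a self-contained proof. So there is no ``paper proof'' to compare against; your proposal should be judged on its own, and it is essentially the standard first-moment/martingale route that those references follow.

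Your upper bound is fine and complete as sketched: the identity $\mathbb{E}\big(\sum_{i\in T_n} r_i^s\big)=\phi(s)^n$ follows by induction from independence of the $W_a$, $\phi$ is strictly decreasing since $r_i\in(0,1)$ and $\mathbb{P}(W\neq\emptyset)>0$, and Fatou plus a countable family $s_k\searrow\delta$ gives $\dim_H E\leq\delta$ a.s.\ with no OSC needed. For the lower bound two remarks are worth making. First, the worry you raise in (i) about positivity of the weighted martingale limit $M_\infty$ is not actually an obstacle here: because $\Lambda$ is finite the offspring law has bounded support, so Biggins' $L\log L$ condition for the branching random walk is automatic and $M_\infty>0$ a.s.\ on nonextinction. (The paper handles this same point through Falconer's flow theorem, Theorem \ref{thm:Falconer's Theorem}, applied at exponents $\nu<\delta$.) Second, and this is the one place your plan would need to be reshaped: a uniform pointwise bound $\mu(B_\rho(x))\lesssim\rho^\delta$ cannot hold in general, because that would make $E$ Ahlfors $\delta$-regular, which GWFs typically are not (indeed the whole point of Theorem \ref{thm:Main theorem diffuse subses} is that one only gets Ahlfors-regular \emph{subsets} with dimension strictly below $\delta$). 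The cylinder masses $\nu([i])=r_i^\delta M_\infty^{(i)}$ carry the random factors $M_\infty^{(i)}$, which are unbounded over $i\in T$. What actually works is an almost-everywhere local dimension estimate: one shows $\liminf_{\rho\to0}\log\nu(B_\rho(x))/\log\rho\geq\delta$ for $\nu$-a.e.\ $x$ (using the OSC-based bounded-overlap count, your Lemma \ref{lem:OSC use}, together with a Borel--Cantelli control of $\sup_{|i|=n} M_\infty^{(i)}$ growing subexponentially), and then concludes via Billingsley's lemma. Alternatively one replaces $\delta$ by any $\nu<\delta$, uses Falconer's theorem to produce a flow with $\inf_\Pi\sum_{i\in T_\Pi}r_i^\nu>0$ a.s.\ on nonextinction, constructs a measure supported on $E$ with $\mu(\varphi_i K)\lesssim r_i^\nu$, and lets $\nu\nearrow\delta$; this is closer to what the paper itself relies on (Lemma \ref{lem:size of cutsets}) elsewhere. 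Either fix closes the gap; as written, the uniform $\rho^\delta$ estimate in step (ii) is too strong.
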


\subsection{Fractal percolation}

Fractal percolation is an important special case of GWFs. In general,
it is easier to analyze since it has more independence, the maps of
the corresponding IFS have a trivial orthogonal component (i.e. they
are only composed of scaling and translation transformations), and
all the contraction ratios are equal. In our case, it is mainly the
last property which makes things less complicated.

To describe fractal percolation in the framework defined above, fix
an integer $b\geq2$ and $p\in\left(0,1\right)$. Denote $\Lambda_{b}=\left\{ 0,\,1,\,...,\,b-1\right\} ^{d}$
(since in this section $b$ is fixed we will not carry the subscript
$b$ and just write $\Lambda$ instead of $\Lambda_{b}$). Construct
a GWT with alphabet $\Lambda$ and binomial offspring distribution,
i.e., $W\sim\text{Bin}\left(\Lambda,\,p\right)$. 

The coding map is the $b$-adic coding map $\gamma_{b}:\Lambda^{\mathbb{N}}\rightarrow\left[0,1\right]^{d}$
given by 

\[
\gamma_{b}\left(i\right)=\sum\limits _{k=1}^{\infty}b^{-k}i_{k},\,\,\,\forall i\in\Lambda^{\mathbb{N}}.
\]
Note that each $i_{k}$ in the above formula is a vector in $\mathbb{R}^{d}$. 

This coding map corresponds to the similarity IFS consisted of the
homotheties mapping the unit cube to each $b$-adic cube, i.e., $\varphi_{i}\left(x\right)=\frac{1}{b}\cdot x+\frac{i}{b}$
for every $i\in\Lambda,\,x\in\mathbb{R}^{d}$. Note that there is
a correspondence between elements of $\Lambda^{n}$ and closed b-adic
subcubes of the $n^{th}$ level, where for each element $j\in\Lambda^{n}$
the corresponding closed b-adic cube is given by $\gamma\left(\left[j\right]\right)$.

Set for each $n$, $E_{n}=\bigcup\limits _{j\in T_{n}}\gamma\left(\left[j\right]\right)$,
then the limit set $E$ is given by
\[
E=\bigcap_{n\in\mathbb{N}}E_{n}=\gamma_{b}\left(\partial T\right).
\]

Note that in the supercritical case, by Theorem \ref{thm:dimension of GW fractals},
a.s. conditioned on nonextinction, \emph{
\[
\dim_{H}E=\log_{b}\left(m\right).
\]
}Recall that $m=\mathbb{E}\left(Z_{1}\right)$, and in case of fractal
percolation in $\mathbb{R}^{d}$ with parameters $b,p$, $m=pb^{d}$.

\section{Hyperplane diffuse sets\label{sec:Hyperplane-absolute-game}}

\subsection{The hyperplane absolute game}

The \emph{hyperplane absolute game, }developed in \cite{Broderick2012319},
is a useful variant of Schmidt's game which was invented by W. Schmidt
in \cite{Schmidt1966} and became a main tool for the study of $\text{BA}_{d}$. 

The hyperplane absolute game is played between two players, Bob and
Alice and has one fixed parameter $\beta\in\left(0,\frac{1}{3}\right)$.
Bob starts by defining a closed ball $B_{1}=B_{\rho_{1}}\left(x_{1}\right)\subset\mathbb{R}^{d}$.
Then, for every $i\in\mathbb{N}$, after Bob has chosen a ball $B_{i}=B_{\rho_{i}}\left(x_{i}\right)$,
Alice chooses an affine hyperplane $\mathcal{L}_{i}\subset\mathbb{R}^{d}$
and an $\varepsilon_{i}\in\left(0,\beta\rho_{i}\right)$, and removes
the $\varepsilon_{i}$-neighborhood of $\mathcal{L}_{i}$ denoted
by $A_{i}=\mathcal{L}_{i}^{\left(\varepsilon_{i}\right)}$ from $B_{i}$.
Then Bob chooses his next ball $B_{i+1}=B_{\rho_{i+1}}\left(x_{i+1}\right)\subset B_{i}\setminus A_{i}$
with the restriction on the radius $\rho_{i+1}\geq\beta\rho_{i}$.
The game continues ad infinitum. A set $S\subset\mathbb{R}^{d}$ is
called\textit{ hyperplane absolute winning} (HAW) if for every $\beta\in\left(0,\frac{1}{3}\right)$,
Alice has a strategy guaranteeing that $\bigcap\limits _{n=1}^{\infty}B_{n}$
intersects $S$. Note that existence of such a strategy for some $\beta\in\left(0,\frac{1}{3}\right)$,
implies the existence of a strategy for every $\beta^{\prime}\in\left(\beta,\frac{1}{3}\right)$. 

Many interesting sets are known to be HAW (see e.g. \cite{AnJinpeng2015BOoD,FishmanL.2018Idao,Nesharim2014145}),
including the set $\text{BA}_{d}$ \cite[Theorem 2.5]{Broderick2012319}.
Note that HAW sets in $\mathbb{R}^{d}$ are always dense and have
Hausdorff dimension $d$. Also, as stated in Theorem \ref{thm:HAW is robust},
the HAW property is preserved under countable intersections and $C^{1}$
diffeomorphisms, which make these sets ``large''. The following
observation may be found useful.
\begin{rem}
\label{rem:G inv. HAW sets}Let $S\subseteq\mathbb{R}^{d}$ be HAW.
Then for every countable group $G$ of $C^{1}$ diffeomorphisms of
$\mathbb{R}^{d}$, $S$ contains a $G$ - invariant set $\tilde{S}$
which is also HAW. Indeed, we may take $\tilde{S}=\bigcap\limits _{g\in G}gS$,
and by Theorem \ref{thm:HAW is robust}, $\tilde{S}$ is itself HAW.
This property may be useful in some cases, and as an example we already
saw a use for this property in Theorem \ref{thm:fractal percolation intersects BA - Haweks}.
\end{rem}

Although HAW sets are ``large'' in the senses mentioned above, as
in the case of $\text{BA}_{d}$, HAW sets may have Lebesgue measure
0.

\emph{}%
\begin{comment}
We now recall Theorem \ref{thm:HAW intersects diffuse sets}. This
is a key feature of HAW sets. There are several ways to determine
the lower bound - \emph{C} (under the notations of Theorem \ref{thm:HAW intersects diffuse sets}
). The first is an explicit formula given in \cite[Theorem 4.6]{Broderick2012319}
which depends only on $d$, and the number $\beta$ for which $K$
is hyperplane $\beta$-diffuse. Another way is given in \cite[Theorem 3.1]{Fishman2009}
(see also \cite{Das2016}). The third way is by examining measures
supported on $K$ \cite[Theorem 1.1]{Kleinbock2005137}, more accurately,
the infimum of local dimensions of special measures called \emph{absolutely
friendly}
\end{comment}

A key feature of HAW sets is given in Theorem \ref{thm:HAW intersects diffuse sets},
which states, generally speaking, that HAW sets intersect hyperplane
diffuse sets. While the definition of the hyperplane diffuse property
given in the Introduction (Definition \ref{def:diffuse}) may seem
a bit technical and maybe tailored for the hyperplane absolute game,
an equivalent definition of this property using the notion of \emph{microsets}
(which was coined by H. Furstenberg in \cite{Furstenberg2008405})
indicates that it is actually quite natural. 
\begin{defn}
Given a compact set $C\subset\mathbb{R}^{d}$, we denote 
\[
\Omega_{C}=\left\{ A\subseteq C:\,A\text{ is compact and nonempty}\right\} .
\]
We equip $\Omega_{C}$ with the Hausdorff metric which we denote by
$d_{H}$, and is given by 
\[
\forall A,B\in\Omega_{C},\,d_{H}\left(A,B\right)=\inf\left\{ \varepsilon>0:\,A\subseteq B^{\left(\varepsilon\right)}\,\wedge\,B\subseteq A^{\left(\varepsilon\right)}\right\} .
\]
\end{defn}

It is a well known fact that as a metric space $\left(\Omega_{C},d_{H}\right)$
is compact. 

Given a (closed or open) ball with radius $r>0$ and center point
$x\in\mathbb{R}^{d}$, we define $F_{B}:\mathbb{R}^{d}\rightarrow\mathbb{R}^{d}$
by $F_{B}\left(t\right)=\frac{1}{r}\left(t-x\right)$, so that $F_{B}$
is the unique homothety mapping $B$ to the unit ball.
\begin{defn}
Let $K\subseteq\mathbb{R}^{d}$ be a compact set. A set of the form
$F_{B}\left(K\cap B\right)$ where $B$ is a closed ball centered
in $K$ is called a \emph{miniset} of $K$. Every limit of minisets
of $K$ in the Hausdorff metric is called a \emph{microset}\footnote{We follow the definition given in \cite{Broderick2012319} which is
slightly different than the original one given by Furstenberg in \cite{Furstenberg2008405}.
For starters, in Furstenberg's definition cubes are used instead of
balls, but the most significant difference is that there is no restriction
on their center points, which in many cases enables a closed set $K$
to have minisets which are contained in lower dimensional affine spaces,
even when $K$ is hyperplane diffuse, hence making Proposition \ref{prop:diffuse iff no microsets on hyperplanes}
false.

}\emph{.}
\end{defn}

\begin{prop}
\label{prop:diffuse iff no microsets on hyperplanes}\cite[Lemma 4.4]{Broderick2012319}
A compact set $K\subset\mathbb{R}^{d}$ is hyperplane diffuse iff
no microset of $K$ is contained in an affine hyperplane.
\end{prop}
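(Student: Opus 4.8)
The plan is to prove the two implications by compactness arguments: I will use that $\Omega_{\overline{B_1(0)}}$ with the Hausdorff metric is compact, that the set of affine hyperplanes meeting a fixed bounded ball is compact, and that the point-to-hyperplane distance $(y,\mathcal{L})\mapsto \mathrm{dist}(y,\mathcal{L})$ is continuous, uniformly on bounded sets. Throughout I pass freely to subsequences.

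\emph{No flat microset $\Rightarrow$ hyperplane diffuse (contrapositive).} Suppose $K$ is not hyperplane diffuse, hence not hyperplane $\frac1n$-diffuse for any $n\ge 1$. Unwinding Definition \ref{def:diffuse} then produces, for each $n$, a scale $\xi_n<\frac1n$, a centre $x_n\in K$, and an affine hyperplane $\mathcal{L}_n$ with $K\cap B_{\xi_n}(x_n)\subseteq\mathcal{L}_n^{(\xi_n/n)}$. Let $B_n$ be the closed ball of radius $\xi_n/2$ about $x_n$; since $x_n\in K$ this is an admissible ball, and the homothety $F_{B_n}$, which multiplies distances by $2/\xi_n$, carries $\mathcal{L}_n$ to an affine hyperplane $\mathcal{L}_n'$ and produces a miniset
\[
M_n:=F_{B_n}(K\cap B_n)\subseteq (\mathcal{L}_n')^{(2/n)}\cap\overline{B_1(0)}
\]
with $0=F_{B_n}(x_n)\in M_n$. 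Since $0\in(\mathcal{L}_n')^{(2/n)}$, each $\mathcal{L}_n'$ meets $\overline{B_2(0)}$, so after passing to a subsequence $M_n\to M$ and $\mathcal{L}_n'\to\mathcal{L}'$. Every $y\in M$ is a limit of points $y_n\in M_n$ with $\mathrm{dist}(y_n,\mathcal{L}_n')<2/n$, so continuity of the distance forces $\mathrm{dist}(y,\mathcal{L}')=0$, i.e.\ $M\subseteq\mathcal{L}'$. Thus $K$ has a microset contained in an affine hyperplane.

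\emph{Hyperplane diffuse $\Rightarrow$ no flat microset.} Assume $K$ is hyperplane $\beta$-diffuse with threshold $\xi_0$, which we may shrink so that $\xi_0\le 1$. The crux is the uniform quantitative statement: with $\beta':=\beta\xi_0/2$, for \emph{every} miniset $N=F_B(K\cap B)$ and \emph{every} affine hyperplane $\mathcal{L}$ some point of $N$ lies at distance at least $\beta'$ from $\mathcal{L}$. To see this, write $B=\overline{B_r(x)}$ with $x\in K$ and $r\le 1$, so that $F_B$ is non-contracting, and apply hyperplane $\beta$-diffuseness at the centre $x$ and the scale $s:=\min(r,\xi_0/2)<\xi_0$: there is $y\in K\cap B_s(x)\subseteq K\cap B$ with $\mathrm{dist}(y,\mathcal{L})\ge\beta s$, and since $F_B$ scales distances by $1/r\ge 1$ and maps affine hyperplanes to affine hyperplanes, $F_B(y)\in N$ lies at distance $\ge\beta s/r\ge\beta'$ from $F_B(\mathcal{L})$; letting $\mathcal{L}$ vary gives the claim. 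Now if $M=\lim_n M_n$ were a microset contained in an affine hyperplane $\mathcal{L}_0$, then Hausdorff convergence would give $M_n\subseteq M^{(\varepsilon_n)}\subseteq\mathcal{L}_0^{(\varepsilon_n)}$ with $\varepsilon_n\to 0$, contradicting (for $n$ large) the fact that $M_n$ contains a point outside $\mathcal{L}_0^{(\beta')}$. Hence no microset of $K$ lies in an affine hyperplane.

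\emph{Where the difficulty lies.} The one genuinely delicate point is the tension between the radius of the defining ball and the diffuseness threshold: a miniset built from a ball of radius close to $1$ rather than from a tiny ball does not automatically inherit the diffuseness inequality at its own scale, which is exactly why one rescales the sub-ball of radius $s=\min(r,\xi_0/2)$. Granting this, both directions reduce to the stability of quantitative non-flatness under Hausdorff limits — immediate from $M_n\subseteq M^{(\varepsilon_n)}$ — together with compactness of $\Omega_{\overline{B_1(0)}}$, compactness of the space of hyperplanes meeting a bounded ball, and uniform continuity of the point-to-hyperplane distance on bounded sets.
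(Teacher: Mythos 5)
The paper does not prove this proposition itself; it cites \cite[Lemma 4.4]{Broderick2012319} without reproducing an argument, so there is no paper-internal proof to compare against.

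Your proof is correct and self-contained. In the contrapositive direction you correctly negate the $\frac{1}{n}$-diffuseness condition, rescale the failing configuration $(x_n,\xi_n,\mathcal{L}_n)$ by $F_{B_n}$ (scaling factor $2/\xi_n$) to produce minisets $M_n\subseteq(\mathcal{L}_n')^{(2/n)}\cap\overline{B_1(0)}$ containing $0$, and pass to a Hausdorff limit using compactness of $\Omega_{\overline{B_1(0)}}$ together with compactness of the hyperplanes meeting $\overline{B_2(0)}$; joint continuity of point-to-hyperplane distance then puts the limit inside $\mathcal{L}'$. In the forward direction, upgrading diffuseness to a \emph{uniform} separation constant $\beta'=\beta\xi_0/2$ valid for every miniset is precisely the needed step, and your choice of comparison scale $s=\min(r,\xi_0/2)$ correctly resolves the mismatch between the radius $r$ of the defining ball and the diffuseness threshold $\xi_0$; the final contradiction via $M_n\subseteq M^{(\varepsilon_n)}\subseteq\mathcal{L}_0^{(\varepsilon_n)}$ is routine. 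One point worth making explicit rather than implicit: your argument, and the truth of the proposition, depend on an upper bound on the radii of the balls defining minisets (you write $r\le 1$, making $F_B$ non-contracting). Without such a bound, letting $r\to\infty$ would make $F_B(K\cap B)\to\{0\}$, so every compact $K$ would have a one-point microset lying in a hyperplane, falsifying the statement. The paper's Definition as written states no radius bound, but it is clearly intended (it is the only reading under which the cited lemma holds), and it is worth saying where it enters, as you do.
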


The following theorem was proved in \cite{Kleinbock2005}.
\begin{thm}
\cite[Theorem 2.3]{Kleinbock2005} Let $\Phi$ be a similarity IFS
satisfying the OSC whose attractor $K$ is not contained in an affine
hyperplane, then $K$ is hyperplane diffuse and Ahlfors-regular.
\end{thm}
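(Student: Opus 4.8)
The plan is to prove the two conclusions separately. Throughout, $\delta$ denotes the similarity dimension of $\Phi$, i.e. the unique solution of $\sum_{i\in\Lambda}r_{i}^{\delta}=1$.

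\textbf{Ahlfors-regularity.} This is the classical part and I would mostly quote it. Let $\nu$ be the Bernoulli measure on $\Lambda^{\mathbb N}$ with weights $(r_{i}^{\delta})_{i\in\Lambda}$ and set $\mu:=(\gamma_{\Phi})_{*}\nu$, so that $\operatorname{supp}\mu=K$ and $\mu(\varphi_{w}K)=r_{w}^{\delta}$ for every $w\in\Lambda^{*}$. For $x\in K$ write $x=\gamma_{\Phi}(j)$ and, for small $\rho>0$, pick $n$ with $r_{j_{1}\cdots j_{n}}\operatorname{diam}K\le\rho<r_{j_{1}\cdots j_{n-1}}\operatorname{diam}K$; then $\varphi_{j_{1}\cdots j_{n}}K\subseteq B_{\rho}(x)$ yields the lower bound $\mu(B_{\rho}(x))\ge r_{j_{1}\cdots j_{n}}^{\delta}\ge(r_{min}/\operatorname{diam}K)^{\delta}\rho^{\delta}$. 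For the upper bound one invokes the OSC through a standard packing argument (Hutchinson \cite{Hutchinson1981}; see also \cite[Theorem 15.10]{Lyons2016}): the cylinders $\varphi_{w}K$ of diameter comparable to $\rho$ that meet $B_{\rho}(x)$ are bounded in number, since the pairwise disjoint images $\varphi_{w}U$ of a fixed bounded OSC set $U$ each have volume $\asymp\rho^{d}$ and lie in a bounded dilate of $B_{\rho}(x)$; summing the masses $r_{w}^{\delta}\asymp\rho^{\delta}$ gives $\mu(B_{\rho}(x))\le c_{2}\rho^{\delta}$. I would cite this rather than reproduce the packing estimate.

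\textbf{Hyperplane diffuseness.} Here I would argue directly from Definition~\ref{def:diffuse}; Proposition~\ref{prop:diffuse iff no microsets on hyperplanes} gives an equivalent ``microset'' reformulation, and a microset-based proof also works, but it needs a compactness extraction, so the direct argument seems cleaner. Put
\[
\eta:=\inf\bigl\{\varepsilon>0:\ K\subseteq\mathcal L^{(\varepsilon)}\text{ for some affine hyperplane }\mathcal L\bigr\}.
\]
The first step is to check $\eta>0$: since $K$ lies in no affine hyperplane it affinely spans $\mathbb R^{d}$, hence contains $d+1$ affinely independent points, whose convex hull is a non-degenerate simplex and therefore contains some ball $B_{r_{0}}(q)$. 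As each slab $\mathcal L^{(\varepsilon)}$ is convex, $K\subseteq\mathcal L^{(\varepsilon)}$ forces $B_{r_{0}}(q)\subseteq\mathcal L^{(\varepsilon)}$, which forces $\varepsilon\ge r_{0}$ (a ball of radius $r_{0}$ does not fit inside a slab of half-width $<r_{0}$). Hence $\eta\ge r_{0}>0$.

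Now fix $\beta\in(0,\tfrac13)$ with $\beta<\dfrac{r_{min}\,\eta}{2\operatorname{diam}K}$ and set $\xi_{0}=\operatorname{diam}K$. Let $x\in K$, $\xi\in(0,\xi_{0})$, and let $\mathcal L$ be any affine hyperplane. Writing $x=\gamma_{\Phi}(j)$, choose $n\ge1$ with $r_{j_{1}\cdots j_{n}}\operatorname{diam}K\le\tfrac{\xi}{2}<r_{j_{1}\cdots j_{n-1}}\operatorname{diam}K$, so that $r_{min}\xi<2\,r_{j_{1}\cdots j_{n}}\operatorname{diam}K$. Put $\psi:=\varphi_{j_{1}\cdots j_{n}}$ and $C:=\psi(K)\subseteq K$; then $x\in C$ and $\operatorname{diam}C\le\xi/2$, whence $C\subseteq B_{\xi}(x)$. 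If we had $C\subseteq\mathcal L^{(\beta\xi)}$, applying the similarity $\psi^{-1}$ (contraction ratio $r_{j_{1}\cdots j_{n}}^{-1}$, mapping affine hyperplanes to affine hyperplanes) would give $K=\psi^{-1}(C)\subseteq(\psi^{-1}\mathcal L)^{(\beta\xi/r_{j_{1}\cdots j_{n}})}$ with $\beta\xi/r_{j_{1}\cdots j_{n}}<2\beta\operatorname{diam}K/r_{min}<\eta$, contradicting the definition of $\eta$. Hence $\emptyset\ne C\setminus\mathcal L^{(\beta\xi)}\subseteq K\cap B_{\xi}(x)\setminus\mathcal L^{(\beta\xi)}$, so $K$ is hyperplane $\beta$-diffuse. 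The only genuinely substantive ingredient is the OSC-based upper mass bound (the finite-overlap lemma for the sets $\varphi_{w}U$); the lower mass bound, the positivity of $\eta$, and the diffuseness estimate are all elementary, each exploiting that the maps of $\Phi$ are \emph{similarities}, so that cylinders of $K$ are honest rescaled isometric copies of $K$ and hence inherit ``not contained in a hyperplane''.
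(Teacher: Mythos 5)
Your proof is correct. Note that the paper itself does not prove this statement---it just cites Kleinbock's Theorem~2.3---so what you have written is an independent, self-contained argument (up to quoting the classical upper mass bound). It is worth comparing, though, to the machinery the paper develops around its own Theorem~\ref{thm:When attractors are hyperplane diffuse}: your key observation that $\eta>0$ is precisely one implication of Lemma~\ref{lem:sets contained in hyperplanes iff 0 width} (the paper proves it via a determinant-perturbation argument, you via a simplex containing a ball---both fine), and your ``zoom into the cylinder $\psi(K)$, apply $\psi^{-1}$ and contradict $\eta$'' step does in one stroke what the paper splits into two: first extracting a $\left(K,c\right)$-diffuse section (Theorem~\ref{thm:When attractors are hyperplane diffuse}, $\eqref{enu:not contained in a hyperplane}\Rightarrow\eqref{enu:exists a diffuse section}$) and then showing a diffuse tree projects to a hyperplane diffuse set (Proposition~\ref{prop:diffuse tree implies diffuse set}). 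Your route is shorter here because for the attractor itself every cylinder is an honest scaled copy of $K$, so no ``section'' bookkeeping is needed; the paper's more elaborate formulation is what generalizes to the random subtrees that drive the main theorem. The one small stylistic point is that hyperplane $\beta$-diffuseness (Definition~\ref{def:diffuse}) does not require $\beta<\tfrac13$---that restriction belongs to the game parameter---so ``fix $\beta\in\left(0,\tfrac13\right)$'' is an unnecessary constraint, though harmless. The Ahlfors-regularity half is standard; the packing estimate you propose to cite is exactly what the paper records as Lemma~\ref{lem:OSC use}.
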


\begin{rem}
\label{rem:irreducible <=00003D> not on a hyperplane }Note that instead
of the condition that $K$ is not contained in a single hyperplane,
the original condition in \cite[Theorem 2.3]{Kleinbock2005} is that
no finite collection of affine hyperplanes is preserved by $\Phi$
(such an IFS is referred to as \emph{irreducible}), but it turns out
that these two conditions are in fact equivalent (regardless of the
OSC). This fact is proved in \cite[Proposition 3.1]{BRODERICK20132186}.
\begin{comment}
\begin{rem}
This could also be proved by the following simple argument:

Let $\left\{ \mathcal{L}_{1},...,\mathcal{L}_{l}\right\} $ be a minimal
collection s.t. $K\subseteq\bigcup\limits _{t=1}^{l}\mathcal{L}_{t}$
(i.e., $l$ is minimal). By minimality there exists some $x\in\left(\mathcal{L}_{1}\cap K\right)\setminus\bigcup_{t=2}^{l}\mathcal{L}_{t}$.
Let $i=i_{1}i_{2}...\in\Lambda^{\mathbb{N}}$ be s.t. $\gamma_{\Phi}\left(i\right)=x$.
Pick some $n$ large enough so that $\varphi_{i_{1}}\circ...\circ\varphi_{i_{n}}\left(K\right)\cap\bigcup_{t=2}^{l}\mathcal{L}_{t}=\emptyset$.
But $\varphi_{i_{1}}\circ...\circ\varphi_{i_{n}}\left(K\right)\subset K\subset\bigcup_{t=1}^{l}\mathcal{L}_{t}$
which implies that $\varphi_{i_{1}}\circ...\circ\varphi_{i_{n}}\left(K\right)\subset\mathcal{L}_{1}$,
and therefore, $K\subset\left(\varphi_{i_{1}}\circ...\circ\varphi_{i_{n}}\right)^{-1}\mathcal{L}_{1}$
which itself is an affine hyperplane.
\end{rem}

\end{comment}
\end{rem}

Ahlfors-regularity is important if one wants to get a full Hausdorff
dimension of the intersection of $K$ with HAW sets. But in order
to get a positive lower bound for the Hausdorff dimension of intersections
of $K$ with HAW sets (which depends only on $K$) it is enough for
$K$ to be hyperplane diffuse. In this case the OSC may be dropped
(see Theorem \ref{thm:When attractors are hyperplane diffuse}).

\subsection{Diffuseness in IFSs\label{subsec:Diffuseness-revisited}}
\begin{defn}
\label{def:diffuseness for IFS}Let $\Phi=\left\{ \varphi_{i}\right\} _{i\in\Lambda}$
be a similarity IFS, and let $F\subset\mathbb{R}^{d}$ be a non-empty
compact set s.t. $\forall i\in\Lambda,\,\varphi_{i}F\subseteq F$.
For any $c>0$, we say that $\Phi$ is $\left(F,c\right)$-diffuse
if $\forall\mathcal{L}\subseteq\mathbb{R}^{d}$ affine hyperplane
$\exists i\in\Lambda$ s.t. $\varphi_{i}F\cap\mathcal{L}^{\left(c\right)}=\emptyset$.
Moreover, we say that $\Phi$ is $F$-diffuse if it is $\left(F,c\right)$-diffuse
for some $c>0$.
\end{defn}

Obviously, the attractor of $\Phi$ is a natural candidate for $F$
in the definition above. 
\begin{lem}
\label{lem:diffuse implies (K,c)-diffuse}Let $\Phi=\left\{ \varphi_{i}\right\} _{i\in\Lambda}$
be a similarity IFS whose attractor is denoted by $K$. Let $F\subset\mathbb{R}^{d}$
be as above, and $c>0$.
\end{lem}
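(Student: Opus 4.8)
The plan is to exploit the fact that the attractor $K$ is the \emph{smallest} nonempty compact set invariant under all the maps of $\Phi$: once diffuseness is known against some auxiliary invariant set $F$, it passes to $K$ by a one-step inclusion, and --- this is the point worth recording --- with the \emph{same} constant $c$. So the statement I would prove is: if $\Phi$ is $(F,c)$-diffuse, then $\Phi$ is $(K,c)$-diffuse (and hence, quantifying over $c$, if $\Phi$ is $F$-diffuse then it is $K$-diffuse).

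First I would recall why $K\subseteq F$. Writing $F_{0}=F$ and $F_{n}=\bigcup_{i\in\Lambda^{n}}\varphi_{i}F$, the hypothesis $\varphi_{i}F\subseteq F$ for all $i\in\Lambda$ gives $F_{1}\subseteq F_{0}$, and applying each $\varphi_{j}$ ($j\in\Lambda$) and taking unions yields by induction a decreasing sequence of nonempty compact sets $F_{0}\supseteq F_{1}\supseteq F_{2}\supseteq\cdots$; by Hutchinson's theorem $\bigcap_{n}F_{n}$ is exactly the attractor $K$ (this is the observation already made in the text just before Definition \ref{def:diffuseness for IFS}). Hence $K\subseteq F_{0}=F$, and therefore $\varphi_{i}K\subseteq\varphi_{i}F$ for every $i\in\Lambda$.

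Next I would fix an arbitrary affine hyperplane $\mathcal{L}\subseteq\mathbb{R}^{d}$ and argue directly: assuming $\Phi$ is $(F,c)$-diffuse, Definition \ref{def:diffuseness for IFS} provides some $i\in\Lambda$ with $\varphi_{i}F\cap\mathcal{L}^{(c)}=\emptyset$, and since $\varphi_{i}K\subseteq\varphi_{i}F$ this forces $\varphi_{i}K\cap\mathcal{L}^{(c)}=\emptyset$. As $\mathcal{L}$ was arbitrary, this is precisely Definition \ref{def:diffuseness for IFS} with $F$ replaced by $K$, i.e.\ $\Phi$ is $(K,c)$-diffuse, and since $F$-diffuse means $(F,c)$-diffuse for some $c>0$, we also obtain that $F$-diffuse implies $K$-diffuse. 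The very same computation in fact shows more generally that $(F,c)$-diffuseness transfers to $(F',c)$-diffuseness for any compact $\Phi$-invariant $F'\subseteq F$, with $F'=K$ the extreme instance.

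I do not expect a genuine obstacle here: the only non-formal ingredient is the standard identification $\bigcap_{n}F_{n}=K$ (hence $K\subseteq F$), after which everything reduces to monotonicity of the condition ``$\varphi_{i}(\,\cdot\,)\cap\mathcal{L}^{(c)}=\emptyset$'' under shrinking the tested set. The role of the lemma is organisational: it shows that being a diffuse IFS does not depend on the auxiliary invariant set $F$ and may always be checked against $K$ itself, which is what the later arguments will use.
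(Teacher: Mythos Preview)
Your argument for the implication ``$(F,c)$-diffuse $\Rightarrow$ $(K,c)$-diffuse'' is correct and is exactly the paper's approach: the paper's entire proof of this direction is the single clause ``(1) is trivial since $K\subseteq F$,'' which is the monotonicity observation you spelled out in detail.

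However, the lemma in the paper has two enumerated parts (the enumerate environment sits just outside the \texttt{lem} environment in the source, so it was clipped from the statement you were shown). Part (2) is a partial converse: if $\Phi$ is $(K,c)$-diffuse, then for every $c'\in(0,c)$ there exists $n$ such that the iterated IFS $\{\varphi_i\}_{i\in\Lambda^n}$ is $(F,c')$-diffuse. The paper proves this from the convergence $\bigcup_{i\in\Lambda^n}\varphi_i F \searrow K$ as $n\to\infty$: once the $n$-th level images of $F$ are within $c-c'$ of $K$, any hyperplane avoided at distance $c$ by some $\varphi_j K$ is avoided at distance $c'$ by some $\varphi_{ji}F$. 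Your proposal does not address this second part, so relative to the full lemma it is incomplete; but for the part you did identify, nothing is missing.
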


\begin{enumerate}
\item If $\Phi$ is $\left(F,c\right)$-diffuse, then $\Phi$ is also $\left(K,c\right)$-diffuse. 
\item If $\Phi$ is $\left(K,c\right)$-diffuse, then $\forall c^{\prime}\in\left(0,c\right)$,
for a large enough $n$, the IFS $\left\{ \varphi_{i}\right\} _{i\in\Lambda^{n}}$
is $\left(F,c^{\prime}\right)$-diffuse.
\end{enumerate}
\begin{proof}
(1) is trivial since $K\subseteq F$. (2) follows directly from the
fact that the decreasing sequence of sets $\bigcup\limits _{i\in\Lambda^{n}}\varphi_{i}F$
converges to $K$ as $n\to\infty$.%
\begin{comment}
For every $\varepsilon>0$, for $n$ large enough, $\bigcup\limits _{i\in\Lambda^{n}}\varphi_{i}F\subseteq K^{\left(\varepsilon\right)}$.
Hence, for any $j\in\Lambda$ and $i\in\Lambda^{n}$, $\varphi_{ji}F\subseteq\varphi_{j}\left(K^{\left(\varepsilon\right)}\right)=\left(\varphi_{j}K\right)^{\left(r_{j}\varepsilon\right)}\subseteq\left(\varphi_{j}K\right)^{\left(\varepsilon\right)}$.
So if $\varphi_{j}K\cap\mathcal{L}^{\left(c\right)}=\emptyset$, then
$\varphi_{ji}F\cap\mathcal{L}^{c-\varepsilon}=\emptyset$.
\end{comment}
\end{proof}
In view of the above, we say that $\Phi$ is $c$-diffuse if it is
$\left(K,c\right)$-diffuse where $K$ is the attractor of $\Phi$,
and we say that $\Phi$ is diffuse if it is $c$-diffuse for some
$c>0$. 

Given an IFS $\left\{ \varphi_{i}\right\} _{i\in\Lambda}$ in the
background, we shall call a finite subset $A\subset\Lambda^{*}$ diffuse
(resp. $c$-diffuse and $\left(F,c\right)$-diffuse) if the IFS $\left\{ \varphi_{i}\right\} _{i\in A}$
is diffuse (resp. $c$-diffuse and $\left(F,c\right)$-diffuse). Moreover,
Given a tree $T\in\mathscr{T}_{\Lambda}$, we say that $T$ is diffuse
(resp. $c$-diffuse and $\left(F,c\right)$-diffuse) if for each $i\in T$,
$W_{T}\left(i\right)$ is diffuse (resp. $c$-diffuse and $\left(F,c\right)$-diffuse).
Note that a tree $T\in\mathscr{T}_{\Lambda}$ is diffuse iff it is
$c$-diffuse for some $c>0$. 

The following lemma will be useful in what follows.
\begin{lem}
\label{lem:sets contained in hyperplanes iff 0 width} $\forall A\subseteq\mathbb{R}^{d}$,
A is contained in an affine hyperplane $\Longleftrightarrow$ 
\[
\inf\left\{ \varepsilon>0:\,\exists\mathcal{L}\subset\mathbb{R}^{d}\text{ affine hyperplane s.t. }A\subseteq\text{\ensuremath{\mathcal{L}}}^{\left(\varepsilon\right)}\right\} =0.
\]
\end{lem}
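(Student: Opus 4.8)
The forward implication is immediate, so I would dispose of it first: if $A\subseteq\mathcal{L}$ for some affine hyperplane $\mathcal{L}$, then $A\subseteq\mathcal{L}^{(\varepsilon)}$ for every $\varepsilon>0$, so the displayed infimum equals $0$. (If $A=\emptyset$ both sides are trivially satisfied, so from now on assume $A\neq\emptyset$.)

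For the converse the plan is a compactness argument on the space of affine hyperplanes. First I would record the standard parametrization: every affine hyperplane can be written as $\mathcal{L}_{u,c}=\{x\in\mathbb{R}^{d}:\langle x,u\rangle=c\}$ for a unit vector $u\in S^{d-1}$ and a scalar $c\in\mathbb{R}$, and, since $\left\Vert\cdot\right\Vert$ is the Euclidean norm, the distance from $x$ to $\mathcal{L}_{u,c}$ is exactly $|\langle x,u\rangle-c|$, so $\mathcal{L}_{u,c}^{(\varepsilon)}=\{x:|\langle x,u\rangle-c|<\varepsilon\}$. Now assume the infimum is $0$ and choose, for each $n\in\mathbb{N}$, a unit vector $u_{n}$ and $c_{n}\in\mathbb{R}$ with $A\subseteq\mathcal{L}_{u_{n},c_{n}}^{(1/n)}$, i.e. $|\langle a,u_{n}\rangle-c_{n}|<1/n$ for all $a\in A$.

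Next I would pin down the offsets before passing to a limit: fix some $a_{0}\in A$, so that $|c_{n}-\langle a_{0},u_{n}\rangle|<1/n$, which keeps the sequence $(c_{n})$ bounded even though $A$ itself may be unbounded. By compactness of $S^{d-1}$, pass to a subsequence with $u_{n_{k}}\to u\in S^{d-1}$; then $c_{n_{k}}\to c:=\langle a_{0},u\rangle$ by continuity of the inner product. For an arbitrary $a\in A$, letting $k\to\infty$ in the inequality $|\langle a,u_{n_{k}}\rangle-c_{n_{k}}|<1/n_{k}$ and using $\langle a,u_{n_{k}}\rangle\to\langle a,u\rangle$ yields $\langle a,u\rangle=c$, i.e. $a\in\mathcal{L}_{u,c}$. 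Since $u$ is a unit vector, $\mathcal{L}_{u,c}$ is a genuine affine hyperplane containing $A$, which finishes the proof. There is no serious obstacle here; the only point needing care is precisely the possible unboundedness of $A$, handled by controlling $c_{n}$ through the fixed point $a_{0}$ before invoking compactness of the sphere.
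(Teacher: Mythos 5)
Your proof is correct, but it follows a genuinely different route from the paper's. The paper proves the contrapositive of the nontrivial direction by an elementary perturbation argument: if $A$ is not contained in an affine hyperplane, pick $d+1$ affinely independent points $x_1,\dots,x_{d+1}\in A$, observe that the matrix with rows $x_{i+1}-x_1$ is nonsingular, and use continuity of the determinant to conclude that for small enough $\varepsilon>0$ no hyperplane can meet all the balls $B_\varepsilon(x_i)$, hence no $\mathcal L^{(\varepsilon)}$ can contain $A$. You instead argue the contrapositive the other way round: assume the infimum is $0$, parametrize hyperplanes as $\mathcal L_{u,c}$ with $u\in S^{d-1}$, extract a convergent subsequence of the $u_n$, control the offsets $c_n$ via a fixed base point $a_0\in A$, and pass to the limit. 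Both arguments are sound; the paper's is more elementary (no compactness, only linear algebra plus continuity of $\det$) and, because it isolates a finite set of $d+1$ witnesses, it would yield a quantitative lower bound on the infimum in terms of the geometry of those points. Your compactness argument is cleaner to state and is in fact the same style the paper uses a little later in Proposition \ref{prop:diffuse implies 0-diffuse}, where hyperplanes are again treated via limits in a compact space; the one subtlety you correctly flag and handle is that $A$ may be unbounded, so the scalars $c_n$ must be pinned down through a fixed $a_0\in A$ before invoking compactness of $S^{d-1}$.
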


\begin{proof}
The implication $\left(\implies\right)$ is trivial. For the other
direction, assume that $A$ is not contained in an affine hyperplane.
Then there exist $x_{1},...,x_{d+1}\in A$ which are not contained
in a single affine hyperplane, that is to say that the vectors $v_{1}=x_{2}-x_{1},...,v_{d}=x_{d+1}-x_{1}$
are linearly independent, so the matrix $M=\left(\begin{array}{c}
-\,v_{1}\,-\\
...\\
-\,v_{d}\,-
\end{array}\right)$ is nonsingular. Since $\det(\cdot)$ is a continuous function, small
perturbations of $M$ are still nonsingular, so for $\varepsilon>0$
small enough, no affine hyperplane intersects all the balls $B_{\varepsilon}\left(x_{i}\right)$
for $i=1,...,d+1$, and there is no affine hyperplane $\mathcal{L}\subset\mathbb{R}^{d}$,
s.t. $A\subseteq\text{\ensuremath{\mathcal{L}}}^{\left(\varepsilon\right)}$.
\end{proof}

\begin{prop}
\label{prop:diffuse implies 0-diffuse}Let $\Phi=$$\left\{ \varphi_{i}\right\} _{i\in\Lambda}$
be a similarity IFS and let $F\subset\mathbb{R}^{d}$ be a nonempty
compact set s.t. $\varphi_{i}F\subseteq F$ for every $i\in\Lambda$.
Then $\Phi$ is $\left(F,c\right)$-diffuse for some $c>0$ $\iff$
for every affine hyperplane $\mathcal{L}\subseteq\mathbb{R}^{d}$,
there exists some $i\in\Lambda$ s.t. $\varphi_{i}F\cap\mathcal{L}=\emptyset$.
\end{prop}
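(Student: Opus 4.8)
The plan is to prove the equivalence in Proposition \ref{prop:diffuse implies 0-diffuse} directly, using the quantitative compactness packaged in Lemma \ref{lem:sets contained in hyperplanes iff 0 width}. The forward implication $(\implies)$ is immediate: if $\Phi$ is $(F,c)$-diffuse and $\mathcal{L}$ is any affine hyperplane, then there is $i\in\Lambda$ with $\varphi_i F\cap\mathcal{L}^{(c)}=\emptyset$, and since $\mathcal{L}\subseteq\mathcal{L}^{(c)}$ we certainly get $\varphi_i F\cap\mathcal{L}=\emptyset$. So the content is entirely in the reverse implication.

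For $(\Longleftarrow)$, I would argue by contradiction. Suppose that for every affine hyperplane $\mathcal{L}$ there is some $i\in\Lambda$ with $\varphi_i F\cap\mathcal{L}=\emptyset$, but that $\Phi$ is not $(F,c)$-diffuse for any $c>0$. Unpacking the negation: for every $n\in\mathbb{N}$ there is an affine hyperplane $\mathcal{L}_n$ such that for \emph{every} $i\in\Lambda$ one has $\varphi_i F\cap \mathcal{L}_n^{(1/n)}\neq\emptyset$. Since $\Lambda$ is finite, by passing to a subsequence we may assume the offending index is the same $i_0\in\Lambda$ for all $n$ in the subsequence, so $\varphi_{i_0}F\cap\mathcal{L}_n^{(1/n)}\neq\emptyset$ for all $n$. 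Pick $x_n\in\varphi_{i_0}F$ with $\mathrm{dist}(x_n,\mathcal{L}_n)<1/n$. Because $\varphi_{i_0}F$ is compact, pass to a further subsequence so that $x_n\to x_\infty\in\varphi_{i_0}F$.

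The remaining step is to extract a limiting hyperplane and reach a contradiction. Each $\mathcal{L}_n$ can be written as $\{y:\langle u_n,y\rangle=a_n\}$ with $u_n$ a unit normal vector; since the unit sphere is compact we may assume $u_n\to u_\infty$, a unit vector. The scalars $a_n=\langle u_n,z_n\rangle$ for any $z_n\in\mathcal{L}_n$ nearest to $x_n$ satisfy $|a_n-\langle u_n,x_n\rangle|<1/n$, and since $x_n\to x_\infty$ and $u_n\to u_\infty$, the $a_n$ converge to $a_\infty:=\langle u_\infty,x_\infty\rangle$. Then $\mathcal{L}_\infty=\{y:\langle u_\infty,y\rangle=a_\infty\}$ is an affine hyperplane containing $x_\infty$, hence $\varphi_{i_0}F\cap\mathcal{L}_\infty\neq\emptyset$. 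This does not yet contradict the hypothesis (which only gives \emph{some} index disjoint from $\mathcal{L}_\infty$), so one more move is needed: I would instead apply the hypothesis to $\mathcal{L}_\infty$ to get an index $j\in\Lambda$ with $\varphi_j F\cap\mathcal{L}_\infty=\emptyset$; by compactness of $\varphi_j F$ this means $\mathrm{dist}(\varphi_j F,\mathcal{L}_\infty)=:2\delta>0$. But $\mathcal{L}_n\to\mathcal{L}_\infty$ uniformly on bounded sets (from $u_n\to u_\infty$, $a_n\to a_\infty$), so for large $n$, $\mathcal{L}_n^{(1/n)}$ stays within distance $\delta$ of $\mathcal{L}_\infty$ near the bounded set $\varphi_j F$, forcing $\varphi_j F\cap\mathcal{L}_n^{(1/n)}=\emptyset$ — contradicting that \emph{every} $i\in\Lambda$, in particular $j$, meets $\mathcal{L}_n^{(1/n)}$.

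The main obstacle is the bookkeeping around the quantifiers: the negation of "$(F,c)$-diffuse for some $c$" is "for all $c$ there is a bad hyperplane for which \emph{all} indices fail", and one must be careful that the limiting hyperplane $\mathcal{L}_\infty$ is then simultaneously bad for all indices, which is exactly what contradicts the hypothesis. The key technical input is the compactness of the space of affine hyperplanes through a bounded region — equivalently, compactness of the unit sphere of normals together with boundedness of the relevant intercepts — which is morally the same phenomenon underlying Lemma \ref{lem:sets contained in hyperplanes iff 0 width}; indeed one could alternatively phrase the whole argument by applying that lemma to the set $A=\{x_1,\dots,x_{d+1}\}$ of nearly-coplanar points in $\bigcup_{i\in\Lambda}\varphi_i F$, but the sequential-compactness route above is cleaner.
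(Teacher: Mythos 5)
Your proof is correct, but it takes a route that is noticeably different from the paper's. The paper also argues by contradiction and extracts a limiting hyperplane, but it does so by intersecting each $\mathcal{L}_n$ with a large closed ball $C\supseteq F^{(c_0)}$, invoking compactness of the space $\Omega_C$ of nonempty compact subsets of $C$ in the Hausdorff metric to get a limit $A\in\Omega_C$, and then using Lemma \ref{lem:sets contained in hyperplanes iff 0 width} to conclude $A$ lies in an affine hyperplane $\mathcal{L}$. It then finishes directly: $\mathcal{L}^{(\varepsilon)}$ meets every $\varphi_iF$ for every $\varepsilon>0$, and since each $\varphi_iF$ is closed this forces $\mathcal{L}\cap\varphi_iF\neq\emptyset$ for all $i$, contradicting the hypothesis. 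You instead parameterize hyperplanes by unit normals $u_n$ and intercepts $a_n$, use compactness of the sphere plus boundedness of the $a_n$ (via the nearby points $x_n\in\varphi_{i_0}F$) to get $\mathcal{L}_\infty$, and then reach a contradiction indirectly: apply the hypothesis to $\mathcal{L}_\infty$ to get an index $j$ with $\operatorname{dist}(\varphi_jF,\mathcal{L}_\infty)>0$, and propagate that gap back to $\mathcal{L}_n^{(1/n)}$ for large $n$. Your route is more self-contained (it avoids both $\Omega_C$ and Lemma \ref{lem:sets contained in hyperplanes iff 0 width}), at the cost of handling the hyperplane parameterization explicitly; the paper's route delegates that bookkeeping to its two stated lemmas and obtains a cleaner, more direct contradiction.

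Two small remarks on your write-up. First, the sentence ``by passing to a subsequence we may assume the offending index is the same $i_0$'' is a non-step: your negation already gives $\varphi_iF\cap\mathcal{L}_n^{(1/n)}\neq\emptyset$ for \emph{every} $i\in\Lambda$, so you can simply fix any $i_0$ without passing to a subsequence. Second, you could shorten the finish: rather than applying the hypothesis to $\mathcal{L}_\infty$ and propagating back, just repeat the $x_n\to x_\infty$ extraction for each of the finitely many $i\in\Lambda$ (taking a common subsequence) to show $\mathcal{L}_\infty$ meets every $\varphi_iF$, which contradicts the hypothesis directly, exactly as the paper does.
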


\begin{proof}
The implication $\left(\implies\right)$ is true by definition. For
the other direction, assume that $\Phi$ is not diffuse, i.e., $\Phi$
is not $\left(F,c\right)$-diffuse for any $c$. Take any decreasing
sequence $\left(c_{n}\right)$ s.t. $c_{n}\searrow0$. Then there
exists a sequence of affine hyperplanes $\left(\mathcal{L}_{n}\right)_{n\in\mathbb{N}}$
s.t. $\forall n\in\mathbb{N},\,\forall i\in\Lambda,\,\varphi_{i}F\cap\mathcal{L}_{n}^{\left(c_{n}\right)}\neq\emptyset$.
Let $C\subset\mathbb{R}^{d}$ be a closed ball containing $F^{\left(c_{0}\right)}$
(hence intersecting all the affine hyperplanes $\mathcal{L}_{n}$),
and denote $\mathcal{L}_{n}^{\prime}=\mathcal{L}_{n}\cap C$ for every
$n$. By compactness of $\Omega_{C}$, taking a subsequence we may
assume that $\mathcal{L}_{n}^{\prime}\rightarrow A$ for some $A\in\Omega_{C}$.
By Lemma \ref{lem:sets contained in hyperplanes iff 0 width}, $A$
is contained in some affine hyperplane $\mathcal{L}$. Now, given
$\varepsilon>0$, taking $n$ large enough s.t. $d_{H}\left(\mathcal{L}_{n}^{\prime},A\right)<\varepsilon/2$
and $c_{n}<\varepsilon/2$, we obtain $\mathcal{L}^{\left(\varepsilon\right)}\cap\varphi_{i}F\neq\emptyset$
for every $i\in\Lambda$%
\begin{comment}
By assumption $\mathcal{L}_{n}^{\prime}\cap\left(\varphi_{i}F\right)^{\left(c_{n}\right)}\neq\emptyset$.
Now, $\mathcal{L}_{n}^{\prime}\subseteq A^{\left(\varepsilon/2\right)}\subseteq\mathcal{L}^{\left(\varepsilon/2\right)}$
and $c_{n}<\varepsilon/2$, so $\mathcal{L}^{\left(\varepsilon/2\right)}\cap\left(\varphi_{i}F\right)^{\left(\varepsilon/2\right)}$
and therefore $\mathcal{L}^{\left(\varepsilon\right)}\cap\varphi_{i}F\neq\emptyset$.
\begin{itemize}
\item Note that $A^{\left(\varepsilon_{1}\right)}\cap A^{\left(\varepsilon_{2}\right)}\neq\emptyset$
iff $A^{\left(\delta_{1}\right)}\cap A^{\left(\delta_{2}\right)}\neq\emptyset$
for every $\delta_{1},\delta_{2}\geq0$ with $\delta_{1}+\delta_{2}=\varepsilon_{1}+\varepsilon_{2}$.
\item Since $F^{\left(c_{0}\right)}\subseteq C$, and $\varphi_{i}F\subseteq F$,
the assumption that $\varphi_{i}F\cap\mathcal{L}_{n}^{\left(c_{n}\right)}\neq\emptyset$
implies that $\varphi_{i}F\cap\mathcal{L}_{n}^{\prime\left(c_{n}\right)}\neq\emptyset$.
\end{itemize}
\end{comment}
. Since this is true for every $\varepsilon>0$ and each $\varphi_{i}F$
is closed, this implies that $\mathcal{L}\cap\varphi_{i}F\neq\emptyset$
for every $i\in\Lambda$.
\end{proof}
The following Proposition relates the concept of diffuseness of IFSs
with that of diffuseness of subsets of $\mathbb{R}^{d}$ as defined
in Definition \ref{def:diffuse}. 
\begin{prop}
\label{prop:diffuse tree implies diffuse set}Let $\Phi=\left\{ \varphi_{i}\right\} _{i\in\Lambda}$
be a similarity IFS with contracting ratios $\left\{ r_{i}\right\} _{i\in\Lambda}$
and attractor $K$. Let $T\in\mathscr{T}_{\Lambda}$ be a $\left(K,c\right)$-diffuse
tree, then $\gamma_{\Phi}\left(\partial T\right)$ is hyperplane $c\frac{r_{min}}{\text{diam}\left(K\right)}$-diffuse. 
\end{prop}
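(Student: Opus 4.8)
The plan is to verify Definition \ref{def:diffuse} directly for $D:=\gamma_{\Phi}\left(\partial T\right)$, taking $\xi_{0}=\text{diam}\left(K\right)$ and $\beta=c\frac{r_{min}}{\text{diam}\left(K\right)}$. Two preliminary remarks are in order. First, $D$ is closed, being the continuous image of the closed (hence compact) set $\partial T\subseteq\Lambda^{\mathbb{N}}$. Second, for every vertex $v\in T$ the set $W_{T}\left(v\right)$ is nonempty (the empty IFS is not $\left(K,c\right)$-diffuse), so $T$ has infinite length, $\partial T\neq\emptyset$, and every $v\in T$ is a prefix of some element of $\partial T$; moreover $\text{diam}\left(K\right)>0$, since if $K$ were a single point then $\varphi_{j}K\subseteq K$ would equal that point for every $j$, and then no hyperplane could be avoided, contradicting $\left(K,c\right)$-diffuseness of $W_{T}\left(\emptyset\right)$.

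Now fix $\xi\in\left(0,\text{diam}\left(K\right)\right)$, a point $x\in D$ and an affine hyperplane $\mathcal{L}$; I must exhibit a point of $D$ lying in $B_{\xi}\left(x\right)$ at distance at least $\beta\xi$ from $\mathcal{L}$. Write $x=\gamma_{\Phi}\left(j\right)$ with $j\in\partial T$. Since $r_{\emptyset}=1$ and $r_{j_{1}\cdots j_{n}}\to0$, I would choose the cut-off level $n\geq1$ to be the unique index with $r_{j_{1}\cdots j_{n}}<\xi/\text{diam}\left(K\right)\leq r_{j_{1}\cdots j_{n-1}}$, and set $v=j_{1}\cdots j_{n}$, $\psi=\varphi_{v}$, $\rho=r_{v}$. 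The left inequality gives $\text{diam}\left(\psi K\right)=\rho\,\text{diam}\left(K\right)<\xi$, so (as $x\in\psi K$) the whole image $\psi\left(K\right)$ lies in $B_{\xi}\left(x\right)$; the right inequality gives the crucial lower bound $\rho=r_{j_{1}\cdots j_{n-1}}r_{j_{n}}\geq\frac{\xi r_{min}}{\text{diam}\left(K\right)}$.

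Next I would transport the diffuseness hypothesis at $v$ through the similarity $\psi$. Since $T$ is $\left(K,c\right)$-diffuse, the IFS $\left\{ \varphi_{i}\right\} _{i\in W_{T}\left(v\right)}$ is $\left(K,c\right)$-diffuse (Definition \ref{def:diffuseness for IFS}). Applying this with the affine hyperplane $\psi^{-1}\left(\mathcal{L}\right)$ (again an affine hyperplane, $\psi$ being an invertible affine map) produces $i\in W_{T}\left(v\right)$ with $\varphi_{i}K\cap\left(\psi^{-1}\mathcal{L}\right)^{\left(c\right)}=\emptyset$. Using the elementary identity $\psi\left(S^{\left(c\right)}\right)=\left(\psi S\right)^{\left(\rho c\right)}$, valid for any set $S$ and any similarity $\psi$ of ratio $\rho$, applying $\psi$ yields $\varphi_{vi}K=\psi\left(\varphi_{i}K\right)\subseteq\mathbb{R}^{d}\setminus\mathcal{L}^{\left(\rho c\right)}$. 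Since $vi\in T$, I would choose $k\in\partial T$ with $vi<k$ and put $y=\gamma_{\Phi}\left(k\right)\in D$. Then $y\in\varphi_{vi}K\subseteq\psi\left(K\right)\subseteq B_{\xi}\left(x\right)$ and $\text{dist}\left(y,\mathcal{L}\right)\geq\rho c\geq\frac{c\,r_{min}}{\text{diam}\left(K\right)}\xi=\beta\xi$, so $y\in D\cap B_{\xi}\left(x\right)\setminus\mathcal{L}^{\left(\beta\xi\right)}$, which is therefore nonempty, as required.

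There is no genuine obstacle here — the proof is a scale selection followed by conjugating the $\left(K,c\right)$-diffuseness of $W_{T}\left(v\right)$ by the similarity $\psi$. The only points that need care are the two-sided control of $\rho$ (which is precisely what pins down the constant $\beta=c\,r_{min}/\text{diam}\left(K\right)$), the open-versus-closed ball bookkeeping that forces the strict inequality $\rho\,\text{diam}\left(K\right)<\xi$ in the choice of $n$, and the verification that a similarity of ratio $\rho$ sends $\varepsilon$-neighbourhoods to $\rho\varepsilon$-neighbourhoods and affine hyperplanes to affine hyperplanes.
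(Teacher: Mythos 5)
Your proof is correct and follows essentially the same route as the paper's: pick the word $v$ along the coding of $x$ at the scale $r_v\approx\xi/\mathrm{diam}(K)$, conjugate the $(K,c)$-diffuseness of $W_T(v)$ by $\varphi_v$ against the pulled-back hyperplane $\varphi_v^{-1}(\mathcal{L})$, and follow the chosen child down to a boundary point. The one place you are slightly more careful than the paper is the strict inequality $r_v<\xi/\mathrm{diam}(K)$, which is what actually guarantees $\varphi_v K\subset B_\xi(x)$ for the open ball (the paper's non-strict choice could in principle allow equality and thus a point on the sphere), and you also make explicit the (needed) observation that a $(K,c)$-diffuse tree has no leaves so that $\varphi_{vi}K$ meets $\gamma_\Phi(\partial T)$.
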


\begin{proof}
Denote $E=\gamma_{\Phi}\left(\partial T\right)$ and $\Delta=\text{diam}\left(K\right)$.
Assume we are given some $\xi\in\left(0,\,\Delta\cdot r_{min}\right)$,
$x\in E$, and an affine hyperplane $\mathcal{L}\subset\mathbb{R}^{d}$.
Let $i=i_{1}...i_{n}\in\Lambda^{n}$ be a finite word s.t. $x\in\varphi_{i}K$
and $\frac{\xi}{\Delta}r_{min}<r_{i}\leq\frac{\xi}{\Delta}$ (in order
to find such $i$, let $k=k_{1}k_{2}...\in\Lambda^{\mathbb{N}}$ be
s.t. $\gamma_{\Phi}\left(k\right)=x$, and let $n\in\mathbb{N}$ be
the unique integer s.t. $r_{k_{1}}\cdots r_{k_{n}}\leq\frac{\xi}{\Delta}<r_{k_{1}}\cdots r_{k_{n-1}}$,
then take $i=k_{1}...k_{n}\in\Lambda^{n}$). Since $\varphi_{i}^{-1}\left(\mathcal{L}\right)$
is still an affine hyperplane, there exists some $j\in W_{T}\left(i\right)$
s.t. $\varphi_{j}K\cap\left(\varphi_{i}^{-1}\left(\mathcal{L}\right)\right)^{\left(c\right)}=\emptyset$.
Applying $\varphi_{i}$ we get that $\varphi_{ij}K\cap\mathcal{L}^{\left(r_{i}c\right)}=\emptyset$.
Since $r_{i}\leq\frac{\xi}{\Delta}$, $\text{diam}\left(\varphi_{i}K\right)=r_{i}\Delta\leq\xi$
and therefore $\varphi_{ij}K\subseteq\varphi_{i}K\subseteq B_{\xi}\left(x\right)$.
Noting that $\varphi_{ij}K\cap K\neq\emptyset$ we have shown that
$K\cap B_{\xi}\left(x\right)\setminus\mathcal{L}^{\left(r_{i}c\right)}\neq\emptyset$,
and since $r_{i}>\frac{\xi}{\Delta}r_{min}$ we are done. 
\end{proof}
Proposition \ref{prop:diffuse tree implies diffuse set} implies in
particular that whenever a similarity IFS is diffuse, its attractor
is hyperplane diffuse.
\begin{thm}
\label{thm:When attractors are hyperplane diffuse}Let $\Phi=\left\{ \varphi_{i}\right\} _{i\in\Lambda}$
be a similarity IFS in $\mathbb{R}^{d}$ with attractor $K$. The
following are equivalent.
\begin{enumerate}
\item \label{enu:not contained in a hyperplane}$K$ is not contained in
an affine hyperplane.
\item \label{enu:exists a diffuse section}There exists a diffuse section
$\Pi\subset\Lambda^{*}$ (i.e., such that the IFS $\left\{ \varphi_{i}\right\} _{i\in\Pi}$
is diffuse).
\item \label{enu:-K is hyperplane diffuse}$K$ is hyperplane diffuse.
\end{enumerate}
\end{thm}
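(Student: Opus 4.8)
The plan is to prove the cycle $(\ref{enu:not contained in a hyperplane})\Rightarrow(\ref{enu:exists a diffuse section})\Rightarrow(\ref{enu:-K is hyperplane diffuse})\Rightarrow(\ref{enu:not contained in a hyperplane})$. The implication $(\ref{enu:-K is hyperplane diffuse})\Rightarrow(\ref{enu:not contained in a hyperplane})$ is immediate from Definition \ref{def:diffuse}: if $K$ were contained in an affine hyperplane $\mathcal{L}$, then for every $x\in K$ and every $\xi>0$ one has $K\cap B_{\xi}(x)\subseteq\mathcal{L}\subseteq\mathcal{L}^{(\beta\xi)}$ for every $\beta>0$, so no $\beta$ could witness the hyperplane diffuseness of $K$.

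For $(\ref{enu:exists a diffuse section})\Rightarrow(\ref{enu:-K is hyperplane diffuse})$, let $\Pi\subset\Lambda^{*}$ be a diffuse section. Since $\bigcup_{i\in\Pi}[i]=\Lambda^{\mathbb{N}}$, we get $K=\gamma_{\Phi}(\Lambda^{\mathbb{N}})=\bigcup_{i\in\Pi}\gamma_{\Phi}([i])=\bigcup_{i\in\Pi}\varphi_{i}K$, so $K$ is also the attractor of the similarity IFS obtained by reindexing $\{\varphi_{i}\}_{i\in\Pi}$ by a finite alphabet $J$; call this IFS $\Psi=\{\psi_{j}\}_{j\in J}$, and let $c>0$ witness its diffuseness. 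Now take $T=J^{*}$, the full tree over $J$. For every $a\in T$ we have $W_{T}(a)=J$, and $\{\psi_{j}\}_{j\in J}=\Psi$ is $(K,c)$-diffuse, so $T$ is a $(K,c)$-diffuse tree. Since $\partial(J^{*})=J^{\mathbb{N}}$ and $\gamma_{\Psi}(J^{\mathbb{N}})=K$, Proposition \ref{prop:diffuse tree implies diffuse set} (applied to $\Psi$) shows that $K$ is hyperplane diffuse.

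The substantive step is $(\ref{enu:not contained in a hyperplane})\Rightarrow(\ref{enu:exists a diffuse section})$; I will show that $\Pi=\Lambda^{n}$ is a diffuse section once $n$ is large enough. The attractor of the IFS $\{\varphi_{i}\}_{i\in\Lambda^{n}}$ is again $K$ (iterate $K=\bigcup_{i\in\Lambda}\varphi_{i}K$), so by Proposition \ref{prop:diffuse implies 0-diffuse} with $F=K$, the section $\Lambda^{n}$ is diffuse if and only if for every affine hyperplane $\mathcal{L}$ there exists $i\in\Lambda^{n}$ with $\varphi_{i}K\cap\mathcal{L}=\emptyset$. Suppose, for contradiction, that this fails for every $n$: for each $n$ choose an affine hyperplane $\mathcal{L}_{n}$ that meets $\varphi_{i}K$ for all $i\in\Lambda^{n}$. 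Set $\Delta=\text{diam}(K)$, which is positive since $K$ is not a single point by $(\ref{enu:not contained in a hyperplane})$. Since $K=\bigcup_{i\in\Lambda^{n}}\varphi_{i}K$ and $\text{diam}(\varphi_{i}K)=r_{i}\Delta\le r_{max}^{n}\Delta$, every point of $K$ lies within $r_{max}^{n}\Delta$ of $\mathcal{L}_{n}$, i.e.\ $K\subseteq\mathcal{L}_{n}^{(r_{max}^{n}\Delta)}$. As $r_{max}<1$, the infimum appearing in Lemma \ref{lem:sets contained in hyperplanes iff 0 width} equals $0$, so $K$ is contained in an affine hyperplane, contradicting $(\ref{enu:not contained in a hyperplane})$. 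Hence some $\Lambda^{n}$ is diffuse, which gives $(\ref{enu:exists a diffuse section})$.

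I expect no serious obstacle: aside from the elementary covering estimate in the last step — which is the only real content and is short given Lemma \ref{lem:sets contained in hyperplanes iff 0 width} — the remaining work is bookkeeping, chiefly verifying that $K$ stays the attractor after passing to a section (or to $\Lambda^{n}$) and that the relabelled full tree satisfies the per-node condition in the definition of a diffuse tree.
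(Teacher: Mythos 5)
Your proof is correct and follows essentially the same route as the paper: the same implication cycle $(1)\Rightarrow(2)\Rightarrow(3)\Rightarrow(1)$, with $(2)\Rightarrow(3)$ via Proposition \ref{prop:diffuse tree implies diffuse set} applied to the full tree over the reindexed section, and $(1)\Rightarrow(2)$ by producing, for each $n$, a hyperplane that nearly covers $K$ and then invoking Lemma \ref{lem:sets contained in hyperplanes iff 0 width}. The only cosmetic differences are that you specialize to the sections $\Lambda^{n}$ rather than arbitrary small-diameter sections, and you route the ``not diffuse $\Rightarrow$ hyperplane meets every $\varphi_{i}K$'' step through Proposition \ref{prop:diffuse implies 0-diffuse} instead of arguing directly with $\varepsilon$-neighborhoods as the paper does.
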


\begin{proof}
$\eqref{enu:not contained in a hyperplane}\implies\eqref{enu:exists a diffuse section}:$
Assume that (\ref{enu:exists a diffuse section}) does not hold, so
that every section $\Pi$ is not diffuse. So given some $\varepsilon>0$,
let $\Pi$ be a section s.t. $\forall i\in\Pi,\,\text{diam}\left(\varphi_{i}K\right)<\varepsilon/2$.
Since $\Pi$ is not $\varepsilon/2$-diffuse, there is some affine
hyperplane $\mathcal{L}$, s.t. $\mathcal{L}^{\left(\varepsilon/2\right)}\cap\varphi_{i}K\neq\emptyset$
for every $i\in\Pi$. Since $\text{diam}\left(\varphi_{i}K\right)<\varepsilon/2$
this implies that $\forall i\in\Pi,\,\varphi_{i}K\subset\mathcal{L}^{\left(\varepsilon\right)}$,
hence $K\subset\mathcal{L}^{\left(\varepsilon\right)}$. Taking $\varepsilon$
to 0 implies that $K$ is a.s. contained in an affine hyperplane by
Lemma \ref{lem:sets contained in hyperplanes iff 0 width}.\\
$\eqref{enu:exists a diffuse section}\implies\eqref{enu:-K is hyperplane diffuse}:$
This follows immediately from Proposition \ref{prop:diffuse tree implies diffuse set}.\\
$\eqref{enu:-K is hyperplane diffuse}\implies\eqref{enu:not contained in a hyperplane}$:
Follows from the definition of the hyperplane diffuse property.
\end{proof}
Note that the OSC is not needed for Theorem \ref{thm:When attractors are hyperplane diffuse}.
It is also worth mentioning that the above conditions are equivalent
to irreducibility of $\Phi$ as defined in \cite{Kleinbock2005} (see
Remark \ref{rem:irreducible <=00003D> not on a hyperplane }).

One of the conditions of Theorem \ref{thm: main generalized} is that
the GWF is non-planar. We now list a few equivalent conditions to
non-planarity of supercritical GWFs.
\begin{prop}
\label{prop: equivalent conditions for not in a hyperplane}Let E
be a supercritical GWF w.r.t. a similarity IFS $\Phi=\left\{ \varphi_{i}\right\} _{i\in\Lambda}$
and offspring distribution $W$, and let $T$ be the corresponding
GWT. Denote by $K$ the attractor of $\Phi$. The following conditions
are equivalent.
\begin{enumerate}
\item \label{enu:non planar}E is non-planar.
\item \label{enu:E not contained in affine hyperplane}$\mathbb{P}\left(E\text{ is not contained in an affine hyperplane\ensuremath{|} nonextinction}\right)=1.$
\item \label{enu:K not contained in affine hyperplane}$K$ is not contained
in an affine hyperplane.
\item \label{enu:exists diffuse section}$\exists\Pi\subseteq\Lambda^{*}$
section, and $c>0$ s.t. $\mathbb{P}\left(T_{\Pi}\text{ is \emph{\ensuremath{\left(K,c\right)}}-diffuse}\right)>0$.
\end{enumerate}
\begin{comment}
It is important to allow sections here since there may be cases where
the attractor of $\left\{ \varphi_{i}\right\} _{i\in W}$ is a.s.
contained in an affine hyperplane, but there exist sections where
this is not true. For example, the case where at each level exactly
2 b-adic cubes survive, at a uniform probability. 
\end{comment}

\end{prop}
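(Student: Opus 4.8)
The plan is to prove the cycle of implications
$\eqref{enu:non planar}\iff\eqref{enu:E not contained in affine hyperplane}$
is essentially a definition chase, then
$\eqref{enu:E not contained in affine hyperplane}\implies\eqref{enu:K not contained in affine hyperplane}\implies\eqref{enu:exists diffuse section}\implies\eqref{enu:E not contained in affine hyperplane}$,
using the already-established Theorem \ref{thm:When attractors are hyperplane diffuse}
and Proposition \ref{prop:0-1 law}. Since non-planarity was \emph{defined} as ``not a.s.\ contained in an affine hyperplane,'' the equivalence of \eqref{enu:non planar} and \eqref{enu:E not contained in affine hyperplane} requires only ruling out the intermediate case $0<\mathbb{P}(E\subseteq\text{some hyperplane}\mid\text{nonext.})<1$; this follows from a zero-one law. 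The natural way is to apply Proposition \ref{prop:0-1 law} to the event $\mathscr{T}'=\{S:\gamma_\Phi(\partial S)\text{ is contained in an affine hyperplane}\}$ — first checking this is measurable (it is a countable/closed condition on the boundary) — so if it had positive probability then a.s.\ conditioned on nonextinction infinitely many descendant subtrees $T^v$ project into hyperplanes; but a single $v$ with $\gamma_\Phi(\partial T^v)\subseteq\mathcal L$ forces $E=\gamma_\Phi(\partial T)$ to meet, in the cylinder $\varphi_v K$, only the affine image $\varphi_v\mathcal L$, and combining this with the global structure one shows $E$ itself lies in a hyperplane with probability one, contradicting $\mathbb{P}<1$. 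In fact the cleanest route is: if $\mathbb{P}(E\subseteq\text{hyperplane}\mid\text{nonext.})>0$ then by Proposition \ref{prop:0-1 law} a.s.\ some $T^v$ has this property, and then since $E\cap\varphi_v K$ determines a large portion of $E$, one iterates to cover all of $E$; but it is simpler to observe that the complementary event also cannot have intermediate probability, so the probability is $0$ or $1$.

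Next, $\eqref{enu:E not contained in affine hyperplane}\implies\eqref{enu:K not contained in affine hyperplane}$: if $K$ were contained in an affine hyperplane $\mathcal L$, then since $E=\gamma_\Phi(\partial T)\subseteq K$ by construction, $E\subseteq\mathcal L$ always, contradicting \eqref{enu:E not contained in affine hyperplane} (conditioned on nonextinction, which has positive probability in the supercritical case). The implication $\eqref{enu:K not contained in affine hyperplane}\implies\eqref{enu:exists diffuse section}$ is where Theorem \ref{thm:When attractors are hyperplane diffuse} does the work: by that theorem, condition \eqref{enu:K not contained in affine hyperplane} here — which is exactly condition \eqref{enu:not contained in a hyperplane} of Theorem \ref{thm:When attractors are hyperplane diffuse} — gives a diffuse section $\Pi\subset\Lambda^*$, say $(\Pi,c_0)$-diffuse with respect to $K$. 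Then I need $\mathbb{P}(T_\Pi\text{ is }(K,c)\text{-diffuse})>0$ for some $c>0$: since $\Pi=\{i^{(1)},\dots,i^{(N)}\}$ is finite and each $i^{(k)}$ is a finite word, the event that \emph{every} word of $\Pi$ survives in $T$ (i.e.\ $\Pi\subseteq T$, equivalently $T_\Pi=\Pi$) is a finite intersection of events each of which has positive probability under the assumption $\forall i\in\Lambda,\ \mathbb{P}(i\in W)>0$ (which the paper always assumes); by independence of the $W_a$ along distinct nodes this intersection has positive probability. On that event $T_\Pi=\Pi$ is $(K,c_0)$-diffuse, so \eqref{enu:exists diffuse section} holds with $c=c_0$.

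Finally, $\eqref{enu:exists diffuse section}\implies\eqref{enu:E not contained in affine hyperplane}$. Suppose $\mathbb{P}(T_\Pi\text{ is }(K,c)\text{-diffuse})=\rho>0$. The event ``$T_\Pi$ is $(K,c)$-diffuse'' is not quite of the form needed for Proposition \ref{prop:0-1 law} directly, so I would instead argue: by the statistical self-similarity and Proposition \ref{prop:0-1 law} applied to the measurable event $\mathscr{T}'=\{S:\, S_\Pi\text{ is }(K,c)\text{-diffuse and }\partial S\ne\emptyset\}$ (which has probability $\ge\rho\cdot\mathbb{P}(\text{nonext.})>0$, after noting diffuseness of $S_\Pi$ plus nonextinction of each surviving branch can be arranged with positive probability), a.s.\ conditioned on nonextinction there is $v\in T$ with $T^v\in\mathscr{T}'$. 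But then $\gamma_\Phi(\partial(T^v)_\Pi)$-type arguments, precisely the content of Proposition \ref{prop:diffuse tree implies diffuse set}, show $\gamma_\Phi(\partial T^v)$ is hyperplane diffuse, hence by the definition of the hyperplane diffuse property it is not contained in any affine hyperplane; since $\gamma_\Phi(\partial T^v)=F_{\varphi_v K}^{-1}$-image aside, $\varphi_v^{-1}(E\cap\varphi_v K)\supseteq\gamma_\Phi(\partial T^v)$ is not in a hyperplane, and applying the affine map $\varphi_v$ (which preserves the property of not being in a hyperplane) gives that $E$ is not contained in any affine hyperplane. So \eqref{enu:E not contained in affine hyperplane} holds. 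The main obstacle is the measurability bookkeeping and getting Proposition \ref{prop:0-1 law} to apply cleanly — one must phrase the relevant events as Borel subsets of $\mathscr{T}_\Lambda$ that interact correctly with the descendant-tree operation $T\mapsto T^v$, and handle the interplay between ``$T_\Pi$ diffuse'' (a finite-depth condition) and nonextinction of the branches below $\Pi$ so that $\partial T\ne\emptyset$ and the projection $\gamma_\Phi(\partial T^v)$ is actually populated.
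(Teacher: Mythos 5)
The implication $\eqref{enu:K not contained in affine hyperplane}\implies\eqref{enu:exists diffuse section}$ is where your proposal breaks down, and it breaks in exactly the place the paper's intricate induction is designed to handle. You take a diffuse section $\Pi$ (which exists by Theorem \ref{thm:When attractors are hyperplane diffuse}), and then claim $\mathbb{P}(\Pi\subseteq T)>0$ ``by independence of the $W_a$ along distinct nodes.'' This is false in general: the events $\{i^{(k)}\in T\}$ for $i^{(k)}\in\Pi$ are \emph{not} independent because they share ancestors, and the standing assumption $\forall i\in\Lambda,\ \mathbb{P}(i\in W)>0$ does \emph{not} imply $\mathbb{P}(\Lambda\subseteq W)>0$. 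Concretely, take $d=2$, $b=2$, $\Lambda=\{0,1\}^2$, and let $W$ be uniform over the two-element subsets of $\Lambda$, so $|W|=2$ almost surely. The attractor $K=[0,1]^2$ satisfies \eqref{enu:K not contained in affine hyperplane}, but every realization of $W$ lies in an affine line, and for any section $\Pi$ other than $\{\emptyset\}$ one has $\mathbb{P}(\Pi\subseteq T)=0$ since $\Pi\subseteq T$ would force $\Lambda\subseteq W_\emptyset$. Meanwhile \eqref{enu:exists diffuse section} \emph{does} hold in this example: one needs $T_\Pi$ to be diffuse with positive probability, which is much weaker than $\Pi=T_\Pi$. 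This is why the paper instead runs an induction on $k$ constructing explicit words $i^1,\dots,i^{d+1}$ that simultaneously (a) have pairwise disjoint cylinders so they fit in a common section, (b) satisfy $\mathbb{P}(\{i^1,\dots,i^{d+1}\}\subseteq T)>0$, and (c) form a $(K,c)$-diffuse family; condition (b) is preserved through the induction by always passing to \emph{extensions} of words already known to survive jointly, which is exactly the mechanism your shortcut lacks.

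There is also a smaller slip in $\eqref{enu:exists diffuse section}\implies\eqref{enu:E not contained in affine hyperplane}$: you invoke Proposition \ref{prop:diffuse tree implies diffuse set} to conclude that $\gamma_\Phi(\partial T^v)$ is \emph{hyperplane diffuse}, but that proposition needs the tree to be $(K,c)$-diffuse at \emph{every} vertex, whereas you only know $T^v_\Pi$ is diffuse at the single section $\Pi$. This is repairable, since you do not actually need hyperplane diffuseness here: if $T^v_\Pi$ is $(K,c)$-diffuse and each $i\in T^v_\Pi$ has an infinite descendant tree, then $\gamma_\Phi(\partial T^v)$ meets each $\varphi_i K$ for $i\in T^v_\Pi$, and those cubes are not all contained in any $\mathcal L^{(c)}$, so $\gamma_\Phi(\partial T^v)$ is simply not contained in an affine hyperplane, which suffices. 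The paper's version of this step runs essentially that argument after first reducing, via the section $\Pi$, to the case where $W$ itself has an attractor not in a hyperplane with positive probability; this reduction is worth keeping, as it cleanly separates the combinatorial part (a good set $A$ with $\mathbb{P}(W=A)>0$) from the geometric part (Lemma \ref{lem:sets contained in hyperplanes iff 0 width}).
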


\begin{proof}
First note that since there are only countably many sections (for
every $n$ there is only a finite number of sections of size $n$),
(\ref{enu:exists diffuse section}) is equivalent to the following
statement:
\[
\mathbb{P}\left(\text{\ensuremath{\exists\Pi\subseteq\Lambda^{*}} section, and \emph{c>0} s.t. }T_{\Pi}\text{ is \emph{\ensuremath{\left(K,c\right)}}-diffuse}\right)>0.
\]
\\
$\eqref{enu:non planar}\implies\eqref{enu:E not contained in affine hyperplane}$:
Follows from Proposition \ref{prop:0-1 law}, namely, since 
\[
\mathbb{P}\left(E\text{ is not contained in an affine hyperplane}\right)>0,
\]
almost surely given nonextinction $\exists v\in T$ s.t. $\gamma_{\Phi}\left(\partial T^{v}\right)$
is not contained in an affine hyperplane, which implies that $E$
is not contained in an affine hyperplane.\\
$\eqref{enu:E not contained in affine hyperplane}\implies\eqref{enu:K not contained in affine hyperplane}$:
Trivial\\
$\eqref{enu:K not contained in affine hyperplane}\implies\eqref{enu:exists diffuse section}$:
We first prove the following claim by induction:
\begin{claim*}
For every integer $0\leq k\leq d-1$ there exists $\left\{ i^{1},...,i^{k+2}\right\} \subseteq\Lambda^{*}$
s.t. the following hold:
\end{claim*}
\begin{enumerate}
\item[(a)] $\forall i,j\in\left\{ i^{1},...,i^{k+2}\right\} $, $\varphi_{i}K\cap\varphi_{j}K=\emptyset$
(hence $i\ngeq j$ and $j\ngeq i$).
\item[(b)] $\mathbb{P}\left(\left\{ i^{1},...,i^{k+2}\right\} \subseteq T\right)>0$
\item[(c)] For every integer $0\leq n\leq k$, for every $n$-dimensional affine
subspace $\mathcal{L}\subseteq\mathbb{R}^{d}$ that intersects $\varphi_{i^{1}}K,...,\varphi_{i^{n+1}}K$,
we have $\varphi_{i^{n+2}}K\cap\mathcal{L}=\emptyset$.
\end{enumerate}
\begin{proof}[Proof of claim]
For $k=0$: since the process is supercritical, there exist $i,j\in\Lambda$
s.t. $\mathbb{P}\left(\left\{ i,j\right\} \subseteq W\right)>0$.
Therefore, for some $i^{\prime}>i$ and $j^{\prime}>j$, $\varphi_{i^{\prime}}K\cap\varphi_{j^{\prime}}K=\emptyset$%
\begin{comment}
If $\varphi_{i}K\neq\varphi_{j}K$, then w.l.o.g there is some $x\in\varphi_{i}K\setminus\varphi_{j}K$.
Taking $i^{\prime}>i$ far enough $\varphi_{i^{\prime}}K\cap\varphi_{j}K=\emptyset$.
Now, assume that $\varphi_{i}K=\varphi_{j}K$. Let $x\in K$ be any
point, and let $y=\varphi_{j}^{-1}\circ\varphi_{i}\left(x\right)\in\mathbb{R}^{d}$.
Since $K$ is not a singleton there is some $y^{\prime}\in K$ s.t.
$y\neq y^{\prime}$ (note that $y$ may not be contained in $K$)
, so that $\varphi_{j}^{-1}\circ\varphi_{i}\left(x\right)\neq y^{\prime}$
and hence $\varphi_{i}\left(x\right)\neq\varphi_{j}\left(y^{\prime}\right)$
for some $x,y^{\prime}\in K$. Therefore, (by continuity of $\varphi_{i},\varphi_{j}$)
there are $i_{x},j_{y^{\prime}}\in\Lambda^{*}$ s.t. $\varphi_{i}\left(\varphi_{i_{x}}K\right)\cap\varphi_{j}\left(\varphi_{j_{y^{\prime}}}K\right)=\emptyset$.
Taking $i^{\prime}=ii_{x}$ and $j^{\prime}=jj_{y^{\prime}}$we are
done.
\end{comment}
{} and all 3 conditions are fulfilled by $\left\{ i^{\prime},j^{\prime}\right\} $.

Assume $\left\{ i^{1},...,i^{k+1}\right\} \subseteq\Lambda^{*}$ satisfies
(a), (b), (c) for $k-1$. By the case $k=0$, there are $a,\,b>i^{k+1}$
s.t. $\varphi_{a}K\cap\varphi_{b}K=\emptyset$, and $\mathbb{P}\left(\left\{ a,b\right\} \subseteq T\right)>0$.
Pick any $x_{1}\in\varphi_{i^{1}}K,...,\,x_{k}\in\varphi_{i^{k}}K,\,x_{k+1}\in\varphi_{a}K$.
By assumption $x_{1},...,x_{k+1}$ are affinely independent, and thus
span a unique $k$-dimensional affine subspace $\mathcal{L}\subset\mathbb{R}^{d}$.
By Theorem \ref{thm:When attractors are hyperplane diffuse}, $\Phi$
has some diffuse section, so there is some $j\in\Lambda^{*}$ s.t.
$\varphi_{bj}K\cap\mathcal{L}=\emptyset$. Therefore, $\varphi_{bj}K$
does not intersect small enough perturbations of $\mathcal{L}$ as
well. So there are $j^{1}>i^{1},...,\,j^{k}>i^{k},\,j^{k+1}>a$ s.t.
$\varphi_{bj}K\cap\mathcal{L}^{\prime}=\emptyset$ for every $k$-dimensional
affine subspace $\mathcal{L}^{\prime}$ which intersects the sets
$\varphi_{j^{1}}K,...,\varphi_{j^{k+1}}K$. Denoting $j^{k+2}=bj$,
the set $\left\{ j^{1},...,j^{k+2}\right\} $ satisfies conditions
(a), (b), (c) for $k$.
\end{proof}
Let $i^{1},...,i^{d+1}\in\Lambda^{*}$ be the elements whose existence
is guaranteed by the claim for $k=d-1$. By property (a), there is
some section $\Pi\subset\Lambda^{*}$ s.t. $i^{1},...,i^{d+1}\in\Pi$.
By property (b), $\mathbb{P}\left(\left\{ i^{1},...,i^{d+1}\right\} \subseteq T_{\Pi}\right)>0$.
And by property (c) combined with Proposition \ref{prop:diffuse implies 0-diffuse},
$\left\{ i^{1},...,i^{d+1}\right\} $ is $\left(K,c\right)$-diffuse
for some $c>0$. Since $\mathbb{P}\left(\left\{ i^{1},...,i^{d+1}\right\} \subseteq T_{\Pi}\right)>0$,
then $\mathbb{P}\left(\left\{ i^{1},...,i^{d+1}\right\} ^{n}\subseteq T_{\Pi^{n}}\right)>0$.
To summarize, we have found a section $\Pi^{n}\subset\Lambda^{*}$
and $c>0$ s.t. $\mathbb{P}\left(T_{\Pi^{n}}\text{ is \emph{\ensuremath{\left(K,c\right)}}-diffuse}\right)>0$.\\
$\eqref{enu:exists diffuse section}\implies\eqref{enu:non planar}$:
By Lemma \ref{lem:diffuse implies (K,c)-diffuse} and Theorem \ref{thm:When attractors are hyperplane diffuse},
Assuming (\ref{enu:exists diffuse section}) implies that there exists
some section $\Pi$ s.t. 
\[
\mathbb{P}\left(\text{the attractor of \ensuremath{\left\{ \varphi_{i}\right\} _{i\in T_{\Pi}}} is not contained in an affine hyperplane}\right)>0.
\]
Consider the GWF with IFS $\left\{ \varphi_{i}\right\} _{i\in T_{\Pi}}$
and offspring distribution $\sim T_{\Pi}$. Obviously it has the same
law as \emph{E. }So without loss of generality we may assume that
\[
\mathbb{P}\left(\text{the attractor of \ensuremath{\left\{ \varphi_{i}\right\} _{i\in W}} is not contained in an affine hyperplane}\right)>0
\]
 and take $\Lambda$ instead of $\Pi$ for convenience of notations.
Let $A\subseteq\Lambda$ be s.t. the attractor of $\left\{ \varphi_{i}\right\} _{i\in A}$,
which we denote by $K_{A}$, is not contained in an affine hyperplane
and $\mathbb{P}\left(W=A\right)>0$. Since $K_{A}$ is not contained
in an affine hyperplane, by Lemma \ref{lem:sets contained in hyperplanes iff 0 width}
there exists some $\varepsilon>0$ s.t. $\forall\mathcal{L}\subset\mathbb{R}^{d}$
affine hyperplane, $K_{A}\nsubseteq\mathcal{L}^{\left(\varepsilon\right)}$.
Therefore, taking $n\in\mathbb{N}$ large enough, for every affine
hyperplane $\mathcal{L}$ there exists some $i\in A^{n}$ s.t. $\varphi_{i}K\cap\mathcal{L}^{\left(\varepsilon/2\right)}=\emptyset$.
Now, since $\mathbb{P}\left(W=A\right)>0$, there exists a positive
probability that $A^{n}\subset T_{n}$ and for every $i\in A^{n}$,
$T^{i}$ is infinite. Obviously, in this case $E$ is not contained
in an affine hyperplane.
\end{proof}
A discussion about hyperplane diffuseness of GWFs is postponed to
the Appendix for the sake of a more fluent reading of the paper. The
main point of this discussion is that in many cases, GWFs are a.s.
not hyperplane diffuse. In particular, fractal percolation sets are
almost surely not hyperplane diffuse.

\section{The fractal percolation case\label{sec:the-fractal-percolation}}

\subsection{Strategy}

In this section we are going to prove Theorem \ref{thm: main generalized}
for the special case of fractal percolation. In order to do that,
we want to use Theorem \ref{thm:HAW intersects diffuse sets}, but
as mentioned earlier, fractal percolation sets are a.s. not hyperplane
diffuse (see Appendix \ref{appendix:Microsets-of-Galton-Watson},
specifically Corollary \ref{cor:Fractal percolation is not hyperplane diffuse}).
So the next best thing we can do is to show that fractal percolation
sets have diffuse subsets. Therefore, what we are going to prove is
the following theorem (which is just Theorem \ref{thm:Main theorem diffuse subses}
stated for the case of fractal percolation):
\begin{thm}
\label{thm:subsets exist}Let $E$ be the limit set of a supercritical
fractal percolation process in $\left[0,1\right]^{d}$ with parameters
$b,p$. Then a.s. conditioned on nonextinction, there exists a sequence
of subsets $D_{n}\subseteq E$ which are hyperplane diffuse and Ahlfors-regular
with $\dim_{H}\left(D_{n}\right)\nearrow\log_{b}m$.
\end{thm}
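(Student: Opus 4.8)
The plan is to build the subsets $D_n$ as projections of random $\mathscr{A}_n$-subtrees of the Galton-Watson tree $T$, where each collection $\mathscr{A}_n \subseteq 2^{\Lambda^{k_n}}$ is chosen to consist of \emph{$(K,c)$-diffuse} families of words of level $k_n$. Concretely, fix a target dimension $s < \log_b m$. For a "compression level" $k$, consider the $k$-th refined IFS $\{\varphi_i\}_{i\in\Lambda^k}$ (whose attractor is still the unit cube, so in particular not contained in a hyperplane), and let $\mathscr{A}_k$ be the set of all $(K,c)$-diffuse subsets $A\subseteq\Lambda^k$ of a fixed cardinality $N_k$. Since the unit cube is not contained in an affine hyperplane, Theorem \ref{thm:When attractors are hyperplane diffuse} guarantees diffuse sections exist; passing to a high enough level $k$ one can moreover insist that a diffuse family of size $N_k$ exists with $\log_b N_k / k$ as close to $\log_b m$ as desired (this is where the open set condition / equal contraction ratios of fractal percolation make the counting transparent: the surviving cubes at level $k$ number roughly $(pb^d)^k = m^k$, and one must extract a diffuse subfamily of size $\asymp m^{k(1-o(1))}$). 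The projection of an $\mathscr{A}_k$-subtree is then, by Proposition \ref{prop:diffuse tree implies diffuse set}, hyperplane diffuse, and being the attractor-type limit set of an IFS with all contraction ratios equal to $b^{-k}$ and exactly $N_k$ maps satisfying the OSC, it is $\delta_k$-Ahlfors-regular with $\delta_k = \log_{b^k} N_k = \log_b N_k / k$.

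The next step is to certify existence of these subtrees almost surely. Here I would invoke Corollary \ref{cor:Pakes-Dekking result}: it suffices to check that the function $g_{\mathscr{A}_k}(s) = \mathbb{P}(W_k^{(s)}\notin\overline{\mathscr{A}_k})$ has a fixed point strictly less than $1$, where $W_k$ is the offspring distribution of the $k$-compressed tree (for fractal percolation, $W_k\sim\mathrm{Bin}(\Lambda^k,\,p^k)$). Equivalently $g_{\mathscr{A}_k}(0) = \mathbb{P}(W_k\notin\overline{\mathscr{A}_k}) < 1$ would already force the smallest fixed point below $1$ provided $g_{\mathscr{A}_k}$ is a strict contraction near that point, but more robustly one argues: as $k\to\infty$, $W_k$ is a binomial $(b^{dk},p^k)$ set, which has expected size $m^k\to\infty$ and concentrates; one shows that with probability tending to $1$ it contains a $(K,c)$-diffuse subfamily of size $N_k$ (again a counting/concentration argument — a typical large random subset of $\Lambda^k$ of density $p^k$ must contain a diffuse subconfiguration because diffuse configurations are "generic" and abundant at every scale once the cube is non-planar). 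Hence $g_{\mathscr{A}_k}(0)\to 0$, so for $k$ large it is $<1$, and Corollary \ref{cor:Pakes-Dekking result} yields, a.s.\ conditioned on nonextinction, infinitely many $v\in T$ with $T^v$ containing an $\mathscr{A}_k$-subtree $S$. Then $D := \gamma_b(\partial S)\subseteq \gamma_b(\partial T^v)\subseteq E$ is the desired diffuse, Ahlfors-regular subset. Taking a sequence $k_n\to\infty$ with $\delta_{k_n}\nearrow\log_b m$ and choosing such a subset $D_n$ for each $n$ (each on a full-measure event, and countably many events intersect to a full-measure event) completes the construction.

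The main obstacle is the combinatorial heart of the second paragraph: showing that a random density-$p^k$ subset of the $b^{dk}$ cubes of level $k$ contains, with overwhelming probability, a $(K,c)$-diffuse subfamily whose cardinality is $m^{k(1-o(1))}$, so that the resulting regularity exponent $\delta_{k}$ converges up to $\log_b m$. One needs a quantitative version of Theorem \ref{thm:When attractors are hyperplane diffuse}: not merely that \emph{some} diffuse family exists, but that diffuse families of near-maximal size exist and are plentiful enough to survive random $p^k$-thinning. I expect this to go through by a first-moment / greedy-packing argument — partition $\Lambda^k$ into macroscopic blocks, note that a bounded number of well-separated cubes (at most $d+1$, by the claim in the proof of Proposition \ref{prop: equivalent conditions for not in a hyperplane}) already forms a diffuse seed, and then any configuration that hits each block in such a seeded pattern is diffuse; the probability that the random percolation fails to do this in a given block is a fixed constant $<1$, so one still retains a positive proportion of blocks, each contributing to the diffuse subfamily, and tuning the block size against $k$ lets $\delta_k\to\log_b m$. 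The remaining steps — verifying hyperplane diffuseness via Proposition \ref{prop:diffuse tree implies diffuse set}, Ahlfors-regularity from the OSC and equal ratios, and the countable-intersection bookkeeping — are routine.
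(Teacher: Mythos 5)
Your overall strategy mirrors the paper's almost exactly: compress the Galton--Watson tree by blocks of $k$ levels, show that for large $k$ the compressed tree almost surely contains an $a$-ary diffuse subtree via the generalized Pakes--Dekking machinery, project it with $\gamma_b$, and let the regularity exponent tend to $\log_b m$. The use of $\mathscr{A}_k$ consisting of $(K,c)$-diffuse sets of fixed cardinality $N_k$, with $\overline{\mathscr{A}_k}$ handled by Theorem \ref{thm:fixed point}, matches the paper's $\mathscr{D}_{b,k}\cap\mathscr{C}_k$ construction, and your block-and-seed heuristic is essentially the paper's $\mathscr{D}_{b,k}$ device (all level-$2$ descendants of a surviving level-$(k-2)$ node form a diffuse seed with $k$-independent conditional probability).

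There is, however, a genuine logical gap in the step that certifies the fixed point. You establish (in outline) that $g_{\mathscr{A}_k}(0)\to 0$ and then conclude ``so for $k$ large it is $<1$, and Corollary \ref{cor:Pakes-Dekking result} yields\dots''. But $g_{\mathscr{A}_k}(0)<1$, or even $g_{\mathscr{A}_k}(0)$ arbitrarily small, does \emph{not} force a fixed point strictly below $1$. For a continuous increasing $g:[0,1]\to[0,1]$ with $g(1)=1$, one needs $g(s)\le s$ at some interior $s$; otherwise $s=1$ may be the only fixed point, as for $g(s)=\varepsilon+(1-\varepsilon)s$ with $\varepsilon>0$ small (which is realizable as such a $g$). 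Moreover, the claimed limit is itself wrong for $p<1$: since $\{T_k=\emptyset\}\subseteq\{T_k^{(s)}\notin\overline{\mathscr{A}_k}\}$ for every $s$, one always has $g_{\mathscr{A}_k}(s)\ge\mathbb{P}(T_k=\emptyset)\to\mathbb{P}(\text{extinction})>0$, so the limit of $g_{\mathscr{A}_k}(0)$ is $\mathbb{P}(\text{extinction})$, not $0$. The paper avoids both problems by proving (Lemmas \ref{lem:p>gamma} and \ref{lem:diffuse subtree}) that $g_{\mathscr{A}_k}(s)\to\mathbb{P}(\text{extinction})$ for \emph{every fixed} $s\in(0,1)$; taking $s$ slightly above $\mathbb{P}(\text{extinction})$ then gives $g_{\mathscr{A}_k}(s)<s$ for large $k$, and the intermediate value theorem (together with $g_{\mathscr{A}_k}(0)\ge 0$) supplies a fixed point in $(0,s)$ --- this is Lemma \ref{lem:Graph fixed point}. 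The fix to your argument is direct: run your concentration argument on the thinned set $W_k^{(s)}$, whose density $p^k(1-s)$ still gives expected size $(1-s)m^k\to\infty$, to show $g_{\mathscr{A}_k}(s)\to\mathbb{P}(\text{extinction})$ for each $s\in(0,1)$; but as written, the proof has a hole at this point and the paper's Lemma \ref{lem:Graph fixed point} must be invoked explicitly rather than informally.
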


In order to find the subsets $D_{n}$, we find appropriate subsets
of $\partial T$ (where $T$ is the corresponding GWT) and project
them using the coding map $\gamma_{b}$. Our way of doing so goes
through the following definition.
\begin{defn}
Let $T\in\mathscr{T}_{\mathbb{A}}$ be a   tree with alphabet $\mathbb{A}$.
For any integer $1\leq k$ we define the \emph{$k$-compressed }%
\begin{lyxgreyedout}
Find a better name than k-compressed tree%
\end{lyxgreyedout}
\emph{tree} w.r.t. to $T$ as the tree with alphabet $\mathbb{A}^{k}$
given by $T_{\left(k\right)}=\bigcup\limits _{n\geq0}T_{k\cdot n}\in\mathscr{T}_{\mathbb{A}^{k}}$.
\end{defn}

Note that in the above definition we identify $\mathbb{A}^{k\cdot n}\thickapprox\left(\mathbb{A}^{k}\right)^{n}$
by the correspondence $\left(a_{1},...,a_{nk}\right)\longleftrightarrow\left(\left(a_{1},...,a_{k}\right),...,\left(a_{\left(n-1\right)k+1},...,a_{nk}\right)\right)$.
We shall make this identification as well as $\left(\mathbb{A}^{k}\right)^{\mathbb{N}}\thickapprox\mathbb{A}^{\mathbb{N}}$
throughout the paper without farther mention.

It is important to note that when $T$ is a GWT with alphabet $\mathbb{A}$,
$T_{\left(k\right)}$ is itself a GWT with alphabet $\mathbb{A}^{k}$
and offspring distribution $\sim T_{k}$. However, one needs to be
careful and notice that if $T$ is a GWT with a binomial offspring
distribution, then $T_{\left(k\right)}$, although being a GWT, no
longer has a binomial offspring distribution due to the dependencies
between the generations. Also note that if we identify $\left(\mathbb{A}^{k}\right)^{\mathbb{N}}$
with $\mathbb{A^{N}}$ as mentioned above, then $T$ and $T_{\left(k\right)}$
have the same boundaries. 

The strategy of the proof is going to be as follows: Choose some $c\in\left(1,m\right)$
s.t. for some $k_{0}\in\mathbb{N}$, $c^{k_{0}}\in\mathbb{Z}$ where
$m=pb^{d}$%
\begin{comment}
existence of such $c$ is explained in the last subsection of this
section.
\end{comment}
. We are going to show that taking a very large $k$ (this may be
thought of as dividing each cube into many many small subcubes of
the same sidelength at each step), the tree $T_{\left(k\right)}$
will almost surely have a vertex $v\in T_{\left(k\right)}$ s.t. the
tree $\left(T_{\left(k\right)}\right)^{v}$ has a subtree $S$ with
the following 2 properties:
\begin{enumerate}
\item Each element of $S$ has exactly $c^{k}$ children (assuming $k=tk_{0}$
for some $t\in\mathbb{N}$, $c^{k}$ is an integer).
\item $S$ is diffuse.
\end{enumerate}
We then take $D=\varphi_{v}\left(\gamma_{b}\left(\partial S\right)\right)$.

The first property listed above implies that the natural measure constructed
on $\gamma_{b}\left(\partial S\right)$ is $\log_{b}c$-Ahlfors-regular,
hence $D$ is $\log_{b}c$-Ahlfors-regular. The second property ensures
that $\gamma_{b}\left(\partial S\right)$ is hyperplane diffuse, and
therefore so is $D$. Finally, we take $c\nearrow m$ through any
sequence.

\subsection{a-ary subtrees of compressed GWTs}

In this subsection we deal with the existence of \emph{a}-ary subtrees
in \emph{k}-compressed supercritical GWTs, where we are actually going
to be interested in the asymptotics as $k\rightarrow\infty$. We will
use Theorem \ref{thm:fixed point}, which in this case reduces to
Pakes-Dekking theorem in its original formulation since we are only
interested in \emph{a-}ary subtrees. The reader should keep in mind
that when $T$ is a GWT with alphabet $\mathbb{A}$, $T_{\left(k\right)}$
may also be considered a GWT with alphabet $\mathbb{A}^{k}$ and offspring
distribution $\sim T_{k}$. Hence, applying Theorem \ref{thm:fixed point}
to the tree $T_{\left(k\right)}$ requires the investigation of the
random set $T_{k}^{\left(s\right)}$. Since in this subsection we
are only interested in the size of the set $T_{k}^{\left(s\right)}$,
we denote $Z_{k}^{\left(s\right)}=\left|T_{k}^{\left(s\right)}\right|$
(this is a number valued random variable).

Let $T$ be a GWT with offspring distribution $W$, s.t. $\mathbb{E}\left(\left|W\right|\right)=m>1$.
Given any integers $k,\,a>0$ , we define the function $g_{k,a}:\left[0,1\right]\rightarrow\left[0,1\right]$
by 
\[
g_{k,a}\left(s\right)=\mathbb{P}\left(Z_{k}^{\left(s\right)}<a\right).
\]
The following lemma is one of the main ingredients for the proof of
Theorem \ref{thm:subsets exist}.
\begin{lem}
\label{lem:p>gamma}Let $\left(a_{k}\right)_{k\in\mathbb{N}}$ be
a sequence of positive integers s.t. $\limsup\limits _{k\rightarrow\infty}\sqrt[{\scriptstyle k}]{a_{k}}<m$,
then $g_{k,a_{k}}\left(s\right)\underset{k\rightarrow\infty}{\longrightarrow}\mathbb{P}\left(\text{extinction}\right)$
for every $s\in\left(0,1\right)$.
\end{lem}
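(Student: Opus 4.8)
The plan is to sandwich $g_{k,a_{k}}(s)=\mathbb{P}(Z_{k}^{(s)}<a_{k})$ between two quantities that both converge to $\mathbb{P}(\text{extinction})$. The lower bound is immediate: each $a_{k}$ is a positive integer, so $\{Z_{k}=0\}\subseteq\{Z_{k}^{(s)}<a_{k}\}$, and the events $\{Z_{k}=0\}$ increase to the extinction event, whence $\mathbb{P}(Z_{k}=0)\nearrow\mathbb{P}(\text{extinction})$ and $\liminf_{k}g_{k,a_{k}}(s)\geq\mathbb{P}(\text{extinction})$. The content of the lemma is therefore the matching upper bound, which I would get from $\mathbb{P}(Z_{k}^{(s)}<a_{k}\mid\text{nonextinction})\to 0$: writing $g_{k,a_{k}}(s)=\mathbb{P}(Z_{k}^{(s)}<a_{k},\,\text{extinction})+\mathbb{P}(Z_{k}^{(s)}<a_{k}\mid\text{nonextinction})\,\mathbb{P}(\text{nonextinction})$ and bounding the first term by $\mathbb{P}(\text{extinction})$ then gives $\limsup_{k}g_{k,a_{k}}(s)\leq\mathbb{P}(\text{extinction})$.

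To prove $\mathbb{P}(Z_{k}^{(s)}<a_{k}\mid\text{nonextinction})\to 0$, first extract from the hypothesis a constant $m'$ with $\limsup_{k}\sqrt[k]{a_{k}}<m'<m$, so that $a_{k}\leq(m')^{k}$ for all large $k$ and hence $a_{k}=o(m^{k})$. Fix $\varepsilon>0$. By Corollary \ref{cor:kesten - stigum} there are $c_{0}>0$ and $K_{1}$ with $\mathbb{P}(Z_{k}\leq c_{0}m^{k}\mid\text{nonextinction})<\varepsilon$ for all $k>K_{1}$. Now condition on the entire GWT $T$ (which determines $Z_{k}$ and whether extinction eventually occurs): the thinning $Y\sim\text{Bin}(\mathbb{A}^{k},1-s)$ is independent of $T$, so conditionally $Z_{k}^{(s)}\sim\text{Bin}(Z_{k},1-s)$. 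On the event $\{Z_{k}>c_{0}m^{k}\}$, since $a_{k}=o(m^{k})$ we have $a_{k}<\tfrac{1}{2}(1-s)c_{0}m^{k}<\tfrac{1}{2}(1-s)Z_{k}$ for $k$ large, and Chebyshev's inequality (using $\mathrm{Var}(\text{Bin}(n,p))=np(1-p)$) gives $\mathbb{P}(\text{Bin}(Z_{k},1-s)<a_{k}\mid Z_{k})\leq\dfrac{4s}{(1-s)\,c_{0}\,m^{k}}$. Splitting according to $\{Z_{k}\leq c_{0}m^{k}\}$ versus $\{Z_{k}>c_{0}m^{k}\}$ and dividing by $\mathbb{P}(\text{nonextinction})>0$, we obtain, for all large $k$, $\mathbb{P}(Z_{k}^{(s)}<a_{k}\mid\text{nonextinction})\leq\varepsilon+\dfrac{4s}{(1-s)\,c_{0}\,m^{k}}$, so $\limsup_{k}\mathbb{P}(Z_{k}^{(s)}<a_{k}\mid\text{nonextinction})\leq\varepsilon$; letting $\varepsilon\to 0$ finishes this step.

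The only mildly delicate point, and hence the part to state carefully, is the bookkeeping around extinction: the conditional-binomial description of $Z_{k}^{(s)}$ must be invoked given the \emph{whole} tree, so that the independent thinning really is independent of whether the process eventually dies, and the elementary monotonicity $\mathbb{P}(Z_{k}=0)\nearrow\mathbb{P}(\text{extinction})$ is what supplies the lower half of the sandwich. Everything else is the standard Kesten--Stigum growth estimate (through Corollary \ref{cor:kesten - stigum}) combined with a one-line second-moment bound; I expect no substantive obstacle beyond correctly matching the growth rate $m^{k}$ against the budget $a_{k}=o(m^{k})$. I would also remark that the same argument works verbatim for every $s\in[0,1)$.
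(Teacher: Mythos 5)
Your proof is correct and follows essentially the same route as the paper's: both rely on Corollary \ref{cor:kesten - stigum} to ensure $Z_{k}\gtrsim c\,m^{k}$ with high conditional probability given nonextinction, and then apply Chebyshev's inequality to the binomial thinning $Z_{k}^{\left(s\right)}\sim\text{Bin}\left(Z_{k},1-s\right)$ to force $\mathbb{P}\left(Z_{k}^{\left(s\right)}<a_{k}\mid\text{nonextinction}\right)\to0$. Your sandwich framing (trivial lower bound from $\left\{ Z_{k}=0\right\} $, matching upper bound from the nonextinction decomposition) and your explicit conditioning on the entire tree to justify the conditional-binomial law of $Z_{k}^{\left(s\right)}$ are minor presentational clarifications of the paper's direct two-term decomposition, not a different method.
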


\begin{proof}
Let $\alpha\in\left(0,1\right)$ be s.t. $\limsup\limits _{k\rightarrow\infty}\sqrt[{\scriptstyle k}]{a_{k}}<\alpha m$.
This implies that $a_{k}<\alpha^{k}m^{k}$ whenever $k$ is large
enough . Given some $\varepsilon>0$, by Corollary \ref{cor:kesten - stigum}
there is some $c>0$ s.t.
\[
\mathbb{P}\left(\dfrac{Z_{k}}{m^{k}}>c\mid\,\text{nonextinction}\right)>1-\varepsilon
\]
whenever $k$ is large enough. 

Fix some $s\in\left(0,1\right)$. 
\[
\begin{array}{l}
g_{k,a_{k}}\left(s\right)=\mathbb{P}\left(Z_{k}^{\left(s\right)}<a_{k}\right)=\\
\mathbb{P}\left(Z_{k}^{\left(s\right)}<a_{k}\mid\text{extinction}\right)\cdot\mathbb{P}\left(\text{extinction}\right)+\\
\mathbb{P}\left(Z_{k}^{\left(s\right)}<a_{k}\mid\text{nonextinction}\right)\cdot\mathbb{P}\left(\text{nonextinction}\right)
\end{array}
\]
Clearly $\mathbb{P}\left(Z_{k}^{\left(s\right)}<a_{k}\mid\text{extinction}\right)\underset{k\rightarrow\infty}{\longrightarrow}1$. 

On the other hand, 
\[
\begin{array}{l}
\mathbb{P}\left(Z_{k}^{\left(s\right)}<a_{k}\mid\text{nonextinction}\right)=\\
\mathbb{P}\left(Z_{k}^{\left(s\right)}<a_{k}\mid\text{nonextinction},\,Z_{k}>cm^{k}\right)\cdot\mathbb{P}\left(Z_{k}>cm^{k}\mid\text{nonextinction}\right)+\\
\mathbb{P}\left(Z_{k}^{\left(s\right)}<a_{k}\mid\text{nonextinction},\,Z_{k}\leq cm^{k}\right)\cdot\mathbb{P}\left(Z_{k}\leq cm^{k}\mid\text{nonextinction}\right)
\end{array}
\]
Now, for $k$ large enough, $\mathbb{P}\left(Z_{k}\leq cm^{k}\mid\text{nonextinction}\right)<\varepsilon$.
Finally, by Chebyshev's inequality we get:
\[
\begin{array}{l}
{\displaystyle \mathbb{P}\left(Z_{k}^{\left(s\right)}<a_{k}\mid\text{nonextinction},\,Z_{k}>cm^{k}\right)\leq}\\
{\displaystyle \mathbb{P}\left(\text{Bin}\left(cm^{k},\,\left(1-s\right)\right)<\alpha^{k}m^{k}\right)\leq}\\
{\displaystyle \dfrac{s\left(1-s\right)cm^{k}}{\left[\left(1-s\right)cm^{k}-\alpha^{k}m^{k}\right]^{2}}=}\\
{\displaystyle \dfrac{1}{m^{k}}\cdot\dfrac{s\left(1-s\right)c}{\left[\left(1-s\right)c-\alpha^{k}\right]^{2}}}
\end{array}
\]
Since $\alpha<1$, the last term goes to $0$ as $k\rightarrow\infty$,
and overall we get 
\[
g_{k,a_{k}}\left(s\right)\underset{k\rightarrow\infty}{\longrightarrow}\mathbb{P}\left(\text{extinction}\right)
\]
 as claimed.
\end{proof}
Since $\mathbb{P}\left(\text{extinction}\right)<1$ and for every
$k$, $g_{k,a_{k}}$ is continuous, the result of Lemma \ref{lem:p>gamma}
implies that the graph of $g_{k,a_{k}}$ has to intersect the graph
of $y=x$ in the interval $\left(0,1\right)$ when $k$ gets large
enough, which means that it has a fixed point smaller than 1. In fact,
it shows that the smallest fixed point of $g_{k,a_{k}}$ converges
to $\mathbb{P}\left(\text{extinction}\right)$ as $k\rightarrow\infty$,
which implies by Theorem \ref{thm:fixed point} that 
\[
\mathbb{P}\left(T_{\left(k\right)}\text{ contains an }a_{k}\text{-ary subtree}|\text{ nonextinction}\right)\underset{k\rightarrow\infty}{\longrightarrow}1.
\]

Moreover, by Corollary \ref{cor:Pakes-Dekking result} , choosing
$k$ large enough, almost surely conditioned on nonextinction, there
exists some vertex $v\in T_{\left(k\right)}$ (actually infinitely
many vertices) s.t. the descendants tree $\left(T_{\left(k\right)}\right)^{v}$
has an $a_{k}$-ary subtree. %
\begin{comment}
One nice application of this lemma is that for every GWT $T$, a.s.
conditioned on nonextinction $\left|\partial T\right|=2^{\aleph_{0}}$.
To see this, we take $k$ large enough so that $T$ a.s. contains
an element $v$ s.t. $T^{v}$ contains a binary tree. 
\end{comment}

Since this reasoning will recur in what follows, we now state a general
lemma which fits the situation described above.
\begin{lem}
\label{lem:Graph fixed point}Let $T$ be a supercritical GWT with
alphabet $\mathbb{A}$. Given a sequence $\left(\mathscr{A}_{k}\right)_{k\in\mathbb{N}}$
s.t. for each $k$, $\mathscr{A}_{k}$ is a nonempty collection of
nonempty subsets of $\mathbb{A}^{k}$, define $g_{\mathscr{A}_{k}}:\left[0,1\right]\to\left[0,1\right]$
by $g_{\mathscr{A}_{k}}\left(s\right)=\mathbb{P}\left(T_{k}^{\left(s\right)}\notin\overline{\mathscr{A}_{k}}\right)$.
Assume that $\forall s\in\left(0,1\right)$, $g_{\mathscr{A}_{k}}\left(s\right)\underset{k\to\infty}{\longrightarrow}\mathbb{P}\left(\text{extinction}\right)$.
Then $\exists K\in\mathbb{N}$ s.t. $\forall k>K$, almost surely
conditioned on nonextinction, there exist infinitely many vertices
$v\in T_{\left(k\right)}$ s.t. the descendants tree $\left(T_{\left(k\right)}\right)^{v}$
has an $\mathscr{A}_{k}$-subtree.
\end{lem}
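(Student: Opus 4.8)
The plan is to recognize $g_{\mathscr{A}_k}$ as the function attached by Theorem~\ref{thm:fixed point} to the auxiliary Galton--Watson tree $T_{(k)}$, and then to invoke Corollary~\ref{cor:Pakes-Dekking result}. Recall that $T_{(k)}$ is itself a GWT, with alphabet $\mathbb{A}^k$ and offspring distribution $\sim T_k$; it is supercritical because $\mathbb{E}\left(\left|T_k\right|\right) = m^k > 1$. Under the identifications $\mathbb{A}^{kn} \approx (\mathbb{A}^k)^n$ fixed in Section~\ref{sec:Labeled-GW Trees}, the random set $T_k^{(s)}$ of the statement is precisely ``$W^{(s)}$'' for the offspring variable $W = T_k$, and an $\mathscr{A}_k$-subtree of $T_{(k)}$ is exactly the object produced by Corollary~\ref{cor:Pakes-Dekking result}. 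Hence, once we check that $g_{\mathscr{A}_k}$ has a fixed point strictly below $1$ for all large $k$, Corollary~\ref{cor:Pakes-Dekking result} applied to $T_{(k)}$ yields the conclusion: a.s.\ conditioned on nonextinction of $T_{(k)}$ there are infinitely many $v \in T_{(k)}$ with $(T_{(k)})^v$ containing an $\mathscr{A}_k$-subtree. Finally, $T_{(k)}$ is extinct (i.e.\ $T_{kn} = \emptyset$ for some $n$) if and only if $T$ is extinct, so the conditioning is on the same event and the lemma follows.

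To obtain a fixed point below $1$ I would argue exactly as in the paragraph preceding the lemma. From the proof of Theorem~\ref{thm:fixed point}, each $g_{\mathscr{A}_k}$ is continuous on $[0,1]$ (it is even a polynomial in $s$), with $g_{\mathscr{A}_k}(0) = \mathbb{P}(T_k \notin \overline{\mathscr{A}_k}) \geq 0$. Put $q = \mathbb{P}(\text{extinction}) < 1$ and fix $s_0 \in (q,1)$, say $s_0 = \tfrac{1}{2}(1+q)$. By hypothesis $g_{\mathscr{A}_k}(s_0) \to q < s_0$, so there is $K \in \mathbb{N}$ with $g_{\mathscr{A}_k}(s_0) < s_0$ for every $k > K$. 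For such $k$ the continuous function $s \mapsto g_{\mathscr{A}_k}(s) - s$ is nonnegative at $s = 0$ and negative at $s = s_0$, hence vanishes at some point of $[0, s_0] \subset [0,1)$, giving the desired fixed point; this is the $K$ of the statement.

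The only step that is more than a one-line citation --- and the place I expect the (mild) main difficulty --- is the bookkeeping identifying $g_{\mathscr{A}_k}$ with the function of Theorem~\ref{thm:fixed point} for $T_{(k)}$: one must be careful that $(T_{(k)})_1$ has the law of $T_k$ as a random subset of $\mathbb{A}^k$, that the operation $X \mapsto X^{(s)}$ is applied to subsets of the right alphabet, and that Definition~\ref{def:A-tree} of an $\mathscr{A}_k$-subtree is precisely the notion appearing in Corollary~\ref{cor:Pakes-Dekking result}. All of this is routine given the observations in Section~\ref{sec:Labeled-GW Trees} that $T_{(k)}$ is a GWT with offspring distribution $\sim T_k$ and that $T$ and $T_{(k)}$ have the same boundary, but it is what the write-up should spell out.
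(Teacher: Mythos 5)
Your proof is correct and matches the paper's intended argument: the paper states this lemma without a formal proof, instead presenting it as a direct abstraction of the reasoning in the preceding paragraph (view $T_{(k)}$ as a supercritical GWT with offspring law $T_k$, use continuity plus the hypothesis $g_{\mathscr{A}_k}(s_0)\to\mathbb{P}(\text{extinction})<s_0$ to produce a fixed point below $1$ via the intermediate value theorem, then invoke Corollary~\ref{cor:Pakes-Dekking result}), and your write-up spells out exactly that chain, including the needed identifications of alphabet, offspring law, and extinction event.
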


\begin{rem}
Conversely, for any $k\in\mathbb{N}$, assume that $a>m^{k}$, then
there exists some $\alpha>1$ s.t. $a>\alpha m^{k}$. In this case,
if $T_{\left(k\right)}$ has an $a$-ary subtree, then $Z_{nk}>\alpha^{n}m^{kn}$
for every $n\in\mathbb{N}$, which implies that $\frac{Z_{l}}{m^{l}}$
can't converge to a finite number as $l\to\infty$, and by Kesten-Stigum
theorem this has probability 0 of occurring. 
\end{rem}

The following lemma is the reason we are interested in \emph{a}-ary
subtrees.
\begin{lem}
\label{lem:a-ary implies dimension}Let $S\in\mathscr{T}_{\Lambda_{b}}$
be an a-ary tree, then $\gamma_{b}\left(\partial S\right)$ is $\log_{b}a$-Ahlfors-regular. 
\end{lem}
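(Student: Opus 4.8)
The plan is to equip $\partial S$ with its natural ``uniform'' measure, push it to $\mathbb{R}^{d}$ by the coding map $\gamma_{b}$, and then obtain the two-sided estimate defining Ahlfors regularity by the standard device of sandwiching Euclidean balls between $b$-adic cubes. First I would let $\nu$ be the Borel probability measure on $\partial S$ determined by $\nu\left([i]\cap\partial S\right)=a^{-|i|}$ for every $i\in S$; this prescription is consistent precisely because $S$ is $a$-ary (each vertex divides its mass equally among its $a$ children), so $\nu$ exists by the Carath\'eodory/Kolmogorov extension theorem, and I set $\mu=\left(\gamma_{b}\right)_{*}\nu$, a probability measure whose support lies in the compact set $E:=\gamma_{b}\left(\partial S\right)$.

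The proof then rests on two elementary properties of the $b$-adic coding, which I would record first: (i) if $Q=\gamma_{b}\left([i]\right)$ is a level-$n$ closed $b$-adic cube, then $\gamma_{b}^{-1}(Q)\subseteq\bigcup\left\{ [i']:i'\in\Lambda_{b}^{n},\ \gamma_{b}\left([i']\right)\cap Q\neq\emptyset\right\}$, and this index set has at most $3^{d}$ elements; (ii) any Euclidean ball of radius $\rho<b^{-n}$ meets at most $3^{d}$ level-$n$ $b$-adic cubes. Since $\nu\left([i']\cap\partial S\right)$ equals $a^{-n}$ when $i'\in S_{n}$ and $0$ otherwise, property (i) yields $\mu(Q)\le 3^{d}a^{-n}$ for every level-$n$ cube $Q$, while for $i\in S_{n}$ one has $\mu\left(\gamma_{b}\left([i]\right)\right)\ge\nu\left([i]\cap\partial S\right)=a^{-n}$ because $\gamma_{b}^{-1}\left(\gamma_{b}\left([i]\right)\right)\supseteq[i]$. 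With these in hand the bounds are routine. For the upper bound, given $x\in E$ and $\rho\in(0,1)$: if $\rho\ge b^{-1}$ then $\mu\left(B_{\rho}(x)\right)\le 1\le a\,\rho^{\log_{b}a}$; otherwise pick $n$ with $b^{-(n+1)}\le\rho<b^{-n}$ and combine (ii) with the cube bound to get $\mu\left(B_{\rho}(x)\right)\le 9^{d}a^{-n}\le 9^{d}a\,\rho^{\log_{b}a}$, using $b^{-n}\le b\rho$ and $a^{-n}=b^{-n\log_{b}a}$. For the lower bound, choose $n$ minimal with $\sqrt{d}\,b^{-n}<\rho$ (so every level-$n$ cube has diameter $<\rho$); since $x\in E$ there is $i\in S_{n}$ with $x\in\gamma_{b}\left([i]\right)$, this cube then lies inside $B_{\rho}(x)$, and minimality forces $b^{-n}>\rho/(b\sqrt{d})$, whence $\mu\left(B_{\rho}(x)\right)\ge a^{-n}\ge(b\sqrt{d})^{-\log_{b}a}\rho^{\log_{b}a}$. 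Finally I would check $\text{supp}(\mu)=E$: every nonempty relatively open subset of $E$ contains some $\gamma_{b}\left([i]\right)\cap E$ with $i\in S_{n}$, which has $\mu$-mass at least $a^{-n}>0$; together with the displayed bounds this shows $E=\gamma_{b}\left(\partial S\right)$ is $\log_{b}a$-Ahlfors-regular. (When $a=1$ both $\partial S$ and $E$ reduce to a point and the statement is trivial with exponent $0$.)

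The only genuinely delicate point is property (i): because $b$-adic rationals have two base-$b$ expansions, $\gamma_{b}$ fails to be injective on the $b$-adic hyperplanes, so $\gamma_{b}^{-1}$ of a closed cube can spill into neighbouring cylinders. The key observation is that this spillage is confined to the at most $3^{d}$ cylinders whose closed cubes touch $Q$, which keeps the distortion down to a harmless multiplicative constant; once that is isolated, the rest is the standard ball-versus-cube comparison.
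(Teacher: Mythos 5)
Your proof is correct and takes the same approach the paper sketches: construct the natural uniform measure on $\partial S$ (giving each level-$n$ cylinder mass $a^{-n}$), push it forward by $\gamma_{b}$, and verify the two-sided Ahlfors bound by comparing Euclidean balls with $b$-adic cubes; the paper defers the details to its more general Lemma \ref{lem:rho^alpha-ary implies alpha Ahlfors regular}, whose proof is this same argument written for an arbitrary similarity IFS, with your explicit $3^{d}$ cube-counting replaced there by the volume-based OSC counting of Lemma \ref{lem:OSC use}. Your extra care about non-injectivity of $\gamma_{b}$ on $b$-adic boundaries is a sound (if slightly wasteful, the $9^{d}$ could be a $3^{d}$) way to handle a point the paper's general proof absorbs into its OSC counting constant.
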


To prove this lemma one should construct the natural measure on $\gamma_{b}\left(\partial S\right)$
by equally distributing mass at each level of the tree. It is not
hard to show that this measure is Ahlfors regular with dimension $\log_{b}a$.
For more details see the proof of the more general Lemma \ref{lem:rho^alpha-ary implies alpha Ahlfors regular}
in Section \ref{sec:The-general-case}.

\subsection{Diffuse subtrees}

In this subsection we show the existence of diffuse subtrees of $T_{\left(k\right)}$
when $k$ is large enough. In fact we are going to show the existence
of $I^{d}$-diffuse subtrees (where $I^{d}=\left[0,1\right]^{d}\subset\mathbb{R}^{d}$
is the unite cube), so that for each vertex of the tree, the \emph{b}-adic
cubes corresponding to its children do not all intersect the same
affine hyperplane. As before, we denote $\Lambda_{b}=\left\{ 0,1,...,b-1\right\} ^{d}$,
and $\gamma_{b}$ the corresponding coding map given by 
\[
\gamma_{b}\left(a\right)=\sum\limits _{k=1}^{\infty}b^{-k}a_{k},\,\,\,\forall a\in\left(\Lambda_{b}\right)^{\mathbb{N}}.
\]
Given any positive integer $k$, we shall identify $\Lambda_{b^{k}}$
with $\left(\Lambda_{b}\right)^{k}$ in the obvious way. %
\begin{comment}
By identifying $c=\sum\limits _{i=0}^{k-1}c_{i}b^{i}\rightsquigarrow\left(c_{0},...,c_{k-1}\right)\in\left(0,...,b-1\right)^{k}$
\end{comment}

We denote

\[
\mathscr{D}_{b}=\left\{ X\subseteq\Lambda_{b}:\,X\text{ is \ensuremath{I^{d}}-diffuse }\right\} 
\]
By definition $\mathscr{D}_{b}$-trees are diffuse, therefore by Proposition
\ref{prop:diffuse tree implies diffuse set} they are projected by
$\gamma_{b}$ to hyperplane diffuse sets.

We now show the existence of $\mathscr{D}_{b^{k}}$-subtrees in $k$-compressed
GWTs whenever $k$ gets large enough. In fact, we are going to show
the existence of $\mathscr{D}_{b,k}$-subtrees where 
\[
\mathscr{D}_{b,k}=\left\{ X\subseteq\left(\Lambda_{b}\right)^{k}:\,\exists a\in\left(\Lambda_{b}\right)^{k-2},\,a\left(\Lambda_{b}\right)^{2}\subseteq X\right\} .
\]
Since $\forall b\geq2$, $\left(\Lambda_{b}\right)^{2}$ is $I^{d}$-diffuse,
$\mathscr{D}_{b,k}\subset\mathscr{D}_{b^{k}}$ and therefore every
$\mathscr{D}_{b,k}$-subtree is also a $\mathscr{D}_{b^{k}}$-subtree.
Note that in the definition of $\mathscr{D}_{b,k}$ we go 2 steps
backwards instead of 1 only to include the case $b=2$ for which $\Lambda_{2}$
itself is not $I^{d}$-diffuse, but $\left(\Lambda_{2}\right)^{2}$
is.

For the following lemma let $T$ be a supercritical GWT with alphabet
$\Lambda_{b}$ and binomial offspring distribution with parameter
\emph{p}. Let $g_{\mathscr{D}_{b,k}}:\left[0,1\right]\rightarrow\left[0,1\right]$
be defined by $g_{\mathscr{D}_{b,k}}\left(s\right)=\mathbb{P}\left(T_{k}^{\left(s\right)}\notin\mathscr{D}_{b,k}\right)$.
Note that $\mathscr{D}_{b,k}=\overline{\mathscr{D}_{b,k}}$, so $g_{\mathscr{D}_{b,k}}$
is the function defined in the paragraph preceding Theorem \ref{thm:fixed point}
for the GWT $T_{\left(k\right)}$ and the collection $\mathscr{D}_{b,k}$.
\begin{lem}
\label{lem:diffuse subtree}For every $s\in\left(0,1\right)$, $g_{\mathscr{D}_{b,k}}\left(s\right)\underset{{\scriptstyle k\rightarrow\infty}}{\longrightarrow}\mathbb{P}\left(\text{extinction}\right)$.
\end{lem}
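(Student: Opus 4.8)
The plan is to follow the proof of Lemma~\ref{lem:p>gamma} almost verbatim, the point being that a ``good block'' $a\left(\Lambda_{b}\right)^{2}$ consists of only $b^{2d}$ words, a number that does not grow with $k$. As there, I would first split
\[
\begin{array}{l}
g_{\mathscr{D}_{b,k}}\left(s\right)=\mathbb{P}\left(T_{k}^{\left(s\right)}\notin\mathscr{D}_{b,k}\mid\text{extinction}\right)\cdot\mathbb{P}\left(\text{extinction}\right)+\\
\mathbb{P}\left(T_{k}^{\left(s\right)}\notin\mathscr{D}_{b,k}\mid\text{nonextinction}\right)\cdot\mathbb{P}\left(\text{nonextinction}\right).
\end{array}
\]
Conditioned on extinction, $T_{k}=\emptyset$ for all large $k$, so $T_{k}^{\left(s\right)}=\emptyset\notin\mathscr{D}_{b,k}$ and the first conditional probability tends to $1$. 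Hence it suffices to show that $\mathbb{P}\left(T_{k}^{\left(s\right)}\notin\mathscr{D}_{b,k}\mid\text{nonextinction}\right)\underset{k\rightarrow\infty}{\longrightarrow}0$.

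Fix $s\in\left(0,1\right)$. Observe that $T_{k}^{\left(s\right)}\in\mathscr{D}_{b,k}$ as soon as there is a single vertex $a\in T_{k-2}$ all of whose $b^{2d}$ grandchildren survive the percolation and are retained by the $s$-thinning. For a fixed word $a\in\left(\Lambda_{b}\right)^{k-2}$, using the binomial offspring distribution and the independence of the thinning, I would compute
\[
\mathbb{P}\left(a\left(\Lambda_{b}\right)^{2}\subseteq T_{k}^{\left(s\right)}\mid a\in T_{k-2}\right)=\rho_{s}:=p^{b^{d}+b^{2d}}\left(1-s\right)^{b^{2d}}>0,
\]
a constant independent of $k$ (the exponent $b^{d}+b^{2d}$ counts the $b^{d}$ children and $b^{2d}$ grandchildren of $a$ that must survive, and $\left(1-s\right)^{b^{2d}}$ is the probability they are all retained in $T_{k}^{\left(s\right)}$). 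By the branching property the subtrees rooted at distinct vertices of $T_{k-2}$ are independent, and the blocks $a\left(\Lambda_{b}\right)^{2}$ for distinct $a$ are pairwise disjoint; hence, conditionally on $T_{k-2}$, the variable $N_{k}:=\left|\left\{ a\in T_{k-2}:\,a\left(\Lambda_{b}\right)^{2}\subseteq T_{k}^{\left(s\right)}\right\} \right|$ is $\text{Bin}\left(\left|T_{k-2}\right|,\rho_{s}\right)$-distributed, and $\left\{ T_{k}^{\left(s\right)}\notin\mathscr{D}_{b,k}\right\} =\left\{ N_{k}=0\right\} $.

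Finally I would invoke Corollary~\ref{cor:kesten - stigum}: given $\varepsilon>0$ there are $c>0$ and $K_{0}\in\mathbb{N}$ such that $\mathbb{P}\left(Z_{k-2}>cm^{k-2}\mid\text{nonextinction}\right)>1-\varepsilon$ for $k>K_{0}+2$, where $m=pb^{d}>1$. On the event $\left\{ Z_{k-2}>cm^{k-2}\right\} $ one has $\mathbb{P}\left(N_{k}=0\mid T_{k-2}\right)=\left(1-\rho_{s}\right)^{\left|T_{k-2}\right|}\leq e^{-\rho_{s}cm^{k-2}}$, so
\[
\mathbb{P}\left(T_{k}^{\left(s\right)}\notin\mathscr{D}_{b,k}\mid\text{nonextinction}\right)\leq\varepsilon+e^{-\rho_{s}cm^{k-2}}
\]
for $k>K_{0}+2$; letting $k\to\infty$ and then $\varepsilon\to0$ gives the claim, and feeding it back into the displayed splitting yields $g_{\mathscr{D}_{b,k}}\left(s\right)\to\mathbb{P}\left(\text{extinction}\right)$. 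The only delicate point --- the ``main obstacle'', such as it is --- is the conditional independence bookkeeping behind the binomial distribution of $N_{k}$, which is exactly where I use the pairwise disjointness of the blocks $a\left(\Lambda_{b}\right)^{2}$ and the branching property of fractal percolation; compared with Lemma~\ref{lem:p>gamma} the estimate is actually easier, since no Chebyshev bound is needed here --- the per-vertex success probability $\rho_{s}$ is a fixed positive constant while the number of available vertices $\left|T_{k-2}\right|$ grows like $m^{k-2}\to\infty$.
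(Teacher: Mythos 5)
Your proposal is correct and follows the same route as the paper: identify a fixed positive per-vertex probability (your $\rho_{s}$, the paper's $p_{0}$) that a given $a\in T_{k-2}$ produces a full block $a\left(\Lambda_{b}\right)^{2}\subseteq T_{k}^{\left(s\right)}$, then use Corollary~\ref{cor:kesten - stigum} to guarantee $\left|T_{k-2}\right|\gtrsim cm^{k-2}$ with high conditional probability, whence the chance that no block survives tends to $0$. The paper's proof is terser (it does not spell out the explicit value of $p_{0}$ or the binomial/exponential bound), but the substance is identical.
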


\begin{proof}
By Corollary \ref{cor:kesten - stigum}, for every $\varepsilon>0$
there exists some $c>0$ s.t. whenever $k$ is large enough 
\begin{equation}
\mathbb{P}\left(Z_{k-2}>c\cdot m^{k-2}\mid\,\text{nonextinction}\right)>1-\varepsilon.\label{eq:kesten - stigum}
\end{equation}
Fix any $s\in\left(0,1\right)$. Note that each element in $T_{k-2}$
has some positive probability $p_{0}$, that all its level 2 descendants
will be contained in $T_{k}^{\left(s\right)}$, i.e., 
\[
\forall v\in\left(\Lambda_{b}\right)^{k-2},\,\mathbb{P}\left(v\left(\Lambda_{b}\right)^{2}\subseteq T_{k}^{\left(s\right)}|\,v\in T\right)=p_{0}>0.
\]
In case this event occurs for at least one element of $T_{k-2}$ we
immediately have $T_{k}^{\left(s\right)}\in\mathscr{D}_{b,k}$. This
fact, combined with (\ref{eq:kesten - stigum}) implies that 
\[
\mathbb{P}\left(T_{k}^{\left(s\right)}\in\mathscr{D}_{b,k}\vert\text{nonextinction}\right)\underset{{\scriptstyle k\rightarrow\infty}}{\longrightarrow}1
\]
and therefore

\[
g_{\mathscr{D}_{b,k}}\left(s\right)\underset{{\scriptstyle k\rightarrow\infty}}{\longrightarrow}\mathbb{P}\left(\text{extinction}\right)
\]
as claimed.
\end{proof}

\subsection{Final step\label{subsec:Final-step - fractal percolation case}}

We now combine the two lemmas \ref{lem:p>gamma}, \ref{lem:diffuse subtree}
in order to prove Theorem \ref{thm:subsets exist}.

We will need the following notation: $\mathscr{Z}=\left\{ x\in\left[1,\infty\right):\,\exists k_{0}\in\mathbb{N},\,x^{k_{0}}\in\mathbb{Z}\right\} $.
Notice that $\mathscr{Z}$ is dense in $\left[1,\infty\right)$%
\begin{comment}
Otherwise, there exists some $\left(a,b\right)\subset\left[1,\infty\right)$
s.t. $\forall x\in\left(a,b\right),\,\forall k\in\mathbb{N},\,x^{k}\notin\mathbb{Z}$.
This is equivalent to saying that for every $k\in\mathbb{N}$, $\left(a^{k},\,b^{k}\right)\cap\mathbb{Z}=\emptyset$.
But since $b^{k}-a^{k}\to\infty$ this is not possible.
\end{comment}
, and that if $x^{k_{0}}\in\mathbb{Z}$, then $\forall t\in\mathbb{N}$,
$x^{t\cdot k_{0}}\in\mathbb{Z}$.
\begin{proof}[Proof of Theorem \ref{thm:subsets exist}]
 Let $T$ be a GWT which corresponds to a supercritical fractal percolation
process with parameters $b,p$. Given some number $c\in\left(1,m\right)\cap\mathscr{Z}$
(where $m=pb^{d}$), denote $\mathscr{C}_{k}=\left\{ B\subseteq\left(\Lambda_{b}\right)^{k}:\,\left|B\right|=\left\lfloor c^{k}\right\rfloor \right\} $.
For every $s\in\left(0,1\right)$, by Lemma \ref{lem:p>gamma}, 
\[
\mathbb{P}\left(T_{k}^{\left(s\right)}\notin\overline{\mathscr{C}_{k}}\vert\,\text{nonextinction}\right)\underset{{\scriptstyle k\rightarrow\infty}}{\longrightarrow}0
\]
and by Lemma \ref{lem:diffuse subtree}, 
\[
\mathbb{P}\left(T_{k}^{\left(s\right)}\notin\mathscr{D}_{b,k}\vert\,\text{nonextinction}\right)\underset{{\scriptstyle k\rightarrow\infty}}{\longrightarrow}0.
\]
Combining these two together we get that for every $s\in\left(0,1\right)$,

\[
\mathbb{P}\left(T_{k}^{\left(s\right)}\notin\mathscr{D}_{b,k}\cap\overline{\mathscr{C}_{k}}\vert\,\text{nonextinction}\right)\underset{{\scriptstyle k\rightarrow\infty}}{\longrightarrow}0
\]
and therefore

\[
\mathbb{P}\left(T_{k}^{\left(s\right)}\notin\mathscr{D}_{b,k}\cap\overline{\mathscr{C}_{k}}\right)\underset{{\scriptstyle k\rightarrow\infty}}{\longrightarrow}\mathbb{P}\left(\text{extinction}\right).
\]

Observe that 
\begin{equation}
\mathscr{D}_{b,k}\cap\overline{\mathscr{C}_{k}}=\overline{\mathscr{D}_{b,k}\cap\mathscr{C}_{k}}=\overline{\mathscr{D}_{b,k}\cap\overline{\mathscr{C}_{k}}}.\label{eq:monotone closure}
\end{equation}
\begin{comment}
The following always hold:
\begin{itemize}
\item $\overline{\mathscr{A}\cap\mathscr{B}}\subseteq\overline{\mathscr{A}}\cap\overline{\mathscr{B}}$
(equality need not hold in general!)
\item $\overline{\overline{\mathscr{A}}\cap\overline{\mathscr{B}}}=\overline{\mathscr{A}}\cap\overline{\mathscr{B}}$
\item $\overline{\mathscr{A}}\cap\overline{\mathscr{B}}=\overline{\left\{ A\cup B:\,A\in\mathscr{A},\,B\in\mathscr{B}\right\} }$
\end{itemize}
\end{comment}
The only part of (\ref{eq:monotone closure}) which does not follow
immediately from the definition of $\mathscr{A}\mapsto\overline{\mathscr{A}}$
and the fact that $\mathscr{D}_{b,k}=\overline{\mathscr{D}_{b,k}}$,
is $\mathscr{D}_{b,k}\cap\overline{\mathscr{C}_{k}}\subseteq\overline{\mathscr{D}_{b,k}\cap\mathscr{C}_{k}}$.
To show that this is correct, let $X\subseteq\left(\Lambda_{b}\right)^{k}$
s.t. $X\in\mathscr{D}_{b,k}\cap\overline{\mathscr{C}_{k}}$, so we
know that $\left|X\right|\geq\left\lfloor c^{k}\right\rfloor $ and
there exists some $a\in\left(\Lambda_{b}\right)^{k-2}$ s.t. $\left\{ aj:\,j\in\left(\Lambda_{b}\right)^{2}\right\} \subseteq X$.
As long as $\left\lfloor c^{k}\right\rfloor \geq b^{2d}$, we may
obviously find a subset $X^{\prime}\subseteq X$ s.t. $\left\{ aj:\,j\in\left(\Lambda_{b}\right)^{2}\right\} \subseteq X^{\prime}$
and $\left|X^{\prime}\right|=\left\lfloor c^{k}\right\rfloor $. Since
$X^{\prime}\in\mathscr{D}_{b,k}\cap\mathscr{C}_{k}$ it follows that
$X\in\overline{\mathscr{D}_{b,k}\cap\mathscr{C}_{k}}$ as claimed. 

By (\ref{eq:monotone closure}), we have shown that $g_{\mathscr{D}_{b,k}\cap\mathscr{C}_{k}}\left(s\right)\underset{k\to\infty}{\longrightarrow}\mathbb{P}\left(\text{extinction}\right)$
for every $s\in\left(0,1\right)$ where $g_{\mathscr{D}_{b,k}\cap\mathscr{C}_{k}}$
is defined as in Theorem \ref{thm:fixed point}. Hence, by Lemma \ref{lem:Graph fixed point},
a.s. conditioned on nonextinction, there exists some $v\in T_{\left(k\right)}$
s.t. $\left(T_{\left(k\right)}\right)^{v}$ has a $\mathscr{D}_{b,k}\cap\mathscr{C}_{k}-\text{subtree}$
when $k$ is large enough. Since $c\in\mathscr{Z}$, we may assume
that $c^{k}\in\mathbb{Z}$. If $S$ is such a subtree, then letting
$D_{c}^{\prime}$ be the projection of $\partial S$ to $\mathbb{R}^{d}$,
by Lemma \ref{lem:a-ary implies dimension}, and Proposition \ref{prop:diffuse tree implies diffuse set},
$D_{c}^{\prime}$ is hyperplane diffuse and $\log_{b}c$-Ahlfors-regular.
The set $D_{c}=\varphi_{v}\left(D_{c}^{\prime}\right)\subseteq E$
has the same properties. Thus, we have shown that for every $c\in\left(1,m\right)\cap\mathscr{Z}$,
a.s. conditioned on nonextinction there exists a subset $D_{c}\subseteq E$
which is $\log_{b}c$-Ahlfors-regular and hyperplane diffuse. 

Now, taking a sequence $c_{n}\nearrow m$ where for each $n$, $c_{n}\in\mathscr{Z}$,
a.s. conditioned on nonextinction there exists a sequence of hyperplane
diffuse subsets $D_{n}:=D_{c_{n}}\subseteq E$ which are Ahlfors-regular
with $\dim_{H}\left(D_{n}\right)\nearrow\log_{b}m$.
\end{proof}
\begin{rem}
In the proof of Theorem \ref{thm:subsets exist} we do not use the
fact that a.s. $\dim_{H}\left(E\right)=\log_{b}\left(pb^{d}\right)$
whenever $E\neq\emptyset$, so in particular we get another proof
of this fact (Theorem \ref{thm:subsets exist} only implies the lower
bound, but the other inequality is straightforward, see e.g. \cite[Lemma 3.7.3]{bishop2016fractals}).
\end{rem}

\section{The general case\label{sec:The-general-case}}

In this section we prove Theorem \ref{thm:Main theorem diffuse subses}
for the general case of arbitrary similarity IFSs. The main difficulty
in this setup arises from allowing different maps in the IFS to have
different contraction ratios, in which case the $k$-compressed trees
have very different weights assigned to the vertices at each level.
In order to deal with this issue, we compress the GWT along sections
where each element of the section has the same weight up to some constant
factor. 

$k$-compression of GWTs are easy to analyze since they may be considered
themselves as GWTs, but compressing a GWT along arbitrary sections
this is no longer the case. While this issue introduces some technical
difficulties and cumbersome notations, the main ideas of the proof
remain the same as in the case of fractal percolation.

\subsection{Sections\label{subsec:Sections}}

We first note that for an IFS $\Phi=\left\{ \varphi_{i}\right\} _{i\in\Lambda}$,
given a section $\Pi\subset\Lambda^{*}$, we may think of the IFS
$\left\{ \varphi_{i}\right\} _{i\in\Pi}$, which obviously has the
same attractor as $\Phi$. 
\begin{lem}
Let T be a GWT with alphabet $\mathbb{A}$ and offspring distribution
$W$, and with weights $\left\{ r_{i}\right\} _{i\in\mathbb{A}}$.
Assume that $\mathbb{E}\left(\sum\limits _{i\in W}r_{i}^{\delta}\right)=1$.
Then for every section $\Pi\subset\mathbb{A}^{*}$, $\mathbb{E}\left(\sum\limits _{i\in T_{\Pi}}r_{i}^{\delta}\right)=1$.%
\end{lem}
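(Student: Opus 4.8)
The plan is to reduce the statement to a deterministic identity about sections and then prove that identity by induction on $|\Pi|$. By linearity of expectation over the finite set $\Pi$,
\[
\mathbb{E}\left(\sum_{j\in T_{\Pi}}r_{j}^{\delta}\right)=\mathbb{E}\left(\sum_{j\in\Pi}r_{j}^{\delta}\,\mathbf{1}_{[j\in T]}\right)=\sum_{j\in\Pi}r_{j}^{\delta}\,\mathbb{P}(j\in T),
\]
so it suffices to show that $F(\Pi):=\sum_{j\in\Pi}r_{j}^{\delta}\,\mathbb{P}(j\in T)$ equals $1$ for every section $\Pi\subset\mathbb{A}^{*}$. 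The base case is $|\Pi|=1$: since the cylinders $\{[w]\}_{w\in\Pi}$ are pairwise disjoint and cover $\mathbb{A}^{\mathbb{N}}$, a single cylinder can do this only if it equals all of $\mathbb{A}^{\mathbb{N}}$, forcing $\Pi=\{\emptyset\}$; and $F(\{\emptyset\})=r_{\emptyset}^{\delta}\,\mathbb{P}(\emptyset\in T)=1$.

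For the inductive step, assume $|\Pi|=n\geq2$ (so $\emptyset\notin\Pi$) and that $F$ equals $1$ on all shorter sections. Choose $v\in\Pi$ of maximal length and let $i$ be its parent, $v=ia'$. The first key point is that $i\mathbb{A}=\{ia:a\in\mathbb{A}\}\subseteq\Pi$: fix $a\in\mathbb{A}$, pick any $x\in[ia]$ and the $w\in\Pi$ with $x\in[w]$, which is necessarily comparable to $ia$ in the prefix order; if $w$ were a strict prefix of $ia$ then $w\leq i<v$ with $w,v\in\Pi$, violating disjointness of the cylinders, while if $w$ strictly extended $ia$ then $|w|>|ia|=|v|$, contradicting maximality of $|v|$; hence $w=ia\in\Pi$. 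Likewise $i\notin\Pi$ (else $i<v$, both in $\Pi$). Set $\Pi'=(\Pi\setminus i\mathbb{A})\cup\{i\}$: replacing the cylinders $\{[ia]\}_{a\in\mathbb{A}}$, which partition $[i]$, by $[i]$ shows that $\Pi'$ is again a section, and (when $|\mathbb{A}|\geq2$; if $|\mathbb{A}|=1$ then $|\Pi|=1$ and we are already in the base case) $|\Pi'|<n$.

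The second key point is the computation witnessing $F(\Pi)=F(\Pi')$. By the recursive construction of the Galton--Watson tree, $\{i\in T\}$ depends only on the offspring variables attached to the proper prefixes of $i$, hence is independent of $W_{i}$, which has the law of $W$; therefore $\mathbb{P}(ia\in T)=\mathbb{P}(i\in T)\,\mathbb{P}(a\in W)$ and, using $r_{ia}=r_{i}r_{a}$,
\[
\sum_{a\in\mathbb{A}}r_{ia}^{\delta}\,\mathbb{P}(ia\in T)=r_{i}^{\delta}\,\mathbb{P}(i\in T)\sum_{a\in\mathbb{A}}r_{a}^{\delta}\,\mathbb{P}(a\in W)=r_{i}^{\delta}\,\mathbb{P}(i\in T)\,\mathbb{E}\left(\sum_{a\in W}r_{a}^{\delta}\right)=r_{i}^{\delta}\,\mathbb{P}(i\in T),
\]
the last equality being the hypothesis $\mathbb{E}\!\left(\sum_{a\in W}r_{a}^{\delta}\right)=1$. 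Splitting $F(\Pi)$ over $\Pi\setminus i\mathbb{A}$ and $i\mathbb{A}$ and substituting gives $F(\Pi)=\sum_{j\in\Pi\setminus i\mathbb{A}}r_{j}^{\delta}\,\mathbb{P}(j\in T)+r_{i}^{\delta}\,\mathbb{P}(i\in T)=F(\Pi')$, which is $1$ by the induction hypothesis. I expect the only place needing genuine care to be the prefix-order bookkeeping establishing $i\mathbb{A}\subseteq\Pi$; everything else is routine, and an equivalent way to organize the argument is to observe that $F$ is unchanged when a leaf $w$ of a section is replaced by its children $w\mathbb{A}$, together with the fact that every section is reached from $\{\emptyset\}$ by finitely many such replacements.
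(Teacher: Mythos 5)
Your proof is correct and follows essentially the same route as the paper's: the same reduction to $\sum_{j\in\Pi}r_{j}^{\delta}\,\mathbb{P}(j\in T)=1$, the same choice of a maximal-length $v\in\Pi$ with parent $i$, the same observation that $i\mathbb{A}\subseteq\Pi$, and the same computation using independence of $\{i\in T\}$ from $W_{i}$ together with the hypothesis on $W$ to collapse $i\mathbb{A}$ into $\{i\}$. The only difference is cosmetic: you take $\{\emptyset\}$ as the base case (and handle $|\mathbb{A}|=1$ explicitly), which is slightly cleaner than the paper's choice of $\Pi=\mathbb{A}$ as base, but the underlying induction is identical.
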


\begin{comment}
This may be proved by considering the IFS $\left\{ \varphi_{i}\right\} _{i\in\Pi}$
and looking at the Galton-Watson process with alphabet $\Pi$ and
offspring distribution $T_{\Pi}$. The boundary of the tree of such
a process obviously has the same law as the boundary of $T$ itself
\footnote{we abuse notations here and treat $\Pi^{*}$ as a subset of $\Lambda^{*}$},
and therefore yields a fractal of the same a.s. dimension which is
$\delta$.
\end{comment}
{} The proof of the lemma may be carried out by induction on the size
of $\Pi$ and is left as an exercise for the reader.
\begin{defn}
Given an alphabet $\mathbb{A}$ with weights $\left\{ r_{i}\right\} _{i\in\mathbb{A}}$
and a positive number $\rho\in\left(0,r_{min}\right)$, we denote
by $\Pi_{\rho}$ the section given by 
\[
\Pi_{\rho}=\left\{ i\in\mathbb{A}^{*}:\,r_{i}\leq\rho<r_{i_{1}}\cdot...\cdot r_{i_{\left|i\right|-1}}\right\} .
\]
Note that $\forall i\in\Pi_{\rho},\,\rho\cdot r_{min}<r_{i}\leq\rho$
(recall that $r_{min}=\min\left\{ r_{i}:\,i\in\mathbb{A}\right\} $)
\end{defn}

\begin{lem}
\label{lem:size of cutsets}Let $T$ be a supercritical GWT with alphabet
$\mathbb{A}$, weights $\left\{ r_{i}\right\} _{i\in\mathbb{A}}$,
and some offspring distribution $W$, and let $\delta$ \textup{\emph{satisfy}}\textup{
$\mathbb{E}\left(\sum\limits _{i\in W}r_{i}^{\delta}\right)=1$.}
Then $\forall\alpha<\delta$, a.s. conditioned on nonextinction there
exists some $\rho_{0}>0$ s.t. $\forall\rho<\rho_{0}$, $\left|T_{\Pi_{\rho}}\right|>\frac{1}{\rho^{\alpha}}$.
\end{lem}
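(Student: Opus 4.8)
The plan is to run the standard additive (Mauldin--Williams/Biggins) martingale argument for weighted Galton--Watson trees and then transfer it from level cut-sets to the cut-sets $\Pi_{\rho}$. For a section $\Pi\subset\mathbb{A}^{*}$ write $M_{\Pi}:=\sum_{i\in T_{\Pi}}r_{i}^{\delta}$; by the preceding lemma $\mathbb{E}(M_{\Pi})=1$ for every section, and in particular the level cut-sets give a non-negative martingale $M_{n}:=M_{\mathbb{A}^{n}}$ with respect to the natural filtration $(\mathcal{F}_{n})_{n\ge0}$ (where $\mathcal{F}_{n}$ records $T_{0},\dots,T_{n}$), the martingale identity being exactly the branching property together with $\mathbb{E}\bigl(\sum_{i\in W}r_{i}^{\delta}\bigr)=1$. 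Since $W$ takes values in the finite set $2^{\mathbb{A}}$, the variable $X:=\sum_{i\in W}r_{i}^{\delta}$ is bounded, so $v:=\operatorname{Var}(X)<\infty$; writing $M_{n+1}=\sum_{i\in T_{n}}r_{i}^{\delta}Y_{i}$ with $Y_{i}$ i.i.d.\ copies of $X$ given $\mathcal{F}_{n}$, one gets $\mathbb{E}(M_{n+1}^{2}\mid\mathcal{F}_{n})=M_{n}^{2}+v\sum_{i\in T_{n}}r_{i}^{2\delta}\le M_{n}^{2}+v\,r_{max}^{n\delta}M_{n}$, whence $\sup_{n}\mathbb{E}(M_{n}^{2})\le 1+v\sum_{n\ge0}r_{max}^{n\delta}<\infty$ (recall $r_{max}<1$ and $\delta>0$, the latter since $\mathbb{E}(\sum_{i\in W}r_{i}^{0})=m>1$). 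Hence $M_{n}\to M_{\infty}$ a.s.\ and in $L^{2}$, and $\mathbb{E}(M_{\infty})=1$.

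Next I would check $M_{\infty}>0$ a.s.\ on nonextinction. The event $\{M_{\infty}=0\}$ clearly contains extinction, and from $M_{\infty}=\sum_{i\in T_{1}}r_{i}^{\delta}M_{\infty}^{(i)}$ --- with $M_{\infty}^{(i)}$ i.i.d.\ copies of $M_{\infty}$ independent of $T_{1}$, and all $r_{i}>0$ --- the probability $q:=\mathbb{P}(M_{\infty}=0)$ satisfies $q=\mathbb{E}(q^{|W|})=f_{W}(q)$, where $f_{W}$ is the probability generating function of $|W|$. In the supercritical regime the only fixed points of $f_{W}$ in $[0,1]$ are $1$ and $\mathbb{P}(\text{extinction})<1$; since $\mathbb{E}(M_{\infty})=1$ forces $q<1$, we get $q=\mathbb{P}(\text{extinction})$, i.e.\ a.s.\ $\{M_{\infty}=0\}=\{\text{extinction}\}$.

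The heart of the proof is the claim $M_{\Pi_{\rho}}\to M_{\infty}$ a.s.\ as $\rho\to0$. As $\rho$ decreases the cut-sets refine (for $\rho'>\rho$ every element of $\Pi_{\rho'}$ is a strict prefix of some element of $\Pi_{\rho}$), so the $\sigma$-fields $\mathcal{F}_{\Pi_{\rho}}:=\sigma\bigl(W_{a}:\,a\text{ is a strict prefix of some element of }\Pi_{\rho}\bigr)$ increase as $1/\rho$ grows; moreover every element of $\Pi_{\rho}$ has length at least $n_{1}(\rho):=\lceil\log\rho/\log r_{min}\rceil$, so $\mathcal{F}_{\Pi_{\rho}}\supseteq\mathcal{F}_{n_{1}(\rho)}$ and, as $n_{1}(\rho)\to\infty$, $\bigvee_{\rho}\mathcal{F}_{\Pi_{\rho}}$ is the full $\sigma$-field of the process. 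Splitting $M_{n}$ through the cut-set $\Pi_{\rho}$ and letting $n\to\infty$ gives $M_{\infty}=\sum_{i\in T_{\Pi_{\rho}}}r_{i}^{\delta}M_{\infty}^{(i)}$ with $(M_{\infty}^{(i)})_{i\in T_{\Pi_{\rho}}}$ i.i.d.\ copies of $M_{\infty}$ independent of $\mathcal{F}_{\Pi_{\rho}}$, so $M_{\Pi_{\rho}}=\mathbb{E}(M_{\infty}\mid\mathcal{F}_{\Pi_{\rho}})$. Since $\rho\mapsto M_{\Pi_{\rho}}$ is right-continuous and, on every compact subinterval of $(0,\infty)$, piecewise constant with finitely many jumps (the jump set being $\{r_{i}:i\in\mathbb{A}^{*}\}$, which is discrete in $(0,\infty)$), Doob's upcrossing inequality along a countable dense set of scales together with Lévy's upward theorem yields $M_{\Pi_{\rho}}\to\mathbb{E}\bigl(M_{\infty}\mid\bigvee_{\rho}\mathcal{F}_{\Pi_{\rho}}\bigr)=M_{\infty}$ a.s.

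Finally I would assemble the estimate. Each $i\in\Pi_{\rho}$ has $r_{i}\le\rho$, hence $r_{i}^{\delta}\le\rho^{\delta}$, so $M_{\Pi_{\rho}}\le|T_{\Pi_{\rho}}|\,\rho^{\delta}$, i.e.\ $|T_{\Pi_{\rho}}|\ge M_{\Pi_{\rho}}\,\rho^{-\delta}$. Fix $\alpha<\delta$. On nonextinction $M_{\infty}>0$ a.s., so by the previous paragraph there is a (random) $\rho_{0}>0$ with $M_{\Pi_{\rho}}>M_{\infty}/2$ for all $\rho<\rho_{0}$; then $|T_{\Pi_{\rho}}|>\tfrac{1}{2}M_{\infty}\,\rho^{-\delta}=\bigl(\tfrac{1}{2}M_{\infty}\,\rho^{\alpha-\delta}\bigr)\rho^{-\alpha}$, and since $\alpha-\delta<0$ the bracketed factor tends to $\infty$ as $\rho\to0$, so after shrinking $\rho_{0}$ we get $|T_{\Pi_{\rho}}|>\rho^{-\alpha}$ for all $\rho<\rho_{0}$, as required. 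I expect the main obstacle to be the passage $M_{\Pi_{\rho}}\to M_{\infty}$ through the continuum of scales: one must be careful with the definition of the cut-set $\sigma$-fields and with running martingale convergence over an uncountable index set, and it is precisely the right-continuous, locally piecewise-constant nature of $\rho\mapsto M_{\Pi_{\rho}}$ that makes this legitimate; the rest is the routine $L^{2}$/generating-function toolkit for branching processes.
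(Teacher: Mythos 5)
Your proof is correct, but it takes a genuinely different route from the paper's. The paper's proof is a four-line contradiction argument that invokes Falconer's Theorem (Theorem \ref{thm:Falconer's Theorem}, part (2)) as a black box: it fixes a \emph{sub}-critical exponent $\alpha'\in(\alpha,\delta)$, for which $\mathbb{E}\bigl(\sum_{i\in W}r_i^{\alpha'}\bigr)>1$, observes that failure of the lemma would force $\sum_{i\in T_{\Pi_{\rho_n}}}r_i^{\alpha'}\le\rho_n^{\alpha'-\alpha}\to0$ along some sequence $\rho_n\searrow0$, and contradicts Falconer's $\inf_\Pi\sum_{i\in T_\Pi}r_i^{\alpha'}>0$ on nonextinction. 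You instead work directly at the \emph{critical} exponent $\delta$: you reconstruct the additive martingale $M_n=\sum_{i\in T_n}r_i^\delta$, establish uniform $L^2$-boundedness using $\mathbb{E}(M_{n+1}^2\mid\mathcal{F}_n)\le M_n^2+v\,r_{max}^{n\delta}M_n$, derive non-degeneracy of $M_\infty$ on nonextinction via the standard generating-function fixed-point argument, and then --- this is the novel technical core --- show $M_{\Pi_\rho}=\mathbb{E}(M_\infty\mid\mathcal{F}_{\Pi_\rho})\to M_\infty$ as $\rho\to0$ by L\'evy's upward theorem, handling the uncountable index set via the piecewise-constant structure of $\rho\mapsto\Pi_\rho$ (jumps only at the discrete set $\{r_i:i\in\mathbb{A}^*\}$). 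The closing bound $M_{\Pi_\rho}\le|T_{\Pi_\rho}|\rho^\delta$ then gives the claim. The trade-off: the paper's argument is far shorter but imports Falconer's dichotomy as an external result; your argument is entirely self-contained, avoids passing to a second exponent, and in fact re-proves the non-degeneracy part of the Kesten--Stigum/Biggins/Falconer circle directly for the cut-sets $\Pi_\rho$. Note that your $L^2$ shortcut is legitimate here only because $W$ takes values in $2^{\mathbb{A}}$ with $\mathbb{A}$ finite, so $X=\sum_{i\in W}r_i^\delta$ is bounded and $v<\infty$; Falconer's theorem does not require this. Two cosmetic remarks: you write $f_W$ for the generating function where you mean $f_{|W|}$, and the invocation of Doob's upcrossing inequality is superfluous --- L\'evy's upward theorem along a single countable decreasing sequence that hits every jump of $\rho\mapsto M_{\Pi_\rho}$, combined with local constancy, is already sufficient.
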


\begin{comment}
Let $\Phi=\left\{ \varphi_{i}\right\} _{i\in\mathbb{A}}$ be a similarity
IFS with contraction ratios $\left\{ r_{i}\right\} _{i\in\mathbb{A}}$
which satisfies the OSC. Let $E$ be the GWF which corresponds to
$T$ and $\Phi$, then by the assumptions of the lemma $\dim_{H}E=\delta$
a.s. conditioned on nonextinction. If the lemma is false, then with
positive probability there exists a decreasing sequence $\lambda_{n}\searrow0$
s.t. $\left|T_{\Pi_{\lambda_{n}}}\right|\leq\frac{1}{\lambda_{n}^{\alpha}}$
for every $n$. Denote the diameter of the attractor $K$ by $\Delta$,
then $\forall i\in T_{\Pi_{\lambda_{n}}}$, 
\[
\text{diam}\left(\varphi_{i}K\right)=r_{i}\cdot\Delta\leq\lambda_{n}\cdot\Delta.
\]
Since each $\Pi_{\lambda_{n}}$ is a section, $\left\{ \varphi_{i}K:\,i\in T_{\Pi_{\lambda_{n}}}\right\} $
is a cover of $E$. Fix any $\alpha^{\prime}\in\left(\alpha,\delta\right)$,
then
\[
\sum_{i\in T_{\Pi_{\lambda_{n}}}}\text{diam}\left(\varphi_{i}K\right)^{\alpha^{\prime}}\leq\frac{1}{\lambda_{n}^{\alpha}}\cdot\left(\lambda_{n}\Delta\right)^{\alpha^{\prime}}=\Delta^{\alpha^{\prime}}\cdot\lambda_{n}^{\alpha^{\prime}-\alpha}\underset{n\rightarrow\infty}{\longrightarrow}0.
\]
This implies that $\dim_{H}E\leq\alpha^{\prime}$ which is a contradiction.
\end{comment}
In order to prove the lemma we need the following theorem by K. Falconer. 
\begin{thm}
\label{thm:Falconer's Theorem}Let $T$ be a GWT with alphabet $\mathbb{A}$,
weights $\left\{ r_{i}\right\} _{i\in\mathbb{A}}$, and offspring
distribution W. Given $\nu>0$, the following statements hold:
\begin{enumerate}
\item $\mathbb{E}\left(\sum\limits _{i\in W}r_{i}^{\nu}\right)\leq1\,\implies\text{either \ensuremath{\sum\limits _{i\in W}r_{i}^{\nu}}=1 a.s. or }\inf\limits _{section\,\Pi\subset\mathbb{A}^{*}}\sum\limits _{i\in T_{\Pi}}r_{i}^{\nu}=0\text{ a.s.}$ 
\item $\mathbb{E}\left(\sum\limits _{i\in W}r_{i}^{\nu}\right)>1\,\implies\inf\limits _{section\,\Pi\subset\mathbb{A}^{*}}\sum\limits _{i\in T_{\Pi}}r_{i}^{\nu}>0\,\text{a.s.}$
conditioned on nonextinction.
\end{enumerate}
\end{thm}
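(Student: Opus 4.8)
The plan is to study the single random variable $Y:=\inf_{\Pi}\sum_{i\in T_{\Pi}}r_{i}^{\nu}$ (infimum over sections $\Pi\subset\mathbb{A}^{*}$); write $m_{s}:=\mathbb{E}\big(\sum_{i\in W}r_{i}^{s}\big)$, so the hypotheses are about $m_{\nu}$. First I would record some facts. Every section of $\mathbb{A}^{*}$ other than $\{\emptyset\}$ refines the first level, so by the statistical self-similarity of Galton--Watson trees
\[
Y\ \overset{d}{=}\ \min\!\Big(1,\ \textstyle\sum_{v\in W}r_{v}^{\nu}Y^{(v)}\Big),
\]
with the $Y^{(v)}$ i.i.d.\ copies of $Y$ independent of $W$; in particular $0\le Y\le1$, $Y=0$ on extinction, and $Y\le M_{n}:=\sum_{i\in T_{n}}r_{i}^{\nu}$ for all $n$. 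By max-flow--min-cut for trees, $Y$ equals the maximal value of a flow through $T$ from the root to $\partial T$ with vertex-capacities $r_{i}^{\nu}$. Next, $M_{n}/m_{\nu}^{n}$ is a nonnegative martingale, hence converges a.s.\ to some $N_{\infty}\ge0$; when $m_{\nu}>1$ a routine variance estimate (using finiteness of $\mathbb{A}$, which bounds $S:=\sum_{v\in W}r_{v}^{\nu}$) shows it is also $L^{2}$-bounded, so $\mathbb{E}N_{\infty}=1$. Finally, since $r_{v}^{\nu}>0$, $\{Y=0\}=\{Y^{(v)}=0\text{ for all }v\in W\}$, so $q:=\mathbb{P}(Y=0)$ satisfies $q=\mathbb{E}(q^{|W|})$, i.e.\ $q$ is a fixed point of the offspring probability generating function.

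For part (1): if $m_{\nu}<1$ then $\mathbb{E}Y\le\mathbb{E}M_{n}=m_{\nu}^{n}\to0$, so $Y=0$ a.s.\ (and $S=1$ a.s.\ is impossible as $\mathbb{E}S<1$). If $m_{\nu}=1$ and $S$ is not a.s.\ $1$: taking expectations in the recursion, $\mathbb{E}Y=\mathbb{E}\min(1,X)=\mathbb{E}X-\mathbb{E}(X-1)^{+}$ with $X=\sum_{v\in W}r_{v}^{\nu}Y^{(v)}$ and $\mathbb{E}X=m_{\nu}\mathbb{E}Y=\mathbb{E}Y$, forcing $X\le1$ a.s., hence $Y=X$ a.s. Iterating this identity simultaneously at all vertices gives $Y=\sum_{i\in T_{n}}r_{i}^{\nu}Y^{(i)}$ a.s.\ for every $n$, so $\mathbb{E}(Y\mid\mathcal{F}_{n})=(\mathbb{E}Y)M_{n}$; if $\mathbb{E}Y>0$, Lévy's theorem then gives $N_{\infty}=Y/\mathbb{E}Y$, a \emph{bounded} variable. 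But $N_{\infty}=\sum_{v\in W}r_{v}^{\nu}N_{\infty}^{(v)}$, which forces $\operatorname{ess\,sup}N_{\infty}\ge(\operatorname{ess\,sup}S)\operatorname{ess\,sup}N_{\infty}$ with $\operatorname{ess\,sup}S>1$, impossible unless $N_{\infty}=0$ a.s., so $Y=0$ a.s.---contradicting $\mathbb{E}Y>0$. Hence $Y=0$ a.s.

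For part (2), $m_{\nu}>1$, so the offspring p.g.f.\ has only the two fixed points $\mathbb{P}(\text{extinction})<1$ and $1$; since $Y=0$ on extinction, it suffices to show $\mathbb{P}(Y>0)>0$, which then upgrades to ``$Y>0$ a.s.\ conditioned on nonextinction''. I would write $Y=\lim_{n}Y_{n}$, $Y_{0}\equiv1$, $Y_{n}=\min\!\big(1,\sum_{v\in T_{1}}r_{v}^{\nu}Y_{n-1}^{(v)}\big)$, a decreasing sequence, and put $\mu_{n}:=\mathbb{E}Y_{n}\searrow\mathbb{E}Y$. Using $\min(1,x)\ge x-x^{2}$ and $\mathbb{E}(Y_{n-1}^{2})\le\mathbb{E}Y_{n-1}=\mu_{n-1}$ (as $Y_{n-1}\le1$) one obtains $\mu_{n}\ge(m_{\nu}-m_{2\nu})\mu_{n-1}-\mathbb{E}(S^{2})\mu_{n-1}^{2}$, and $m_{2\nu}<m_{\nu}$ since each $r_{v}\in(0,1)$. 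When $m_{\nu}-m_{2\nu}>1$, a positive decreasing sequence satisfying this recursion cannot tend to $0$, so $\mathbb{E}Y>0$, and then $\mathbb{E}(Y^{2})\le\mathbb{E}Y$ with the Paley--Zygmund inequality gives $\mathbb{P}(Y>0)>0$. For a general $\nu$ with $m_{\nu}>1$ one need not have $m_{\nu}-m_{2\nu}>1$, so I would first spread the mass out---on the $k$-compressed tree $T_{(k)}$ the analogous gap is $m_{\nu}^{k}-m_{2\nu}^{k}\to\infty$---and then transfer positivity of the cut-set infimum back to $T$ by a regeneration argument.

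The hard part is exactly this last transfer in (2). Compression only gives the \emph{upper} bound $Y(T_{(k)})\ge Y(T)$ on cut-set infima, so positivity on the compressed tree does not by itself yield $\mathbb{P}(Y(T)>0)>0$; equivalently, one must build on $T$ itself, with positive probability, a flow of positive value respecting the capacities $r_{i}^{\nu}$. The natural candidate $\theta(i)=r_{i}^{\nu}m_{\nu}^{-|i|}N_{\infty}^{(i)}$ coming from the $L^{2}$-bounded martingale has the correct value but violates the capacity constraints at the finitely many shallow vertices where $N_{\infty}^{(i)}$ is atypically large, and making the repair work uniformly in the near-critical regime $m_{\nu}\downarrow1$ is the delicate point---this is the weighted analogue of Lyons' second-moment / flow argument for percolation on Galton--Watson trees. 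Everything else (the recursion, the martingale convergence, the generating-function dichotomy, and part (1)) is elementary.
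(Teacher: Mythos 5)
The paper states this result as Falconer's theorem and cites it to Lyons and Peres (\cite[Theorem 5.35]{Lyons2016}) without giving a proof, so there is no in-paper argument to compare against. Judged on its own terms, your part (1) is correct, and the framework you build for part (2) --- the cut-set recursion $Y=\min\bigl(1,\sum_{v\in W}r_{v}^{\nu}Y^{(v)}\bigr)$, the generating-function dichotomy reducing the claim to $\mathbb{P}(Y>0)>0$, and the second-moment recursion $\mu_{n}\ge(m_{\nu}-m_{2\nu})\mu_{n-1}-\mathbb{E}(S^{2})\mu_{n-1}^{2}$ on the $k$-compressed tree together with the observation that $m_{\nu}^{k}-m_{2\nu}^{k}\to\infty$ --- is also sound.

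The step you declare delicate and leave open, however, is not an obstruction, and no flow construction or regeneration argument is needed. You correctly note $Y(T_{(k)})\ge Y(T)$, which is the wrong direction for what you want; but the reverse inequality also holds up to a deterministic constant. Given any section $\Pi\subset\mathbb{A}^{*}$, coarsen it to $\Pi^{+}$ by replacing each $i\in\Pi$ with all of its extensions of length $k\lceil|i|/k\rceil$; then $\Pi^{+}$ is a section whose elements have lengths divisible by $k$, hence a section for $T_{(k)}$, and
\[
\sum_{j\in T\cap\Pi^{+}}r_{j}^{\nu}
=\sum_{i\in T\cap\Pi}r_{i}^{\nu}\sum_{l\in\left(T^{i}\right)_{m(i)}}r_{l}^{\nu}
\le C_{k}\sum_{i\in T\cap\Pi}r_{i}^{\nu},
\]
where $m(i)=k\lceil|i|/k\rceil-|i|\le k-1$ and $C_{k}=\max\bigl(1,\,\bigl(\sum_{a\in\mathbb{A}}r_{a}^{\nu}\bigr)^{k-1}\bigr)<\infty$ since $\mathbb{A}$ is finite. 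Taking the infimum over $\Pi$ yields $Y(T)\le Y(T_{(k)})\le C_{k}\,Y(T)$, so $\{Y(T)=0\}=\{Y(T_{(k)})=0\}$ pointwise, and the second-moment bound you already proved for $T_{(k)}$ completes the argument.
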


We note that Falconer's Theorem is in fact more general than stated
above and may be applied in cases were the weights themselves are
random variables (cf. \cite[Theorem 5.35]{Lyons2016}). Also note
that Falconer's theorem is the main ingredient in the proof given
in \cite{Lyons2016} of Theorem \ref{thm:dimension of GW fractals}.
\begin{proof}[Proof of Lemma \ref{lem:size of cutsets}]
 Given $\alpha<\delta$, fix some $\alpha^{\prime}\in\left(\alpha,\delta\right)$.
If the lemma is false, then with positive probability there exists
a decreasing sequence $\rho_{n}\searrow0$ s.t. $\left|T_{\Pi_{\rho_{n}}}\right|\leq\frac{1}{\rho_{n}^{\alpha}}$
for every $n$. In this case, for each $n$, we have 
\[
\sum\limits _{i\in T_{\Pi_{\rho_{n}}}}r_{i}^{\alpha^{\prime}}\leq\frac{1}{\rho_{n}^{\alpha}}\cdot\rho_{n}^{\alpha^{\prime}}=\rho_{n}^{\alpha^{\prime}-\alpha}\underset{n\rightarrow\infty}{\longrightarrow}0
\]
which contradicts (2) of Theorem \ref{thm:Falconer's Theorem} since
$\alpha^{\prime}<\delta$ implies that $\mathbb{E}\left(\sum\limits _{i\in W}r_{i}^{\alpha^{\prime}}\right)>1$.
\end{proof}
\begin{cor}
\label{cor:size of cutsets}Let $T$ and $\delta$ be as in the previous
lemma. Then for every $\alpha<\delta$, $\forall\varepsilon>0,\,\exists\rho_{0}>0$
s.t. $\forall\rho<\rho_{0}$
\[
\mathbb{P}\left(\left|T_{\Pi_{\rho}}\right|>\frac{1}{\rho^{\alpha}}|\text{ nonextinction}\right)>1-\varepsilon.
\]
\end{cor}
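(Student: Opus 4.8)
The plan is to obtain the corollary directly from Lemma~\ref{lem:size of cutsets} by the standard device of upgrading an almost-sure ``eventually'' statement into a statement about uniformly high probability for all sufficiently small $\rho$, using continuity of a probability measure from below.

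Fix $\alpha<\delta$, and for $\rho\in(0,r_{min})$ write $A_{\rho}=\bigl\{|T_{\Pi_{\rho}}|>\rho^{-\alpha}\bigr\}$. For $n\in\mathbb{N}$ set $B_{n}=\bigcap_{0<\rho<1/n}A_{\rho}$. As $\rho$ decreases through $(0,r_{min})$ the section $\Pi_{\rho}$ changes only at the countably many values $\{r_{i}:i\in\mathbb{A}^{*}\}$ and is constant on each of the intervals in between; since $|T_{\Pi_{\rho}}|$ depends on $\rho$ only through $\Pi_{\rho}$ while $\rho^{-\alpha}$ is monotone in $\rho$, the condition defining $A_{\rho}$ is most restrictive at the left endpoint of each such interval, so the intersection defining $B_{n}$ reduces to a countable one and $B_{n}\in\mathscr{B}$. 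The sequence $(B_{n})_{n}$ is increasing, and $\bigcup_{n}B_{n}$ is precisely the event ``there exists $\rho_{0}>0$ such that $|T_{\Pi_{\rho}}|>\rho^{-\alpha}$ for all $\rho<\rho_{0}$'', which by Lemma~\ref{lem:size of cutsets} has probability $1$ conditioned on nonextinction.

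Hence $\mathbb{P}(B_{n}\mid\text{nonextinction})\nearrow1$ as $n\to\infty$. Given $\varepsilon>0$, I would pick $n$ with $\mathbb{P}(B_{n}\mid\text{nonextinction})>1-\varepsilon$ and set $\rho_{0}=1/n$; then for every $\rho<\rho_{0}$ one has $B_{n}\subseteq A_{\rho}$, so $\mathbb{P}\bigl(|T_{\Pi_{\rho}}|>\rho^{-\alpha}\mid\text{nonextinction}\bigr)\ge\mathbb{P}(B_{n}\mid\text{nonextinction})>1-\varepsilon$, which is the assertion. I do not expect a genuine obstacle here; the only point deserving a moment's care is the measurability of $B_{n}$ --- that the \emph{a priori} uncountable intersection over $\rho$ collapses to a countable one --- and this is precisely why the statement is recorded as a corollary rather than proved from scratch.
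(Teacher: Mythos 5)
Your proof is correct and follows essentially the same route as the paper's: define the increasing events $B_n$ (which the paper calls $A_n$), appeal to Lemma~\ref{lem:size of cutsets} to get $\mathbb{P}\bigl(\bigcup_n B_n \mid \text{nonextinction}\bigr)=1$, and use continuity from below. Your added remark on measurability of the uncountable intersection is a legitimate point of care that the paper leaves implicit, but it does not change the argument.
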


\begin{proof}
Denote by $A_{n}$ the event: $\forall\rho<n^{-1},\,\left|T_{\Pi_{\rho}}\right|>\rho^{-\alpha}$.
By Lemma \ref{lem:size of cutsets} 
\[
\mathbb{P}\left(\bigcup\limits _{n=1}^{\infty}A_{n}|\:\text{nonextinction}\right)=1.
\]
Since $\left(A_{n}\right)_{n=1}^{\infty}$ is an increasing sequence
of events, $\mathbb{P}\left(A_{n}|\:\text{nonextinction}\right)\nearrow1$.
Taking $\rho_{0}=n_{0}^{-1}$ with $n_{0}$ large enough we finish
the proof. 
\end{proof}
Note that Corollary \ref{cor:size of cutsets} replaces Corollary
\ref{cor:kesten - stigum} in the present, more flexible setup. Unlike
Corollary \ref{cor:kesten - stigum} which we proved using Kesten
- Stigum theorem and did not need to use the result about the a.s.
dimension of fractal percolation sets, in the setup of arbitrary similarity
IFSs things are more complicated and we did use Theorem \ref{thm:dimension of GW fractals}
in order to bound from below the size of sections.

We conclude this subsection with the following lemma which is a standard
application of the open set condition.%
\begin{lyxgreyedout}
This Lemma is from Mike Hochman's lecture notes (in the following
link: http://math.huji.ac.il/\textasciitilde mhochman/courses/fractals-2012/course-notes.june-26.pdf)
claim 5.17. The proof was not clear to me, so I changed it. What reference
should I provide here?%
\end{lyxgreyedout}

\begin{lem}
\label{lem:OSC use}Let $\Phi=\left\{ \varphi_{i}\right\} _{i\in\Lambda}$
be a similarity IFS with contraction ratios $\left\{ r_{i}\right\} _{i\in\Lambda}$,
which satisfies the open set condition. Let $K$ be the attractor
of $\Phi$. Then there exists some constant $C>0$, s.t. $\forall x\in\mathbb{R}^{d}$,
for any $\rho\in\left(0,r_{min}\right)$, $\left|\left\{ a\in\Pi_{\rho}:\,\varphi_{a}K\cap B_{\rho}\left(x\right)\neq\emptyset\right\} \right|<C$.
\end{lem}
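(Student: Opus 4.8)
The plan is a standard bounded-overlap argument via volume packing. First I would fix a \emph{bounded} open set $U$ realizing the OSC for $\Phi$; such a $U$ exists by a standard reduction (intersect any OSC set with a large closed ball that contains $K$ and is invariant under every $\varphi_i$). Then $\overline U$ is compact and $\varphi_i\overline U=\overline{\varphi_iU}\subseteq\overline U$ for all $i\in\Lambda$, so $K\subseteq\overline U$ (since $K=\bigcap_n\bigcup_{|i|=n}\varphi_iF$ for every compact $F$ with $\varphi_iF\subseteq F$). Set $\Delta_U=\operatorname{diam}(\overline U)<\infty$ and fix a ball $B_s(y_0)\subseteq U$ with $s>0$; both $\Delta_U$ and $s$ depend only on $\Phi$.

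Next I would record two geometric inputs. First, the open sets $\{\varphi_aU:a\in\Pi_\rho\}$ are pairwise disjoint: $\Pi_\rho$ is a section and hence an antichain in $\Lambda^*$, and for incomparable $a,a'$ one may write $a=wi\sigma$ and $a'=wj\tau$ with $w,\sigma,\tau\in\Lambda^*$ and $i\neq j$ in $\Lambda$; then $\varphi_aU\subseteq\varphi_{wi}U$ and $\varphi_{a'}U\subseteq\varphi_{wj}U$ (using $\varphi_\sigma U,\varphi_\tau U\subseteq U$), while $\varphi_{wi}U\cap\varphi_{wj}U=\varphi_w(\varphi_iU\cap\varphi_jU)=\emptyset$. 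Second, for every $a\in\Pi_\rho$ one has $\rho\,r_{min}<r_a\le\rho$, so $\varphi_aU$ contains the ball $\varphi_a(B_s(y_0))=B_{r_as}(\varphi_ay_0)$ of radius $>\rho\,r_{min}s$, while $\operatorname{diam}(\varphi_a\overline U)=r_a\Delta_U\le\rho\Delta_U$.

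Finally, fix $x\in\mathbb{R}^d$ and $\rho\in(0,r_{min})$, and let $N$ denote the number of $a\in\Pi_\rho$ with $\varphi_aK\cap B_\rho(x)\neq\emptyset$. For such an $a$, pick $z\in\varphi_aK\cap B_\rho(x)$; since $K\subseteq\overline U$ gives $z\in\varphi_a\overline U$ and $\varphi_aU\subseteq\varphi_a\overline U$, the diameter bound above yields $\varphi_aU\subseteq B_{\rho(1+\Delta_U)}(x)$. Thus the corresponding $N$ sets $\varphi_aU$ are pairwise disjoint, each contains a ball of radius $>\rho\,r_{min}s$, and all are contained in $B_{\rho(1+\Delta_U)}(x)$; comparing Lebesgue measures (the volume of a $d$-ball is proportional to the $d$-th power of its radius) gives $N\,(\rho\,r_{min}s)^d\le(\rho(1+\Delta_U))^d$, hence $N\le\bigl((1+\Delta_U)/(r_{min}s)\bigr)^d$, and the lemma follows with $C=\bigl((1+\Delta_U)/(r_{min}s)\bigr)^d+1$, a constant depending only on $\Phi$. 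The disjointness bookkeeping in the second step and the volume comparison in the third are routine; the only mildly delicate point is the reduction to a bounded OSC set with $K\subseteq\overline U$, which is the standard absorbing-ball argument indicated at the start.
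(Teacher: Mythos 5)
Your proof is correct and takes essentially the same route as the paper's: both rely on the pairwise disjointness of $\left\{ \varphi_{a}U\right\} _{a\in\Pi_{\rho}}$, the fact that each $\varphi_{a}U$ contains a ball of radius comparable to $\rho$, and the fact that every $\varphi_{a}U$ meeting $B_{\rho}\left(x\right)$ lies inside a ball of radius comparable to $\rho$ around $x$, and then compare Lebesgue measures. You are in fact slightly more careful than the paper at the outset, since the paper takes the diameter of the OSC set $U$ without first arranging for $U$ to be bounded, a point your absorbing-ball reduction handles explicitly.
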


\begin{proof}
Let $U\subset\mathbb{R}^{d}$ be an OSC set for $\Phi$. Denote $\delta=\text{diam}\left(U\right)$
and $\Delta=\text{diam}\left(K\right)$. Note that since $\varphi_{i}\overline{U}\subseteq\overline{U}$
for every $i\in\Lambda$, $K\subseteq\overline{U}$. Hence, since
the ball $B_{\rho}\left(x\right)$ is open, it is enough to show that
$\left|\left\{ a\in\Pi_{\rho}:\,\varphi_{a}U\cap B_{\rho}\left(x\right)\neq\emptyset\right\} \right|<C$
for some constant $C$.

Fix some open ball $B\subseteq U$ of radius $\varepsilon>0$. For
any $a\in\Pi_{\rho}$, $\varphi_{a}B$ is a ball of radius $r_{a}\varepsilon$.
Since $r_{a}\varepsilon>\rho r_{min}\varepsilon$, we have 
\[
\text{Vol}\left(\varphi_{a}U\right)\geq\text{Vol}\left(\varphi_{a}B\right)>C_{1}\rho^{d}
\]
 Where $C_{1}>0$ is some constant which depends only on $\varepsilon,\,r_{min},\,d$.

On the other hand, for any $a\in\Pi_{\rho}$, $\text{diam}\left(\varphi_{a}U\right)=r_{a}\delta\leq\rho\delta$.
Hence, if $\varphi_{a}U\cap B_{\rho}\left(x\right)\neq\emptyset$,
then $\varphi_{a}U\subseteq B_{\rho\left(1+\delta\right)}\left(x\right)$.
Note that $\text{Vol}\left(B_{\rho\left(1+\delta\right)}\left(x\right)\right)=C_{2}\rho^{d}$
where $C_{2}>0$ is some constant which depends only on $\delta$
and $d$.

Since all the sets $\left\{ \varphi_{a}U\right\} _{a\in\Pi_{\rho}}$
are disjoint, we have $\left|\left\{ a\in\Pi_{\rho}:\,\varphi_{a}U\cap B_{\rho}\left(x\right)\neq\emptyset\right\} \right|<\dfrac{C_{2}}{C_{1}}$
.
\end{proof}

\subsection{$*$-trees}

We wish to adjust the idea of $k$-compression of trees to the present
setup, where instead of transforming each $k$ levels into one, we
compress the tree along sections of the form $\Pi_{\rho^{k}}$. The
object we receive by performing this action is no longer a tree as
defined in section \ref{sec:Labeled-GW Trees}, as elements of each
level may be strings of various lengths. Therefore we need to make
the definition of trees a bit more flexible, allowing the building
blocks of the tree to be strings in the alphabet $\mathbb{A}$ instead
of just letters. 

But first, we introduce the following notation: given a subset $S\subseteq\mathbb{A}^{*}$,
we define the function $h_{S}:S\to\mathbb{N}\cup\left\{ 0\right\} $
by 
\[
\forall i\in S,\,h\left(i\right)=\left|\left\{ j\in S:\,j<i\right\} \right|.
\]
The value $h\left(i\right)$ is referred to as the \emph{height of
}$i$. The subscript after $h$ may be omitted whenever the context
is believed to be clear.
\begin{defn}
Let $\mathbb{A}$ be a finite alphabet. A subset $S\subset\mathbb{A}^{*}$
will be called a \emph{$*$-tree }with alphabet $\mathbb{A}$ if the
following conditions hold:
\begin{enumerate}
\item $\emptyset\in S$
\item $\forall\emptyset\neq i\in S$, there exists a unique $j\in S$, s.t.
$j<i$ and $h\left(j\right)=h\left(i\right)-1$
\item $\forall n\in\mathbb{N},\,\left|h^{-1}\left(n\right)\right|<\infty$.
\end{enumerate}
We denote $S_{n}=h^{-1}\left(n\right)$. For each $i\in S$ we denote
by $W_{S}\left(i\right)=\left\{ j\in\mathbb{A}^{*}:\,ij\in S_{h\left(i\right)+1}\right\} $
the set of children of $i$. The boundary of $S$ is defined by $\partial S=\left\{ i\in\mathbb{A}^{\mathbb{N}}:\,\forall n\in\mathbb{N},\,\exists j\in S_{n},\,j<i\right\} $.
Given a $*$-tree $S$ and some vertex $i\in S,$ the descendants
tree of $i$ is defined to be $S^{i}=\left\{ j\in\mathbb{A}^{*}:\,ij\in S\right\} $.

Obviously, every tree $T\in\mathscr{T}_{\mathbb{A}}$ is a $*$-tree
with alphabet $\mathbb{A}$, with $h\left(i\right)=\left|i\right|$
for every $i\in T$. %
\begin{comment}
To make things more formal, let $\mathscr{T}_{\mathbb{A}}^{*}$ be
the set of all $*$-trees with alphabet $\mathbb{A}$. Then there
is an injection $\mathscr{T}_{\mathbb{A}}^{*}\to\prod\limits _{n=0}^{\infty}2^{\mathbb{A}^{*}}$
given by $T\mapsto\prod\limits _{n=0}^{\infty}T_{n}$. The set $\prod\limits _{n=0}^{\infty}2^{\mathbb{A}^{*}}$
may be endowed with the product topology, regarding each set $2^{\mathbb{A}^{*}}$
as discrete. With this topology $\prod\limits _{n=0}^{\infty}2^{\mathbb{A}^{*}}$
is compact and metrizable. Then we may induce the topology of $\prod\limits _{n=0}^{\infty}2^{\mathbb{A}^{*}}$
on $\mathscr{T}_{\mathbb{A}}^{*}$. As in the case of trees, $\mathscr{T}_{\mathbb{A}}^{*}$
is a closed (hence compact) set in $\prod\limits _{n=0}^{\infty}2^{\mathbb{A}^{*}}$. 
\end{comment}
\end{defn}

\begin{defn}
Let $S$ be a $*$-tree with alphabet $\mathbb{A}$. A \emph{$*$-subtree
}of \emph{S} is any $*$-tree $Q$ with alphabet $\mathbb{A}$ s.t.
$Q\subseteq S$ and for every $i\in Q,\,\left\{ j\in S:\,j<i\right\} \subset Q$
(this condition ensures that $h_{Q}=h_{S}\restriction_{Q}$).
\end{defn}

We now define the compression of trees along sections.
\begin{defn}
\label{def:compression along cutsets}Let $T$ be a tree with alphabet
$\mathbb{A}$. Let $\left(\Pi_{n}\right)_{n=1}^{\infty}$ be a sequence
of sections s.t. for every $n$, $\forall i\in\Pi_{n+1},\,\exists j\in\Pi_{n},\,\text{s.t. }j<i$.
Then \emph{the compression of $T$ along the sections} $\left(\Pi_{n}\right)_{n=1}^{\infty}$
is defined to be the $*$-tree $S$ with alphabet $\mathbb{A}$ given
by $S=\bigcup_{n=0}^{\infty}T_{\Pi_{n}}$, where we define $\Pi_{0}=\left\{ \emptyset\right\} $.
\end{defn}

Note that $S_{n}=T_{\Pi_{n}}$ for every $n$, and that $\partial S=\partial T$.
Now, suppose that $T$ is a  tree with alphabet $\mathbb{A}$ and
weights $\left\{ r_{i}\right\} _{i\in\mathbb{A}}$, and let $\rho\in\left(0,r_{min}\right)$
be some positive number. The compression of $T$ along the sections
$\left(\Pi_{\rho^{n}}\right)_{n=1}^{\infty}$ will be denoted by $T_{\left(\rho\right)}$.
\begin{defn}
Given an alphabet $\mathbb{A}$ with weights $\left\{ r_{i}\right\} _{i\in\mathbb{A}}$,
$\rho\in\left(0,r_{min}\right)$ and $i\in\bigcup\limits _{n=1}^{\infty}\Pi_{\rho^{n}}$,
we denote $a_{\rho}\left(i\right):=\frac{r_{i}}{\rho^{n_{\rho}\left(i\right)}}$,
where $n_{\rho}\left(i\right)$ is the unique $n\in\mathbb{N}$ s.t.
$i\in\Pi_{\rho^{n}}$. We also denote $n_{\rho}\left(\emptyset\right)=0$
and $a_{\rho}\left(\emptyset\right)=1$.
\end{defn}

\begin{prop}
\label{prop:Next elem in compressed tree}Let $\mathbb{A}$ be an
alphabet with weights $\left\{ r_{i}\right\} _{i\in\mathbb{A}}$,
and let $\rho\in\left(0,r_{min}\right)$. Then for every $i\in\bigcup\limits _{n=1}^{\infty}\Pi_{\rho^{n}}\cup\left\{ \emptyset\right\} $,
$m\in\mathbb{N}$, and $j\in\mathbb{A}^{*}$, 
\[
ij\in\Pi_{\rho^{n_{\rho}\left(i\right)+m}}\iff j\in\Pi_{\frac{\rho^{m}}{a_{\rho}\left(i\right)}}.
\]
\end{prop}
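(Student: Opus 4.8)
The plan is to unwind both sides of the claimed equivalence straight from the definition of the sections $\Pi_{(\cdot)}$, using only multiplicativity of the weights, $r_{ij}=r_{i}r_{j}$ for $i,j\in\mathbb{A}^{*}$, together with the identity $r_{i}=a_{\rho}(i)\,\rho^{\,n_{\rho}(i)}$, which is just the definition of $a_{\rho}(i)$. Write $n=n_{\rho}(i)$, $a=a_{\rho}(i)$ and $\sigma=\rho^{m}/a$. First I would dispose of the two degenerate cases. If $i=\emptyset$ then $n=0$, $a=1$, $ij=j$ and $\sigma=\rho^{m}$, so the asserted equivalence is the tautology $j\in\Pi_{\rho^{m}}\iff j\in\Pi_{\rho^{m}}$. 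If $i\neq\emptyset$ but $j=\emptyset$ (so $m\ge 1$), then $ij=i$; from $i\in\Pi_{\rho^{n}}$ we have $r_{i}>\rho^{n}r_{min}>\rho^{n+1}\ge\rho^{n+m}$, hence $i\notin\Pi_{\rho^{n+m}}$, while $\emptyset$ belongs to no section $\Pi_{\tau}$ with $\tau<1$ (its defining inequality would force $1\le\tau<1$); so both sides are false.

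For the main case, $i$ and $j$ both nonempty, the computation is a single division. Reading off the letters of $ij=i_{1}\cdots i_{|i|}j_{1}\cdots j_{|j|}$, one has $r_{ij}=r_{i}r_{j}=a\rho^{n}r_{j}$, and the product of all letters of $ij$ except the last equals $r_{i}\cdot r_{j_{1}}\cdots r_{j_{|j|-1}}=a\rho^{n}\,r_{j_{1}}\cdots r_{j_{|j|-1}}$. Therefore
\begin{align*}
ij\in\Pi_{\rho^{n+m}}
&\iff r_{ij}\le\rho^{n+m}<r_{i_{1}}\cdots r_{i_{|i|}}\,r_{j_{1}}\cdots r_{j_{|j|-1}}\\
&\iff a\rho^{n}r_{j}\le\rho^{n+m}<a\rho^{n}\,r_{j_{1}}\cdots r_{j_{|j|-1}}\\
&\iff r_{j}\le\frac{\rho^{m}}{a}<r_{j_{1}}\cdots r_{j_{|j|-1}}\\
&\iff j\in\Pi_{\rho^{m}/a},
\end{align*}
where the third equivalence divides the double inequality by $a\rho^{n}>0$, and the last is exactly the definition of $\Pi_{\rho^{m}/a}$ (with the usual empty-product convention when $|j|=1$). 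Before invoking that definition I would also record that $\sigma=\rho^{m}/a\in(0,1)$ — indeed $r_{i}>\rho^{n}r_{i_{|i|}}\ge\rho^{n}r_{min}$ gives $\sigma=\rho^{m+n}/r_{i}<\rho^{m}/r_{min}\le\rho/r_{min}<1$ — so that $\Pi_{\sigma}$ is a genuine section even though $\sigma$ need not be smaller than $r_{min}$.

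I do not expect a real obstacle here: the statement is essentially a change of variables in the defining inequality of a section. The only points that demand attention are bookkeeping ones — correctly reading off the letters of the concatenation $ij$ so that the ``all-but-the-last-letter'' product factors as $r_{i}\cdot r_{j_{1}}\cdots r_{j_{|j|-1}}$, treating the two empty-word cases separately as above, and noting that the new scale $\rho^{m}/a_{\rho}(i)$ still lies in $(0,1)$. Everything else is forced by $r_{ij}=r_{i}r_{j}$ and $r_{i}=a_{\rho}(i)\rho^{n_{\rho}(i)}$.
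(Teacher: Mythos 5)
Your proof is correct and follows exactly the same strategy as the paper's: unwind the defining double inequality of $\Pi_{\rho^{n+m}}$, factor out $r_i=a_\rho(i)\rho^{n}$, and divide. You are in fact slightly more careful than the paper — you explicitly dispose of the $j=\emptyset$ case (which the paper's one-line chain glosses over, though both sides turn out false there anyway) and you check that the new scale $\rho^{m}/a_\rho(i)$ lies in $(0,1)$ so that $\Pi_{\rho^{m}/a_\rho(i)}$ still makes sense even when it exceeds $r_{\min}$; neither of these extra checks changes the substance, so the two arguments are essentially identical.
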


\begin{proof}
For $i=\emptyset$ the claim is trivial. Given $i\in\Pi_{\rho^{n}}$
for some $n\geq1$, for every $j\in\mathbb{A}^{*}$,

\[
\begin{array}{l}
ij\in\Pi_{\rho^{n+m}}\\
\iff r_{i}r_{j}\leq\rho^{n+m}<r_{i}r_{j_{1}}\cdots r_{j_{\left|j\right|-1}}\\
\iff r_{j}\leq\frac{\rho^{m}}{a_{\rho}\left(i\right)}<r_{j_{1}}\cdots r_{j_{\left|j\right|-1}}\\
\iff j\in\Pi_{\frac{\rho}{a_{\rho}\left(i\right)}}
\end{array}
\]
\end{proof}
The following Proposition is an immediate consequence of Proposition
\ref{prop:Next elem in compressed tree}.
\begin{prop}
\label{prop:almost iterative construction}Let $T$ be a GWT with
alphabet $\mathbb{A}$ and weights $\left\{ r_{i}\right\} _{i\in\mathbb{A}}$,
and fix any $\rho\in\left(0,r_{min}\right)$. Then $\forall i\in\bigcup\limits _{n=1}^{\infty}\Pi_{\rho^{n}}\cup\left\{ \emptyset\right\} $,
conditioned on $i\in T_{\left(\rho\right)}$, $W_{T_{\left(\rho\right)}}\left(i\right)$
has the same law as $T\cap\Pi_{\frac{\rho}{a_{\rho}\left(i\right)}}$.%
\begin{comment}
\begin{proof}
For every $j\in\mathbb{A}^{*}$, 
\[
\mathbb{P}\left(j\in W_{T_{\left(\rho\right)}}\left(i\right)|\,i\in T_{\left(\rho\right)}\right)=\mathbb{P}\left(j\in T^{i}\text{ and }j\in\Pi_{\frac{\rho}{a_{i}}}|\,i\in T_{\left(\rho\right)}\right)=\mathbb{P}\left(j\in T\cap\Pi_{\frac{\rho}{a_{i}}}\right).
\]
\end{proof}
\end{comment}
\end{prop}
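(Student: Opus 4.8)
The plan is to turn this probabilistic statement into a deterministic identity expressing the random set $W_{T_{(\rho)}}(i)$ in terms of the descendants tree $T^{i}$, and then to read off the law from the statistical self-similarity of Galton--Watson trees.

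First I would unwind the definitions. By Definition~\ref{def:compression along cutsets}, $T_{(\rho)}$ is the $*$-tree obtained by compressing $T$ along the sections $\bigl(\Pi_{\rho^{n}}\bigr)_{n=1}^{\infty}$, so its $n$-th level is $(T_{(\rho)})_{n}=T_{\Pi_{\rho^{n}}}=T\cap\Pi_{\rho^{n}}$, and the height in $T_{(\rho)}$ of a vertex $i\in\bigcup_{n\ge1}\Pi_{\rho^{n}}\cup\{\emptyset\}$ equals $n_{\rho}(i)$. Hence, by the definition of the children of a vertex in a $*$-tree,
\[
W_{T_{(\rho)}}(i)=\bigl\{\,j\in\mathbb{A}^{*}:\ ij\in T\cap\Pi_{\rho^{\,n_{\rho}(i)+1}}\,\bigr\}.
\]
Now I would apply Proposition~\ref{prop:Next elem in compressed tree} with $m=1$, which gives $ij\in\Pi_{\rho^{\,n_{\rho}(i)+1}}\iff j\in\Pi_{\rho/a_{\rho}(i)}$; here the radius $\rho/a_{\rho}(i)$ lies in $(0,1)$, in fact in $[\rho,\rho/r_{min})$, because $r_{min}<a_{\rho}(i)\le1$ for $i\in\Pi_{\rho^{\,n_{\rho}(i)}}$ and $a_{\rho}(\emptyset)=1$, so $\Pi_{\rho/a_{\rho}(i)}$ is a well-defined (finite) section. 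Combining the two displays yields
\[
W_{T_{(\rho)}}(i)=T^{i}\cap\Pi_{\rho/a_{\rho}(i)},
\]
an equality of random sets.

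It then remains to identify the conditional law. Since $n_{\rho}(i)$ is defined precisely so that $i\in\Pi_{\rho^{\,n_{\rho}(i)}}$ (and $\emptyset$ belongs to every tree), the event $\{i\in T_{(\rho)}\}$ coincides with $\{i\in T\}$. By the statistical self-similarity of Galton--Watson trees recalled just before Proposition~\ref{prop:0-1 law}, conditioned on $i\in T$ the descendants tree $T^{i}$ is again a GWT with the same offspring distribution $W$, hence has the same law as $T$; applying the deterministic, measurable (indeed continuous) map $Q\mapsto Q\cap\Pi_{\rho/a_{\rho}(i)}$ to both sides, I conclude that, conditioned on $i\in T_{(\rho)}$, the set $W_{T_{(\rho)}}(i)=T^{i}\cap\Pi_{\rho/a_{\rho}(i)}$ has the same law as $T\cap\Pi_{\rho/a_{\rho}(i)}$, which is the assertion. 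I do not expect any genuine obstacle: essentially all the content is in Proposition~\ref{prop:Next elem in compressed tree}, and the only points needing a moment's care are the bookkeeping of heights in the $*$-tree $T_{(\rho)}$ and the verification that $\rho/a_{\rho}(i)$ is an admissible radius for $\Pi_{(\cdot)}$.
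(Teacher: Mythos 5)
Your proof is correct and follows essentially the same route as the paper's (commented-out) one-line proof: both rest on the deterministic identity $W_{T_{(\rho)}}(i)=T^{i}\cap\Pi_{\rho/a_{\rho}(i)}$ coming from Proposition~\ref{prop:Next elem in compressed tree} with $m=1$, combined with the statistical self-similarity of GWTs. Your version is simply more careful: the paper's displayed equation only records the one-dimensional marginals $\mathbb{P}(j\in\cdot)$, whereas you make the set-valued identity explicit, observe that $\{i\in T_{(\rho)}\}=\{i\in T\}$, and push forward the law of $T^{i}$ under the deterministic map $Q\mapsto Q\cap\Pi_{\rho/a_{\rho}(i)}$, which is the logically complete form of the same argument.
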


Next, we define random $*$-trees. Let $\mathbb{A}$ be a finite alphabet
and let $B\subseteq\mathbb{A}^{*}$ be a $*$-tree. Let $\left\{ M_{x}\right\} _{x\in B}$
be a collection of independent random variables s.t. for every $x\in B$,
$M_{x}$ takes values in the finite set $2^{W_{B}\left(x\right)}$.
Define 
\begin{itemize}
\item $S_{0}=\emptyset$ 
\item For $n\geq1,$ $S_{n}=\bigcup\limits _{i\in S_{n-1}}\left\{ ij:\,j\in M_{i}\right\} ,$
\end{itemize}
and finally take $S=\bigcup_{n=0}^{\infty}S_{n}$. \emph{S} is then
a random $*$-tree on $B$ with offspring distributions $\left\{ M_{x}\right\} _{x\in B}$.
Note that every realization of $S$ is a \emph{$*$-}subtree of $B$%
\begin{comment}
To make things more formal, define $\mathscr{B}=\prod\limits _{n=0}^{\infty}2^{B_{n}}$.
$\mathscr{B}$ may be endowed with the product topology which is compact
and metrizable (regarding each $2^{B_{n}}$ as a finite discrete set).
Let $\mathscr{T}_{B}$ be the set of all {*}-subtrees of $B$. Then
there is an injection $\mathscr{T}_{B}\to\mathscr{B}$ given by $T\mapsto\prod\limits _{n=0}^{\infty}T_{n}$.
$\mathscr{T}_{B}$ is a closed set in $\mathscr{B}$. The random process
defined above may be considered a Markov process on $\mathscr{B}$,
therefore by Kolmogorov's extension theorem there exists a Borel measure
on $\mathscr{B}$ (which is obviously supported on $\mathscr{T}_{B}$)
which is the distribution of the random $*$-tree.
\end{comment}
. In the graph theoretic perspective, this process may be thought
of in the following way: Let $G$ be the graph with vertices $B$
and edges $\left\{ \left(i,j\right):\,j\in W_{B}\left(i\right)\right\} $.
The graph $G$ is a directed rooted tree in the graph theoretic sense
with $\emptyset$ serving as the root. Now, given a realization of
the random variables $\left\{ M_{x}\right\} _{x\in B}$, we take $S$
to be the connected component of the subgraph of $G$, with vertices
$B$ and edges $\left\{ \left(i,j\right):\,j\in M_{i}\right\} $ which
contains $\emptyset$.

There may be elements $i\in B$ s.t. $\mathbb{P}\left(i\in S\right)=0$.
These elements add no information to the construction. Therefore,
given the setup above we denote $B^{\prime}=\left\{ i\in B:\,\mathbb{P}\left(i\in S\right)>0\right\} $.
Note that by the construction of $S$, $B^{\prime}$ is a \emph{$*$-}subtree
of $B$.

Let $T$ be a GWT with alphabet $\mathbb{A}$, and let $\left(\Pi_{n}\right)_{n=1}^{\infty}$
be a sequence of sections as in Definition \ref{def:compression along cutsets}.
The compression of $T$ along the sections $\left(\Pi_{n}\right)_{n=1}^{\infty}$
has the law of a random $*$-tree on $B=\bigcup_{n=0}^{\infty}\Pi_{n}$,
where for each $i\in B$, $M_{i}$ has the law of $T\cap W_{B}\left(i\right)$.
In particular, the following is an immediate consequence of Proposition
\ref{prop:almost iterative construction}.
\begin{prop}
\label{prop:law of compressed tree}Let $T$ be a GWT with alphabet
$\mathbb{A}$, weights $\left\{ r_{i}\right\} _{i\in\mathbb{A}}$
and offspring distribution $W$. Then $\forall\rho\in\left(0,r_{min}\right)$,
the compressed tree $T_{\left(\rho\right)}$ has the law of a random
$*$-tree on $B=\bigcup_{n=1}^{\infty}\Pi_{\rho^{n}}\cup\left\{ \emptyset\right\} $,
with offspring distributions $M_{i}\sim T_{\Pi_{\frac{\rho}{a_{\rho}\left(i\right)}}}$,
$\forall i\in B$. 
\end{prop}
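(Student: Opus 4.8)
The plan is to combine two ingredients that are already in hand: first, the general observation stated just before the proposition, that the compression of a GWT $T$ along a nested sequence of sections $\left(\Pi_n\right)_{n=0}^{\infty}$ is a random $*$-tree on $B=\bigcup_{n=0}^{\infty}\Pi_n$ whose offspring law at a vertex $i$ is the law of $T\cap W_B(i)$; and second, Proposition \ref{prop:Next elem in compressed tree}, which identifies the children of $i$ inside such a $B$. So essentially all that remains is to compute $W_B(i)$ for the particular sections $\Pi_n=\Pi_{\rho^n}$ and recognize the answer as $\Pi_{\rho/a_\rho(i)}$.

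Concretely, I would first record that for $\rho\in(0,r_{min})$ the sections $\Pi_{\rho^n}$, $n\ge 1$, are pairwise disjoint: if $i\in\Pi_{\rho^n}\cap\Pi_{\rho^k}$ with $k\le n$, then $r_i\le\rho^n$ while $\rho^k<r_{i_1}\cdots r_{i_{|i|-1}}=r_i/r_{i_{|i|}}\le r_i/r_{min}$, so $\rho^n\ge r_i>r_{min}\rho^k>\rho^{k+1}$, which forces $n\le k$ and hence $n=k$. Since each $\Pi_{\rho^k}$ is a section, this disjointness implies that each $i\in\Pi_{\rho^n}$ has, inside $B$, exactly one strict prefix in each of $\Pi_{\rho^1},\dots,\Pi_{\rho^{n-1}}$ and no prefix in $\Pi_{\rho^k}$ for $k>n$, so the height function of $B$ satisfies $h_B(i)=n_\rho(i)$ and the $n$-th level of $B$ is $\Pi_{\rho^n}$. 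Then Proposition \ref{prop:Next elem in compressed tree} with $m=1$ gives, for $i\in B$ with $n=n_\rho(i)$ and every $j\in\mathbb{A}^*$, that $ij\in\Pi_{\rho^{n+1}}$ iff $j\in\Pi_{\rho/a_\rho(i)}$; that is, $W_B(i)=\Pi_{\rho/a_\rho(i)}$. Substituting this into the general $*$-tree description, the offspring law of $T_{(\rho)}$ at $i$ is the law of $T\cap\Pi_{\rho/a_\rho(i)}=T_{\Pi_{\rho/a_\rho(i)}}$, as claimed. Equivalently, this is exactly what Proposition \ref{prop:almost iterative construction} asserts about the conditional law of $W_{T_{(\rho)}}(i)$ given $i\in T_{(\rho)}$; what the $*$-tree description contributes on top of that is the independence of the offspring sets across a level, which is the strong branching property of $T$ along the cutset $\Pi_{\rho^n}$ (conditioned on $T_{\Pi_{\rho^n}}$, the descendant trees $\{T^v:v\in T_{\Pi_{\rho^n}}\}$ are i.i.d.\ copies of $T$).

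I do not expect a genuine obstacle: the content is bookkeeping. The only mild subtlety is making the step ``compression of a GWT along sections is a random $*$-tree'' precise, i.e.\ invoking the strong Markov (branching) property of a Galton--Watson tree along an arbitrary section, and verifying that the height function of $B$ behaves so that its $n$-th level is exactly $\Pi_{\rho^n}$; both are straightforward once the disjointness of the $\Pi_{\rho^n}$ noted above is established.
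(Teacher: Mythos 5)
Your proposal is correct and follows essentially the same route as the paper: the paper derives this as an immediate consequence of Proposition \ref{prop:almost iterative construction} combined with the preceding observation that compression along sections yields a random $*$-tree. You simply make the omitted bookkeeping explicit — verifying that the sections $\Pi_{\rho^n}$ are pairwise disjoint so the height function of $B$ behaves correctly, computing $W_B(i)=\Pi_{\rho/a_\rho(i)}$ via Proposition \ref{prop:Next elem in compressed tree}, and noting that independence across a level comes from the branching property of $T$ along cutsets.
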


Note that since we assume that $\forall i\in\mathbb{A},\,\mathbb{P}\left(i\in W\right)>0$,
we have $B^{\prime}=B$. It is important to notice that for every
$i\in T_{\left(\rho\right)}$, $r_{min}<a_{\rho}\left(i\right)\leq1$.
This means that although $T_{\left(\rho\right)}$ need not have the
structure of a GWT where all the offspring distributions are the same,
the offspring distributions of $T_{\left(\rho\right)}$ can not vary
too much. 
\begin{defn}
In the setup of random {*}-trees as described above, the offspring
distributions $\left\{ M_{x}\right\} _{x\in B}$ will be called \emph{bounded}
if there exists some constant $C>0$ s.t. $\forall x\in B^{\prime},\,\mathbb{P}\left(\left|M_{x}\right|<C\right)=1$.
\end{defn}

The following definition should be compared with definition \ref{def:A-tree}.
\begin{defn}
Let $\mathscr{A}$ be a mapping $\mathscr{A}:\mathbb{A}^{*}\rightarrow2^{\left(2^{\mathbb{A}^{*}}\right)}$
with the notation $\mathscr{A}\left(i\right)=\mathscr{A}_{i}$ so
that $\forall i\in\mathbb{A}^{*},\,\emptyset\neq\mathscr{A}_{i}\subseteq2^{\mathbb{A}^{*}}$.
A $*$-tree $S$ is called an $\mathscr{A}$\emph{-$*$-tree} if $\forall i\in S,\,W_{S}\left(i\right)\in\mathscr{A}_{i}$.
$S$ will be called an $\mathscr{A}$\emph{-$*$-tree of level $n$}
if for every element $i\in S$ of height $<n$, $W_{S}\left(i\right)\in\mathscr{A}_{i}$.
\end{defn}

Given a mapping $\mathscr{A}$ as above, and any $x\in\mathbb{A}^{*}$,
we denote by $\mathscr{A}^{x}$ the mapping $\mathscr{A}^{x}:\mathbb{A}^{*}\to2^{\left(2^{\mathbb{A}^{*}}\right)}$
given by $\mathscr{A}^{x}\left(i\right)=\mathscr{A}_{xi}$.

The following Lemma is a generalization of Lemma \ref{lem:A-subtree of every length implies infinite A-subtree}
to $*$-trees.
\begin{lem}
\label{lem:A-*-subtrees of every length implies A-*-subtrees}Let
$S$ be a $*$-tree with alphabet $\mathbb{A}$, and let $\mathscr{A}:\mathbb{A}^{*}\rightarrow2^{\left(2^{\mathbb{A}^{*}}\right)}$
be as above. Then $S$ has an infinite $\mathscr{A}$-$*$-subtree$\iff$
$\forall n\in\mathbb{N}$, $T$ has an $\mathscr{A}$-$*$-subtree
of level $n$.%
\begin{comment}
\begin{proof}
Let $T^{\left(n\right)}$ be $\mathscr{A}-*$-subtrees of level $n$
s.t. $T_{n+1}^{\left(n\right)}=\emptyset$. Define $T=\bigcup\limits _{n=0}^{\infty}T^{\left(n\right)}\subseteq S$.
This is a $*$-subtree of $S$. Define $S^{\prime}=\left\{ v\in S:\,S^{v}\,\text{is infinite}\right\} $.
Then $S^{\prime}$ is a subtree of $T$. $S^{\prime}$ is also an
$\overline{\mathscr{A}}-*$-tree. To see this, notice that for every
$v\in S^{\prime}$, $v\in T^{\left(n\right)}$ for infinitely many
$n$. Therefore, $S^{v}$ has $\mathscr{A}-*$-subtrees of every level.
Let $A=\left\{ w\in W_{S}\left(v\right):\,S^{vw}\text{ is finite}\right\} $,
and denote $k=\max\left\{ \text{length}\left(S^{vw}\right):\,w\in A\right\} $.
$S^{v}$ has an $\mathscr{A}-*$-subtree of level > $k+1$. This subtree
can not contain any elements from $A$, hence $W_{S}\left(v\right)\in\overline{\mathscr{A}_{v}}$.
\end{proof}
\end{comment}
\end{lem}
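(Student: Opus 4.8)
The plan is to imitate the proof of Lemma~\ref{lem:A-subtree of every length implies infinite A-subtree} almost verbatim, with ``height'' in a $*$-tree playing the role of ``length of a word''; the one thing to be careful about is that heights add up when passing from a $*$-tree to a descendant tree. The direction $(\Longrightarrow)$ is immediate: truncating an infinite $\mathscr{A}$-$*$-subtree of $S$ at height $n$ yields an $\mathscr{A}$-$*$-subtree of level $n$, since the children-sets of vertices of height $<n$ are unaffected.

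For $(\Longleftarrow)$ I would first record two routine observations, the $*$-tree counterparts of facts used implicitly before. First, a $*$-tree all of whose levels are finite but which has infinitely many nonempty levels has nonempty boundary: build an infinite branch greedily, at each step choosing a child whose descendant tree still has infinitely many nonempty levels; this is the $*$-tree analogue of ``$\text{length}(T)=\infty\Rightarrow\partial T\ne\emptyset$''. Second, writing $\overline{\mathscr{A}}$ for the mapping $i\mapsto\overline{\mathscr{A}_i}$ (with $\overline{\mathscr{A}_i}$ the monotone closure of the collection $\mathscr{A}_i$, as in Definition~\ref{def:A-tree}), a $*$-tree has an $\mathscr{A}$-$*$-subtree iff it has an $\overline{\mathscr{A}}$-$*$-subtree: given the latter, shrink each vertex's children-set, top-down, to a member of $\mathscr{A}_i$ contained in it, which creates no leaves since the members of each $\mathscr{A}_i$ are nonempty. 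So it suffices to produce an infinite $\overline{\mathscr{A}}$-$*$-subtree of $S$.

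Now, given $\mathscr{A}$-$*$-subtrees of $S$ of every level, let $S^{(n)}$ be one of level $n$, truncated at height $n$, so $S^{(n)}$ is finite and $S^{(n)}_n\ne\emptyset$ (a member of $\mathscr{A}_i$ is nonempty, so such a subtree has vertices at every height $0,\dots,n$). Put $Q=\bigcup_n S^{(n)}$, a $*$-subtree of $S$ with $Q_n\supseteq S^{(n)}_n\ne\emptyset$ for all $n$; by the first observation $\partial Q\ne\emptyset$. Let $Q'$ be the set of vertices of $Q$ lying on an infinite branch, equivalently $Q'=\{v\in Q:\,Q^v\text{ infinite}\}$; it is an infinite $*$-subtree of $S$. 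The crux is to show $W_{Q'}(v)\in\overline{\mathscr{A}_v}$ for every $v\in Q'$. For this, observe that such a $v$ lies in $S^{(n)}$ for infinitely many $n$ (else $Q^v$ would be a finite union of finite trees), and that for each such $n$ the descendant tree $(S^{(n)})^v$ is, by additivity of heights, an $\mathscr{A}^v$-$*$-subtree of $Q^v$ of level $n-h(v)$; hence $Q^v$ has $\mathscr{A}^v$-$*$-subtrees of every level. Set $A=W_Q(v)\setminus W_{Q'}(v)=\{w\in W_Q(v):\,Q^{vw}\text{ finite}\}$ and $k=\max\{\text{length}(Q^{vw}):w\in A\}<\infty$ (with $k=0$ if $A=\emptyset$); choosing an $\mathscr{A}^v$-$*$-subtree $P$ of $Q^v$ of level $k+2$, we have $W_P(\emptyset)\in\mathscr{A}_v$, and for $w\in W_P(\emptyset)$ the tree $P^w$ is an $\mathscr{A}^{vw}$-$*$-subtree of $Q^{vw}$ of level $k+1$, so $\text{length}(Q^{vw})\ge k+2>k$ and thus $w\notin A$. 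Hence $W_P(\emptyset)\subseteq W_{Q'}(v)$, giving $W_{Q'}(v)\in\overline{\mathscr{A}_v}$; so $Q'$ is the desired infinite $\overline{\mathscr{A}}$-$*$-subtree, and the lemma follows from the reduction in the previous paragraph.

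The only real obstacle I anticipate is the bookkeeping, and in particular two small points on which the argument rests: that an $\mathscr{A}$-$*$-subtree of level $n$ genuinely reaches height $\ge n$ — this is exactly what lets $W_P(\emptyset)$ avoid the ``short'' children collected in $A$, and it is why the hypothesis is phrased in terms of arbitrarily large \emph{level} — and that heights are additive under $S\mapsto S^v$, so that the truncated pieces $(S^{(n)})^v$ really do furnish $\mathscr{A}^v$-$*$-subtrees of $Q^v$ of unbounded level. Once these are stated carefully, the rest is a transcription of the finite-word proof of Lemma~\ref{lem:A-subtree of every length implies infinite A-subtree}.
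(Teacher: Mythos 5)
Your proof is correct and takes essentially the same approach the paper intends (the paper leaves the adaptation from Lemma~\ref{lem:A-subtree of every length implies infinite A-subtree} to the reader): truncate the level-$n$ subtrees at height $n$, take the union $Q$, pass to the $*$-subtree $Q'$ of vertices on infinite branches, and show $Q'$ is an $\overline{\mathscr{A}}$-$*$-subtree by avoiding the short children collected in $A$. The points you single out --- additivity of heights under $S\mapsto S^{v}$, the compactness step giving $\partial Q\neq\emptyset$, and the reduction from $\overline{\mathscr{A}}$ back to $\mathscr{A}$ --- are precisely the ``minor adjustments'' the paper alludes to but does not spell out.
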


Proving the lemma only requires some minor adjustments of the proof
of Lemma \ref{lem:A-subtree of every length implies infinite A-subtree}
to $*$-trees. The details are left to the reader.

Next, we prove a version of Theorem \ref{thm:fixed point} for $*$-trees.
\begin{thm}
\label{thm:fixed point for GRLT}Let S be a random $*$-tree on the
$*$-tree $B\subseteq\mathbb{A}^{*}$ with bounded offspring distributions
$\left\{ M_{x}\right\} _{x\in B}$. Let $\mathscr{A}:B^{\prime}\rightarrow2^{\left(2^{\mathbb{A}^{*}}\right)}$
be s.t. $\forall x\in B^{\prime},\,\mathscr{A}_{x}\subseteq2^{W_{B^{\prime}}\left(x\right)}$
is monotonic. Define $g_{\mathscr{A}}:\left[0,1\right]\to\left[0,1\right]$
by $g_{\mathscr{A}}\left(s\right)=\sup\limits _{x\in B^{\prime}}\mathbb{P}\left(M_{x}^{\left(s\right)}\notin\mathscr{A}_{x}\right)$.
Let $s_{0}$ be the smallest fixed point of $g_{\mathscr{A}}$ in
$\left[0,1\right]$. Then 
\begin{equation}
\sup_{x\in B^{\prime}}\mathbb{P}\left(S^{x}\text{ has no \ensuremath{\mathscr{A}^{x}}-\ensuremath{*}-subtree}|\,x\in S\right)\leq s_{0}.\label{eq:fixed point for *-trees}
\end{equation}
\end{thm}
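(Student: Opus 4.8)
The plan is to adapt the proof of Theorem \ref{thm:fixed point} almost verbatim, keeping careful track of the fact that offspring distributions now vary with the vertex. First I would set, for each $x\in B^{\prime}$ and each $n\geq 0$,
\[
q_{n}\left(x\right)=\mathbb{P}\left(S^{x}\text{ has no }\mathscr{A}^{x}\text{-}*\text{-subtree of level }n\mid x\in S\right),
\]
with $q_{0}\left(x\right)=0$. By Lemma \ref{lem:A-*-subtrees of every length implies A-*-subtrees}, for fixed $x$ the event ``$S^{x}$ has an $\mathscr{A}^{x}$-$*$-subtree of level $n$'' decreases in $n$ to ``$S^{x}$ has an infinite $\mathscr{A}^{x}$-$*$-subtree'', so $q_{n}\left(x\right)\nearrow \mathbb{P}\left(S^{x}\text{ has no }\mathscr{A}^{x}\text{-}*\text{-subtree}\mid x\in S\right)$. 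Writing $Q_{n}=\sup_{x\in B^{\prime}}q_{n}\left(x\right)$, the goal \eqref{eq:fixed point for *-trees} becomes $\lim_{n\to\infty}Q_{n}\leq s_{0}$.

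The key recursive step is the analogue of the claim inside Theorem \ref{thm:fixed point}. Fix $x\in B^{\prime}$ and condition on $x\in S$. The children of $x$ in $S$ form the random set $M_{x}$, and for each child $xy$ with $y\in W_{B^{\prime}}\left(x\right)$, independently over distinct children, the subtree $S^{xy}$ fails to have an $\mathscr{A}^{xy}$-$*$-subtree of level $n-1$ with probability $q_{n-1}\left(xy\right)\leq Q_{n-1}$. Since $S^{x}$ has an $\mathscr{A}^{x}$-$*$-subtree of level $n$ iff the set of children $xy$ for which $S^{xy}$ has an $\mathscr{A}^{xy}$-$*$-subtree of level $n-1$ lies in $\mathscr{A}_{x}$, a coupling/monotonicity argument (using that $\mathscr{A}_{x}$ is monotonic, i.e.\ upward closed, so that deleting children only hurts) gives
\[
q_{n}\left(x\right)\leq \mathbb{P}\left(M_{x}^{\left(Q_{n-1}\right)}\notin\mathscr{A}_{x}\right)\leq g_{\mathscr{A}}\left(Q_{n-1}\right).
\]
Here $M_{x}^{\left(Q_{n-1}\right)}$ is obtained from $M_{x}$ by independently deleting each element with probability $Q_{n-1}$, which stochastically dominates (in the sense of containment) the set of ``good'' children when each good-probability is at least $1-Q_{n-1}$; this is exactly where boundedness of the offspring distributions is harmless and monotonicity of $\mathscr{A}_{x}$ is essential. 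Taking the supremum over $x$ yields $Q_{n}\leq g_{\mathscr{A}}\left(Q_{n-1}\right)$.

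Since $g_{\mathscr{A}}$ is monotonically increasing (it is a sup of increasing functions of $s$, each being $\mathbb{P}\left(M_{x}^{\left(s\right)}\notin\mathscr{A}_{x}\right)$) and $Q_{0}=0$, an easy induction gives $Q_{n}\leq g_{\mathscr{A}}^{n}\left(0\right)$ for all $n$. A standard fact about monotone functions $[0,1]\to[0,1]$ (no continuity needed for this direction) is that $g_{\mathscr{A}}^{n}\left(0\right)\nearrow$ some limit which is $\leq$ every fixed point of $g_{\mathscr{A}}$; in particular $\lim_{n}g_{\mathscr{A}}^{n}\left(0\right)\leq s_{0}$. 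Combining, $\lim_{n\to\infty}Q_{n}\leq s_{0}$, which is \eqref{eq:fixed point for *-trees}. I expect the main obstacle to be the book-keeping in the recursion step: one must be careful that the events ``$S^{xy}$ has an $\mathscr{A}^{xy}$-$*$-subtree of level $n-1$'' for distinct children are independent given $x\in S$ (which follows from the independence of the $M_{z}$'s and the tree structure of $B$), and that replacing the individual failure probabilities $q_{n-1}\left(xy\right)$ by the uniform bound $Q_{n-1}$ is legitimate precisely because $\mathscr{A}_{x}$ is upward closed, so a worse (smaller) set of good children can only increase the failure probability. Note that, unlike in Theorem \ref{thm:fixed point}, we only get an inequality rather than an exact value for the failure probability, because the bound $Q_{n-1}$ is not attained simultaneously at all vertices; this is why the statement is phrased as $\leq s_{0}$.
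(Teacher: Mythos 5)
Your proof is correct and follows the same skeleton as the paper's: define the level-$n$ failure probabilities $q_{n}(x)$, establish the recursive bound $Q_{n}\leq g_{\mathscr{A}}(Q_{n-1})$ via monotonicity of $\mathscr{A}_{x}$ and stochastic domination of the ``good children'' set over $M_{x}^{(Q_{n-1})}$, iterate to get $Q_{n}\leq g_{\mathscr{A}}^{n}(0)$, and bound the latter by $s_{0}$. The small but genuine divergence is in how you close the argument: the paper first proves that $g_{\mathscr{A}}$ is \emph{continuous} --- this is precisely where it invokes boundedness of the offspring distributions, via equicontinuity of the family $\left\{\mathbb{P}\left(M_{x}^{(s)}\notin\mathscr{A}_{x}\right)\right\}_{x}$ --- in order to conclude that $\lim_{n}g_{\mathscr{A}}^{n}(0)$ \emph{equals} $s_{0}$, and it also cites an explicit lemma to exchange $\sup_{x}$ and $\lim_{n}$. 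You instead observe that monotonicity of $g_{\mathscr{A}}$ alone yields $g_{\mathscr{A}}^{n}(0)\leq s_{0}$ for every $n$ by induction against the fixed point, and that only the trivial inequality $\sup_{x}\lim_{n}q_{n}(x)\leq\lim_{n}\sup_{x}q_{n}(x)$ is needed. This is a legitimate streamlining: for the one-sided bound \eqref{eq:fixed point for *-trees} neither continuity nor the full sup/lim exchange is required, and your version makes the role of boundedness transparent --- it is needed only to guarantee (through continuity of $g_{\mathscr{A}}$) that a smallest fixed point exists so the statement is well posed, not in the chain of inequalities itself. One stylistic nit: you write that boundedness ``is harmless'' in the coupling step, which slightly obscures the fact that your argument does not use it there at all; it would be cleaner to state outright that boundedness enters only through the well-posedness of $s_{0}$.
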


\begin{proof}
Since the collection $\left\{ M_{x}\right\} _{x\in B^{\prime}}$ is
bounded, the collection of functions $\left\{ \mathbb{P}\left(M_{x}^{\left(s\right)}\notin\mathscr{A}_{x}\right)\right\} _{x\in B^{\prime}}$
is equicontinuous %
\begin{comment}
Proof of equicontiuity:

Note that given a set $A$, the function $f\left(s\right)=\mathbb{P}\left(A^{\left(s\right)}\notin\mathscr{A}\right)$
is increasing and differentiable. The derivative $f^{\prime}$ depends
only on $\left|A\right|$ and $\left|2^{A}\cap\mathscr{A}\right|$.
Thus assuming that $\left|M_{x}\right|<C$ a.s. for every $x\in B$,
implies that the collection $\left\{ \mathbb{P}\left(M_{x}^{\left(s\right)}\notin\mathscr{A}_{x}\right)\right\} _{x\in B}$
has uniformly bounded derivatives, thus is equicontinuous.
\end{comment}
and therefore $g_{\mathscr{A}}$ is continuous%
\begin{comment}
\begin{lem}
Let $\left\{ f_{i}\right\} _{i\in I}$ be an equicontiuous collection
of functions. Define the function $S\left(x\right)=\sup\limits _{i\in I}f_{i}\left(x\right)$.
Then $S$ is continuous.
\end{lem}

\begin{proof}
Given $x\in\mathbb{R}$, and $\varepsilon>0$, there exists some $\delta>0$
s.t. $\forall y\in\mathbb{R}$ if $\left|x-y\right|<\delta$ then
$\forall i\in I$, $\left|f_{i}\left(x\right)-f_{i}\left(y\right)\right|<\varepsilon/2$.
Now, let $i\in I$ be s.t. $\left|f_{i}\left(x\right)-S\left(x\right)\right|<\varepsilon/2$,
and given such $y$, let $j\in I$ be s.t. $\left|f_{j}\left(y\right)-S\left(y\right)\right|<\varepsilon/2$.
On one hand, $S\left(y\right)\geq f_{i}\left(y\right)\geq f_{i}\left(x\right)-\varepsilon/2\geq S\left(x\right)-\varepsilon$
and on the other hand, by the same argument for $j$, $S\left(x\right)\geq S\left(y\right)-\varepsilon$,
which is overall equivalent to $\left|S\left(x\right)-S\left(y\right)\right|\leq\varepsilon$.
\end{proof}
\end{comment}
. Also, monotonicity of $\mathscr{A}_{x}$ for every $x\in B^{\prime}$
implies that $g_{\mathscr{A}}$ is monotonically increasing%
\begin{comment}
\begin{lem}
Let $S\left(x\right)=\sup\limits _{i\in I}f_{i}\left(x\right)$ and
assume that each $f_{i}$ is monotonically increasing, then S$\left(x\right)$
is monotonically increasing.
\end{lem}

\begin{proof}
Given $x,y\in\mathbb{R}$ s.t. $x<y$, let $i\in\mathbb{R}$ be s.t.
$\left|f_{i}\left(x\right)-S\left(x\right)\right|<\varepsilon$, then
$S\left(y\right)\geq f_{i}\left(y\right)\geq f_{i}\left(x\right)\geq S\left(x\right)-\varepsilon$.
Since this is true for every $\varepsilon>0$, we have $S\left(y\right)\geq S\left(x\right)$.
\end{proof}
Note that $S$ need not be strictly increasing. For example, take
$f_{n}\left(x\right)=\frac{x}{n}$, then each $f_{n}$ is increasing,
and the collection is equicontinuous, but for example, $\sup\limits _{n\in\mathbb{N}}f_{n}\left(-1\right)=\sup\limits _{n\in\mathbb{N}}f_{n}\left(0\right)=0$.
\end{comment}
. These 2 properties of $g_{\mathscr{A}}$ imply that $\lim_{n\rightarrow\infty}g_{\mathscr{A}}^{n}\left(0\right)$
is the smallest fixed point of $g_{\mathscr{\mathscr{A}}}$, where
$g_{\mathscr{A}}^{n}$ denotes the composition of $g_{\mathscr{A}}$
with itself $n$ times. So $s_{0}=\lim_{n\rightarrow\infty}g_{\mathscr{A}}^{n}\left(0\right)$.
Define 
\[
q_{n}=\sup_{x\in B^{\prime}}\mathbb{P}\left(S^{x}\text{ has no \ensuremath{\mathscr{A}^{x}}-\ensuremath{*}-subtree of level \emph{n}}|\,x\in S\right).
\]
\begin{claim*}
$\forall n\geq1,\,g_{\mathscr{A}}^{n}\left(0\right)\geq q_{n}$.
\end{claim*}
\begin{proof}[Proof of claim]
First, notice that $g_{\mathscr{A}}\left(0\right)=q_{1}$. Now, assume
the claim is true for $n$. So $g_{\mathscr{A}}^{n}\left(0\right)\geq q_{n}$,
which implies by monotonicity of $g_{\mathscr{A}}$ that $g_{\mathscr{A}}^{n+1}\left(0\right)\geq g_{\mathscr{A}}\left(q_{n}\right)=\sup\limits _{x\in B^{\prime}}\mathbb{P}\left(M_{x}^{\left(q_{n}\right)}\notin\mathscr{A}_{x}\right)$.
For every $x\in B^{\prime}$,
\[
\begin{array}{l}
\mathbb{P}\left(M_{x}^{\left(q_{n}\right)}\notin\mathscr{A}_{x}\right)\geq\\
\mathbb{P}\left(\left\{ a\in W_{S}\left(x\right):\,\text{\ensuremath{S^{xa}} has an \ensuremath{\mathscr{A}^{xa}}-\ensuremath{*}-subtree of level \emph{n}}\right\} \notin\mathscr{A}_{x}|\,x\in S\right)=\\
\mathbb{P}\left(S^{x}\text{ has no \ensuremath{\mathscr{A}^{x}}-\ensuremath{*}-subtree of level \emph{n+1}}|\,x\in S\right)
\end{array}
\]
(the inequality uses the fact that $q_{n}$ is defined as a supremum
and the monotonicity of $\mathscr{A}_{x}$). By taking supremums we
obtain $g_{\mathscr{A}}\left(q_{n}\right)\geq q_{n+1}$ which finishes
the proof of the claim.
\end{proof}
The sequence $\left(q_{n}\right)$ is monotonically increasing and
bounded by $s_{0}$, so $q=\lim_{n\rightarrow\infty}q_{n}\leq s_{0}$.
We now need the following elementary lemma whose proof is left to
the reader:
\begin{lem*}
\begin{comment}
\begin{proof}
First note that uniform boundedness ensures that both sides of the
equation are well defined (for the left side, by a previous comment
$\sup\limits _{a\in A}f_{n}\left(a\right)$ is increasing and bounded,
and thus has a finite limit, and for the right side, by monotonicity,
all the limits $\lim\limits _{n\rightarrow\infty}f_{n}\left(a\right)$
exist and are bounded by some value, therefore the sup is finite).
For every $a\in A$, $\lim\limits _{n\rightarrow\infty}\sup\limits _{x\in A}f_{n}\left(x\right)\geq\lim\limits _{n\rightarrow\infty}f_{n}\left(a\right)$,
which implies that $\lim\limits _{n\rightarrow\infty}\sup\limits _{a\in A}f_{n}\left(a\right)\geq\sup\limits _{a\in A}\lim\limits _{n\rightarrow\infty}f_{n}\left(a\right)$.
For the other inequality, denote $l=\lim\limits _{n\rightarrow\infty}\sup\limits _{a\in A}f_{n}\left(a\right)$.
Given any $\varepsilon>0$, there exists some $N\in\mathbb{N}$ s.t.
$\forall n>N$, $\sup_{a\in A}f_{n}\left(a\right)>l-\varepsilon$.
Hence, there is some $a_{0}\in A$ s.t. $f_{n}\left(a_{0}\right)>l-\varepsilon$
for every $n>N$. This implies that $\lim_{n\rightarrow\infty}f_{n}\left(a_{0}\right)>l-\varepsilon$,
and hence $\sup\limits _{a\in A}\lim\limits _{n\rightarrow\infty}f_{n}\left(a\right)>l-\varepsilon$.
Since $\varepsilon$ was arbitrary this implies that $\lim\limits _{n\rightarrow\infty}\sup\limits _{a\in A}f_{n}\left(a\right)\leq\sup\limits _{a\in A}\lim\limits _{n\rightarrow\infty}f_{n}\left(a\right)$.
\end{proof}
\end{comment}
Let $f_{n}:A\rightarrow\mathbb{R}$ be a sequence of functions s.t.
$\forall a\in A$, the sequence $f_{n}\left(a\right)$ is monotonically
increasing, and the functions $f_{n}$ are uniformly bounded. Then
\[
\lim\limits _{n\rightarrow\infty}\sup\limits _{a\in A}f_{n}\left(a\right)=\sup\limits _{a\in A}\lim\limits _{n\rightarrow\infty}f_{n}\left(a\right).
\]
\end{lem*}
Combining the above lemma with Lemma \ref{lem:A-*-subtrees of every length implies A-*-subtrees},
we obtain that %
\begin{comment}
$\begin{array}{cl}
q & =\sup\limits _{x\in B^{\prime}}\lim\limits _{n\to\infty}\mathbb{P}\left(S^{x}\text{ has no \ensuremath{\mathscr{A}}-\ensuremath{*}-subtree of level \emph{n}}|\,x\in S\right)\\
 & =\sup\limits _{x\in B^{\prime}}\mathbb{P}\left(\text{\ensuremath{S^{x}} has no \ensuremath{\mathscr{A}}-\ensuremath{*}-subtree}|\,x\in S\right)
\end{array}$
\end{comment}
\[
q=\sup\limits _{x\in B^{\prime}}\mathbb{P}\left(\text{\ensuremath{S^{x}} has no \ensuremath{\mathscr{A}^{x}}-\ensuremath{*}-subtree}|\,x\in S\right)
\]
{} which concludes the proof of the theorem.
\end{proof}
\begin{rem}
Equality in equation (\ref{eq:fixed point for *-trees}) need not
hold. See the Appendix for a counter-example.
\end{rem}

\begin{rem}
Although Theorems \ref{thm:fixed point for GRLT}, \ref{thm:fixed point}
are stated using symbolic spaces, these theorems are purely graph
theoretic and discuss random subtrees of a given locally finite rooted
tree with uniformly bounded offspring distributions.

\end{rem}

\subsection{Proof of main theorem}

\subsubsection{Ahlfors-regularity}
\begin{lem}
\label{lem: rho^alpha subtrees exist}Let T be a supercritical GWT
with alphabet $\mathbb{A}$, weights $\left\{ r_{i}\right\} _{i\in\mathbb{A}}$,
and offspring distribution W, and let $\delta$ satisfy $\mathbb{E}\left(\sum\limits _{i\in W}r_{i}^{\delta}\right)=1$.
Then $\forall\alpha\in\left(0,\delta\right),\,\forall s\in\left(0,1\right),$
\[
\sup_{x\in B_{\rho}}\mathbb{P}\left(\left|\left(T_{\Pi_{\frac{\rho}{a_{\rho}\left(x\right)}}}\right)^{\left(s\right)}\right|<\rho^{-\alpha}|\text{ nonextinction}\right)\underset{\rho\rightarrow0}{\longrightarrow}0
\]
where $B_{\rho}=\left\{ \emptyset\right\} \cup\left(\bigcup_{n=1}^{\infty}\Pi_{\rho^{n}}\right)$.
\end{lem}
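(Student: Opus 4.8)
The plan is to follow the proof of Lemma~\ref{lem:p>gamma} in the present, more flexible setting: the role played there by Kesten--Stigum (Corollary~\ref{cor:kesten - stigum}) is now played by the cut-set estimate Corollary~\ref{cor:size of cutsets}, and one must track the quantity $a_{\rho}\left(x\right)$, which by definition satisfies $r_{min}<a_{\rho}\left(x\right)\le1$ for every $x\in B_{\rho}$. Consequently the section parameter $\rho':=\rho/a_{\rho}\left(x\right)$ appearing in the statement always lies in the interval $\left[\rho,\,\rho/r_{min}\right)$, which does not depend on $x$. (By Proposition~\ref{prop:law of compressed tree}, $T_{\Pi_{\rho'}}$ is precisely the offspring distribution of $T_{\left(\rho\right)}$ at the vertex $x$, which is why the lemma is phrased this way.)

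First I would fix $\alpha\in\left(0,\delta\right)$, $s\in\left(0,1\right)$ and $\varepsilon>0$, and choose an auxiliary exponent $\alpha'\in\left(\alpha,\delta\right)$. By Corollary~\ref{cor:size of cutsets} applied with $\alpha'$ in place of $\alpha$, there is $\rho_{1}\in\left(0,r_{min}\right)$ such that
\[
\mathbb{P}\left(\left|T_{\Pi_{\eta}}\right|>\eta^{-\alpha'}\mid\,\text{nonextinction}\right)>1-\varepsilon\qquad\text{for every }\eta<\rho_{1}.
\]
Assume from now on that $\rho$ is small enough that $\rho/r_{min}<\rho_{1}$; then $\rho'=\rho/a_{\rho}\left(x\right)<\rho_{1}$ for every $x\in B_{\rho}$, so the estimate above applies with $\eta=\rho'$, uniformly in $x$. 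Since the thinning set $Y\sim\text{Bin}\left(\Pi_{\rho'},1-s\right)$ used in the definition of $\left(\cdot\right)^{\left(s\right)}$ is independent of $T$, conditionally on $\left\{ \left|T_{\Pi_{\rho'}}\right|=N\right\} $ the random variable $\left|\left(T_{\Pi_{\rho'}}\right)^{\left(s\right)}\right|$ has distribution $\text{Bin}\left(N,1-s\right)$. Splitting according to whether $\left|T_{\Pi_{\rho'}}\right|>\left(\rho'\right)^{-\alpha'}$ we obtain
\[
\mathbb{P}\left(\left|\left(T_{\Pi_{\rho'}}\right)^{\left(s\right)}\right|<\rho^{-\alpha}\mid\,\text{nonextinction}\right)\le\varepsilon+\sup_{N\ge\left(\rho'\right)^{-\alpha'}}\mathbb{P}\left(\text{Bin}\left(N,1-s\right)<\rho^{-\alpha}\right).
\]

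To bound the last supremum, note that $\rho'<\rho/r_{min}$ gives $\left(\rho'\right)^{-\alpha'}>\left(r_{min}\right)^{\alpha'}\rho^{-\alpha'}$, and since $\alpha'>\alpha$ the ratio $\left(r_{min}\right)^{\alpha'}\rho^{-\alpha'}/\rho^{-\alpha}=\left(r_{min}\right)^{\alpha'}\rho^{-\left(\alpha'-\alpha\right)}$ tends to $\infty$ as $\rho\to0$; in particular $\left(1-s\right)\left(r_{min}\right)^{\alpha'}\rho^{-\alpha'}>\rho^{-\alpha}$ once $\rho$ is small. Chebyshev's inequality, exactly as in the proof of Lemma~\ref{lem:p>gamma}, yields, for $N$ with $\left(1-s\right)N>\rho^{-\alpha}$,
\[
\mathbb{P}\left(\text{Bin}\left(N,1-s\right)<\rho^{-\alpha}\right)\le\frac{N\left(1-s\right)s}{\left[\left(1-s\right)N-\rho^{-\alpha}\right]^{2}},
\]
and the right-hand side is decreasing in $N$ on this range, hence is maximal at $N=\left(r_{min}\right)^{\alpha'}\rho^{-\alpha'}$. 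Substituting this value and simplifying, the supremum over $N$ is at most
\[
\frac{\left(r_{min}\right)^{\alpha'}\left(1-s\right)s\,\rho^{\alpha'}}{\left[\left(1-s\right)\left(r_{min}\right)^{\alpha'}-\rho^{\alpha'-\alpha}\right]^{2}}\underset{\rho\to0}{\longrightarrow}0,
\]
since $\alpha'>\alpha>0$. Combining the two displays, $\sup_{x\in B_{\rho}}\mathbb{P}\left(\left|\left(T_{\Pi_{\rho/a_{\rho}\left(x\right)}}\right)^{\left(s\right)}\right|<\rho^{-\alpha}\mid\,\text{nonextinction}\right)\le\varepsilon+o\left(1\right)$ as $\rho\to0$, and letting $\varepsilon\to0$ finishes the proof.

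The only genuinely delicate point is the uniformity over $x$, and it rests on two facts: first, $a_{\rho}\left(x\right)\in\left(r_{min},1\right]$ confines $\rho'$ to the fixed interval $\left[\rho,\rho/r_{min}\right)$, so that both Corollary~\ref{cor:size of cutsets} and the lower bound $\left(\rho'\right)^{-\alpha'}>\left(r_{min}\right)^{\alpha'}\rho^{-\alpha'}$ hold simultaneously for all $x\in B_{\rho}$; and second, the Chebyshev bound is monotone in $N$, so that the worst case over the random, $x$-dependent value of $N=\left|T_{\Pi_{\rho'}}\right|$ is dominated by a single deterministic quantity in $\rho$ which tends to $0$. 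Everything else is bookkeeping transplanted from the fractal percolation argument.
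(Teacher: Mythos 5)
Your proof is correct and follows essentially the same plan as the paper's: reduce the problem to bounding $\left|T_{\Pi_{\rho'}}\right|$ from below via Corollary~\ref{cor:size of cutsets} (using that $a_{\rho}(x)\in(r_{\min},1]$ confines $\rho'=\rho/a_{\rho}(x)$ to the $x$-independent interval $[\rho,\rho/r_{\min})$), then hit the binomial thinning with Chebyshev. The one genuine, if small, difference is how the uniformity over $x$ is obtained: the paper asserts that $\mathbb{P}\left(\left|T_{\Pi_{\varepsilon}}\right|<C\mid\text{nonextinction}\right)$ is monotone in $\varepsilon$ and reduces outright to the single worst case $\varepsilon=\rho/r_{\min}$, whereas you apply the Corollary simultaneously to every $\rho'<\rho_{1}$ and then use the monotonicity of the Chebyshev bound in $N$ to dominate the worst case. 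Your route sidesteps the stochastic-monotonicity claim entirely (which the paper invokes without proof), at the minor cost of having to check that $N\mapsto N(1-s)s/[(1-s)N-\rho^{-\alpha}]^{2}$ is decreasing on $N>\rho^{-\alpha}/(1-s)$ — which you do correctly. Both arguments are sound; yours is slightly more self-contained.
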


\begin{proof}
First, we recall that $a_{\rho}\left(x\right)\in\left[r_{min},1\right]$
for every $x\in B_{\rho}$. Since for any positive constant $C$,
$\mathbb{P}\left(\left|T_{\Pi_{\varepsilon}}\right|<C|\text{ nonextinction}\right)$
decreases as $\varepsilon$ decreases, we have for any $x\in B_{\rho}$,
\[
\mathbb{P}\left(\left|\left(T_{\Pi_{\frac{\rho}{a_{\rho}\left(x\right)}}}\right)\right|<\frac{1}{\rho^{\alpha}}|\text{ nonextinction}\right)\leq\mathbb{P}\left(\left|T_{\Pi_{\frac{\rho}{r_{min}}}}\right|<\frac{1}{\rho^{\alpha}}|\text{ nonextinction}\right).
\]
Hence, it is enough to show that $\mathbb{P}\left(\left|\left(T_{\Pi_{\frac{\rho}{r_{min}}}}\right)^{\left(s\right)}\right|<\frac{1}{\rho^{\alpha}}|\text{ nonextinction}\right)\underset{\rho\rightarrow0}{\longrightarrow}0$.

Fix some $\beta\in\left(\alpha,\delta\right)$. Given any $\varepsilon>0$,
by Corollary \ref{cor:size of cutsets} 
\[
\mathbb{P}\left(\left|T_{\Pi_{\frac{\rho}{r_{min}}}}\right|>\left(\frac{r_{min}}{\rho}\right)^{\beta}|\text{ nonextinction}\right)>1-\varepsilon
\]
whenever $\rho$ is small enough. Given some $s\in\left(0,1\right)$,
for a small enough $\rho$, $\left(\frac{r_{min}}{\rho}\right)^{\beta}>\frac{2}{1-s}\cdot\frac{1}{\rho^{\alpha}}$,
so that
\[
\mathbb{P}\left(\left|T_{\Pi_{\frac{\rho}{r_{min}}}}\right|>\frac{2}{\left(1-s\right)\rho^{\alpha}}|\text{ nonextinction}\right)>1-\varepsilon.
\]
By Chebyshev's inequality
\[
\mathbb{P}\left(\text{Bin}\left(\frac{2}{\left(1-s\right)\rho^{\alpha}},1-s\right)<\frac{1}{\rho^{\alpha}}\right)\leq\frac{\frac{2}{\left(1-s\right)\rho^{\alpha}}s\left(1-s\right)}{\left(\frac{2}{\left(1-s\right)\rho^{\alpha}}\left(1-s\right)-\frac{1}{\rho^{\alpha}}\right)^{2}}=2s\rho^{\alpha}\underset{\rho\rightarrow0}{\longrightarrow}0.
\]
Therefore, $\mathbb{P}\left(\left|\left(T_{\Pi_{\frac{\rho}{r_{min}}}}\right)^{\left(s\right)}\right|<\frac{1}{\rho^{\alpha}}|\text{ nonextinction}\right)\underset{\rho\rightarrow0}{\longrightarrow}0$
as required.
\end{proof}
\begin{lem}
\label{lem:rho^alpha-ary implies alpha Ahlfors regular}Let $\Phi=$$\left\{ \varphi_{i}\right\} _{i\in\Lambda}$
be a similarity IFS satisfying the OSC, with contraction ratios $\left\{ r_{i}\right\} _{i\in\Lambda}$,
and let $T\in\mathscr{T}_{\Lambda}$ be an infinite tree with alphabet
$\Lambda$. Let $\rho\in\left(0,r_{min}\right)$ and $\alpha>0$ be
s.t. $\rho^{-\alpha}$ is an integer, and let S be a $\rho^{-\alpha}$-ary
\emph{$*$-}subtree of $\left(T_{\left(\rho\right)}\right)^{\omega}$
for some $\omega\in T_{\left(\rho\right)}$. Then $E=\gamma_{\Phi}\left(\partial S\right)$
is $\alpha$-Ahlfors regular.
\end{lem}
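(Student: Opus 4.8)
The plan is to build the natural self‑similar measure on $E$ and verify the Ahlfors bound by hand. Write $N=\rho^{-\alpha}\in\mathbb{N}$ (note $N\geq 2$ since $\rho<r_{min}<1$), so that $S$ is $N$‑ary. Define a measure $\mu$ on $\partial S$ by $\mu\bigl(\{i\in\partial S:\sigma<i\}\bigr)=N^{-n}$ for every vertex $\sigma\in S$ of height $n$; this is consistent because each such $\sigma$ has exactly $N$ children, and since the sets $\{i\in\partial S:\sigma<i\}$ are clopen in the compact space $\partial S$ the resulting premeasure extends uniquely to a Borel probability measure. Put $\nu=(\gamma_{\Phi})_{*}\mu$, a Borel probability measure on $E=\gamma_{\Phi}(\partial S)$; one checks in the usual way that $\text{supp}(\nu)=E$. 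The key dictionary is the following: if $\omega$ has height $k=n_{\rho}(\omega)$ in $T_{(\rho)}$, then by Proposition \ref{prop:Next elem in compressed tree} the height‑$n$ vertices of $S$ ($n\geq 1$) form a subset of the single section $\Pi_{\rho'(n)}$ with $\rho'(n)=\rho^{n+k}/r_{\omega}\in[\rho^{n},\,\rho^{n}/r_{min})$; hence every such vertex $\sigma$ satisfies $r_{min}\rho'(n)<r_{\sigma}\leq\rho'(n)$, it carries $\mu$‑mass exactly $N^{-n}=\rho^{\alpha n}$, and $\gamma_{\Phi}(\{i\in\partial S:\sigma<i\})\subseteq\varphi_{\sigma}K$, a set of diameter at most $\rho'(n)\,\text{diam}(K)$. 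In particular $\rho^{n}\leq\rho'(n)$, so $\rho^{\alpha n}\leq\rho'(n)^{\alpha}$, with the reverse comparison $\rho'(n)^{\alpha}< r_{min}^{-\alpha}\rho^{\alpha n}$; all implied constants depend only on $\Phi$ and $\rho$, not on $\omega$.

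For the upper bound, fix $x\in E$ and $r\in(0,1)$, and let $n$ be minimal with $\rho'(n)\,\text{diam}(K)\leq r$; using $\rho'(n)=\rho\,\rho'(n-1)$ one gets $\rho r/\text{diam}(K)<\rho'(n)\leq r/\text{diam}(K)$, so $r/\rho'(n)<\text{diam}(K)/\rho$, and for $r$ below an explicit threshold $n\geq 2$, which guarantees $\rho'(n)<r_{min}$ so that Lemma \ref{lem:OSC use} applies at scale $\rho'(n)$. Any height‑$n$ vertex $\sigma$ of $S$ with $\varphi_{\sigma}K\cap B_{r}(x)\neq\emptyset$ has $\varphi_{\sigma}K\subseteq B_{2r}(x)$ (its diameter is at most $\rho'(n)\,\text{diam}(K)\leq r$); covering $B_{2r}(x)$ by a bounded number (depending only on $\text{diam}(K)/\rho$ and $d$) of balls of radius $\rho'(n)$ and applying Lemma \ref{lem:OSC use} to each, there are at most some constant $C'$ such vertices. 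Since $\gamma_{\Phi}^{-1}(B_{r}(x))\cap\partial S$ is contained in the union of the (disjoint) cylinders over these vertices, $\nu(B_{r}(x))\leq C'\rho^{\alpha n}\leq C'\rho'(n)^{\alpha}\leq C'\,\text{diam}(K)^{-\alpha}r^{\alpha}$, which is the desired upper estimate.

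For the lower bound, write $x=\gamma_{\Phi}(i)$ with $i\in\partial S$, let $m$ be minimal with $\rho'(m)\,\text{diam}(K)<r$ and let $\sigma<i$ be the height‑$m$ vertex of $S$. Then $x\in\varphi_{\sigma}K$ and $\text{diam}(\varphi_{\sigma}K)\leq\rho'(m)\,\text{diam}(K)<r$, so $\varphi_{\sigma}K\subseteq B_{r}(x)$, whence $\nu(B_{r}(x))\geq\mu(\{j\in\partial S:\sigma<j\})=\rho^{\alpha m}$; and minimality of $m$ gives $\rho r/\text{diam}(K)\leq\rho'(m)$, so $\rho^{m}>r_{min}\rho'(m)\geq r_{min}\rho\, r/\text{diam}(K)$ and $\nu(B_{r}(x))\geq\bigl(r_{min}\rho/\text{diam}(K)\bigr)^{\alpha}r^{\alpha}$. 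The finitely many ``large'' scales $r$ for which the minimal $n$ or $m$ above would be $0$ or $1$ are handled separately: the upper bound is trivial from $\nu(E)=1$ after enlarging the constant, and the lower bound follows from $\inf_{x\in E}\nu(B_{r_{0}}(x))>0$, which holds because $x\mapsto\nu(B_{r_{0}}(x))$ is lower semicontinuous and positive on the compact set $E$. Taking $c_{1},c_{2}$ to be the resulting explicit constants shows $\nu$ is $\alpha$‑Ahlfors regular with $\text{supp}(\nu)=E$, proving the lemma.

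The step I expect to require the most care is not conceptual but is the bookkeeping in the first paragraph: making precise that a single ``generation'' of the $*$‑subtree $S$ — whose vertices are words of varying lengths — genuinely sits inside one section $\Pi_{\rho'(n)}$ of $\Lambda^{*}$ at a resolution comparable to $\rho^{n}$, with the comparison constants independent of both $n$ and $\omega$. This uniformity (ultimately coming from $r_{min}<a_{\rho}(\cdot)\leq 1$) is exactly what lets the open set condition, via Lemma \ref{lem:OSC use}, produce a single bound $C'$ on the number of generation‑$n$ cells meeting a ball of radius comparable to their diameter. Once that is in place, the argument is the classical verification that an evenly branching, geometrically shrinking Moran construction carries an Ahlfors‑regular measure.
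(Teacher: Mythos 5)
Your proof takes essentially the same approach as the paper: define the natural measure by giving each height-$n$ vertex of $S$ mass $\rho^{n\alpha}$, push forward under $\gamma_{\Phi}$, then bound $\nu(B_r(x))$ above via the OSC lemma (\ref{lem:OSC use}) applied to the appropriate section $\Pi_{\rho^n/a_{\rho}(\omega)}$ and below by exhibiting a cylinder $\varphi_{\sigma}K\subseteq B_r(x)$ at a comparable scale. The only divergence is cosmetic: the paper chooses $n$ so that $r\leq\rho^n/a_{\rho}(\omega)$ and applies Lemma \ref{lem:OSC use} directly to the ball $B_{\rho^n/a_{\rho}(\omega)}(x)\supseteq B_r(x)$, avoiding your covering of $B_{2r}(x)$ by smaller balls; you are somewhat more explicit than the paper about the finitely many large scales (where Lemma \ref{lem:OSC use} is not yet applicable) and about uniformity of the constants in $\omega$.
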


\begin{proof}
Construct a probability measure $\mu$ on $\Lambda^{\mathbb{N}}$
by equally distributing mass at each level of the tree, i.e., $\forall i\in S,\,\mu\left(\left[i\right]\right)=\rho^{h_{S}\left(i\right)\alpha}$.
Let $\nu$ be the projection of $\mu$ to $\mathbb{R}^{d}$, i.e.,
$\nu=\left(\gamma_{\Phi}\right)_{*}\mu$. Obviously $\text{supp}\left(\nu\right)=E$.
We show that $\nu$ is $\alpha$-Ahlfors regular.

Fix any $r>0$ and $x\in E$. Let $n$ be the unique integer s.t.
$\frac{\rho^{n+1}}{a_{\rho}\left(\omega\right)}<r\leq\frac{\rho^{n}}{a_{\rho}\left(\omega\right)}$.
By Lemma \ref{lem:OSC use} and Proposition \ref{prop:Next elem in compressed tree},
the ball $B_{\frac{\rho^{n}}{a_{\rho}\left(\omega\right)}}\left(x\right)$
intersects at most $C$ of the sets $\left\{ \varphi_{i}\left(K\right)\right\} _{i\in S_{n}}$,
where $K$ is the attractor of $\Phi$ and $C>0$ is some constant
not depending on $x$ and $n$. Therefore, 
\[
\nu\left(B_{r}\left(x\right)\right)\leq\sum_{i\in S_{n},\,\varphi_{i}\left(K\right)\cap B_{r}\left(x\right)\neq\emptyset}\mu\left(\left[i\right]\right)\leq C\cdot\rho^{n\alpha}\leq C\rho^{-\alpha}\cdot r^{\alpha}
\]

On the other hand, let $j\in S_{m+1}$ be s.t. $x\in\varphi_{j}\left(K\right)$
and $\frac{\rho^{m+1}}{a_{\rho}\left(\omega\right)}<\frac{r}{\Delta}\leq\frac{\rho^{m}}{a_{\rho}\left(\omega\right)}$,
where $\Delta=\text{diam}\left(K\right)$. Then $\text{diam}\left(\varphi_{j}\left(K\right)\right)=r_{j}\Delta\leq\frac{\rho^{m+1}}{a_{\rho}\left(\omega\right)}\Delta<r$,
and therefore $\varphi_{j}\left(K\right)\subseteq B_{r}\left(x\right)$.
Hence 
\[
\nu\left(B_{r}\left(x\right)\right)\geq\nu\left(\varphi_{j}\left(K\right)\right)=\rho^{\left(m+1\right)\alpha}\geq\left(\frac{\rho a_{\rho}\left(\omega\right)}{\Delta}\right)^{\alpha}\cdot r^{\alpha}
\]
\end{proof}

\subsubsection{Diffuseness}
\begin{prop}
\label{prop: diffuse compressed tree implies diffuse limit set}Let
$\Phi=\left\{ \varphi_{i}\right\} _{i\in\Lambda}$ be a similarity
IFS with an attractor $K$ and contraction ratios $\left\{ r_{i}\right\} _{i\in\Lambda}$.
Let $T\subset\Lambda^{*}$ be an infinite tree. Let $T_{\left(\rho\right)}$
be the compression of $T$ for some $\rho\in\left(0,r_{min}\right)$.
Fix some $\omega\in T_{\left(\rho\right)}$. Assume that there exists
some $c>0$ s.t. $\forall i\in\left(T_{\left(\rho\right)}\right)^{\omega}$,
$W_{T_{\left(\rho\right)}}\left(i\right)$ is $\left(K,c\right)$-diffuse.
Then the limit set $E=\gamma_{\Phi}\left(\partial T^{\omega}\right)$
is $\frac{\rho cr_{min}}{\Delta}$-diffuse, where $\Delta=\text{diam}\left(K\right)$.
\end{prop}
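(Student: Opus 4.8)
The plan is to adapt the proof of Proposition~\ref{prop:diffuse tree implies diffuse set} almost verbatim, the one new feature being that compression along the sections $\Pi_{\rho^{n}}$ forces us to carry the scale factor $a_{\rho}(\omega)$ through the estimates. Write $\Delta=\text{diam}(K)$ and $\beta=\frac{\rho c r_{min}}{\Delta}$; I will show that $E$ is hyperplane $\beta$-diffuse with threshold $\xi_{0}:=\Delta/a_{\rho}(\omega)$, which is a legitimate positive constant because $\omega$ is fixed and $a_{\rho}(\omega)\in(r_{min},1]$.

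First I would record the structural facts about $(T_{(\rho)})^{\omega}$. Since each $W_{T_{(\rho)}}(i)$, $i\in(T_{(\rho)})^{\omega}$, is $(K,c)$-diffuse it is in particular nonempty, so every vertex of $(T_{(\rho)})^{\omega}$ has a child; consequently every descendants $*$-tree of $(T_{(\rho)})^{\omega}$ is infinite and has nonempty boundary, $\partial(T_{(\rho)})^{\omega}=\partial T^{\omega}$, and for every vertex $v$ of $(T_{(\rho)})^{\omega}$ the set $\varphi_{v}K$ contains a point of $E=\gamma_{\Phi}(\partial T^{\omega})$ (project a boundary point of the infinite subtree rooted at $v$). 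Next, by Proposition~\ref{prop:Next elem in compressed tree}, a vertex $v$ of $(T_{(\rho)})^{\omega}$ of height $n$ corresponds to $\omega v\in\Pi_{\rho^{n_{\rho}(\omega)+n}}$, equivalently $v\in\Pi_{\rho^{n}/a_{\rho}(\omega)}$, so that $\frac{\rho^{n}}{a_{\rho}(\omega)}r_{min}<r_{v}\le\frac{\rho^{n}}{a_{\rho}(\omega)}$, and its children $vj$ lie in $\Pi_{\rho^{n+1}/a_{\rho}(\omega)}$.

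The core of the argument is then the local step of Proposition~\ref{prop:diffuse tree implies diffuse set}. Fix $\xi\in(0,\xi_{0})$, $x\in E$ and an affine hyperplane $\mathcal{L}$. Choose $k\in\partial T^{\omega}$ with $\gamma_{\Phi}(k)=x$ and let $\emptyset=v^{(0)}<v^{(1)}<\cdots$ be the successive prefixes of $k$ that are vertices of $(T_{(\rho)})^{\omega}$. Pick $n\ge1$ minimal with $\frac{\rho^{n}}{a_{\rho}(\omega)}\Delta\le\xi$ — it exists and satisfies $n\ge1$ precisely because $\xi<\xi_{0}$, and minimality forces $\frac{\rho^{n}}{a_{\rho}(\omega)}>\frac{\rho\xi}{\Delta}$. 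Since $W_{T_{(\rho)}}(v^{(n)})$ is $(K,c)$-diffuse and $\varphi_{v^{(n)}}^{-1}\mathcal{L}$ is an affine hyperplane, there is $j\in W_{T_{(\rho)}}(v^{(n)})$ with $\varphi_{j}K\cap(\varphi_{v^{(n)}}^{-1}\mathcal{L})^{(c)}=\emptyset$; applying the similarity $\varphi_{v^{(n)}}$ (contraction ratio $r_{v^{(n)}}$) yields $\varphi_{v^{(n)}j}K\cap\mathcal{L}^{(r_{v^{(n)}}c)}=\emptyset$, where $r_{v^{(n)}}c>\frac{\rho^{n}}{a_{\rho}(\omega)}r_{min}c>\beta\xi$. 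On the other hand $x\in\varphi_{v^{(n)}}K$, $\varphi_{v^{(n)}j}K\subseteq\varphi_{v^{(n)}}K$, and $\text{diam}(\varphi_{v^{(n)}}K)=r_{v^{(n)}}\Delta\le\frac{\rho^{n}}{a_{\rho}(\omega)}\Delta\le\xi$, so $\varphi_{v^{(n)}j}K\subseteq B_{\xi}(x)$. Finally $v^{(n)}j$ is a vertex of $(T_{(\rho)})^{\omega}$, so by the previous paragraph $\varphi_{v^{(n)}j}K$ meets $E$ in some point $y$; then $y\in E\cap B_{\xi}(x)$ and $\text{dist}(y,\mathcal{L})>\beta\xi$, i.e. $E\cap B_{\xi}(x)\setminus\mathcal{L}^{(\beta\xi)}\ne\emptyset$, which is exactly hyperplane $\beta$-diffuseness of $E$.

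I expect the only delicate point to be the first paragraph: checking that taking boundaries commutes with compression so that $\partial(T_{(\rho)})^{\omega}=\partial T^{\omega}$ and the same for descendant subtrees, and keeping the normalizing factor $a_{\rho}(\omega)$ (rather than a pure power of $\rho$) correctly in every estimate via Proposition~\ref{prop:Next elem in compressed tree}; once that bookkeeping is set up, the remainder is a direct transcription of the proof of Proposition~\ref{prop:diffuse tree implies diffuse set}.
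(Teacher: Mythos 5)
Your proof is correct and follows essentially the same route as the paper's: pick the unique height $n$ for which the $\Pi_{\rho^{n}/a_{\rho}(\omega)}$-cell containing $x$ has diameter $\le\xi$, apply $(K,c)$-diffuseness at that vertex of $(T_{(\rho)})^{\omega}$ to push a child cell off $\mathcal{L}^{(\beta\xi)}$, and note that every cell of the compressed subtree meets $E$ because all offspring sets are nonempty. The only (cosmetic) difference is that you verify the diffuseness threshold at $\xi_{0}=\Delta/a_{\rho}(\omega)$ while the paper uses the more conservative $\xi_{0}=\rho\Delta/a_{\rho}(\omega)$; both are legitimate choices and lead to the same constant $\frac{\rho c\,r_{min}}{\Delta}$.
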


\begin{proof}
Fix $\xi\in\left(0,\dfrac{\rho\Delta}{a_{\rho}\left(\omega\right)}\right)$,
$x\in E$ and an affine hyperplane $\mathcal{L}\subset\mathbb{R}^{d}$
. Let $n\in\mathbb{N}$ be the unique integer s.t. $\rho^{n}\leq\frac{\xi a_{\rho}\left(\omega\right)}{\Delta}<\rho^{n-1}$,
and let $i\in T^{\omega}\cap\Pi_{\frac{\rho^{n}}{a_{\rho}\left(\omega\right)}}$
be s.t. $x\in\varphi_{i}K$ (by Proposition \ref{prop:Next elem in compressed tree}
$i\in\left(\text{\ensuremath{T_{\left(\rho\right)}}}\right)^{\omega}$).
Note that 
\[
\frac{r_{min}\rho}{\Delta}\xi<\dfrac{r_{min}\rho^{n}}{a_{\rho}\left(\omega\right)}<r_{i}\leq\dfrac{\rho^{n}}{a_{\rho}\left(\omega\right)}\leq\frac{\xi}{\Delta}.
\]
By assumption $\exists j\in W_{T_{\left(\rho\right)}}\left(i\right)$
s.t. $\varphi_{j}K\cap\left(\varphi_{i}^{-1}\mathcal{L}\right)^{\left(c\right)}=\emptyset$.
Applying $\varphi_{i}$ we get that $\varphi_{ij}K\cap\mathcal{L}^{\left(r_{i}c\right)}=\emptyset$,
hence $\varphi_{ij}K\cap\mathcal{L}^{\left(\frac{r_{min}\rho c}{\Delta}\xi\right)}=\emptyset$.
Notice that $\text{diam}\left(\varphi_{i}K\right)=r_{i}\Delta\leq\xi$,
therefore $\varphi_{ij}K\subset\varphi_{i}K\subseteq B_{\xi}\left(x\right)$.
By assumption $\forall v\in T_{\left(\rho\right)}$, $W_{T_{\left(\rho\right)}}\left(v\right)$
is $\left(K,c\right)$-diffuse and in particular non-empty, so every
descendants tree of $T_{\left(\rho\right)}$ is infinite. Hence, $\varphi_{ij}K\cap E\neq\emptyset$
and therefore $B_{\xi}\left(x\right)\cap E\setminus\mathcal{L}^{\left(\frac{r_{min}\rho c}{\Delta}\xi\right)}\neq\emptyset$.
\end{proof}
\begin{lem}
\label{lem:sections are diffuse}Let $T$ be a GWT corresponding to
a similarity IFS $\Phi=\left\{ \varphi_{i}\right\} _{i\in\Lambda}$
with offspring distribution $W$. Assume that $\mathbb{P}\left(\text{\ensuremath{W} is \ensuremath{\left(F,c\right)}-diffuse}\right)=\nu>0$
for some $c>0$ and $F$ as in Definition \ref{def:diffuseness for IFS}.
Then for every section $\Pi\subset\Lambda^{*}$, $\mathbb{P}\left(\text{\ensuremath{T_{\Pi}} is \ensuremath{\left(F,c\right)}-diffuse}\right)\geq\nu\cdot\mathbb{P}\left(T_{n}\neq\emptyset\right)^{\left|\Lambda\right|}$
where $n=\max\left\{ \left|i\right|:\,i\in\Pi\right\} $ is the maximal
depth of the section $\Pi$.
\end{lem}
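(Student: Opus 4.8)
The plan is to condition on the first generation $T_{1}=W_{\emptyset}$ and to propagate a diffuse configuration of letters down to the section $\Pi$. Two observations drive the argument. First, $F$ is forward invariant under $\Phi$, so $\varphi_{w}F\subseteq F$ for every $w\in\Lambda^{*}$, and hence $\varphi_{i}F\subseteq\varphi_{j}F$ whenever $j\leq i$ in $\Lambda^{*}$; in particular, if $\varphi_{j}F\cap\mathcal{L}^{(c)}=\emptyset$ for an affine hyperplane $\mathcal{L}$, the same holds for every descendant $i\geq j$. Second, a purely combinatorial fact: if $\Pi'\subseteq\Lambda^{*}$ is a section of maximal depth $n'$ and $S$ is a tree with $S_{n'}\neq\emptyset$, then $S\cap\Pi'\neq\emptyset$; indeed, pick $v\in S_{n'}$, extend it to an infinite word $a$, and let $i\in\Pi'$ be the unique prefix of $a$ lying in $\Pi'$: since $|i|\leq n'=|v|$ we have $i\leq v$, hence $i\in S$.

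With these in hand I would argue as follows. Assume as we may that $n\geq1$ (so $\emptyset\notin\Pi$), and for each $j\in\Lambda$ let $E_{j}=\{(T^{j})_{n-1}\neq\emptyset\}$ be the event that the descendants tree of $j$ survives to level $n-1$. Consider the event $\mathcal{A}=\{W_{\emptyset}\text{ is }(F,c)\text{-diffuse}\}\cap\bigcap_{j\in W_{\emptyset}}E_{j}$. I claim $T_{\Pi}$ is $(F,c)$-diffuse on $\mathcal{A}$: given an affine hyperplane $\mathcal{L}$, diffuseness of $W_{\emptyset}=T_{1}$ gives $j\in T_{1}$ with $\varphi_{j}F\cap\mathcal{L}^{(c)}=\emptyset$; if $j\in\Pi$ then $j\in T\cap\Pi=T_{\Pi}$ and $\mathcal{L}$ is handled, while if $j\notin\Pi$ then (using $\emptyset\notin\Pi$ and $|j|=1$) the set $\Pi_{j}:=\{w\in\Lambda^{*}:jw\in\Pi\}$ is a section of $\Lambda^{\mathbb{N}}$ of maximal depth $\leq n-1$, so $E_{j}$ together with the combinatorial fact yields $w\in\Pi_{j}\cap T^{j}$, i.e. $i:=jw\in T_{\Pi}$ with $i\geq j$, and then $\varphi_{i}F\cap\mathcal{L}^{(c)}\subseteq\varphi_{j}F\cap\mathcal{L}^{(c)}=\emptyset$ by the monotonicity above. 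Since $\mathcal{L}$ was arbitrary, this proves the claim, so that $\mathbb{P}(T_{\Pi}\text{ is }(F,c)\text{-diffuse})\geq\mathbb{P}(\mathcal{A})$.

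It then remains to bound $\mathbb{P}(\mathcal{A})$ from below. Conditioning on $W_{\emptyset}=D$, the subtrees $\{T^{j}:j\in D\}$ are independent GWTs with the law of $T$ and are independent of $W_{\emptyset}$ (statistical self-similarity), whence $\mathbb{P}\bigl(\bigcap_{j\in D}E_{j}\mid W_{\emptyset}=D\bigr)=\mathbb{P}(T_{n-1}\neq\emptyset)^{|D|}\geq\mathbb{P}(T_{n-1}\neq\emptyset)^{|\Lambda|}$. Summing over the $(F,c)$-diffuse sets $D$ with weights $\mathbb{P}(W_{\emptyset}=D)$ gives $\mathbb{P}(\mathcal{A})\geq\nu\cdot\mathbb{P}(T_{n-1}\neq\emptyset)^{|\Lambda|}\geq\nu\cdot\mathbb{P}(T_{n}\neq\emptyset)^{|\Lambda|}$, the last step because $T_{n}\neq\emptyset$ implies $T_{n-1}\neq\emptyset$. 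This is the asserted bound.

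The only genuinely delicate point is the interaction between the section and the random tree in the second paragraph: ensuring that once a branch of $T$ emanating from a favorable letter $j$ survives deep enough, it is forced to meet $\Pi$ at a descendant of $j$, on which emptiness of the intersection with $\mathcal{L}^{(c)}$ persists. Everything else is routine branching-process first-moment bookkeeping, so I expect no substantial difficulty beyond stating that step carefully — in particular keeping straight the off-by-one between "$\Pi$ has maximal depth $n$'' and "a branch must survive to level $n-1$''.
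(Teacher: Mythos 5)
Your proof is correct and follows essentially the same route as the paper: condition on $T_{1}=W_{\emptyset}$ being $\left(F,c\right)$-diffuse, force each $j\in T_{1}$ to have a descendant in $T_{\Pi}$ by requiring survival of $T^{j}$ deep enough, and use the forward invariance of $F$ to pass the emptiness of $\varphi_{j}F\cap\mathcal{L}^{\left(c\right)}$ down to the descendant. The paper phrases the survival step directly as a lower bound $\mathbb{P}\left(\exists\,j'\in T_{\Pi}\text{ with }j'\geq j\mid j\in T_{1}\right)\geq\mathbb{P}\left(T_{n}\neq\emptyset\right)$, whereas you spell out the sufficient event $\left(T^{j}\right)_{n-1}\neq\emptyset$ together with the elementary section-meets-tree fact and then weaken $\mathbb{P}\left(T_{n-1}\neq\emptyset\right)$ to $\mathbb{P}\left(T_{n}\neq\emptyset\right)$; the two are the same argument at a different level of detail. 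Your explicit exclusion of the degenerate section $\Pi=\left\{\emptyset\right\}$ is appropriate (the paper tacitly does the same).
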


\begin{proof}
First we note that if $A\subseteq\Lambda$ is $\left(F,c\right)$-diffuse,
then for every finite set $P\subseteq\Lambda^{*}$ s.t. $\forall i\in A,\,\exists j\in P$,
s.t. $j\geq i$, $P$ is also $\left(F,c\right)$-diffuse. Indeed,
given an affine hyperplane $\mathcal{L}\subset\mathbb{R}^{d}$, $\varphi_{i}F\cap\mathcal{L}^{\left(c\right)}=\emptyset$
for some $i\in A$. For some $j\in P$, $j\geq i$ and therefore $\varphi_{j}F\subseteq\varphi_{i}F$
which implies $\varphi_{j}F\cap\mathcal{L}^{\left(c\right)}=\emptyset$.
Hence, given some section $\Pi\subset\Lambda^{*}$, if $T_{1}$ is
$\left(F,c\right)$ - diffuse and $\forall i\in T_{1},\,\exists j\in T_{\Pi}$
s.t. $j\geq i$, then $T_{\Pi}$ is also $\left(F,c\right)$ - diffuse.
Since for every $i\in\Lambda$, $\mathbb{P}\left(\exists j\in T_{\Pi}\text{ s.t. }j\geq i|\,i\in T_{1}\right)\geq\mathbb{P}\left(T_{n}\neq\emptyset\right)$
and these events are independent for different elements of $\Lambda$
and also independent of the event: $T_{1}$ is $\left(F,c\right)$
- diffuse, the claim follows. 
\end{proof}
For the next lemma we need the following notation. Let $\Phi=\left\{ \varphi_{i}\right\} _{i\in\Lambda}$
be a similarity IFS with contraction ratios $\left\{ r_{i}\right\} _{i\in\Lambda}$
and attractor $K$. Given $\rho\in\left(0,r_{min}\right)$, denote
$B_{\rho}=\left\{ \emptyset\right\} \cup\left(\bigcup_{n=1}^{\infty}\Pi_{\rho^{n}}\right)$.
For every $c>0$ and $x\in B_{\rho}$, denote 
\[
^{\rho,c}\mathscr{D}_{x}=\left\{ A\subseteq\Pi_{\frac{\rho}{a_{\rho}\left(x\right)}}:\,\exists i\in\Pi_{\frac{\rho}{a_{\rho}\left(x\right)r_{min}}},\,\left\{ \varphi_{v}:\,iv\in A\right\} \text{ is \emph{\ensuremath{\left(K,c\right)}}-diffuse}\right\} .
\]
This notation (and the following lemma) should be compared with the
notation $\mathscr{D}_{b,k}$ which was defined before Lemma \ref{lem:diffuse subtree}.
Note that every $A\in{}^{\rho,c}\mathscr{D}_{x}$ is $\left(K,\rho c\right)$-diffuse.
We denote $^{\rho,c}\mathscr{D}:B_{\rho}\to2^{\left(2^{\Lambda^{*}}\right)}$
where $^{\rho,c}\mathscr{D}\left(x\right)={}^{\rho,c}\mathscr{D}_{x}$%
\begin{comment}
Indeed, given an affine hyperplane $\mathcal{L}\subset\mathbb{R}^{d}$,
fix $i\in\Pi_{\frac{\rho}{a_{x}r_{min}}}$ s.t. $\left\{ \varphi_{v}:\,iv\in A\right\} $
is $\emph{\ensuremath{\left(K,c\right)}}$-diffuse. Then there exists
$v\in\Lambda^{*}$ s.t. $iv\in A$ and $\varphi_{v}K\cap\varphi_{i}^{-1}\left(\mathcal{L}^{\left(c\right)}\right)=\emptyset$.
Hence, $\varphi_{iv}K\cap\mathcal{L}^{\left(r_{i}c\right)}=\emptyset$,
and since $r_{i}\geq\rho$, $\varphi_{iv}K\cap\mathcal{L}^{\left(\rho c\right)}=\emptyset$
as well.
\end{comment}

\begin{lem}
\label{lem: diffuse subtrees exist}Let $T$ be a GWT corresponding
to a similarity IFS $\Phi=\left\{ \varphi_{i}\right\} _{i\in\Lambda}$
with contraction ratios $\left\{ r_{i}\right\} _{i\in\Lambda}$ and
offspring distribution $W$. Denote by $K$ the attractor of $\Phi$.
Assume that $\exists c>0$ s.t. $\mathbb{P}\left(\text{\ensuremath{\left\{  \varphi_{i}\right\}  _{i\in W}} is \emph{\ensuremath{\left(K,c\right)}}-diffuse}\right)=\nu>0$.
Then for every $s\in\left(0,1\right)$,
\[
\sup_{x\in B_{\rho}}\mathbb{P}\left(\left(T_{\Pi_{\frac{\rho}{a_{\rho}\left(x\right)}}}\right)^{\left(s\right)}\notin{}^{\rho,c}\mathscr{D}_{x}|\text{ nonextinction}\right)\underset{\rho\rightarrow0}{\longrightarrow}0
\]
where $B_{\rho}=\left\{ \emptyset\right\} \cup\left(\bigcup_{n=1}^{\infty}\Pi_{\rho^{n}}\right)$.
\end{lem}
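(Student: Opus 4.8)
The plan is to transcribe the argument of Lemma~\ref{lem:diffuse subtree} to the present setting, with Corollary~\ref{cor:size of cutsets} playing the part that Corollary~\ref{cor:kesten - stigum} played there, together with some extra bookkeeping to accommodate the ragged relative sections produced by $\rho$-compression. Since $\nu>0$ and $W$ takes values in the finite set $2^{\Lambda}$, I would first fix once and for all a set $A_{0}\subseteq\Lambda$ with $p_{1}:=\mathbb{P}\left(W=A_{0}\right)>0$ such that $\left\{\varphi_{j}\right\}_{j\in A_{0}}$ is $\left(K,c\right)$-diffuse. Next, fixing $\rho\in\left(0,r_{min}\right)$ and $x\in B_{\rho}$ and writing $\sigma=\frac{\rho}{a_{\rho}\left(x\right)}\in\left[\rho,\rho/r_{min}\right]$, I would record the elementary section arithmetic: every $i\in\Pi_{\sigma/r_{min}}$ satisfies $r_{i}\in\left(\sigma,\sigma/r_{min}\right]$ and $\left\{v:iv\in\Pi_{\sigma}\right\}=\Pi_{\sigma/r_{i}}$ with $\sigma/r_{i}\in\left[r_{min},1\right)$; moreover, for a single letter $j\in\Lambda$ the elements $v\in\Pi_{\sigma/r_{i}}$ with $v\geq j$ are exactly the $v=jw$ with $w\in\Pi_{\left(\sigma/r_{i}\right)/r_{j}}$, where $\left(\sigma/r_{i}\right)/r_{j}\geq r_{min}/r_{max}$, so such $w$ have length bounded by a constant $N_{0}$ depending only on $\Phi$.

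The heart of the matter is a lower bound, uniform in $\rho$, $x$ and in $i\in\Pi_{\sigma/r_{min}}$, for the ``success probability''
\[
\mathbb{P}\!\left(\left\{v:iv\in\left(T_{\Pi_{\sigma}}\right)^{\left(s\right)}\right\}\text{ contains a }\left(K,c\right)\text{-diffuse subset}\ \bigm|\ i\in T\right)\ \geq\ p_{0}>0 .
\]
To get this I would work on the event $\left\{W_{i}=A_{0}\right\}$, which has probability $p_{1}$ given $i\in T$; on it the subtrees $\left(T^{ij}\right)_{j\in A_{0}}$ are independent GWTs, and each survives to depth $N_{0}$ with probability at least $\mathbb{P}\left(\text{nonextinction}\right)$. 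A surviving path of length $N_{0}$ is prefix-closed in $T^{ij}$, hence has a prefix in the section $\Pi_{\left(\sigma/r_{i}\right)/r_{j}}$; calling it $w_{j}$ and putting $A_{0}^{\prime}=\left\{jw_{j}:j\in A_{0}\right\}\subseteq T^{i}\cap\Pi_{\sigma/r_{i}}$, the family $\left\{\varphi_{v}\right\}_{v\in A_{0}^{\prime}}$ is still $\left(K,c\right)$-diffuse because $\varphi_{jw_{j}}K\subseteq\varphi_{j}K$ (this is the mechanism of Lemma~\ref{lem:sections are diffuse}). Finally each of the at most $\left|\Lambda\right|$ elements $iv$, $v\in A_{0}^{\prime}$, is retained by the $s$-thinning independently with probability $1-s$, so $p_{0}=p_{1}\cdot\mathbb{P}\left(\text{nonextinction}\right)^{\left|\Lambda\right|}\cdot\left(1-s\right)^{\left|\Lambda\right|}$ works, using that a superset of a $\left(K,c\right)$-diffuse family is again $\left(K,c\right)$-diffuse. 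In particular, if at least one $i\in T_{\Pi_{\sigma/r_{min}}}$ is successful, then $\left(T_{\Pi_{\sigma}}\right)^{\left(s\right)}\in{}^{\rho,c}\mathscr{D}_{x}$.

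The assembly is then routine and parallels Proposition~\ref{prop:0-1 law} and Lemma~\ref{lem:diffuse subtree}: conditioned on the finite set $T_{\Pi_{\sigma/r_{min}}}$ the successes of distinct (necessarily incomparable) section nodes are independent, by the branching property and the independence of the thinning, so $\mathbb{P}\bigl(\left(T_{\Pi_{\sigma}}\right)^{\left(s\right)}\notin{}^{\rho,c}\mathscr{D}_{x},\ \left|T_{\Pi_{\sigma/r_{min}}}\right|\geq M\bigr)\leq\left(1-p_{0}\right)^{M}$, and hence
\[
\mathbb{P}\!\left(\left(T_{\Pi_{\sigma}}\right)^{\left(s\right)}\notin{}^{\rho,c}\mathscr{D}_{x}\bigm|\text{nonextinction}\right)\leq\mathbb{P}\!\left(\left|T_{\Pi_{\sigma/r_{min}}}\right|<M\bigm|\text{nonextinction}\right)+\frac{\left(1-p_{0}\right)^{M}}{\mathbb{P}\left(\text{nonextinction}\right)} .
\]
Because $\sigma/r_{min}\leq\rho/r_{min}^{2}$ for all $x\in B_{\rho}$, and $\mathbb{P}\left(\left|T_{\Pi_{\eta}}\right|<M\mid\text{nonextinction}\right)$ is non-increasing as $\eta\downarrow0$ (a monotonicity fact already used in the proof of Lemma~\ref{lem: rho^alpha subtrees exist}), the first term is at most $\mathbb{P}\left(\left|T_{\Pi_{\rho/r_{min}^{2}}}\right|<M\mid\text{nonextinction}\right)$, which by Corollary~\ref{cor:size of cutsets} tends to $0$ as $\rho\to0$ for each fixed $M$; so, given $\varepsilon>0$, I would pick $M$ with $\left(1-p_{0}\right)^{M}/\mathbb{P}\left(\text{nonextinction}\right)<\varepsilon/2$ and then $\rho_{0}$ making the first term $<\varepsilon/2$ for $\rho<\rho_{0}$. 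The step I expect to be the main obstacle is the uniform bound $p_{0}$: one must reconcile the abstract diffuse configuration $A_{0}\subseteq\Lambda$ furnished by the hypothesis with the relative section $\Pi_{\sigma/r_{i}}$ forced by the compression, and bound the probability of realizing a diffuse configuration inside $T$ after thinning uniformly in $x$, $\rho$ and $i$; everything else is the second-moment-and-conditioning argument transplanted from the fractal percolation case.
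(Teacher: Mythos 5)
Your proof is correct and follows essentially the same route as the paper's: Corollary \ref{cor:size of cutsets} supplies many surviving section nodes, and a $(K,c)$-diffuse first generation is pushed into the relative section (the mechanism of Lemma \ref{lem:sections are diffuse}, which you re-derive inline by fixing a concrete diffuse realization $A_{0}$ of $W$ rather than conditioning on the event $\{W\text{ is diffuse}\}$), yielding a uniform per-node success probability. Your bookkeeping is marginally sharper, requiring only the $|A_{0}|\leq|\Lambda|$ witness elements to survive the $s$-thinning rather than the entire relative sub-section, giving $(1-s)^{|\Lambda|}$ in place of the paper's $(1-s)^{|\Lambda|^{n_{0}}}$, and you make explicit the final conditioning-and-independence step the paper leaves implicit, but the underlying argument is the same.
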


\begin{proof}
Recall that $a_{\rho}\left(x\right)\in\left[r_{min},1\right]$ for
every $x\in B_{\rho}$. Fix some $\alpha\in\left(0,\delta\right)$
where $\delta$ satisfies the equation $\mathbb{E}\left(\sum\limits _{i\in\Lambda}r_{i}^{\delta}\right)=1$.
Then by Corollary \ref{cor:size of cutsets}, given $\varepsilon>0$
there exists some $\rho_{0}>0$ s.t. whenever $\rho<\rho_{0}$, 
\[
\mathbb{P}\left(\left|T\cap\Pi_{\frac{\rho}{\left(r_{min}\right)^{2}}}\right|>\left(r_{min}\right)^{2\alpha}\cdot\frac{1}{\rho^{\alpha}}\right)>1-\varepsilon.
\]
Fix some $x\in B_{\rho}.$ Since $a_{\rho}\left(x\right)\geq r_{min}$,
for every $\rho<\rho_{0}$,
\begin{equation}
\mathbb{P}\left(\left|T\cap\Pi_{\frac{\rho}{a_{\rho}\left(x\right)\cdot r_{min}}}\right|>\frac{1}{\rho^{\alpha}}\right)>1-\varepsilon\label{eq: lower bound on sections}
\end{equation}
where the constant $\left(r_{min}\right)^{2\alpha}$ was removed as
it may be absorbed by taking a slightly smaller $\alpha$ and assuming
that $\rho_{0}$ is small enough.

For every $i\in\Pi_{\frac{\rho}{a_{\rho}\left(x\right)\cdot r_{min}}}$,
\[
\mathbb{P}\left(\left\{ \varphi_{j}\right\} _{j\in W_{T}\left(i\right)}\text{ is \emph{\ensuremath{\left(K,c\right)}}-diffuse}|\,i\in T\right)=\nu>0.
\]
Denote $V_{i}=\left\{ j\in\Lambda^{*}:\,ij\in\Pi_{\frac{\rho}{a_{\rho}\left(x\right)}}\right\} $.
$V_{i}$ is a section, and $\max\limits _{j\in V_{i}}\left|j\right|\leq\left\lceil \log_{r_{max}}\left(r_{min}\right)^{2}\right\rceil \eqqcolon n_{0}$.
Note that $n_{0}$ is independent of $i$ and of $x$%
\begin{comment}
For $i\in\Pi_{\frac{\rho}{a_{x}\cdot r_{min}}}$, $\frac{\rho}{a_{x}}<r_{i}\leq\frac{\rho}{a_{x}\cdot r_{min}}$.
If $\left|j\right|>n_{0},$ then $r_{j}<\left(r_{min}\right)^{2}$,
and therefore $r_{ij}<\frac{\rho r_{min}}{a_{x}}$, and therefore
$ij\notin\Pi_{\frac{\rho}{a_{x}}}$
\end{comment}
. By Lemma \ref{lem:sections are diffuse}, 
\[
\mathbb{P}\left(\text{\ensuremath{\left\{ \varphi_{j}\right\} _{j\in T_{V_{i}}}} is \emph{\ensuremath{\left(K,c\right)}}-diffuse}\right)\geq\nu\cdot\mathbb{P}\left(T_{n_{0}}\neq\emptyset\right)^{\left|\Lambda\right|},
\]
and given $s\in\left(0,1\right)$, 
\begin{equation}
\mathbb{P}\left(\text{\ensuremath{\left\{ \varphi_{j}\right\} _{j\in T_{V_{i}}^{\left(s\right)}}} is \emph{\ensuremath{\left(K,c\right)}}-diffuse}\right)\geq\nu\cdot\mathbb{P}\left(T_{n_{0}}\neq\emptyset\right)^{\left|\Lambda\right|}\cdot\left(1-s\right)^{\left(\left|\Lambda\right|^{n_{0}}\right)}.\label{eq:Prob for c-diffuse}
\end{equation}
We shall denote the right hand side of inequality (\ref{eq:Prob for c-diffuse})
by $\nu^{\prime}$. Notice that conditioned on $i\in T$, $\left\{ v\in\Lambda^{*}:\,iv\in T\cap\Pi_{\frac{\rho}{a_{x}}}\right\} \sim T_{V_{i}}$.
Therefore,
\[
\forall i\in\Pi_{\frac{\rho}{a_{\rho}\left(x\right)\cdot r_{min}}},\,\mathbb{P}\left(\left\{ v\in\Lambda^{*}:\,iv\in T\cap\Pi_{\frac{\rho}{a_{\rho}\left(x\right)}}\right\} \,\text{is \emph{\ensuremath{\left(K,c\right)}}-diffuse}|\,i\in T\right)\geq\nu^{\prime}.
\]

Using inequality (\ref{eq: lower bound on sections}) we conclude
the proof of the lemma.
\end{proof}

\subsubsection{Final step}
\begin{proof}[Proof of Theorem \ref{thm:Main theorem diffuse subses} ]
Let $E$ be a non-planar GWF and $T$ the corresponding GWT, w.r.t.
a similarity IFS $\Phi=\left\{ \varphi_{i}\right\} _{i\in\Lambda}$
whose attractor is denoted by $K$, and offspring distribution $W$.
First, note that by Proposition \ref{prop: equivalent conditions for not in a hyperplane},
there exist a section $\Pi\subseteq\Lambda^{*}$, and $c>0$ s.t.
$\mathbb{P}\left(T_{\Pi}\text{ is \emph{\ensuremath{\left(K,c\right)}}-diffuse}\right)>0$.
Since we may consider the IFS $\left\{ \varphi_{i}\right\} _{i\in\Pi}$
which has the same attractor as $\Phi$, and the GWF corresponding
to the offspring distribution $\sim T_{\Pi}$ which has the same law
as $E$, there is no loss of generality in assuming that $\mathbb{P}\left(W\text{ is \emph{\ensuremath{\left(K,c\right)}}-diffuse}\right)>0$,
hence we proceed assuming the latter holds. 

Given $\rho>0$, consider the compressed $*$-tree $T_{\left(\rho\right)}$.
Recall that by Proposition \ref{prop:law of compressed tree}, $T_{\left(\rho\right)}$
has the law of a random $*$-tree on $B_{\rho}=\bigcup\limits _{n=1}^{\infty}\Pi_{\rho^{n}}\cup\left\{ \emptyset\right\} $
with offspring distributions $M_{i}\sim T_{\Pi_{\frac{\rho}{a_{\rho}\left(i\right)}}}$,
for every $i\in B_{\rho}$. By Lemma \ref{lem: diffuse subtrees exist},
for every $s\in\left(0,1\right)$,
\[
\sup_{x\in B_{\rho}}\mathbb{P}\left(\left(T_{\Pi_{\frac{\rho}{a_{\rho}\left(x\right)}}}\right)^{\left(s\right)}\notin{}^{\rho,c}\mathscr{D}_{x}|\,\text{nonextinction}\right)\underset{\rho\rightarrow0}{\longrightarrow}0.
\]
Fix any $\alpha\in\left(0,\delta\right)$. By Lemma \ref{lem: rho^alpha subtrees exist},
\[
\sup_{x\in B_{\rho}}\mathbb{P}\left(\left|\left(T_{\Pi_{\frac{\rho}{a_{\rho}\left(x\right)}}}\right)^{\left(s\right)}\right|<\rho^{-\alpha}|\text{ nonextinction}\right)\underset{\rho\rightarrow0}{\longrightarrow}0.
\]
Denoting for every $x\in B_{\rho}$, $^{\rho,\alpha}\mathscr{C}_{x}=\left\{ A\subseteq\Pi_{\frac{\rho}{a_{\rho}\left(x\right)}}:\,\left|A\right|\geq\rho^{-\alpha}\right\} $,
we have 
\[
\sup_{x\in B_{\rho}}\mathbb{P}\left(\left(T_{\Pi_{\frac{\rho}{a_{\rho}\left(x\right)}}}\right)^{\left(s\right)}\notin{}^{\rho,c}\mathscr{D}_{x}\cap{}^{\rho,\alpha}\mathscr{C}_{x}|\text{ nonextinction}\right)\underset{\rho\rightarrow0}{\longrightarrow}0
\]
 which implies that 
\[
\sup_{x\in B_{\rho}}\mathbb{P}\left(\left(T_{\Pi_{\frac{\rho}{a_{\rho}\left(x\right)}}}\right)^{\left(s\right)}\notin{}^{\rho,c}\mathscr{D}_{x}\cap{}^{\rho,\alpha}\mathscr{C}_{x}\right)\underset{\rho\rightarrow0}{\longrightarrow}\mathbb{P}\left(\text{extinction}\right).
\]

Therefore, we have shown that for every $s\in\left(0,1\right)$, 
\[
\sup_{x\in B_{\rho}}\mathbb{P}\left(M_{x}^{\left(s\right)}\notin{}^{\rho,c}\mathscr{D}_{x}\cap{}^{\rho,\alpha}\mathscr{C}_{x}\right)\underset{\rho\rightarrow0}{\longrightarrow}\mathbb{P}\left(\text{extinction}\right).
\]
This implies that as a function of $s$, $\sup_{x\in B_{\rho}}\mathbb{P}\left(M_{x}^{\left(s\right)}\notin{}^{\rho,c}\mathscr{D}_{x}\cap{}^{\rho,\alpha}\mathscr{C}_{x}\right)$
has a fixed point <1 whenever $\rho$ is small enough. Denote $^{\rho,c}\mathscr{A}:B_{\rho}\to2^{\left(2^{\Lambda^{*}}\right)}$
given by $^{\rho,c}\mathscr{A}\left(x\right)={}^{\rho,c}\mathscr{D}_{x}\cap{}^{\rho,\alpha}\mathscr{C}_{x}$.
Since $^{\rho,c}\mathscr{A}\left(x\right)$ is monotonic for every
$x\in B_{\rho}$, by Theorem \ref{thm:fixed point for GRLT} we obtain
\[
\sup_{x\in B_{\rho}}\mathbb{P}\left(\left(T_{\left(\rho\right)}\right)^{x}\text{ has no \ensuremath{\left(^{\rho,c}\mathscr{A}\right)^{x}}-\ensuremath{*}-subtree}|\,x\in T_{\left(\rho\right)}\right)<1
\]
whenever $\rho$ is small enough, and in this case,
\begin{equation}
\mathbb{P}\left(\exists x\in T_{\left(\rho\right)}\text{ s.t. }\left(T_{\left(\rho\right)}\right)^{x}\text{ has a \ensuremath{\left(^{\rho,c}\mathscr{A}\right)^{x}}-\ensuremath{*}-subtree}|\text{ nonextinction}\right)=1.\label{eq:*-subtrees exists a.s.}
\end{equation}
\begin{comment}
$\mathscr{D}^{\rho,c}\cap\mathscr{C}^{\rho,\alpha}$ should be considered
as a map $\mathscr{D}^{\rho,c}\cap\mathscr{C}^{\rho,\alpha}:B_{\rho}\to2^{\left(2^{\Lambda^{*}}\right)}$
given by $\mathscr{D}^{\rho,c}\cap\mathscr{C}^{\rho,\alpha}\left(x\right)=\mathscr{D}_{x}^{\rho,c}\cap\mathscr{C}_{x}^{\rho,\alpha}$.
\end{comment}
\begin{comment}
This follows again from Proposition \ref{prop:0-1 law}. This time
take $n$ large and look at $\left(T_{\left(\rho\right)}\right)_{n}$,
By corollary \ref{cor:size of cutsets} They have exponentially increasing
number of elements, each has same positive probability for having
the required \emph{$*$-}subtree, therefore the probability that there
exists a \emph{$*$-}subtree given nonextinction is larger then any
$1-\varepsilon$, therefore it is 1.
\end{comment}

Fix $\rho$ small enough s.t. (\ref{eq:*-subtrees exists a.s.}) holds,
and s.t. $\rho^{-\alpha}$ is an integer larger than $\left|\Lambda\right|^{n_{0}}$,
where $n_{0}=\left\lceil \log_{r_{max}}\left(r_{min}\right)^{2}\right\rceil $
as in the proof of Lemma \ref{lem: diffuse subtrees exist}. Then
every $\left(^{\rho,c}\mathscr{A}\right)^{x}\text{-}\ensuremath{*}\text{-subtree}$
contains a $\rho^{-\alpha}$-ary $\left(^{\rho,c}\mathscr{D}\right)^{x}$\emph{-$*$-}subtree.
Indeed, if $A\in\left(^{\rho,c}\mathscr{D}\right)_{v}^{x}$ for some
$v\in B_{\rho}^{x}$, we may remove elements of $A$ except for a
subset of size at most $\left|\Lambda\right|^{n_{0}}$ and obtain
a smaller set which is still in $\left(^{\rho,c}\mathscr{D}\right)_{v}^{x}$.

Now, assume that for some $x\in T_{\left(\rho\right)}$, $\left(T_{\left(\rho\right)}\right)^{x}\text{ has a \ensuremath{\left(^{\rho,c}\mathscr{A}\right)^{x}}-\ensuremath{*}-subtree}$,
then by the above, it also contains a $\rho^{-\alpha}$-ary $\left(^{\rho,c}\mathscr{D}\right)^{x}$\emph{-$*$-}subtree
$S$. Denote $D_{\alpha}^{\prime}=\gamma_{\Phi}\left(\partial S\right)$.
Since $\forall v\in B_{\rho}$, every set in $^{\rho,c}\mathscr{D}_{v}$
is $\left(K,\rho c\right)$ - diffuse, by Proposition \ref{prop: diffuse compressed tree implies diffuse limit set},
$D_{\alpha}^{\prime}$ is hyperplane diffuse, and so is $D_{\alpha}=\varphi_{x}\left(D_{\alpha}^{\prime}\right)\subseteq E$.

In case $\Phi$ satisfies the OSC, Lemma \ref{lem:rho^alpha-ary implies alpha Ahlfors regular}
implies that $D_{\alpha}^{\prime}$ is also $\alpha$-Ahlfors regular
(and so is $D_{\alpha}$). Thus, in this case we have shown that for
every $\alpha\in\left(0,\delta\right)$, a.s. conditioned on nonextinction,
there exists a subset $D_{\alpha}\subseteq E$ which is hyperplane
diffuse and $\alpha$-Ahlfors regular. Taking a sequence $\alpha_{n}\nearrow\delta$
concludes the proof.
\end{proof}
\begin{rem}
The proof of Theorem \ref{thm:Main theorem diffuse subses} for the
case without the OSC could obviously be much shorter since the existence
of a $\mathscr{D}^{\rho,c}$-$*$-subtree suffices. 
\end{rem}

\appendix

\section{Microsets of Galton-Watson fractals\label{appendix:Microsets-of-Galton-Watson}}

\begin{lyxgreyedout}
Should this section be included?%
\end{lyxgreyedout}
We now study the microsets of GWFs where our goal is to show that
in many cases, GWFs are a.s. not hyperplane diffuse (see Corollaries
\ref{cor:GW fractals are not hyperplane diffuse}, \ref{cor:Fractal percolation is not hyperplane diffuse}).
The reader should note that in this paper, we follow the definition
given in \cite{Broderick2012319} for microsets. This definition is
slightly different than the original definition given by Furstenberg
in \cite{Furstenberg2008405}. If one wishes to translate the results
to the latter definition, some minor adjustments need to be made.

A direct consequence of Proposition \ref{prop:0-1 law} is the following.
\begin{prop}
\label{prop:descendant trees are a.s. dense}Let $T$ be a supercritical
GWT on the alphabet $\mathbb{A}$, and let $\mathscr{GW}$ be the
corresponding measure on $\mathscr{T}_{\mathbb{A}}$. Then a.s. conditioned
on nonextinction, 
\[
\overline{\left\{ T^{v}:\,v\in T\right\} }=\text{supp}\left(\mathscr{GW}\right).
\]
\begin{comment}
Note that $\text{supp}\left(\mathscr{GW}\restriction\mathscr{T}_{\mathbb{A}}^{\infty}\right)=\text{supp}\left(\mathscr{GW}\right)\cap\mathscr{T}_{\mathbb{A}}^{\infty}$.
This is a general property of measures restricted to closed sets.
\end{comment}
\begin{comment}
\begin{proof}
\textit{\emph{$\left(\supseteq\right)$: $\text{cls}\left(\left\{ T^{v}:\,v\in T\right\} \cap\mathscr{T}_{\mathbb{A}}^{\infty}\right)$
is closed and by Proposition \ref{prop:0-1 law} a.s. for every cylinder
set $\left[L\right]$ s.t. $\mathscr{GW}\left(\left[L\right]\right)>0$,
$\text{cls}\left(\left\{ T^{v}:\,v\in T\right\} \cap\mathscr{T}_{\mathbb{A}}^{\infty}\right)\cap\left[L\right]\neq\emptyset$
(the quantifiers may be written in this order since there are only
countably many cylinder sets). Since the cylinder sets are a basis
for the topology of $\mathscr{T}_{\mathbb{A}}$, $\text{cls}\left(\left\{ T^{v}:\,v\in T\right\} \cap\mathscr{T}_{\mathbb{A}}^{\infty}\right)\supseteq\text{supp}\left(\mathscr{GW}\restriction\mathscr{T}_{\mathbb{A}}^{\infty}\right)$.}}

$\left(\subseteq\right)$\textit{\emph{: This containment follows
from the fact that for every $v\in\mathbb{A}^{*}$, given that $v\in T$,
$T^{v}$ is distributed according to $\mathscr{GW}$ and the fact
that $\mathbb{A}^{*}$ is countable.}}
\end{proof}
\end{comment}
\end{prop}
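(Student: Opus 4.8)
The plan is to prove the two inclusions separately, using the topological description of $\mathscr{T}_{\mathbb{A}}$ from the previous subsection --- the cylinder-type sets $[L]$, for $L$ a finite tree, form a basis for the topology and there are only countably many of them --- together with Proposition \ref{prop:0-1 law} and the statistical self-similarity of Galton--Watson trees recorded just before it.

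For the inclusion $\overline{\left\{T^{v}:\,v\in T\right\}}\subseteq\text{supp}\left(\mathscr{GW}\right)$, which I expect to hold almost surely with no conditioning, I would fix $v\in\mathbb{A}^{*}$ and use that, conditioned on $\left\{v\in T\right\}$, the descendants tree $T^{v}$ is again a Galton--Watson tree with offspring distribution $W$, hence has law $\mathscr{GW}$. Therefore $\mathbb{P}\left(v\in T\ \wedge\ T^{v}\notin\text{supp}\left(\mathscr{GW}\right)\right)=0$, and since $\mathbb{A}^{*}$ is countable a union bound shows that almost surely $T^{v}\in\text{supp}\left(\mathscr{GW}\right)$ for every $v\in T$; as $\text{supp}\left(\mathscr{GW}\right)$ is closed, the closure of $\left\{T^{v}:\,v\in T\right\}$ is contained in it.

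For the reverse inclusion $\text{supp}\left(\mathscr{GW}\right)\subseteq\overline{\left\{T^{v}:\,v\in T\right\}}$, where conditioning on nonextinction is essential, it suffices to show that a.s.\ conditioned on nonextinction the set $\left\{T^{v}:\,v\in T\right\}$ meets every basic open set $[L]$ with $\mathscr{GW}\left([L]\right)>0$: a point $x\in\text{supp}\left(\mathscr{GW}\right)$ has, by definition of support, $\mathscr{GW}\left([L]\right)>0$ for every basic neighborhood $[L]\ni x$, so whenever such an $[L]$ contains some $T^{v}$ we conclude $x\in\overline{\left\{T^{v}:\,v\in T\right\}}$. For a fixed finite tree $L$ with $\mathbb{P}\left(T\in[L]\right)=\mathscr{GW}\left([L]\right)>0$, Proposition \ref{prop:0-1 law} applied with $\mathscr{T}'=[L]$ yields that a.s.\ conditioned on nonextinction there exist (indeed infinitely many) $v\in T$ with $T^{v}\in[L]$; intersecting these countably many full-probability events over all finite trees $L$ with $\mathscr{GW}\left([L]\right)>0$ gives the claim.

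I do not anticipate a serious obstacle: the statement is essentially a repackaging of Proposition \ref{prop:0-1 law}. The only points requiring care are the interchange of the ``almost surely'' and ``for every $L$'' quantifiers --- legitimate because the family of finite subtrees of $\mathbb{A}^{*}$ is countable --- and the identification of ``$\text{supp}\left(\mathscr{GW}\right)\cap[L]\neq\emptyset$'' with ``$\mathscr{GW}\left([L]\right)>0$'', which is just the definition of the support of a measure applied to the open set $[L]$.
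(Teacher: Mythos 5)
Your proof is correct and follows essentially the same two-step route as the paper's own (commented) argument: the inclusion $\overline{\{T^{v}:v\in T\}}\subseteq\text{supp}(\mathscr{GW})$ from statistical self-similarity plus a union bound over the countable $\mathbb{A}^{*}$, and the reverse inclusion from Proposition \ref{prop:0-1 law} applied to each of the countably many cylinders $[L]$ of positive measure, using that these form a basis. The only cosmetic difference is that the paper's draft proof tracks the subspace of infinite trees $\mathscr{T}_{\mathbb{A}}^{\infty}$ in the $(\supseteq)$ direction, which your version does not need since you argue directly in terms of the full support.
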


Now, assume that an IFS $\Phi=\left\{ \varphi_{i}\right\} _{i\in\Lambda}$
with attractor $K$ is given. Proposition \ref{prop:descendant trees are a.s. dense}
provides information about convergence in $\mathscr{T}_{\Lambda}$.
But this alone does not provide information about convergence in the
Hausdorff metric of the corresponding sets in $\mathbb{R}^{d}$, since
except for trivial cases, the map $\mathscr{T}_{\Lambda}\to\Omega_{K}$
is not continuous. A more useful observation for this purpose is the
next Proposition which involves the following notation: For every
finite tree $Y\in\mathscr{T}_{\mathbb{A}}$ where $\mathbb{A}$ is
some alphabet, we denote 
\[
\left[Y\right]_{\infty}=\left\{ L\in\mathscr{T}_{\mathbb{A}}:\,L\in\left[Y\right]\text{ and }\forall v\in Y_{\text{length}\left(Y\right)-1},\,L^{v}\text{ is infinite}\right\} .
\]
In other words, for every tree $L\in\mathscr{T}_{\mathbb{A}}$, $L\in\left[Y\right]_{\infty}$
iff $L$ and $Y$ coincide up to the last level of $Y$ (i.e., $L\in\left[Y\right]$),
and every element of $L$ of this level has an infinite line of descendants.
Note that for a supercritical GWT $T$ on the alphabet $\mathbb{A}$,
for every finite tree $F\in\mathscr{T}_{\mathbb{A}}$, $\mathbb{P}\left(T\in\left[F\right]\right)>0\iff\mathbb{P}\left(T\in\left[F\right]_{\infty}\right)>0$.
\begin{defn}
Let $\Phi=\left\{ \varphi_{i}\right\} _{i\in\Lambda}$ be a similarity
IFS. Each $\varphi_{i}$ is a composition of a scaling transformation,
a translation, and an orthogonal transformation which we denote by
$O_{i}$. The closure of the group generated by all the orthogonal
transformations $\left\{ O_{i}\right\} _{i\in\Lambda}$ will be denoted
by $\mathcal{O}_{\Phi}$.
\end{defn}

\begin{prop}
\label{prop:convergence of descendant trees}Let T be a supercritical
GWT with alphabet $\Lambda$, and let $\Phi=\left\{ \varphi_{i}\right\} _{i\in\Lambda}$
be a similarity IFS. Then a.s. conditioned on nonextinction, for every
finite tree $F\in\mathscr{T}_{\Lambda}$ s.t. $\mathbb{P}\left(T\in\left[F\right]\right)>0$,
the following hold:
\end{prop}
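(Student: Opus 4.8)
The plan is to deduce everything from two mechanisms: the statistical self-similarity of $T$, packaged in Proposition \ref{prop:0-1 law} together with its refinement Proposition \ref{prop:descendant trees are a.s. dense}; and a ``zooming'' argument that turns a symbolic statement about $T$ into a Hausdorff-metric statement about $E=\gamma_{\Phi}\left(\partial T\right)$. Since $\mathscr{T}_{\Lambda}$ contains only countably many finite subtrees, it suffices to fix one finite tree $F$ with $\mathbb{P}\left(T\in\left[F\right]\right)>0$ and prove the conclusions a.s.\ conditioned on nonextinction; intersecting the resulting full-measure events over all admissible $F$ then yields the simultaneous (``for every $F$'') form.

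First I would record the purely symbolic content. Since $T$ is supercritical, $\mathbb{P}\left(T\in\left[F\right]\right)>0$ is equivalent to $\mathbb{P}\left(T\in\left[F\right]_{\infty}\right)>0$, and applying Proposition \ref{prop:0-1 law} to the measurable set $\left[F\right]_{\infty}\subseteq\mathscr{T}_{\Lambda}$ gives that a.s.\ conditioned on nonextinction there are infinitely many $v\in T$ with $T^{v}\in\left[F\right]_{\infty}$. For the refined assertions involving $\mathcal{O}_{\Phi}$ I would first observe that, $\mathcal{O}_{\Phi}$ being a compact subgroup of the orthogonal group, the closed sub-semigroup it generates is already a subgroup, so $\overline{\left\{ O_{w}:\,w\in\Lambda^{*}\right\} }=\mathcal{O}_{\Phi}$. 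Combined with Proposition \ref{prop:descendant trees are a.s. dense} (which says the descendant trees $T^{v}$ are dense in $\text{supp}\left(\mathscr{GW}\right)$), this lets one arrange, along the infinitely many good vertices, not only that $T^{v}\in\left[F\right]_{\infty}$ but also that the truncation of $T^{v}$ to any prescribed further depth, and the accumulated orthogonal part realized at $v$, can be taken arbitrarily close to prescribed targets.

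The geometric transfer is the step to carry out with care, because the coding map does not descend to a continuous map $\mathscr{T}_{\Lambda}\to\Omega_{K}$. For a good vertex $v$ let $B_{v}$ be a closed ball centered at a point of $E\cap\varphi_{v}\left(K\right)$ of radius a fixed multiple of $r_{v}$, so that $F_{B_{v}}\circ\varphi_{v}$ is, up to a bounded scaling, the isometry $x\mapsto\mathrm{const}\cdot O_{v}\left(x-x_{0}\right)$. Then the miniset $F_{B_{v}}\left(E\cap B_{v}\right)$ contains $O_{v}$ applied to a rescaled copy of $\gamma_{\Phi}\left(\partial T^{v}\right)$; since $T^{v}\in\left[F\right]_{\infty}$, and since the tails of $T^{v}$ below the leaves of $F$ are independent copies of the Galton--Watson tree, this rescaled copy is a union, indexed by the leaves $i$ of $F$, of rescaled copies of $\varphi_{i}$ applied to nonempty compact subsets of $K$. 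Letting $v$ run through the good vertices and invoking compactness of $\Omega_{K}$ (and of $\mathcal{O}_{\Phi}$) to extract a convergent subsequence, one obtains the asserted microsets of $E$; by first enlarging $F$ so that the tree is forced to agree with an admissible deeper pattern, the error coming from the uncontrolled tails is made as small as one likes.

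The main obstacle is the \emph{coupling}: one cannot dictate where in $T$ a copy of $F$ appears, hence cannot directly prescribe the accumulated rotation $O_{v}$ at a good vertex, and one cannot in general force the branches below $F$ to be full copies of $K$ (the outcome $W=\Lambda$, or a singleton outcome, need not have positive probability). This is precisely where Proposition \ref{prop:descendant trees are a.s. dense}, rather than just Proposition \ref{prop:0-1 law}, is needed: using the density of $\left\{ T^{v}\right\} $ in $\text{supp}\left(\mathscr{GW}\right)$ one simultaneously realizes the required depth of agreement below $F$ and an accumulated orthogonal part approximating the chosen element of $\mathcal{O}_{\Phi}$, and then compactness of $\Omega_{K}$ upgrades the sequence of approximate minisets to an exact microset. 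Once this is in place, the applications to non-diffuseness (Corollaries \ref{cor:GW fractals are not hyperplane diffuse}, \ref{cor:Fractal percolation is not hyperplane diffuse}) follow by choosing $F$ so that the relevant microset is contained in an affine hyperplane and invoking Proposition \ref{prop:diffuse iff no microsets on hyperplanes}.
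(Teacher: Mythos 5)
Your treatment of item (1) --- apply Proposition \ref{prop:0-1 law} to $\left[F\right]_{\infty}$ and intersect over the countably many admissible $F$ --- coincides with the paper's. Item (2), however, has a genuine gap. You propose to control the accumulated rotation $O_{v}$ at a good vertex via Proposition \ref{prop:descendant trees are a.s. dense}, but that proposition speaks only to the descendant trees $T^{v}$ as elements of $\mathscr{T}_{\Lambda}$: it lets you find $v\in T$ with $T^{v}$ matching any prescribed pattern to any depth, and says nothing at all about the path from the root to $v$. The quantity $O_{v}$ is a function of that path, not of $T^{v}$, and there is no measurable $\mathscr{T}^{\prime}\subseteq\mathscr{T}_{\Lambda}$ such that $T^{v}\in\mathscr{T}^{\prime}$ forces $O_{v}=O$. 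So density of $\left\{ T^{v}\right\}$, and more generally Proposition \ref{prop:0-1 law} applied to a single subset of $\mathscr{T}_{\Lambda}$, cannot by themselves control $O_{v}$. This is exactly the coupling obstacle you yourself name, and invoking Proposition \ref{prop:descendant trees are a.s. dense} does not dissolve it.

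The paper closes the gap by using finiteness of $\mathcal{O}_{\Phi}$ in a \emph{conditional} way. Setting $l=\left|\mathcal{O}_{\Phi}\right|$, every element of the finite group $\mathcal{O}_{\Phi}$ is realized as $O_{u}$ for some $u\in\Lambda^{*}$ with $\left|u\right|\leq l$. Hence for \emph{any} $v$, whatever rotation $O_{v}$ it carries, one may choose a word $u$ of length $\leq l$ with $O_{u}=O_{v}^{-1}O$; then $\mathbb{P}\left(u\in T^{v}\text{ and }T^{vu}\in\left[F\right]_{\infty}\mid v\in T\right)\geq p\cdot\mathbb{P}\left(T\in\left[F\right]_{\infty}\right)$ for a uniform $p>0$ depending only on words of length $\leq l$. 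Running the second-moment argument of Proposition \ref{prop:0-1 law} over a large generation (via Corollary \ref{cor:kesten - stigum}) then yields a.s.\ existence of the required $m\in T$, and reversing quantifiers over the countably many pairs $\left(F,O\right)$ finishes. The step missing from your proposal is precisely this: choosing the descendant word \emph{as a function of} $O_{v}$, with a uniform positive probability because its length is bounded. Finally, note that the Hausdorff-metric and miniset material in the latter half of your write-up actually belongs to Proposition \ref{prop:microsets up to rotation}; the present Proposition \ref{prop:convergence of descendant trees} is purely symbolic and has no geometric content to transfer.
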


\begin{enumerate}
\item $\exists v\in T$ s.t. $T^{v}\in\left[F\right]_{\infty}$
\item If $\mathcal{O}_{\Phi}$ is finite, then $\forall O\in\mathcal{O}_{\Phi}$,
$\exists v\in T$ s.t. $T^{v}\in\left[F\right]_{\infty}$ and $O_{v}=O$.
\end{enumerate}
\begin{proof}
(1) is immediate. For (2), denote $l=\left|O_{\Phi}\right|$, and
notice that $O_{\Phi}=\left\{ O_{i}:\,\left|i\right|\leq l\right\} $%
\begin{comment}
\begin{lem*}
Let $G$ be some group, and let $H=\left\langle g_{1},...,g_{n}\right\rangle $
be a subgroup of $G$ generated by $g_{1},...,g_{n}\in G$. Assume
that $\left|H\right|=l$>1, then $H=\left\{ g_{i_{1}}\cdots g_{i_{k}}:\,k<l\right\} $.
\end{lem*}
\begin{proof}
Let $h=g_{i_{1}}\cdots g_{i_{k}}$ where $k>l$, then $h=g_{j_{1}}\cdots g_{j_{k^{\prime}}}$
for some $j_{1},...,j_{k^{\prime}}$ where $k^{\prime}<k$. To see
this consider the group elements $e,g_{i_{1}},g_{i_{1}}g_{i_{2}},...,g_{i_{1}}\cdots g_{i_{l}}$.
This is a list of $l+1$ group elements so at least 2 of them are
equal. replace the longer by the shorter one to obtain a shorter representation
for $h$. Since $H$ is of finite order, every $g_{i}$ has order
less than $l$, so every $g_{i}^{-1}=g_{i}^{k}$ for some $k<l$.
Hence, every $h\in H$ may be represented as a product $h=g_{i_{1}}\cdots g_{i_{k}}$.
Combining the two statements proves the claim.
\end{proof}
\end{comment}
. Denote $p=\min\left\{ \mathbb{P}\left(i\in T\right):\,i\in\bigcup\limits _{n=0}^{l}\Lambda^{n}\right\} $,
and since we assume that $\forall i\in\Lambda,\,\mathbb{P}\left(i\in T\right)>0$,
we have $p>0$. For every $v\in\Lambda^{*}$ and $O\in\mathcal{O}_{\Phi}$,
\[
\mathbb{P}\left(\exists w\in T^{v}\text{ s.t. }O_{vw}=O|\,v\in T\right)\geq p.
\]
Let $F\in\mathscr{T}_{\Lambda}$ and $O\in\mathcal{O}_{\Phi}$ be
as above. Then for every $v\in\Lambda^{*}$, 
\[
\mathbb{P}\left(\exists w\in T^{v}\text{ s.t. }O_{vw}=O\text{ and }T^{vw}\in\left[F\right]_{\infty}|\,v\in T\right)\geq p\cdot\mathbb{P}\left(T\in\left[F\right]_{\infty}\right).
\]
Using Corollary \ref{cor:kesten - stigum}, we get that a.s. conditioned
on nonextinction, $\exists m\in T$, s.t. $O_{m}=O$ and $T^{m}\in\left[F\right]_{\infty}$.
Since the set of finite trees in $\mathscr{T}_{\Lambda}$ is countable,
and $\mathcal{O}_{\Phi}$ is finite, the order of the quantifiers
may be reversed.
\end{proof}
The advantage of Proposition \ref{prop:convergence of descendant trees}
over Proposition \ref{prop:descendant trees are a.s. dense} will
become clear in Lemma \ref{lem:Hausdorff convergence of ball intersections},
but first we need the following compactness argument.

\begin{lem}
\label{lem:sequential compactness of trees}Let $T$ be some infinite
tree, and let $v^{n}\in T$ be some sequence of nodes of $T$ with
$\left|v^{n}\right|\to\infty$, then there exist $w\in\partial T$
and a sequence $n_{k}\to\infty$, s.t. 
\[
\max\left\{ \left|u\right|:\,u\leq v^{n_{k}}\text{ and }u\leq w\right\} \underset{k\to\infty}{\longrightarrow}\infty.
\]
\begin{comment}
in particular, $\partial T\neq\emptyset$
\end{comment}
\end{lem}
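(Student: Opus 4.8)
The plan is to mimic the standard compactness argument establishing that an infinite tree has nonempty boundary (the lemma stated in the commented-out passages), but carried out with bookkeeping that tracks how close the branch $w$ stays to the sequence $v^n$. First I would augment the alphabet by a symbol $\ast \notin \mathbb{A}$ and set $\mathbb{A}' = \mathbb{A} \cup \{\ast\}$, giving $\left(\mathbb{A}'\right)^{\mathbb{N}}$ the product topology; this space is compact and metrizable since $\mathbb{A}'$ is finite. For each $n$ define the padded sequence $\widetilde{v^n} = v^n \ast \ast \ast \cdots \in \left(\mathbb{A}'\right)^{\mathbb{N}}$, that is, the word $v^n$ followed by infinitely many copies of $\ast$. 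By compactness there is a subsequence $(n_k)$ with $\widetilde{v^{n_k}} \to w$ for some $w \in \left(\mathbb{A}'\right)^{\mathbb{N}}$.

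The key step is to argue that $w$ contains no $\ast$, i.e.\ $w \in \mathbb{A}^{\mathbb{N}}$, and that it lies in $\partial T$. For the first point: convergence in the product topology means that for every $m$ the first $m$ coordinates of $\widetilde{v^{n_k}}$ eventually stabilize to $w_1 \cdots w_m$; since $\lvert v^{n_k} \rvert \to \infty$, for large $k$ the first $m$ coordinates of $\widetilde{v^{n_k}}$ are exactly the first $m$ letters of $v^{n_k}$, which are in $\mathbb{A}$, hence $w_1 \cdots w_m \in \mathbb{A}^m$. As $m$ is arbitrary, $w \in \mathbb{A}^{\mathbb{N}}$. Moreover, for each fixed $m$, for large $k$ we have $w_1 \cdots w_m = v^{n_k}_1 \cdots v^{n_k}_m \in T_m$ because $v^{n_k} \in T$ and $T$ is prefix-closed; therefore $w_1 \cdots w_m \in T$ for all $m$, which is precisely $w \in \partial T$. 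Finally, for the quantitative conclusion: given any $M$, choose $k$ large enough that $\lvert v^{n_k} \rvert \ge M$ and that $\widetilde{v^{n_k}}$ agrees with $w$ on the first $M$ coordinates; then $w_1 \cdots w_M = v^{n_k}_1 \cdots v^{n_k}_M$ is a common prefix of $w$ and $v^{n_k}$ of length $M$, so $\max\{\lvert u\rvert : u \le v^{n_k} \text{ and } u \le w\} \ge M$. Since $M$ was arbitrary, this maximum tends to $\infty$ along the subsequence.

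I do not anticipate a serious obstacle here; the only point requiring mild care is coordinating the two ways $\lvert v^{n_k}\rvert \to \infty$ is used — once to rule out $\ast$ appearing in the limit, and once to guarantee the common prefix with $v^{n_k}$ is long — but both follow immediately from the definition of convergence in the product topology together with the padding construction. The argument is essentially the tree-compactness lemma already sketched in the excerpt, with the conclusion strengthened to record the length of the shared initial segment.
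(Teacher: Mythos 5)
Your proposal is correct and follows essentially the same approach as the paper's own proof: augment the alphabet with a padding symbol $\ast$, embed each $v^{n}$ into the compact space $\left(\mathbb{A}'\right)^{\mathbb{N}}$ as $v^{n}\ast\ast\cdots$, extract a convergent subsequence, and use $\left|v^{n_{k}}\right|\to\infty$ to conclude the limit lies in $\mathbb{A}^{\mathbb{N}}$. You also spell out, more explicitly than the paper, why the limit belongs to $\partial T$ and why the length of the common prefix grows, but those are details the paper leaves to the reader and do not represent a different argument.
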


\begin{proof}
Let the set $\mathbb{A}^{\prime}=\mathbb{A}\cup\left\{ \star\right\} $
be equipped with the discrete topology. Denote for each $v\in\mathbb{A}^{*}$,
$\widetilde{v}=v\star\star...\in\left(\mathbb{A}^{\prime}\right)^{\mathbb{N}}$.
Since $\left(\mathbb{A}^{\prime}\right)^{\mathbb{N}}$ with the product
topology is compact, the sequence $\widetilde{v^{n}}$ has some convergent
subsequence $\widetilde{v^{n_{k}}}\to w\in\left(\mathbb{A}^{\prime}\right)^{\mathbb{N}}$.
Since $\forall k\in\mathbb{N}$, $\left(\widetilde{v^{n_{k}}}\right)_{1},...,\left(\widetilde{v^{n_{k}}}\right)_{\left|v^{n_{k}}\right|}\neq\star$
and since $\left|v^{n_{k}}\right|\to\infty$, $w\in\text{\ensuremath{\mathbb{A^{N}}}}$.
\end{proof}
\begin{lem}
\label{lem:Hausdorff convergence of ball intersections}Let $\Phi=\left\{ \varphi_{i}\right\} _{i\in\Lambda}$
be a similarity IFS, and $S\in\mathscr{T}_{\Lambda}$ some infinite
tree. Let $\left(T_{\left[n\right]}\right)_{n\in\mathbb{N}}$ be a
sequence of trees s.t. $T_{\left[n\right]}\in\left[\bigcup\limits _{i=0}^{n}S_{i}\right]_{\infty}$
for every $n\in\mathbb{N}$. Let $B=\overline{B_{\rho}\left(x\right)}$
be some closed ball s.t. $x\in\text{\ensuremath{\gamma}}_{\Phi}\left(\partial S\right)$.
Then there exists a sequence of closed balls $B_{n}=\overline{B_{\rho_{n}}\left(x_{n}\right)}$,
s.t. $x_{n}\in\gamma_{\Phi}\left(\partial T_{\left[n\right]}\right)$
for every $n\in\mathbb{N}$, $x_{n}\to x$, $\rho_{n}\to\rho$, and
\[
d_{H}\left(B_{n}\cap\gamma_{\Phi}\left(\partial T_{\left[n\right]}\right),\,B\cap\gamma_{\Phi}\left(\partial S\right)\right)\underset{n\to\infty}{\longrightarrow}0.
\]
\end{lem}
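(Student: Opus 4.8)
The plan is to estimate everything level by level. Write $Y=\gamma_{\Phi}(\partial S)$ and $Y_{n}=\gamma_{\Phi}(\partial T_{[n]})$, and put $\delta_{n}=r_{max}^{n}\,\text{diam}(K)\to0$. First I would extract the structural content of the hypothesis $T_{[n]}\in\bigl[\bigcup_{i=0}^{n}S_{i}\bigr]_{\infty}$: since $S$ is infinite, $\bigcup_{i=0}^{n}S_{i}$ has length $n+1$, so $(T_{[n]})_{m}=S_{m}$ for every $m\le n$, and in addition $T_{[n]}^{\,i}$ is infinite for every $i\in S_{n}$. Splitting the coding map after the first $n$ symbols and using \eqref{eq:projection of tree} gives
\[
Y=\bigcup_{i\in S_{n}^{*}}\varphi_{i}\bigl(\gamma_{\Phi}(\partial S^{i})\bigr),\qquad Y_{n}=\bigcup_{i\in S_{n}}\varphi_{i}\bigl(\gamma_{\Phi}(\partial T_{[n]}^{\,i})\bigr),
\]
where $S_{n}^{*}=\{i\in S_{n}:S^{i}\text{ is infinite}\}\neq\emptyset$; every summand on each side is a nonempty compact subset of the corresponding set $\varphi_{i}K$, whose diameter is at most $\delta_{n}$. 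Hence $Y_{n}\subseteq K_{n}:=\bigcup_{i\in S_{n}}\varphi_{i}K$, and since each $\varphi_{i}K$ with $i\in S_{n}$ contains a nonempty piece of $Y_{n}$ we get $d_{H}(Y_{n},K_{n})\le\delta_{n}$; on the other hand the sets $\bigl(\bigcup_{i\in S_{m}}\varphi_{i}K\bigr)_{m}$ form a decreasing sequence of nonempty compacta with intersection $Y$ (again \eqref{eq:projection of tree}), so $d_{H}(K_{n},Y)\to0$ by the usual compactness argument for decreasing compacta. Setting $\eta_{n}=\max\{\delta_{n},\,d_{H}(Y_{n},Y)\}$ we thus have $\eta_{n}\to0$, every level-$n$ piece has diameter $\le\eta_{n}$, and (via the decomposition and $d_H$) $Y\subseteq Y_{n}^{(\eta_{n})}$ and $Y_{n}\subseteq Y^{(\eta_{n})}$.

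Next I would define the balls. Pick $i_{0}\in S_{n}^{*}$ with $x\in\varphi_{i_{0}}\bigl(\gamma_{\Phi}(\partial S^{i_{0}})\bigr)$ and let $x_{n}$ be any point of $\varphi_{i_{0}}\bigl(\gamma_{\Phi}(\partial T_{[n]}^{\,i_{0}})\bigr)\subseteq Y_{n}$ (nonempty since $T_{[n]}^{\,i_{0}}$ is infinite); then $x_{n}\in Y_{n}$ and $\|x_{n}-x\|\le\eta_{n}$, so $x_{n}\to x$. Set $\rho_{n}=\rho+5\eta_{n}$ and $B_{n}=\overline{B_{\rho_{n}}(x_{n})}$, so $\rho_{n}\to\rho$; note $x\in B\cap Y$ and $x_{n}\in B_{n}\cap Y_{n}$, so neither intersection is empty. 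The inclusion $B\cap Y\subseteq(B_{n}\cap Y_{n})^{(2\eta_{n})}$ is then immediate: for $w\in B\cap Y$ choose $w_{n}\in Y_{n}$ with $\|w_{n}-w\|\le\eta_{n}$; then $\|w_{n}-x_{n}\|\le\|w_{n}-w\|+\|w-x\|+\|x-x_{n}\|\le\rho+2\eta_{n}\le\rho_{n}$, so $w_{n}\in B_{n}\cap Y_{n}$ lies within $\eta_{n}$ of $w$.

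The reverse inclusion $B_{n}\cap Y_{n}\subseteq(B\cap Y)^{(\varepsilon_{n})}$ with $\varepsilon_{n}\to0$ is the heart of the argument, and the step I expect to be the main obstacle: the ball $B_{n}$ may capture pieces of $Y_{n}$ lying just outside the sphere $\partial B_{\rho}(x)$ although the corresponding pieces of $Y$ lie entirely outside $B$, so a crude triangle-inequality estimate does not close. The fix is a compactness argument at the boundary sphere. For $z\in B_{n}\cap Y_{n}$ pick $z'\in Y$ with $\|z-z'\|\le\eta_{n}$; then $\|z'-x\|\le\|z'-z\|+\|z-x_{n}\|+\|x_{n}-x\|\le\rho_{n}+2\eta_{n}=\rho+7\eta_{n}$. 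If $\|z'-x\|\le\rho$ then $z'\in B\cap Y$ and $z$ is within $\eta_{n}$ of $B\cap Y$. Otherwise $z'$ lies in the thin shell $\rho<\|z'-x\|\le\rho+7\eta_{n}$, and here I would invoke that $\omega(t):=\sup\{\,d(y,B\cap Y):y\in Y,\ \rho<\|y-x\|\le\rho+t\,\}$ tends to $0$ as $t\to0^{+}$. This holds by compactness of $Y$: if it failed, there would be $\varepsilon_{0}>0$, $t_{k}\to0$ and $y_{k}\in Y$ with $\rho<\|y_{k}-x\|\le\rho+t_{k}$ and $d(y_{k},B\cap Y)\ge\varepsilon_{0}$; a subsequential limit $y^{*}\in Y$ then satisfies $\|y^{*}-x\|=\rho$, hence $y^{*}\in\partial B_{\rho}(x)\cap Y\subseteq B\cap Y$, contradicting $d(y_{k},B\cap Y)\ge\varepsilon_{0}$. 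Thus $d(z,B\cap Y)\le\eta_{n}+\omega(7\eta_{n})=:\varepsilon_{n}\to0$. Combining the two inclusions gives $d_{H}(B_{n}\cap Y_{n},B\cap Y)\le\varepsilon_{n}\to0$, which is what we want. (This last compactness input is the Euclidean-space analogue of Lemma~\ref{lem:sequential compactness of trees}, so the latter could be used to phrase it symbolically instead.)
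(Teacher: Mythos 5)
Your proof is correct, but it takes a genuinely different route from the paper's in the crucial reverse inclusion, and it is worth comparing the two. The paper argues by contradiction: given a bad subsequence $y_{n_k}\in B_{n_k}\cap\gamma_\Phi(\partial T_{[n_k]})$ bounded away from $B\cap\gamma_\Phi(\partial S)$, it writes $y_{n_k}\in\gamma_\Phi([w^{n_k}])$ with $w^{n_k}\in S_{n_k}$ and invokes the symbolic compactness result (Lemma~\ref{lem:sequential compactness of trees}) to align prefixes along a subsequence and produce a limit $\gamma_\Phi(u)$ with $u\in\partial S$; the author even flags that plain compactness of $\gamma_\Phi(\partial S)$ ``is not enough'' because $S^{w^{n_k}}$ may be finite, so $y_{n_k}$ need not sit near any point of $Y$ inside the same cylinder $\varphi_{w^{n_k}}K$. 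You sidestep this entirely by first proving $d_H(Y_n,Y)\to 0$, sandwiching $Y_n$ between the decreasing compacta $K_n=\bigcup_{i\in S_n}\varphi_i K$: the estimate $d_H(Y_n,K_n)\le\delta_n$ uses exactly the $[\,\cdot\,]_\infty$ hypothesis (every $i\in S_n$ has $T_{[n]}^{\,i}$ infinite), and $d_H(K_n,Y)\to0$ is the standard Hausdorff convergence of a nested sequence of compacta to its intersection. The ``dead branch'' subtlety is then absorbed automatically, because branches $i\in S_n$ with $S^i$ finite simply drop out of $K_m$ for $m$ large. The remaining boundary-shell issue (points of $Y_n$ in $B_n$ whose nearest point of $Y$ lands just outside $B$) is exactly the part the paper gets for free from closedness of $B$ in its limit argument; you make it explicit with the modulus $\omega(t)\to 0$, whose proof is another compactness argument, this time in $\mathbb{R}^d$ on the sphere $\partial B$. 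Both versions are sound; yours is more modular and isolates the two separate compactness inputs cleanly, while the paper's stays closer to the tree structure it has already set up. Two small remarks: the constant $5$ in $\rho_n=\rho+5\eta_n$ is more slack than needed (any choice $\rho_n\ge\rho+2\eta_n$ with $\rho_n\to\rho$ works), and the first inclusion you prove is actually $B\cap Y\subseteq (B_n\cap Y_n)^{(\eta_n)}$, sharper than the stated $(2\eta_n)$.
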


\begin{proof}
Let $\rho>0$ and $x\in\text{\ensuremath{\gamma}}_{\Phi}\left(\partial S\right)$
be the radius and the center point of $B$. %
\begin{comment}
Let $i\in\partial S$ be s.t. $x=\gamma_{\Phi}\left(i\right)$, and
let $x_{n}=\text{\ensuremath{\gamma}}_{\Phi}\left(j^{n}\right)$ where
$j^{n}\in\partial T_{\left[n\right]}\cap\left[i_{1}...i_{n}\right]$
\end{comment}
Let $x_{n}\in\gamma_{\Phi}\left(\partial T_{\left[n\right]}\right)$
satisfy $\left\Vert x_{n}-x\right\Vert \leq\Delta r_{max}^{n}$ for
every $n$, where $\Delta$ is the diameter of the attractor of $\Phi$
(such $x_{n}$ exist since $T_{\left[n\right]}\in\left[\bigcup\limits _{i=0}^{n}S_{i}\right]_{\infty}$).
Define $\rho_{n}=\rho+2\Delta r_{max}^{n}$ and let $B_{n}=\overline{B_{\rho_{n}}\left(x_{n}\right)}$.
Fix some $\varepsilon>0$.

\begin{comment}
Note that $d_{H}\left(A,B\right)<\varepsilon$ iff the following two
statements hold:
\begin{enumerate}
\item $\forall a\in A$, $\exists b\in B$ s.t. $d\left(a,b\right)<\varepsilon$
\item $\forall b\in B$, $\exists a\in A$ s.t. $d\left(a,b\right)<\varepsilon$
\end{enumerate}
Therefore, for a sequence of sets $\left(A_{n}\right)_{n\in\mathbb{N}}$,
$d_{H}\left(A_{n},A\right)\to0$ iff the following 2 statements hold:
\begin{enumerate}
\item $\forall\varepsilon>0,\,\exists N\in\mathbb{N},\,\forall n>N,\,\forall a\in A,\,\exists a_{n}\in A_{n},\,d\left(a,a_{n}\right)<\varepsilon$
\item $\forall\varepsilon>0,\,\exists N\in\mathbb{N},\,\forall n>N,\,\forall a_{n}\in A_{n},\,\exists a\in A,\,d\left(a,a_{n}\right)<\varepsilon$
\end{enumerate}
\end{comment}
Assume we are given a point $y\in B\cap\gamma_{\Phi}\left(\partial S\right)$.
As before, for every $n\in\mathbb{N}$ there exists $y_{n}\in\gamma_{\Phi}\left(\partial T_{\left[n\right]}\right)$
s.t. $\left\Vert y_{n}-y\right\Vert \leq\Delta r_{max}^{n}$.%
\begin{comment}
Let $i\in\partial S$ be s.t. $\gamma_{\Phi}\left(i\right)=y$, and
let $y_{n}=\gamma_{\Phi}\left(j^{n}\right)$, where $j^{n}\in\partial T^{v_{n}}\cap\left[i_{1}...i_{n}\right]$.
\end{comment}
{} $\left\Vert y_{n}-x_{n}\right\Vert \leq\left\Vert y_{n}-y\right\Vert +\left\Vert y-x\right\Vert +\left\Vert x-x_{n}\right\Vert \leq\rho+2\Delta r_{max}^{n}$,
hence $y_{n}\in B_{n}\cap\gamma_{\Phi}\left(\partial T_{\left[n\right]}\right)$.
Choosing $N\in\mathbb{N}$ large enough (does not depend on $y$),
for every $n>N$, $\left\Vert y_{n}-y\right\Vert <\varepsilon$.

On the other hand, we need to show that for some large enough $N\in\mathbb{N}$,
for every $n>N$, for every point $y_{n}\in B_{n}\cap\gamma_{\Phi}\left(\partial T_{\left[n\right]}\right)$,
there exists a point $y\in B\cap\gamma_{\Phi}\left(\partial S\right)$
s.t. $\left\Vert y_{n}-y\right\Vert <\varepsilon$. Assume this is
false, so there is a sequence $y_{n_{k}}\in B_{n_{k}}\cap\gamma_{\Phi}\left(\partial T_{\left[n_{k}\right]}\right)$
for some sequence $n_{k}\to\infty$ s.t. for every $y\in B\cap\gamma_{\Phi}\left(\partial S\right)$,
$\left\Vert y_{n_{k}}-y\right\Vert \geq\varepsilon$ for every $k$.
Since $T_{\left[n_{k}\right]}\in\left[\bigcup\limits _{i=0}^{n_{k}}S_{i}\right]_{\infty}$for
every $k$, there exists $w^{n_{k}}\in S_{n_{k}}$ s.t. $y_{n_{k}}\in\gamma_{\Phi}\left(\left[w^{n_{k}}\right]\right)$
for every $k$%
\begin{comment}
Note that it is not necessarily true that $S^{w^{n_{k}}}$ is infinite.
Therefore, Lemma \ref{lem:sequential compactness of trees} is really
needed and compactness of $\gamma_{\Phi}\left(\partial S\right)$
is not enough.
\end{comment}
. According to Lemma \ref{lem:sequential compactness of trees}, by
taking a subsequence we may assume that there exists some $u\in\partial S$
s.t. $\max\left\{ \left|t\right|:\,t\leq w^{n_{k}}\text{ and }t\leq u\right\} \underset{k\to\infty}{\longrightarrow}\infty$
which implies that $y_{n_{k}}\underset{k\to\infty}{\longrightarrow}\gamma_{\Phi}\left(u\right)$.
Also, since $\left\Vert y_{n_{k}}-x\right\Vert \leq\rho+3\Delta r_{max}^{n}$%
\begin{comment}
$\left\Vert y_{n_{k}}-x\right\Vert \leq\left\Vert y_{n_{k}}-x_{n_{k}}\right\Vert +\left\Vert x_{n_{k}}-x\right\Vert \leq\rho_{n_{k}}+\Delta r_{max}^{n}=\rho+3\Delta r_{max}^{n}$
\end{comment}
, $\left\Vert \gamma_{\Phi}\left(u\right)-x\right\Vert \leq\rho$,
hence $\gamma_{\Phi}\left(u\right)\in B\cap\gamma_{\Phi}\left(\partial S\right)$
which contradicts our assumption.
\end{proof}
\begin{rem}
Note that if in the statement of Lemma \ref{lem:Hausdorff convergence of ball intersections}
we remove the restriction that the centers of the balls must be elements
of the corresponding sets, i.e., we allow arbitrary $x$ and $\left(x_{n}\right)_{n\in\mathbb{N}}$,
we still need to take shrinking balls converging to $B$, since although
$d_{H}\left(\gamma_{\Phi}\left(\partial T_{\left[n\right]}\right),\,\gamma_{\Phi}\left(\partial S\right)\right)\underset{n\to\infty}{\longrightarrow}0$,
it is not true in general that $d_{H}\left(B\cap\gamma_{\Phi}\left(\partial T_{\left[n\right]}\right),\,B\cap\gamma_{\Phi}\left(\partial S\right)\right)\underset{n\to\infty}{\longrightarrow}0.$
\end{rem}

The proof of the following Lemma is an easy exercise.
\begin{lem}
\label{lem:Ball from OSC }Let $\Phi=\left\{ \varphi_{i}\right\} _{i\in\Lambda}$
be a similarity IFS, and let $T\in\mathscr{T}_{\Lambda}$ be an infinite
tree. Let $U\subset\mathbb{R}^{d}$ be an OSC set for $\Phi$, and
$B\subseteq U$ be any subset of $U$. Then for every $v\in T$, 
\[
\varphi_{v}\left(B\right)\cap\gamma_{\Phi}\left(\partial T\right)=\varphi_{v}\left(B\cap\gamma_{\Phi}\left(\partial T^{v}\right)\right).
\]
\end{lem}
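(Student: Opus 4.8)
The plan is to prove the two inclusions separately; only ``$\subseteq$'' uses the open set condition. For ``$\supseteq$'' I would just unwind the definitions: since $\varphi_{v}$ is a homeomorphism of $\mathbb{R}^{d}$ it commutes with the nested intersection in the coding map, so $\gamma_{\Phi}\left(vj\right)=\bigcap_{l}\varphi_{v}\varphi_{j_{1}\dots j_{l}}\left(K\right)=\varphi_{v}\left(\gamma_{\Phi}\left(j\right)\right)$ for every $j\in\Lambda^{\mathbb{N}}$, and by the definition of $T^{v}$ one has $j\in\partial T^{v}\iff vj\in\partial T$. Hence if $x=\varphi_{v}\left(b\right)$ with $b\in B$ and $b=\gamma_{\Phi}\left(j\right)$, $j\in\partial T^{v}$, then $x=\gamma_{\Phi}\left(vj\right)\in\gamma_{\Phi}\left(\partial T\right)$ and also $x\in\varphi_{v}\left(B\right)$; no use of the OSC is needed here.

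For ``$\subseteq$'' I would first record three elementary facts. First, $K\subseteq\overline{U}$ (after passing to a bounded OSC set without loss of generality, so $\overline{U}$ is compact and mapped into itself by every $\varphi_{i}$; this is exactly what is used in the proof of Lemma \ref{lem:OSC use}). Second, $\varphi_{w}U\cap\varphi_{w'}U=\emptyset$ for any two distinct words $w,w'$ of the same length: this follows from the OSC by an easy induction on the length (if the first letters differ the images already separate; if they agree, strip them off, apply the inductive hypothesis, and use injectivity of the common initial map). Third, since two disjoint open sets $A,B$ satisfy $A\cap\overline{B}=\emptyset$, and $\varphi_{w'}K\subseteq\varphi_{w'}\overline{U}=\overline{\varphi_{w'}U}$, the second fact upgrades to
\[
w\neq w',\ \left|w\right|=\left|w'\right|\ \Longrightarrow\ \varphi_{w}U\cap\varphi_{w'}K=\emptyset .
\]

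Now take $x\in\varphi_{v}\left(B\right)\cap\gamma_{\Phi}\left(\partial T\right)$ and write $x=\varphi_{v}\left(b_{0}\right)$ with $b_{0}\in B\subseteq U$, so $x\in\varphi_{v}U$. Choose $i\in\partial T$ with $\gamma_{\Phi}\left(i\right)=x$ (possible since $x\in\gamma_{\Phi}\left(\partial T\right)$). By the definition of $\gamma_{\Phi}$, $x\in\varphi_{i_{1}\dots i_{\left|v\right|}}\left(K\right)$; since also $x\in\varphi_{v}U$ and $i_{1}\dots i_{\left|v\right|}$ and $v$ are words of length $\left|v\right|$, the displayed separation property forces $i_{1}\dots i_{\left|v\right|}=v$. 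Thus $i=vj$ with $j:=i_{\left|v\right|+1}i_{\left|v\right|+2}\dots$, and because all prefixes of $i$ lie in $T$, every $vj_{1}\dots j_{m}$ lies in $T$, i.e. $j\in\partial T^{v}$. Then $x=\gamma_{\Phi}\left(vj\right)=\varphi_{v}\left(\gamma_{\Phi}\left(j\right)\right)$, so by injectivity of $\varphi_{v}$ we get $b_{0}=\varphi_{v}^{-1}\left(x\right)=\gamma_{\Phi}\left(j\right)\in B\cap\gamma_{\Phi}\left(\partial T^{v}\right)$, whence $x\in\varphi_{v}\left(B\cap\gamma_{\Phi}\left(\partial T^{v}\right)\right)$, as required.

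I do not expect a genuine obstacle here: the whole argument is bookkeeping around one geometric input, namely the separation of distinct same-level cylinder images $\varphi_{w}K$ by the open neighbourhood $\varphi_{v}U$. The only point to handle with a little care is arranging $\overline{U}$ to be compact (equivalently, replacing $U$ by a bounded OSC set), which is harmless and standard.
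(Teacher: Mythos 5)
Your proof is correct, and it uses the same essential input as the paper's own (hidden in a comment block): the OSC-derived separation $\varphi_{w}U\cap\varphi_{w'}K=\emptyset$ for distinct same-length words $w,w'$, applied to the ball/set $\varphi_{v}B\subseteq\varphi_{v}U$. The paper phrases this as a short set-algebraic chain on $\gamma_{\Phi}\left(\partial T\right)=\bigcap_{n}\bigcup_{i\in T_{n}}\varphi_{i}K$, whereas you run the same argument pointwise via the coding map; the content is the same.
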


\begin{comment}
\begin{proof}
by definition $\varphi_{v}B\cap\gamma_{\Phi}\left(\partial T\right)=\varphi_{v}B\cap\left(\bigcap\limits _{n=1}^{\infty}\bigcup\limits _{i\in T_{n}}\varphi_{i}K\right)=\varphi_{v}B\cap\left(\bigcap\limits _{n=\left|v\right|}^{\infty}\bigcup\limits _{i\in T_{n}}\varphi_{i}K\right)$.
By the OSC, $\varphi_{v}B\cap\left(\bigcap\limits _{n=\left|v\right|}^{\infty}\bigcup\limits _{i\in T_{n}}\varphi_{i}K\right)=\varphi_{v}B\cap\left(\bigcap\limits _{n=\left|v\right|}^{\infty}\bigcup\limits _{i\in T_{n},\,v\leq i}\varphi_{i}K\right)=\varphi_{v}B\cap\left(\bigcap\limits _{n=0}^{\infty}\bigcup\limits _{i\in\Lambda^{n},\,vi\in T}\varphi_{v}\varphi_{i}K\right)=\varphi_{v}\left(B\cap\left(\bigcap\limits _{n=0}^{\infty}\bigcup\limits _{i\in T_{n}^{v}}\varphi_{i}K\right)\right)=\varphi_{v}\left(B\cap\gamma_{\Phi}\left(\partial T^{v}\right)\right)$.

The OSC is only used in the first equality of the above.
\end{proof}
\end{comment}

\begin{prop}
\label{prop:microsets up to rotation}Let $E$ be a Galton-Watson
fractal with respect to a similarity IFS $\Phi=\left\{ \varphi_{i}\right\} _{i\in\Lambda}$
which satisfies the OSC, and let $\mathscr{GW}$ be the corresponding
measure on $\mathscr{T}_{\Lambda}$. Let $U\subset\mathbb{R}^{d}$
be an OSC set for $\Phi$. Then a.s. conditioned on nonextinction,
for every infinite tree $S\in\text{supp}\left(\mathscr{GW}\right)$%
\begin{comment}
(i.e., s.t. for every $n$, $\mathbb{P}\left(T\in\left[\bigcup\limits _{i=0}^{n}S_{i}\right]\right)>0$)
\end{comment}
, $\exists O\in\mathcal{O}_{\Phi}$ s.t. for every closed ball $B\subseteq U$
centered in $\gamma_{\Phi}\left(\partial S\right)$, $O\circ F_{B}\left(\gamma_{\Phi}\left(\partial S\right)\cap B\right)$
is a microset of $E$. Moreover, if in addition $\mathcal{O}_{\Phi}$
is finite, then a.s. conditioned on nonextinction, for every $S$
and $B$ as above, for every $O\in\mathcal{O}_{\Phi}$, $O\circ F_{B}\left(\gamma_{\Phi}\left(\partial S\right)\cap B\right)$
is a microset of $E$.
\end{prop}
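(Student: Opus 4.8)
The plan is to run a single argument that, given an infinite tree $S\in\text{supp}(\mathscr{GW})$, produces deeper and deeper descendants trees of $T$ agreeing with $S$, records their orthogonal parts, and then uses the two geometric lemmas to transport ball-intersections from $\gamma_{\Phi}(\partial S)$ into $E$ in the Hausdorff limit. Throughout we work on the a.s. event (conditioned on nonextinction) on which the conclusion of Proposition~\ref{prop:convergence of descendant trees} holds; since there are only countably many finite trees this is a single a.s. event, and everything below is deterministic in $T$.

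Fix an infinite $S\in\text{supp}(\mathscr{GW})$ and write $F^{(n)}=\bigcup_{i=0}^{n}S_{i}$ for its truncation to depth $n$. Because $S$ lies in the support, each $[F^{(n)}]$ is a basic open neighborhood of $S$, hence $\mathbb{P}(T\in[F^{(n)}])>0$ and therefore $\mathbb{P}(T\in[F^{(n)}]_{\infty})>0$. First I would use Proposition~\ref{prop:convergence of descendant trees}(1) to pick, for every $n$, a vertex $v_{n}\in T$ with $T^{v_{n}}\in[F^{(n)}]_{\infty}$. The orthogonal parts $O_{v_{n}}$ lie in the compact group $\mathcal{O}_{\Phi}$, so after passing to a subsequence we may assume $O_{v_{n_{k}}}\to O$ for some $O\in\mathcal{O}_{\Phi}$; note that $O$ depends on $S$ (and $T$) but on no ball. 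For the ``moreover'' clause, when $\mathcal{O}_{\Phi}$ is finite I would instead fix $O\in\mathcal{O}_{\Phi}$ at the outset and invoke Proposition~\ref{prop:convergence of descendant trees}(2) to choose $v_{n}\in T$ with $T^{v_{n}}\in[F^{(n)}]_{\infty}$ \emph{and} $O_{v_{n}}=O$, so that no subsequence is needed.

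Now fix a closed ball $B=\overline{B_{\rho}(x)}\subseteq U$ with $x\in\gamma_{\Phi}(\partial S)$. Since $T^{v_{n_{k}}}\in[\bigcup_{i=0}^{m}S_{i}]_{\infty}$ for every $m\le n_{k}$, Lemma~\ref{lem:Hausdorff convergence of ball intersections} applies to the sequence $(T^{v_{n_{k}}})_{k}$ (after the obvious re-indexing) and yields closed balls $B_{k}=\overline{B_{\rho_{k}}(x_{k})}$ with $x_{k}\in\gamma_{\Phi}(\partial T^{v_{n_{k}}})$, $x_{k}\to x$, $\rho_{k}\to\rho$, and $d_{H}\!\left(B_{k}\cap\gamma_{\Phi}(\partial T^{v_{n_{k}}}),\,B\cap\gamma_{\Phi}(\partial S)\right)\to0$. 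As $B$ is compact and $U$ open, $B_{k}\subseteq U$ for $k$ large; for such $k$ the set $\varphi_{v_{n_{k}}}(B_{k})$ is a closed ball of radius $r_{v_{n_{k}}}\rho_{k}$ centered at $\varphi_{v_{n_{k}}}(x_{k})\in\varphi_{v_{n_{k}}}\!\left(\gamma_{\Phi}(\partial T^{v_{n_{k}}})\right)\subseteq\gamma_{\Phi}(\partial T)=E$, so $M_{k}:=F_{\varphi_{v_{n_{k}}}(B_{k})}\!\left(E\cap\varphi_{v_{n_{k}}}(B_{k})\right)$ is a miniset of $E$. Using Lemma~\ref{lem:Ball from OSC } to rewrite $E\cap\varphi_{v_{n_{k}}}(B_{k})=\varphi_{v_{n_{k}}}\!\left(B_{k}\cap\gamma_{\Phi}(\partial T^{v_{n_{k}}})\right)$, together with the elementary identity $F_{\varphi_{v}(B')}\circ\varphi_{v}=O_{v}\circ F_{B'}$ valid for any closed ball $B'$ and any similarity $\varphi_{v}$ with orthogonal part $O_{v}$, gives $M_{k}=O_{v_{n_{k}}}\circ F_{B_{k}}\!\left(B_{k}\cap\gamma_{\Phi}(\partial T^{v_{n_{k}}})\right)$.

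It then remains to let $k\to\infty$. Since $O_{v_{n_{k}}}\to O$ and $F_{B_{k}}\to F_{B}$ uniformly on bounded sets (the latter because $x_{k}\to x$, $\rho_{k}\to\rho$), and since $B_{k}\cap\gamma_{\Phi}(\partial T^{v_{n_{k}}})\to B\cap\gamma_{\Phi}(\partial S)$ in the Hausdorff metric with all the relevant sets confined to a fixed bounded region, a routine three-term estimate gives $M_{k}\to O\circ F_{B}\!\left(B\cap\gamma_{\Phi}(\partial S)\right)$ in $d_{H}$. Being a Hausdorff limit of minisets of $E$, this set is a microset of $E$, which is exactly the asserted conclusion for $B$; since $(v_{n_{k}})$ and $O$ were fixed before $B$, the same $O$ works for every admissible $B$, proving the first statement. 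The finite-$\mathcal{O}_{\Phi}$ case is identical, run with the full sequence $(v_{n})$ and $O_{v_{n}}\equiv O$, and hence holds for every prescribed $O\in\mathcal{O}_{\Phi}$. The main obstacle is exactly what the two lemmas were designed to overcome: the map $\mathscr{T}_{\Lambda}\to\Omega_{K}$, $L\mapsto\gamma_{\Phi}(\partial L)$, is discontinuous, so one cannot simply claim $\gamma_{\Phi}(\partial T^{v_{n}})\to\gamma_{\Phi}(\partial S)$; this is what forces the use of the shrinking balls $B_{k}$ and of $[F]_{\infty}$ rather than $[F]$, and the OSC (through Lemma~\ref{lem:Ball from OSC }) is what lets one identify $E\cap\varphi_{v}(B)$ with a rescaled copy of $\gamma_{\Phi}(\partial T^{v})\cap B$.
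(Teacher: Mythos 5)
Your proof follows essentially the same route as the paper: invoke Proposition~\ref{prop:convergence of descendant trees} to get descendants trees $T^{v^n}$ in $[\bigcup_{i=0}^n S_i]_\infty$, transport ball-intersections via Lemma~\ref{lem:Hausdorff convergence of ball intersections} and Lemma~\ref{lem:Ball from OSC }, use the identity $F_{\varphi_v(B')}\circ\varphi_v=O_v\circ F_{B'}$, and close by compactness of $\mathcal{O}_\Phi$ (resp.\ part (2) of the proposition when $\mathcal{O}_\Phi$ is finite).

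One small technical point: your justification for applying Lemma~\ref{lem:Hausdorff convergence of ball intersections} to the subsequence relies on the claim ``$T^{v_{n_k}}\in[\bigcup_{i=0}^{m}S_i]_\infty$ for every $m\le n_k$''. This can fail when $S$ has finite branches: if $w\in S_m$ has no descendant in $S_{n_k}$, then since $(T^{v_{n_k}})_j=S_j$ for $j\le n_k$ the tree $(T^{v_{n_k}})^w$ dies out before level $n_k-m$, so the $[\cdot]_\infty$ condition at depth $m$ is violated. (The lemma's \emph{proof} only ever uses vertices of $S_n$ that lie below $\partial S$, so the conclusion still holds, but you can't invoke the lemma as a black box via this re-indexing.) The paper sidesteps this by applying the Hausdorff lemma to the full sequence $(T^{v^n})_n$ first, producing balls $B_n$ for all $n$, and only then passing to a subsequence on which $O_{v^{n_k}}\to O$; since the subsequence depends only on $(v^n)$ and not on $B$, the resulting $O$ is still uniform over all balls $B\subseteq U$, which is the property you were (rightly) trying to make explicit by reordering.
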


The reader should recall that $F_{B}$ is defined as the unique homothety
mapping the closed ball $B$ to the closed unit ball $\overline{B_{1}\left(0\right)}$.
\begin{proof}
Let $T$ be the corresponding GWT. By Proposition \ref{prop:convergence of descendant trees},
a.s. conditioned on nonextinction, $\forall S\in\text{supp}\left(\mathscr{GW}\right)$
there exists a sequence $v^{n}\in T$ s.t. $T^{v^{n}}\in\left[\bigcup\limits _{i=0}^{n}S_{i}\right]_{\infty}$
for every $n\in\mathbb{N}$. Given such $S$ and sequence $v^{n}$,
for every closed ball $B\subseteq U$ centered in $\gamma_{\Phi}\left(\partial S\right)$,
by Lemma \ref{lem:Hausdorff convergence of ball intersections}, $\gamma_{\Phi}\left(\partial T^{v^{n}}\right)\cap B_{n}\underset{n\to\infty}{\longrightarrow}\gamma_{\Phi}\left(\partial S\right)\cap B$
for some sequence of closed balls $B_{n}$ centered in $\gamma_{\Phi}\left(\partial T^{v^{n}}\right)$,
whose radii and centers converge to those of $B$. Since $B\subseteq U$,
then for large enough values of $n$%
\begin{comment}
\begin{lem*}
If $U\subset\mathbb{R}^{d}$ is some open set and $B=\overline{B_{r}\left(x\right)}\subset U$
, then there exists some $\varepsilon>0$ s.t. $\overline{B_{r+\varepsilon}\left(x\right)}\subset U$.
\end{lem*}
\begin{proof}
Assume this is false, then there exists a sequence $y_{n}\in U^{c}$
s.t. $d\left(y_{n},x\right)\underset{n\to\infty}{\longrightarrow}r$.
Since this sequence is contained in some bounded set, it has a convergent
subsequence $y_{n_{k}}\to y$. Since $U^{c}$ is closed, $y\in U^{c}$,
but on the other hand $d\left(y,x\right)=r$, and therefore $y\in B\subset U$
which is a contradiction. 
\end{proof}
\end{comment}
, $B_{n}\subseteq U$, so we may assume without loss of generality
that $\forall n\in\mathbb{N},\,B_{n}\subseteq U$. By Lemma \ref{lem:Ball from OSC },
$\varphi_{v^{n}}\left(\gamma_{\Phi}\left(\partial T^{v^{n}}\right)\cap B_{n}\right)=E\cap\varphi_{v^{n}}\left(B_{n}\right)$
for every $n$. Since each $\varphi_{v^{n}}$ is a similarity map,
$\varphi_{v^{n}}\left(B_{n}\right)$ is a closed ball, and $F_{\varphi_{v^{n}}\left(B_{n}\right)}=F_{O_{v^{n}}\left(B_{n}\right)}\circ O_{v^{n}}\circ\varphi_{v^{n}}^{-1}$
is the homothety%
\begin{comment}
It is important that it is a homothety! For example, $F_{B_{n}}\circ\varphi_{v_{n}}^{-1}$
also maps the ball $\varphi_{v_{n}}\left(B_{n}\right)$ to the closed
unit ball, but it is not a homothety, so the orthogonal parts must
be taken into consideration.
\end{comment}
{} mapping $\varphi_{v^{n}}\left(B_{n}\right)$ to the closed unit ball.
Thus,
\[
\begin{array}{l}
F_{\varphi_{v^{n}}\left(B_{n}\right)}\left(E\cap\varphi_{v^{n}}\left(B_{n}\right)\right)=\\
F_{O_{v^{n}}\left(B_{n}\right)}\circ O_{v^{n}}\left(\gamma_{\Phi}\left(\partial T^{v^{n}}\right)\cap B_{n}\right)=\\
O_{v^{n}}\circ F_{B_{n}}\left(\gamma_{\Phi}\left(\partial T^{v^{n}}\right)\cap B_{n}\right)
\end{array}
\]
\begin{comment}
In general $F_{O\left(B\right)}\circ O=O\circ F_{B}$ for an orthogonal
transformation $O$ and a ball $B$. The proof is straightforward.
\end{comment}
is a miniset of $E$ for every $n$. By compactness of $\mathcal{O}_{\Phi}$,
there exists a subsequence $\left(v^{n_{k}}\right)$ s.t. $O_{v^{n_{k}}}\underset{k\to\infty}{\longrightarrow}O$
where $O\in\mathcal{O}_{\Phi}$. It follows that 
\[
O_{v^{n_{k}}}\circ F_{B_{n_{k}}}\left(\gamma_{\Phi}\left(\partial T^{v^{n_{k}}}\right)\cap B_{n_{k}}\right)\underset{k\to\infty}{\longrightarrow}O\circ F_{B}\left(\gamma_{\Phi}\left(\partial S\right)\cap B\right)
\]
\begin{comment}
This follows from the following easy lemma:
\begin{lem*}
Let $A_{n},\,A$ be compact sets in $\mathbb{R}^{d}$, and assume
that $A_{n}\to A$ in the Hausdorff metric. Let $f_{n}$ be a sequence
of functions which converges uniformly to the function $f$. Then
$f_{n}\left(A_{n}\right)\to f\left(A\right)$ in the Hausdorff metric.
\end{lem*}
\begin{proof}
The proof is immediate
\end{proof}
It is easy to see that $O_{v_{n_{k}}}\circ F_{B_{n_{k}}}\to O\circ F_{B}$
uniformly.
\end{comment}
in the Hausdorff metric, and hence $O\circ F_{B}\left(\gamma_{\Phi}\left(\partial S\right)\cap B\right)$
is a microset of $E$ which concludes the proof of (1).

In order to prove (2), assume that $\mathcal{O}_{\Phi}$ is finite
and fix any $\tilde{O}\in\mathcal{O}_{\Phi}$. By (2) of Proposition
\ref{prop:convergence of descendant trees}, we could ensure that
for every $n\in\mathbb{N}$, $O_{v^{n}}=\tilde{O}$.

\end{proof}
\begin{cor}
\label{cor:GW fractals are not hyperplane diffuse}Let $E$ be a Galton-Watson
fractal with respect to a similarity IFS $\Phi=\left\{ \varphi_{i}\right\} _{i\in\Lambda}$,
and let $T$ be the corresponding GWT. Assume $\Phi$ satisfies the
OSC, and let $U\subset\mathbb{R}^{d}$ be an OSC set for $\Phi$.
If $\exists S\in\text{supp}\left(\mathscr{GW}\right)$ s.t. $\gamma_{\Phi}\left(\partial S\right)\cap U$
is nonempty and contained in an affine hyperplane, then a.s. $E$
is not hyperplane diffuse.%
\begin{lyxgreyedout}
Should the discussion include examples which satisfy the assumptions
of the corollary? For example, if P(|W|=0)>0, then every singleton
of the fractal is a microset.%
\end{lyxgreyedout}
\end{cor}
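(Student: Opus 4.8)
The plan is to feed the hypothesis into Proposition \ref{prop:microsets up to rotation} and then invoke the microset characterisation of hyperplane diffuseness (Proposition \ref{prop:diffuse iff no microsets on hyperplanes}). First observe that $E=\gamma_{\Phi}\left(\partial T\right)$ is compact, being the continuous image of the compact set $\partial T\subseteq\Lambda^{\mathbb{N}}$, and that the tree $S$ appearing in the hypothesis is automatically infinite: indeed $\gamma_{\Phi}\left(\partial S\right)\cap U\neq\emptyset$ forces $\partial S\neq\emptyset$, which for a tree means $\text{length}\left(S\right)=\infty$. Hence Proposition \ref{prop:microsets up to rotation} applies to this $S$, giving that a.s. conditioned on nonextinction there is some $O\in\mathcal{O}_{\Phi}$ such that for every closed ball $B\subseteq U$ centred in $\gamma_{\Phi}\left(\partial S\right)$, the set $O\circ F_{B}\left(\gamma_{\Phi}\left(\partial S\right)\cap B\right)$ is a microset of $E$.

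Next I would exhibit a single such ball whose associated microset lies in an affine hyperplane. Let $\mathcal{L}\subset\mathbb{R}^{d}$ be the affine hyperplane containing $\gamma_{\Phi}\left(\partial S\right)\cap U$, and pick a point $x\in\gamma_{\Phi}\left(\partial S\right)\cap U$. Since $U$ is open, there is $r>0$ with $B:=\overline{B_{r}\left(x\right)}\subseteq U$; this $B$ is a closed ball centred at the point $x\in\gamma_{\Phi}\left(\partial S\right)$, so it is admissible in the previous paragraph. Because $B\subseteq U$ we get $\gamma_{\Phi}\left(\partial S\right)\cap B\subseteq\gamma_{\Phi}\left(\partial S\right)\cap U\subseteq\mathcal{L}$. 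The homothety $F_{B}$ carries the affine hyperplane $\mathcal{L}$ onto an affine hyperplane, and the orthogonal map $O$ carries that onto yet another affine hyperplane; therefore $O\circ F_{B}\left(\gamma_{\Phi}\left(\partial S\right)\cap B\right)$ is contained in the affine hyperplane $O\left(F_{B}\left(\mathcal{L}\right)\right)$. This set is a nonempty microset of $E$ (it contains the point $O\circ F_{B}\left(x\right)$), so Proposition \ref{prop:diffuse iff no microsets on hyperplanes} gives that $E$ is not hyperplane diffuse. Since all of this holds a.s. conditioned on nonextinction, the corollary follows (on the extinction event $E=\emptyset$ the statement is read conditionally, as elsewhere in the paper).

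I do not expect any real obstacle here: the genuine work is already encapsulated in Proposition \ref{prop:microsets up to rotation}, whose proof in turn leans on the compactness argument of Lemma \ref{lem:sequential compactness of trees}, the Hausdorff-convergence statement of Lemma \ref{lem:Hausdorff convergence of ball intersections}, and the OSC identity of Lemma \ref{lem:Ball from OSC }. The only points requiring a line of care are (i) nonemptiness of the resulting microset, which holds because $x\in\gamma_{\Phi}\left(\partial S\right)\cap B$ and $F_{B},O$ are bijections, and (ii) the fact that using balls contained in the OSC set $U$ is exactly what lets one convert a miniset seen inside $U$ into an honest miniset of $E$; this is handled inside Proposition \ref{prop:microsets up to rotation} via Lemma \ref{lem:Ball from OSC }, so nothing further is needed at this stage.
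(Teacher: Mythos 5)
Your proof is correct and follows the intended route: the corollary is indeed meant to fall out directly from Proposition \ref{prop:microsets up to rotation} together with the microset characterisation of diffuseness (Proposition \ref{prop:diffuse iff no microsets on hyperplanes}). The paper omits the argument entirely (and only spells out the trivial specialisation for fractal percolation in Corollary \ref{cor:Fractal percolation is not hyperplane diffuse}), so your write-up fills the gap faithfully: you verify $S$ is infinite, choose a closed ball inside $U$ centred at a point of $\gamma_{\Phi}\left(\partial S\right)\cap U$, observe that the resulting microset lies in the image of the affine hyperplane under a homothety followed by an orthogonal map, and check nonemptiness. The one place you are perhaps slightly more careful than the paper is in flagging that the conclusion holds a.s.\ conditioned on nonextinction; strictly read, the empty set is vacuously hyperplane $\beta$-diffuse for every $\beta$, so the unconditional ``a.s.'' in the corollary's statement is a mild imprecision that your parenthetical correctly addresses.
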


\begin{cor}
\label{cor:Fractal percolation is not hyperplane diffuse}Let $E$
be the limit set of a supercritical fractal percolation process. Then
a.s. conditioned on nonextinction, every closed subset of $B_{1}\left(0\right)$
which intersects the origin is a microset of $E$ (in particular,
E is a.s. not hyperplane diffuse).
\end{cor}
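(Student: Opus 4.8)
The plan is to deduce the corollary from Proposition~\ref{prop:microsets up to rotation}. For fractal percolation the IFS is $\Phi=\left\{\varphi_i(x)=\frac1b x+\frac ib\right\}_{i\in\Lambda}$, whose maps are homotheties with trivial orthogonal parts, so $\mathcal O_\Phi$ is the trivial group (in particular finite), and $U=(0,1)^d$ is an OSC set for $\Phi$. Hence the ``moreover'' half of Proposition~\ref{prop:microsets up to rotation} yields an event of full probability (conditioned on nonextinction) on which, for \emph{every} infinite tree $S\in\text{supp}(\mathscr{GW})$ and \emph{every} closed ball $B\subseteq(0,1)^d$ centered in $\gamma_b(\partial S)$, the set $F_B\big(\gamma_b(\partial S)\cap B\big)$ is a microset of $E$ — a single event not depending on the choice of $S$ or $B$. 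Working on this event, it suffices to show that an arbitrary nonempty compact set $C\subseteq\overline{B_1(0)}$ with $0\in C$ can be written in exactly this form for some \emph{deterministic} $S$ and $B$.

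To build $S$, fix $r=\frac14$ and $v=\left(\frac12,\dots,\frac12\right)$ and set $\tilde C:=rC+v$; this is a compact set with $v\in\tilde C$ (because $0\in C$) and $\tilde C\subseteq\overline{B_r(v)}\subseteq(0,1)^d$. Define a tree $S=\bigcup_n S_n$ by declaring $i\in S_n$ iff the level-$n$ $b$-adic cube $\gamma_b([i])=\varphi_iK$ meets $\tilde C$. Since the cubes are nested, $S$ is closed under prefixes, and since $\tilde C\neq\emptyset$ each $S_n\neq\emptyset$, so $S$ is an infinite tree. Using \eqref{eq:projection of tree}, $\gamma_b(\partial S)=\bigcap_n\bigcup_{i\in S_n}\gamma_b([i])$, and one checks that $\gamma_b(\partial S)=\tilde C$: the inclusion $\supseteq$ is immediate, while if $x\notin\tilde C$ then $\operatorname{dist}(x,\tilde C)>0$ by compactness, so every level-$n$ cube containing $x$ misses $\tilde C$ once $b^{-n}\sqrt d<\operatorname{dist}(x,\tilde C)$, whence $x\notin\gamma_b(\partial S)$. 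It remains to check $S\in\text{supp}(\mathscr{GW})$: a neighborhood basis of $S$ in $\mathscr T_\Lambda$ is given by the sets $\big[\bigcup_{i=0}^nS_i\big]$, and for fractal percolation $\mathscr{GW}\big(\big[\bigcup_{i=0}^nS_i\big]\big)=\mathbb P(T_i=S_i\text{ for }i\le n)$ is a finite product of factors $p$ and $1-p$, hence positive as $0<p<1$.

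Finally, take $B:=\overline{B_r(v)}$. It is a closed ball contained in $U=(0,1)^d$ and centered at $v\in\gamma_b(\partial S)=\tilde C$, and since $\tilde C\subseteq B$ we have $\gamma_b(\partial S)\cap B=\tilde C$; as $F_B(t)=\frac1r(t-v)$ maps $\tilde C=rC+v$ onto $C$, we conclude $C=F_B\big(\gamma_b(\partial S)\cap B\big)$ is a microset of $E$. Since the full-probability event is independent of $C$, this proves that a.s.\ conditioned on nonextinction every such $C$ is a microset of $E$; applying this to $C=\{0\}$, which is contained in an affine hyperplane, and invoking Proposition~\ref{prop:diffuse iff no microsets on hyperplanes}, shows $E$ is a.s.\ not hyperplane diffuse. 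The only genuine step beyond bookkeeping is the construction of $S$ together with the two verifications that $\gamma_b(\partial S)$ equals $\tilde C$ \emph{exactly} and that $S\in\text{supp}(\mathscr{GW})$; the reduction itself is immediate once one observes that $\mathcal O_\Phi$ is trivial. (The assumption $p<1$ is essential: for $p=1$ one has $E=[0,1]^d$, which is hyperplane diffuse.)
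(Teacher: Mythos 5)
Your proof is correct and follows the same route as the paper, which simply invokes Proposition~\ref{prop:microsets up to rotation} together with the observations that $\text{supp}(\mathscr{GW})=\mathscr{T}_{\Lambda}$ and $\mathcal{O}_{\Phi}$ is trivial for fractal percolation. You have merely spelled out the routine reduction the paper leaves implicit — constructing a deterministic tree $S$ with $\gamma_b(\partial S)=rC+v$, verifying it lies in the support, and choosing the ball $B$ — and your appeal to the ``moreover'' clause is unnecessary but harmless, since with $\mathcal{O}_{\Phi}=\{\mathrm{id}\}$ the two halves of the proposition coincide.
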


\begin{proof}
This follows immediately from Proposition \ref{prop:microsets up to rotation}
since in the case of fractal percolation, $\text{supp}\left(\mathscr{GW}\right)=\mathscr{T}_{\mathbb{A}}$
and $\mathcal{O}_{\Phi}$ is trivial.
\end{proof}

\section{A counter example for equality in Theorem \ref{thm:fixed point for GRLT}}

We now show an example of a random $*$-tree for which equality in
equation \ref{eq:fixed point for *-trees} does not hold.
\begin{example}
Let $\mathbb{A}=\left\{ a_{1},a_{2},...,a_{n}\right\} $ be an alphabet
and let $B=\left\{ \emptyset\right\} \cup\left\{ a_{1},a_{2}\right\} \mathbb{A}^{*}\subset\mathbb{A}^{*}$
be a $*$-tree. Let $S\subset\mathbb{A}^{*}$ be a random $*$-tree
on $B$ with offspring distributions $\left\{ M_{i}\right\} _{i\in B}$
given by:
\begin{itemize}
\item $\mathbb{P}\left(M_{i}=\mathbb{A}\right)=1$ for $i\in\left\{ a_{1},a_{2}\right\} $.
\item $\forall i\in B,\,\left|i\right|\geq2\,\implies M_{i}\sim\text{Bin}\left(\left\{ a_{1},a_{2},a_{3}\right\} ,p\right)$
where $p\in\left(0,1\right)$ is large enough so that a GWT with alphabet
of size 3 and binomial offspring distribution with parameter $p$
has a positive probability, $\alpha>0$, of containing a binary subtree
(by Pakes-Dekking theorem there exists such $p$).
\item $\mathbb{P}\left(M_{\emptyset}=\left\{ a_{1},a_{2}\right\} \right)=$$\alpha+\varepsilon,\,\mathbb{P}\left(M_{\emptyset}=\left\{ a_{1}\right\} \right)=1-\left(\alpha+\varepsilon\right)$
for some small $\varepsilon>0$.
\end{itemize}
Now, define $\mathscr{A}_{x}=\left\{ L\subseteq\mathbb{A}^{*}:\,\left|L\right|\geq2\right\} $
for every $x\in B$, so that $\mathscr{A}$-$*$-trees are $*$-trees
which contain binary trees.

Choosing $n$ large enough, we may guarantee that the sup in equation
(\ref{eq:fixed point for *-trees}) is realized by every element $x\in B$
with $\left|x\right|\geq2$ and its value is $1-\alpha$, that is
to say that $q=1-\alpha$ where $q$ is as defined in the proof of
Theorem \ref{thm:fixed point for GRLT}. In that proof we have shown
that $q\leq s_{0}$ where $s_{0}$ is the smallest fixed point of
the function $g_{\mathscr{A}}\left(s\right)=\sup\limits _{x\in B^{\prime}}\mathbb{P}\left(M_{x}^{\left(s\right)}\notin\mathscr{A}_{x}\right)$
in $\left[0,1\right]$. 

Analyzing $g_{\mathscr{A}}\left(q\right)=\sup\limits _{x\in B^{\prime}}\mathbb{P}\left(M_{x}^{\left(q\right)}<2\right)$,
one should notice that the sup in the formula for $g_{\mathscr{A}}\left(q\right)$
is realized by $x=\emptyset$. This is because $\alpha<\mathbb{P}\left(\text{Bin}\left(3,p\right)\geq2\right)$
and $\varepsilon$ may be chosen to be arbitrarily small. So $g_{\mathscr{A}}\left(q\right)=\mathbb{P}\left(M_{\emptyset}^{\left(q\right)}<2\right)=1-\left(\left(\alpha+\varepsilon\right)\cdot\alpha^{2}\right)$
which is strictly larger than $q$ when $\varepsilon$ is small enough.
\end{example}

\bibliographystyle{abbrv}
\bibliography{all}

\end{document}